\newcolumntype{M}[1]{>{\centering\arraybackslash}m{#1}}
\newcolumntype{N}{@{}m{0pt}@{}}
\def\.{\hskip.06cm}
\def\ts{\hskip.03cm}
\newcommand{\vertiii}[1]{{\left\vert\kern-0.25ex\left\vert\kern-0.25ex\left\vert #1 
		\right\vert\kern-0.25ex\right\vert\kern-0.25ex\right\vert}}
\def\T{\mathbf{T}}
\def\balpha{\boldsymbol{\alpha}}
\def\bbeta{\boldsymbol{\beta}}
\def\z{\mathbf{z}}
\def\y{\mathbf{y}}
\def\supp{\textup{supp}}
\def\<{\langle}
\def\>{\rangle}
\def\Z{ {\text {\rm Z} } }
\def\0{{\mathbf 0}}
\def\.{\hskip.06cm}
\def\ts{\hskip.03cm}
\def\P{{\textup{\textsf{P}}}}
\DeclareMathOperator{\Var}{\textnormal{Var}}
\def\liminf{\mathop{\rm lim\,inf}\limits}
\def\limsup{\mathop{\rm lim\,sup}\limits}
\def\Z{\mathbb{Z}}
\def\R{\mathbb{R}}
\def\E{\mathbb{E}}
\def\P{\mathbb{P}}
\def\T{\mathcal{T}}
\def\r{\mathtt{r}}
\def\eps{\varepsilon}
\def\X{\mathbf{X}}
\def\a{\mathbf{a}}
\def\b{\mathbf{b}}
\def\r{\mathbf{r}}
\def\c{\mathbf{c}}
\def\x{\mathbf{x}}
\newcommand*\rel@kern[1]{\kern#1\dimexpr\macc@kerna}
\newcommand*\widebar[1]{%
	\begingroup
	\def\mathaccent##1##2{%
		\rel@kern{0.8}%
		\overline{\rel@kern{-0.8}\macc@nucleus\rel@kern{0.2}}%
		\rel@kern{-0.2}%
	}%
	\macc@depth\@ne
	\let\math@bgroup\@empty \let\math@egroup\macc@set@skewchar
	\mathsurround\z@ \frozen@everymath{\mathgroup\macc@group\relax}%
	\macc@set@skewchar\relax
	\let\mathaccentV\macc@nested@a
	\macc@nested@a\relax111{#1}%
	\endgroup
}
\newcommand{\tr}{\textup{tr}}
\DeclareMathOperator{\diag}{diag}
\DeclareMathOperator*{\argmax}{arg\,max}
\DeclareMathOperator*{\argmin}{arg\,min}
\DeclareMathOperator*{\VEC}{\mathtt{VEC}}
\newtheorem{theorem}{Theorem}
\numberwithin{theorem}{section}
\numberwithin{equation}{section}
\newtheorem{lemma}[theorem]{Lemma}
\newtheorem{prop}[theorem]{Proposition}
\newtheorem{corollary}[theorem]{Corollary}
\newtheorem{conjecture}[theorem]{Conjecture}
\newtheorem{assumption}[theorem]{Assumption}
\theoremstyle{definition}
\newtheorem{definition}[theorem]{Definition}
\newtheorem{example}[theorem]{Example}
\newtheorem{remark}[theorem]{Remark}
\definecolor{hancolor}{rgb}{0.0 0.0, 1.0}
\newcommand{\addresseshere}{%
	\enddoc@text\let\enddoc@text\relax
}
\begin{document}
	
	\title[Large random matrices with given margins]{Large random matrices with given margins  }

	\author{Hanbaek Lyu}
	\address{Hanbaek Lyu, Department of Mathematics, University of Wisconsin - Madison, WI, 53717, USA}
	\email{\texttt{hlyu@math.wisc.edu}}

	\author{Sumit Mukherjee}
	\address{Sumit Mukherjee, Department of Statistics, Columbia University, New York, NY 10027, USA}
	\email{\texttt{sm3949@columbia.edu}}

	\keywords{Random matrices, margins, contingency tables, Schr\"{o}dinger bridge, Sinkhorn algorithm, transference, concentration, empirical singular value distribution}
	\subjclass[2010]{60B20, 60C05, 05A05}

	\begin{abstract}
		We study large random matrices with i.i.d. entries conditioned to have prescribed row and column sums (margins), a problem connected to relative entropy minimization, Schrödinger bridges,  contingency tables, and random graphs with given degree sequences.  
		Our central result is a \textit{transference principle}: the  complex margin-conditioned matrix can be closely approximated by a simpler matrix whose entries are independent and drawn from an exponential tilting of the original model. The tilt parameters are determined by the sum of two potentials. We establish phase diagrams for  \textit{tame margins}, where these potentials are uniformly bounded. This framework resolves a 2011 conjecture by Chatterjee, Diaconis, and Sly on $\delta$-tame degree sequences and generalizes a sharp phase transition in contingency tables obtained by Dittmer, Lyu, and Pak in 2020. For tame margins, we show that a generalized Sinkhorn algorithm can compute the potentials at a dimension-free exponential rate. Our limit theory further establishes that for a convergent sequence of tame margins, the potentials converge 
		as fast as the margins converge.
		
		We apply this framework and obtain several key results for the conditioned matrix: The marginal distribution of any single entry is asymptotically an exponential tilting of the base measure, resolving a 2010 conjecture by Barvinok on contingency tables. 
		The conditioned matrix concentrates in cut norm around a \textit{typical table} (the expectation of the tilted model), which acts as a static Schrödinger bridge between the margins.
		The empirical singular value distribution of the rescaled matrix converges to an explicit law determined by the variance profile of the tilted model. In particular, we confirm the universality of the Marchenko-Pastur law for constant linear margins.

	\end{abstract}

	\maketitle
	
	\let\cleardoublepage\clearpage
	\vspace{-0.5cm}
	\tableofcontents

	\section{Introduction and main results}
	\label{sec:Introduction}

	In this paper, we are interested in the structure of random matrices with i.i.d. entries conditioned to have prescribed row and column sums.  Let $\mu$ be a $\sigma$-finite Borel measure on $\R$ and let 
	\begin{align}\label{eq:supp}
		A:=\inf\{{\rm supp}(\mu)\}\le \sup\{{\rm supp}(\mu)\}=:B.
	\end{align} 
	We allow for the possibility that $A=-\infty$, and/or $B=\infty$. The \textit{base model} is the product measure $\mu^{\otimes (m\times n)}$ on the set of $m\times n$ random matrices. 
	When $\mu$ is a probability measure, then the entries in the base model are independent and identically distributed as $\mu$.
	
	Let $\x=(x_{ij})$ be an $m\times n$ matrix of real entries. We define the \textit{row margin} of $\x$ as the vector $r(\x):=(r_{1}(\x),\dots,r_{m}(\x))$ with $r_{i}(\x):=\sum_{j=1}^{n} x_{ij}$; the \textit{column margin} of $\x$ is the vector $c(\x):=(c_{1}(\x),\dots,c_{n}(\x))$ with $c_{j}(\x):=\sum_{i=1}^{m} x_{ij}$. We call the pair $(r(\x), c(\x))$ the \textit{margin} of $\x$. For each $\rho\ge 0$, we let
	\begin{align}
		\mathcal{T}_{\rho}(\r,\c) := \left\{ \x\in \R^{m\times n}\,:\,  \textup{$\lVert r(\x)-\r \rVert_{1}\le \rho$ and $\lVert c(\x)-\c \rVert_{1}\le \rho$ }  \right\}
	\end{align}
	denote the set of all $m\times n$ real matrices whose margin is within $L^{1}$ distance $\rho$ from the prescribed margin $(\r,\c)$. The  \textit{transportation polytope} with margin $(\r,\c)$ is the set  $\mathcal{T}(\r,\c):=\mathcal{T}_{0}(\r,\c)$.

	A fundamental question we investigate in this work is how a random matrix drawn from the base model behaves if we condition its margin to take prescribed values. Namely, 
	\begin{align}\label{eq:main_question}
		\textit{If we condition $X\sim \mu^{\otimes (m\times n)}$ on being in $\T_{\rho}(\r,\c)$, how does it look like?}  
	\end{align}
	Since the margin is a fundamental observable for a matrix, it is natural to ask about the most likely structure of a random matrix after we observe its margin. This natural question connects to various important problems across diverse fields. When $\mu$ is the counting measure on $\Z_{\ge 0}$, $X$ is the uniformly random contingency table with given margin $(\r,\c)$, which is a fundamental object in statistics and combinatorics \cite{good1963maximum,diaconis1995rectangular}. 
	Counting the exact number contingency tables is known to be $\#\texttt{P}$-complete 
	\cite{dyer1997sampling} even for the $2\times n$ case. There is extensive literature in combinatorics on approximately counting the number of contingency tables 
	(see, e.g., \cite{canfield2007asymptotic, canfield2010asymptotic, barvinok2009asymptotic, barvinok2010approximation, barvinok2010number, barvinok2010does,barvinok2010maximum,lyu2022number}). The number of contingency tables is also closely related to the  Littlewood-Richardson coefficients \cite{colarusso2022contingency}. 
	When $\mu$ is the counting measure on $\{0,1\}$,  $X$ is a uniformly random bipartite graph with given degree sequence \cite{barvinok2010number, wu2020properties}. Further imposing symmetry and zero diagonal entries, it represents uniformly random simple graphs with a given degree sequence \cite{chatterjee2011random, barvinok2013number}. When $\mu$ is the Lebesgue measure on $\R_{\ge 0}$, then $X$ is the uniformly random nonnegative matrix from the transportation polytope $\T(\r,\c)$, which specializes to the uniformly random doubly stochastic matrix 
	\cite{chatterjee2014properties,nguyen2014random}. When $\mu$ is the Poisson distribution with unit mean, $X$ follows the multivariate hypergeometric distribution (or Fisher-Yates) with margin $(\r,\c)$ \cite{diaconis1995random}. We will see that in this case, the structure of $X$ is closely related to the static Schr\"{o}dinger bridge and entropic optimal transport  \cite{fortet1940resolution, pavon2021data}.

	A moment's thought reveals that it is not at all easy to sample such a margin-conditioned random matrix  $X$. This task is often nontrivial, as margin conditioning can induce complex correlations between the entries. For instance, in the case where  $X$  is a uniformly random contingency table, Dittmer, Lyu, and Pak \cite{dittmer2020phase} showed that the structure of $X$ exhibits a sharp phase transition as the margin varies continuously. The problem of sampling uniformly random contingency tables has been extensively studied (see, e.g., \cite{diaconis1995random, dyer1997sampling, kannan1999simple, dyer2000polynomial, morris2002improved, dyer2003approximate, cryan2006rapidly}). 
	In this literature, the Diaconis-Gangolli Markov chain \cite{diaconis1985testing} is an important sampling algorithm, but obtaining the cutoff for it remains an open problem \cite{nestoridi2020mixing}.
	
	\vspace{0.1cm} 
	We propose to approximate $X$ by  another random matrix $Y$ with \textit{independent entries} and establish the following  \textit{transference principle}:
	\begin{align}\label{eq:main_question_ans2}
		\hspace{-1cm}	\textup{\textbf{Transference}:}	
		\quad 	\textit{Events that are sufficiently rare under $Y$ are also rare under $X$}.
	\end{align}
	Hence, as far as such sufficiently rare events are concerned, one can avoid analyzing (and even sampling) $X$ altogether and simply use the model $Y$. The point is that the independence of the entries in $Y$ makes it much easier to analyze than $X$. 
	Such a comparison model $Y$ can be characterized from dual  perspectives of maximum likelihood (parametric) and minimum relative entropy (non-parametric):
	
	\begin{description}[itemsep=0.2cm, leftmargin=0.55cm]

		\item[1.] (\textit{Maximum Likelihood Perspective}): $Y$ 
		is the random matrix with independent entries obtained by a rank-one tilting of the base model $\mu^{\otimes (m\times n)}$, where the tilt parameters are obtained by a maximum-likelihood procedure based on the observable margin $(\r,\c)$.

		\item[2.] (\textit{Minimum Relative Entropy Perspective}): 
		Among the class of random matrices with independent entries and expected margin $(\r,\c)$, $Y$ has the minimum relative entropy from the base model $\mu^{\otimes (m\times n)}$. 
	\end{description}
	
	\noindent The equivalent characterizations of $Y$ above are in \textit{Kantorovich duality} as in the static Schr\"{o}dinger bridge/entropic optimal transport and their dual formulations. We give precise statements of our main results in the following subsections.

	\vspace{0.2cm}
	\subsection{Construction of the conditional probability measures}

	We construct the law of the margin-conditioned random matrix $X$ in \eqref{eq:main_question}, which we denote $\lambda_{\r,\c,\rho}$, under either of the following two (non-exclusive) assumptions. This covers all examples that we consider in this work and also all instances in the literature that we are aware of.
	
	\begin{assumption}\label{assumption:weak_transference}
		$\mu^{\otimes(m\times n)}(\T_{\rho}(\r,\c))\in (0,\infty)$. 
	\end{assumption}
	
	\begin{assumption}\label{assumption:strong_transference}
		$\rho=0$	 or $\mu^{\otimes(m\times n)}(\T_{\rho}(\r,\c))=0$. Furthermore, let $\pi:\R^{m\times n}\rightarrow \R^{m+n}$ denote the map that sends a matrix $\x$ to its margin $(r(\x),c(\x))$. Let $\nu=\pi_{\#}(\mu^{\otimes(m\times n)})$ denote the pushforward $\mu^{\otimes(m\times n)}$ under $\pi$. Then $\nu$ is $\sigma$-finite. 
	\end{assumption}
	
	Under Assumption \ref{assumption:weak_transference}, we simply define $\lambda_{\r,\c,\rho}$  by normalizing the product measure $\mu^{\otimes (m\times n)}$ restricted on $\T_{\rho}(\r,\c)$. Under Assumption \ref{assumption:strong_transference}, we need to condition on an event of measure zero $\mu^{\otimes(m\times n)}$. To handle this, we use the `disintegration approach' for constructing regular conditional probabilities, which is nicely described by Chang and Pollard \cite{chang1997conditioning}. Namely, we set $\lambda_{\r,\c,\rho}:=\lambda_{\r,\c,0}$, where the conditional probability measures $\lambda_{\r,\c}:=\lambda_{\r,\c,0}$ are constructed for $\nu$-almost all margins $(\r,\c)$ 
	via disintegrating the product measure $\mu^{\otimes (m\times n)}$ using the margin map $\x\mapsto (r(\x), c(\x))$. (See Sec. \ref{sec:strong_transference_pf} for more details.) Frequently we will regard the pushforward measure $\nu$ on the margins as a measure on $\R^{m+n-1}$ by identifying a margin $(\r,\c)$ with the $m+n-1$ dimensional vector $(\r(1),\cdots,\r(m-1),\c(1),\cdots,\c(n))$ omitting $\r(m)$.

	\subsection{Exponential tilting and the typical table} 
	\label{sec:exp_tilting}

	We first introduce some notations on exponential tilting and the parameterized comparison model. Throughout this paper, we use `increasing' (resp., `decreasing') and `non-decreasing' (resp., `non-increasing') interchangeably.

	\begin{definition}[Exponential tilting]\label{def:tilting}
		Define the set $\Theta$  of all allowed tilt parameters for $\mu$:
		\begin{align}\label{eq:def_Theta}
			\Theta:=\Big\{\theta\in \R: \int e^{\theta x}d\mu(x)<\infty\Big\}. 
		\end{align}
		Let $\Theta^\circ$ be the interior of $\Theta$. For any $\theta\in \Theta^\circ$, let $\mu_\theta$ denote the tilted  probability measure given by
		\begin{align}\label{eq:def_tilt}
			\frac{d\mu_\theta}{d\mu}(x)=e^{\theta x-\psi(\theta)},\quad \psi(\theta):=\log\int e^{\theta x}d\mu(x).
		\end{align}
		Then we have $	\E_{\mu_\theta}[X]=\psi'(\theta)$ and $ \Var_{\mu_\theta}(X)=\psi''(\theta)>0$, so  the function $\psi'(.):\Theta^\circ\mapsto (A,B)$ is strictly increasing, and has a strictly increasing inverse $\phi(.):(A,B)\mapsto \Theta^\circ$ satisfying  $\phi(\psi'(\theta))=\theta$ for all $\theta\in \Theta^\circ$. Note that $\Theta^{\circ}=(\phi(A),\phi(B))$.	Throughout this paper, we assume the measure $\mu$ on $\R$ is so that the set $\Theta$ of tilt parameters in \eqref{eq:def_Theta} is 
		nonempty and not a singleton. Then it follows from H\"{o}lder's inequality that $\Theta$ is a non-empty interval. 
	\end{definition}

	Next, we introduce an exponential tilting of the base i.i.d. model where the tilt parameters are parameterized by the direct sum of two vectors. These parameters will be tuned to achieve the prescribed expected margin. For two vectors $\a,\b$, let $\a\oplus \b$ denote the  matrix whose $(i,j)$ coordinate is $\a(i)+\b(j)$. For a univariate function $\varphi:\R\rightarrow \R$, we denote by $\varphi(\a\oplus \b)$ the matrix whose $(i,j)$ entry is $\varphi(\a(i)+\b(j))$. 
	
	\begin{definition}[The $(\balpha,\bbeta)$-model]\label{def:alpha_beta_model}
		Let  $\{\mu_\theta\}_{\theta\in \Theta^\circ}$ be probability measures on $\R$, as introduced in \eqref{eq:def_tilt}, and let vectors $\balpha=(\alpha_{1},\dots,\alpha_{m})\in \R^{m}$ and $\bbeta=(\beta_{1},\dots,\beta_{n})\in \R^{m}$ be such that $\alpha_i+\beta_j\in \Theta^\circ$ for all $i\in [m], j\in [n]$. The \textit{$(\balpha,\bbeta)$-model} is a $m\times n$ random matrix $Y=(Y_{ij})$ where the entries are independent and $Y_{ij}\sim \mu_{\alpha_i+\beta_j}$. In this case, we write $Y\sim \mu_{\balpha\oplus\bbeta}$. 
	\end{definition}
	
	We begin by observing that the likelihood of observing a matrix $\x\in \R^{m\times n}$ under the $(\balpha,\bbeta)$-model depends only on the margin of $\x$. Indeed, suppose we observed a matrix $\x=(x_{ij})\in \R^{m\times n}$ from the model $\mu_{\balpha\oplus\bbeta}$. Let $(\r,\c)$ be the margins of $\x$. 
	Note that the  log-likelihood of observing $\x$ under $Y\sim \mu_{\balpha, \bbeta}$, with respect to base measure $\mu^{\otimes (m\times n)}$ is 
	\begin{align}
		\sum_{i=1}^m \sum_{j=1}^n \Big[\x_{ij}(\balpha(i)+\bbeta(j)) - \psi(\balpha(i)+\bbeta(j)) \Big]  
		= \langle \r, \balpha \rangle +  \langle \c, \bbeta \rangle  -  \sum_{i,j}\psi(\balpha(i)+\bbeta(j)). 
	\end{align}
	Hence, given an observed table $\x$ from the $(\balpha,\bbeta)$-model, the row and column sums of $\x$ give a sufficient statistic for the parameters $\balpha,\bbeta$. This is analogous to the fact that the degree sequence of an observed graph under the $\bbeta$-model is a sufficient statistic for $\bbeta$, see \cite{chatterjee2011random}.
	Consequently, conditional on the margin, the distribution of the $(\balpha,\bbeta)$-model is free of $\x$. 
	Given a single observation $\x\in \T(\r,\c)$, the maximum likelihood estimate (MLE) of $(\balpha,\bbeta)$ is obtained by maximizing the log-likelihood function above. 
	We formulate this discussion below.

	\begin{definition}[MLE for margin $(\r,\c)$]\label{def:MLE_typical}
		Fix an $m\times n$ margin $(\r,\c)$. The \textit{MLE} of  $(\balpha,\bbeta)$ for margin $(\r,\c)$, 
		is a solution to the following concave maximization problem:
		\begin{align}\label{eq:typical_Lagrangian}
			\sup_{\balpha, \bbeta} \left( g^{\r,\c}(\balpha, \bbeta) := 
			\langle \r,\balpha \rangle  + \langle \c, \bbeta \rangle 
			-\sum_{i,j}\psi(\balpha(i)+\bbeta(j) )  \right),
		\end{align}
		where we optimize over the open set where $\balpha(i)+\bbeta(j)\in \Theta^{\circ}$ for all $i\in [m],j\in [n]$.
		An MLE $(\balpha,\bbeta)$ for $(\r,\c)$ is said to be a  \textit{standard MLE} if $\langle \balpha,\mathbf{1} \rangle=0$ and is denoted as $(\balpha^{\r,\c}, \bbeta^{\r,\c})$. 
	\end{definition}

	A careful reader may wonder if the $(\balpha,\bbeta)$-model is a bit too restrictive for describing the structure of margin-conditioned random matrices. For instance, what if we use all $mn$ independent tilt parameters $\theta_{ij}$ for each entry instead of the $m+n$ ones in the $(\balpha,\bbeta)$-model? Parameterizing $\theta_{ij}$ as the mean $z_{ij}$ after tilt through the relation $\theta_{ij}=\phi(z_{ij})$, the optimal such $mn$ tilt parameters are given by solving the following relative entropy minimization problem.

	\begin{definition}[Typical table]\label{def:typical_table}
		Fix a $m\times n$ margin $(\r,\c)$. The \textit{typical table} $Z^{\r,\c}$ for margin $(\r,\c)$ with respect to the base measure $\mu$ is defined by 
		\begin{align}\label{eq:typical_table_opt}
			Z^{\r,\c} \. := \. \argmin_{Z=(z_{ij})\in \mathcal{T}(\r,\c) } \,  \left[H(Z):=\sum_{i,j}\, D(\mu_{\phi(z_{ij})} \Vert  \mu) \right],
		\end{align}
		where 
		$D(\mu_{\theta} \lVert \mu)$ is the \textit{relative entropy} from $\mu$ to the tilted probability measure  $\mu_{\theta}$ defined as   
		\begin{align}\label{eq:def_rel_entropy}
			D(\mu_{\theta} \lVert \mu) := \begin{cases} \int_{x\in \R} \log\left(  \frac{d\mu_{\theta}}{d\mu}(x) \right)\,d\mu_{\theta}(x) =
				\theta \psi'(\theta)  - \psi(\theta) &  \textup{if $\theta \in (\phi(A),\phi(B))$} \\
				\infty & \textup{otherwise}.
			\end{cases}
		\end{align}
	\end{definition}

	Note that when $\mu$ is a probability measure, the relative entropy above agrees with the Kullback-Leibler divergence from $\mu$ to $\mu_{\theta}$ so  $D(\mu_{\theta}\Vert \mu)\ge 0$. However, this quantity need not be nonnegative in general when $\mu$ is not a probability measure. For instance, if $\mu$ is the counting measure on nonnegative integers (see Ex. \ref{ex:counting_base_measure}), then $D(\mu_{\theta}\lVert \mu)$ equals the negative entropy of the geometric distribution $\mu_{\theta}$, so it is nonpositive and is not bounded from below.

	On the one hand, the MLE problem in \eqref{eq:typical_Lagrangian} seeks to estimate the unknown parameters $(\balpha,\bbeta)$ that best describe the random matrix with a given margin through the $(\balpha,\bbeta)$-model. On the other hand, the typical table problem in \eqref{eq:typical_table_opt} seeks to find the best $m\times n$ mean matrix in the transportation polytope that achieves the smallest possible relative entropy  when the law of each entry is exponentially tiled. A key observation in this work is that these two problems are strongly dual to each other, analogously to the Kantorovich duality in the Schr\"{o}dinger bridge theory.
	
	\begin{theorem}[Strong duality between typical table and MLE]\label{thm:strong_duality_simple}
		Let $(\r,\c)$ be an $m\times n$ margin. 
		Then an MLE $(\balpha,\bbeta)$ for $(\r,\c)$ exists if and only if the typical table $Z^{\r,\c}$ exists if and only if $\T(\r,\c)\cap (A,B)^{m\times n}\ne \emptyset$. If they exist, the following implications hold:
		\begin{align}\label{eq:lem_MLE_typical_duality1}
			\textup{$(\balpha,\bbeta)$ is an MLE} \quad \Longleftrightarrow \quad \E[\mu_{\balpha\oplus \bbeta}]=\psi'(\balpha\oplus \bbeta) \in \T(\r,\c) 	 \quad \Longleftrightarrow \quad \textup{$Z^{\r,\c}= \psi'(\balpha\oplus \bbeta)$. }
		\end{align}
		Furthermore, the typical table and the MLE problems are in strong duality: 
		\begin{align}\label{eq:strong_duality}
			\inf_{Z\in \mathcal{T}(\r,\c)} \, H(Z) = 
			\sup_{\balpha, \bbeta} \, g^{\r,\c}(\balpha, \bbeta). 
		\end{align}
	\end{theorem}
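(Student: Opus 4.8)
The plan is to recognize the whole statement as Lagrangian convex duality for a separable strictly convex program. The key algebraic fact is that $D(\mu_{\phi(z)}\Vert\mu)=\phi(z)\,z-\psi(\phi(z))$ is exactly the convex conjugate $\psi^{\star}(z):=\sup_{\theta}\bigl(\theta z-\psi(\theta)\bigr)$ of the log-Laplace transform $\psi$: by Definition~\ref{def:tilting}, $\psi^{\star}$ is finite, strictly convex and differentiable on $(A,B)$ with $(\psi^{\star})'(z)=\phi(z)$, and the stated convention makes $\psi^{\star}(z)=+\infty$ off $[A,B]$. Hence the typical-table problem \eqref{eq:typical_table_opt} is $\inf_{Z\in\T(\r,\c)}H(Z)$ with $H(Z)=\sum_{i,j}\psi^{\star}(z_{ij})$, a convex minimization of a separable objective over the affine flat $\T(\r,\c)$, and $g^{\r,\c}$ is precisely its Lagrangian dual: attaching multipliers $\balpha$ to the row constraints and $\bbeta$ to the column constraints and minimizing the Lagrangian over $Z$ coordinatewise gives $\inf_{z}\bigl(\psi^{\star}(z)-(\alpha_i+\beta_j)z\bigr)=-\psi^{\star\star}(\alpha_i+\beta_j)=-\psi(\alpha_i+\beta_j)$ for $\alpha_i+\beta_j\in\Theta^{\circ}$, so the dual objective equals $\langle\r,\balpha\rangle+\langle\c,\bbeta\rangle-\sum_{i,j}\psi(\alpha_i+\beta_j)=g^{\r,\c}(\balpha,\bbeta)$ and the dual problem is exactly \eqref{eq:typical_Lagrangian}. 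Weak duality $\inf H\ge\sup g$ is then automatic.

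To obtain \eqref{eq:lem_MLE_typical_duality1} I would carry out the first-order analysis. The concave function $g^{\r,\c}$ has gradient with components $r_i-\sum_{j}\psi'(\alpha_i+\beta_j)$ and $c_j-\sum_{i}\psi'(\alpha_i+\beta_j)$, and since its domain is open a point is an MLE iff its gradient vanishes iff the row and column sums of $\psi'(\balpha\oplus\bbeta)$ equal $\r$ and $\c$, i.e.\ iff $\psi'(\balpha\oplus\bbeta)\in\T(\r,\c)$ (automatically $\psi'(\balpha\oplus\bbeta)\in(A,B)^{m\times n}$). Given such $(\balpha,\bbeta)$, set $Z:=\psi'(\balpha\oplus\bbeta)$, so $\phi(z_{ij})=\alpha_i+\beta_j$ and $Z\in\T(\r,\c)\cap(A,B)^{m\times n}$; for any $Z'\in\T(\r,\c)$ with $H(Z')<\infty$, convexity of $H$ on $(A,B)^{m\times n}$ gives $H(Z')-H(Z)\ge\langle\nabla H(Z),\,Z'-Z\rangle=\sum_{i,j}(\alpha_i+\beta_j)(z'_{ij}-z_{ij})$, and the right-hand side vanishes because $Z$ and $Z'$ share both margins; hence $Z$ is the typical table (unique by strict convexity). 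Substituting, $H(Z)=\sum_{i,j}\bigl[(\alpha_i+\beta_j)z_{ij}-\psi(\alpha_i+\beta_j)\bigr]=\langle\r,\balpha\rangle+\langle\c,\bbeta\rangle-\sum_{i,j}\psi(\alpha_i+\beta_j)=g^{\r,\c}(\balpha,\bbeta)$, which combined with weak duality yields \eqref{eq:strong_duality} whenever a solution exists.

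It remains to settle existence. Two implications are immediate: if the typical table exists it is itself a matrix in $\T(\r,\c)\cap(A,B)^{m\times n}$, and if an MLE $(\balpha,\bbeta)$ exists then $\psi'(\balpha\oplus\bbeta)$ is such a matrix. The substantive direction is $\T(\r,\c)\cap(A,B)^{m\times n}\ne\emptyset\Rightarrow$ the typical table exists (the MLE then exists via the correspondence above). For this I would show $H$ attains its infimum over $\T(\r,\c)$ at an interior point of the box, using: (i) coercivity of $H$ along every recession direction of $\T(\r,\c)$ --- a nonzero matrix whose rows and columns all sum to zero, hence having entries of both signs in some row --- which holds because $\psi^{\star}$ is strictly convex with one-sided growth governed by the endpoints of $\Theta$ (superlinear when that endpoint is infinite, asymptotically linear with the extreme slope otherwise; note $B<\infty\Rightarrow\sup\Theta=+\infty$ and $A>-\infty\Rightarrow\inf\Theta=-\infty$), so the sublevel sets of $H$ on $\T(\r,\c)$ are bounded; and (ii) \emph{steepness}: $(\psi^{\star})'(z)=\phi(z)\to\pm\infty$ as $z\to A^{+},B^{-}$, which is exactly the standing hypothesis that $\psi'$ maps $\Theta^{\circ}$ \emph{onto} $(A,B)$, and which forces any candidate minimizer lying on the relative boundary of the box to be beaten by an interior competitor (moving toward a point of $\T(\r,\c)\cap(A,B)^{m\times n}$ strictly decreases $H$, the inward directional derivative being $-\infty$ in the offending coordinates). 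Finally, when $\T(\r,\c)\cap(A,B)^{m\times n}=\emptyset$ (including incompatible margins, where $\T(\r,\c)=\emptyset$), one has $\inf H=+\infty$ by convention, and I would check $\sup g=+\infty$ as well: separating the affine flat $\T(\r,\c)$ from the open box by a functional $Z\mapsto\langle\a\oplus\b,\,Z\rangle$ and using that the same sign constraints ($a_i+b_j>0\Rightarrow B<\infty$, $a_i+b_j<0\Rightarrow A>-\infty$) keep the ray $t\mapsto(\balpha^{0}+t\a,\,\bbeta^{0}+t\b)$ inside $\{\alpha_i+\beta_j\in\Theta^{\circ}\}$ for all $t\ge0$, along which $g^{\r,\c}$ increases without bound; thus \eqref{eq:strong_duality} holds in general.

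I expect the main obstacle to be part~(ii) of the existence argument: ruling out that the infimum of $H$ is approached only on a boundary face of $[A,B]^{m\times n}$, and (when $\T(\r,\c)$ is unbounded) along a recession direction. This needs a careful case analysis of the boundary behaviour of $\psi^{\star}$ --- finite versus infinite endpoints of $(A,B)$ and of $\Theta$, atoms versus non-atoms at finite endpoints --- together with the precise use of steepness of the exponential family. The $\sup g=+\infty$ half of the degenerate case is more delicate than it looks and must be arranged so that no $\psi(t(a_i+b_j))$ term ever becomes $+\infty$ along the chosen ray. A more black-box route to the same conclusions is to invoke Fenchel--Rockafellar duality for $Z\mapsto\sum_{i,j}\psi^{\star}(z_{ij})$ precomposed with the margin map, whose constraint qualification is precisely $\T(\r,\c)\cap(A,B)^{m\times n}\ne\emptyset$; this immediately gives no duality gap, dual attainment, and --- via the conjugacy relation $Z=\psi'(\balpha\oplus\bbeta)$ --- primal attainment.
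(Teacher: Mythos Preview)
Your proposal is correct and takes essentially the same route as the paper. The paper organizes the argument into Lemmas~\ref{lem:typical_MLE_equation}, \ref{lem:typical}, and \ref{lem:strong_dual_MLE_typical} --- the MLE equation (your gradient computation), existence of the typical table, and the strong-duality identity plus Lagrange multipliers --- and the content matches yours point for point. Two minor differences: you make the Fenchel-conjugate structure explicit via $D(\mu_{\phi(z)}\Vert\mu)=\psi^{\star}(z)$, whereas the paper carries out the equivalent computation by hand; and for existence you sketch a recession-function/steepness argument, whereas the paper writes out a minimizing-sequence argument with the convexity inequality $\frac{1}{k\lambda}\ge\sum_{i,j}\phi(z_{ij}^{(k,\lambda)})(z_{ij}^{(k)}-x_{ij})$ to rule out boundary limits --- both hinge on $\phi\to\pm\infty$ at finite endpoints of $(A,B)$. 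Your treatment of the degenerate case $\T(\r,\c)\cap(A,B)^{m\times n}=\emptyset$ goes beyond what the paper proves. One loose phrase: ``the MLE then exists via the correspondence above'' --- the correspondence you established runs MLE $\Rightarrow$ typical table, so you still need Lagrange multipliers at the interior minimizer of $H$ (giving $\phi(z_{ij})=\alpha_i+\beta_j$) to close the loop, which is exactly the paper's Lemma~\ref{lem:strong_dual_MLE_typical}\textbf{(i)} and what your closing Fenchel--Rockafellar remark supplies.
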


	An implication of Thm. \ref{thm:strong_duality_simple} above is that the $(\balpha,\bbeta)$-model is the best possible among all possible entry-wise exponential tilting of the base model. In fact, when $\mu$ is a probability measure, it is the best possible among all random matrix ensembles, as it is the \textit{information projection} 
	of the base measure $\mu^{\otimes (m\times n)}$ onto the set of all probability measures on $m\times n$ real matrices constrained to have expected margin $(\r,\c)$ (see Sec. \ref{sec:min_rel_entropy_discussion} for more discussion).

	The behavior of an $(\balpha,\bbeta)$-model depends crucially on how far the entries of $\balpha\oplus \bbeta$ are away from the boundary values $\phi(A)$ and $\phi(B)$. 
	This leads to the following notion of `tameness' of a margin. With a slight abuse of notations, we say $a\le M \le b$ for a matrix $M$ and scalars $a,b$ if every entry of $M$ lies in $[a,b]$. 
	
	\begin{definition}[Tame margins]\label{def:ab}
		Fix $\delta>0$ and let $\mathcal{M}^{\delta}=\mathcal{M}^{\delta}(\mu,m,n)$ denote the set of all $m\times n$ \textit{$\delta$-tame} margins $(\r,\c)$, that is, the MLE $(\balpha,\bbeta)$ exists and its  entries satisfy
		\begin{align}\label{eq:ab}
			\quad 	A_\delta:=\max\Big(A+\delta,-\frac{1}{\delta}\Big)\le \psi'(\balpha\oplus \bbeta ) \le \min\Big(B-\delta,\frac{1}{\delta}\Big)=:B_\delta. 
		\end{align}
	\end{definition}
	
	According to Thm. \ref{thm:strong_duality_simple}, $\delta$-tameness of a margin $(\r,\c)$  can be equivalently defined as the typical table $Z^{\r,\c}$ taking all entries from $[A_{\delta},B_{\delta}]$. 
	Also we remark  that, since $\balpha\oplus \bbeta$ belongs to $(\phi(A),\phi(B))^{m\times n}$ by definition, any margin $(\r,\c)$ with an MLE (or typical table)  is always $\delta$-tame for some $\delta>0$ that may depend on $m$ and $n$.
	The important question is whether a sequence of margins is uniformly $\delta$-tame for a fixed $\delta>0$. `Cloning' a given margin provides a simple way to generate a family of $\delta$-tame margins.

	\begin{example}[Cloned margins]\label{ex:cloned_margin}
		Let $(\r_{0},\c_{0})$  be a $a \times b$ margin with an MLE $(\balpha_{0},\bbeta_{0})$. The  \textit{$k$-cloning} of $(\r_{0},\c_{0})$ is the $k a\times k b$ margin $(\r,\c)$ with $\r=k\r_{0}\otimes \mathbf{1}_{k}$ and $\c =  k\c_{0}\otimes \mathbf{1}_{k}$, where $\otimes$ denotes the Kronecker product, i.e., $\r$ and $\c$ repeat $k\r_{0}$ and $k\c_{0}$ $k$ times, respectively. Then $(\r,\c)$ has MLE $(\balpha_{0}\otimes \mathbf{1}_{k}, \bbeta_{0}\otimes \mathbf{1}_{k})$. Hence if $(\r_{0},\c_{0})$ is $\delta$-tame, then its $k$-clonings for all $k\ge 1$ are all $\delta$-tame. For instance, the constant linear margin $(\r,\r)$ with $\r=n a \mathbf{1}_{n}$ for any $a\in (A,B)$ is the $n$-cloning of the $1\times 1$ margin $(a,a)$ and is $\delta$-tame for any $\delta>0$ such that $A_{\delta}\le \psi'(a)\le B_{\delta}$. 
	\end{example}
	
	Establishing $\delta$-tameness for a general margin $(\r,\c)$ beyond cloned ones turns out to be quite delicate. In Section \ref{sec:suff_cond_tame}, we establish phase diagrams and characterizations of the  $\delta$-tame margins.

	\vspace{0.2cm}
	\subsection{Transference principles} 
	\label{sec:transference_main}
	
	In this section, we give precise statements of the transference principle \eqref{eq:main_question_ans2}. The following `weak transference principle' concerns the case when the measure of the conditioning set $\T_{\rho}(\r,\c)$ is nonzero (Assumption \ref{assumption:weak_transference}).

	\begin{theorem}[Weak transference]\label{thm:transference}
		Let $(\r,\c)$ be an $m\times n$ margin with an MLE $(\balpha,\bbeta)$
		and let $X\sim \lambda_{\r,\c,\rho}$ under Assumption \ref{assumption:weak_transference}. Let $\delta>0$ be small enough so that $(\r,\c)$ is $\delta$-tame and let  $Y\sim \mu_{\balpha\oplus \bbeta}$. Then there exists  a  constant $C_{1}=C_{1}(\mu,\delta)>0$ such that    for each Borel set $\mathcal{E}\subseteq \R^{m\times n}$, 
		\begin{align}\label{eq:transference_1}
			\P(X\in \mathcal{E}) \le \exp\left(  C_{1} \rho  \right) \P\left( Y\in \T_{\rho}(\r,\c) \right)^{-1}  \P\left( Y\in \mathcal{E} \right).
		\end{align}
		Furthermore, if $\rho\ge C_{2} \sqrt{mn(m+n)}$ and $m,n\ge C_{2}$ for some constant $C_{2}=C_{2}(\mu,\delta)>0$, then
		\begin{align}\label{eq:transference_11}
			\P(X\in \mathcal{E}) \le 2 \exp\left(  C_{1} \rho  \right)  \P\left( Y\in \mathcal{E} \right).
		\end{align}
	\end{theorem}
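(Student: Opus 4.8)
The plan is to exploit the fact, established in the paragraph preceding Definition 1.6, that the likelihood ratio $\frac{d\mu_{\balpha\oplus\bbeta}}{d\mu^{\otimes(m\times n)}}(\x)$ depends on $\x$ only through its margin $(r(\x),c(\x))$. Explicitly, for $\x\in\R^{m\times n}$ with margin $(r(\x),c(\x))$ write
\begin{align}
	\frac{d\mu_{\balpha\oplus\bbeta}}{d\mu^{\otimes(m\times n)}}(\x)
	= \exp\!\Big( \langle r(\x),\balpha\rangle + \langle c(\x),\bbeta\rangle - \textstyle\sum_{i,j}\psi(\balpha(i)+\bbeta(j)) \Big).
\end{align}
On the conditioning set $\T_\rho(\r,\c)$ we have $\|r(\x)-\r\|_1\le\rho$ and $\|c(\x)-\c\|_1\le\rho$, so by H\"older's inequality $|\langle r(\x)-\r,\balpha\rangle|\le \rho\|\balpha\|_\infty$ and similarly for the column term. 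Hence, denoting by $L:=\langle\r,\balpha\rangle+\langle\c,\bbeta\rangle-\sum_{i,j}\psi(\balpha(i)+\bbeta(j))$ the value of the exponent at the exact margin, for every $\x\in\T_\rho(\r,\c)$ the likelihood ratio lies in $[\,e^{L-\rho(\|\balpha\|_\infty+\|\bbeta\|_\infty)},\,e^{L+\rho(\|\balpha\|_\infty+\|\bbeta\|_\infty)}\,]$. This is the one quantitative input we need; the constant $C_1$ will be $\|\balpha\|_\infty+\|\bbeta\|_\infty$ up to the dependence on $\delta$.

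Next I would convert the $\delta$-tameness hypothesis into a uniform bound on $\|\balpha\|_\infty$ and $\|\bbeta\|_\infty$ depending only on $\mu$ and $\delta$. By Definition 1.8, $\delta$-tameness gives $A_\delta\le\psi'(\balpha(i)+\bbeta(j))\le B_\delta$, i.e. $\balpha(i)+\bbeta(j)\in[\phi(A_\delta),\phi(B_\delta)]$ for all $i,j$, where $\phi(A_\delta),\phi(B_\delta)$ are finite and depend only on $(\mu,\delta)$. Since the standard MLE normalization may be assumed ($\langle\balpha,\mathbf 1\rangle=0$), averaging $\balpha(i)+\bbeta(j)$ over $j$ controls $\balpha(i)$ and then $\bbeta(j)$, yielding $\|\balpha\|_\infty,\|\bbeta\|_\infty\le C_1'(\mu,\delta)$. (One subtlety: the MLE in the statement need not be the standard one, but replacing $(\balpha,\bbeta)$ by $(\balpha-\bar\alpha\mathbf 1,\bbeta+\bar\alpha\mathbf 1)$ leaves $\balpha\oplus\bbeta$, hence $Y$, unchanged, so we may assume it is standard.) This is the step I expect to require the most care, though it is essentially elementary linear algebra once tameness is unpacked.

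With these bounds in hand the inequality \eqref{eq:transference_1} is immediate: since $\lambda_{\r,\c,\rho}$ is $\mu^{\otimes(m\times n)}$ restricted to $\T_\rho(\r,\c)$ and normalized, for any Borel $\mathcal E$,
\begin{align}
	\P(X\in\mathcal E)
	= \frac{\mu^{\otimes(m\times n)}(\mathcal E\cap\T_\rho(\r,\c))}{\mu^{\otimes(m\times n)}(\T_\rho(\r,\c))}
	\le \frac{e^{-L+C_1\rho}\,\P(Y\in\mathcal E\cap\T_\rho(\r,\c))}{e^{-L-C_1\rho}\,\P(Y\in\T_\rho(\r,\c))}
	\le e^{2C_1\rho}\,\frac{\P(Y\in\mathcal E)}{\P(Y\in\T_\rho(\r,\c))},
\end{align}
after absorbing the factor $2$ into $C_1$; here I used the sandwich bound on the likelihood ratio on $\T_\rho(\r,\c)$ for both numerator and denominator, and $\mu^{\otimes(m\times n)}(\T_\rho(\r,\c))\in(0,\infty)$ from Assumption 1.4 so the ratio is well-defined. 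For \eqref{eq:transference_11} it remains to show $\P(Y\in\T_\rho(\r,\c))\ge \tfrac12$ once $\rho\ge C_2\sqrt{mn(m+n)}$ and $m,n\ge C_2$. Since $\E[Y]=\psi'(\balpha\oplus\bbeta)\in\T(\r,\c)$, we have $\E[r(Y)]=\r$ and $\E[c(Y)]=\c$ exactly, so $\E\|r(Y)-\r\|_1=\sum_i\E|r_i(Y)-\E r_i(Y)|\le\sum_i\sqrt{\Var(r_i(Y))}$; tameness bounds each $\Var(Y_{ij})=\psi''(\balpha(i)+\bbeta(j))$ by a constant $v(\mu,\delta)$, giving $\E\|r(Y)-\r\|_1\le m\sqrt{nv}$ and likewise $\E\|c(Y)-\c\|_1\le n\sqrt{mv}$; a Markov/Chebyshev bound (or Chebyshev on the sum) then forces $\P(Y\notin\T_\rho(\r,\c))\le\tfrac12$ once $\rho$ exceeds a constant multiple of $\max(m\sqrt n,n\sqrt m)\le\sqrt{mn(m+n)}$, which is exactly the stated threshold. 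Substituting this lower bound into \eqref{eq:transference_1} yields \eqref{eq:transference_11}.
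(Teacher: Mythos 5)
Your proof of \eqref{eq:transference_1} is correct and coincides with the paper's argument: both exploit that the density of $\mu_{\balpha\oplus\bbeta}$ w.r.t.\ $\mu^{\otimes(m\times n)}$ depends on $\x$ only through its margin, bound the log-likelihood fluctuation over $\T_\rho(\r,\c)$ by $C_1\rho$ via H\"older and the $L^\infty$ bound on the (standard) MLE coming from $\delta$-tameness (the paper's Lemma 5.3(iii); your averaging argument is the same device, modulo the harmless reordering that one should first average over $i$ using $\langle\balpha,\mathbf 1\rangle=0$ to pin down $\bar\bbeta$ and then the individual coordinates), and then sandwich numerator and denominator of $\mu^{\otimes}(\mathcal E\cap\T_\rho)/\mu^{\otimes}(\T_\rho)$. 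Where you genuinely depart from the paper is \eqref{eq:transference_11}: the paper proves $\P(Y\in\T_\rho(\r,\c))\ge 1/2$ via a Bernstein inequality for sub-exponential entries combined with a union bound over all $3^{m+n}$ signed sums (its Lemma 5.4), obtaining the much stronger bound $1-(1/3)^{m+n}$ at the price of the side condition $m,n\ge C_2$; you instead use only $\E[r(Y)]=\r$, $\E[c(Y)]=\c$, the variance bound $\Var(Y_{ij})=\psi''(\balpha(i)+\bbeta(j))\le v(\mu,\delta)$ from tameness, Cauchy--Schwarz to get $\E\lVert r(Y)-\r\rVert_1\le m\sqrt{nv}$ (and $n\sqrt{mv}$ for columns), and Markov, noting $m\sqrt n+n\sqrt m\le\sqrt2\,\sqrt{mn(m+n)}$. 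This first-moment argument is perfectly adequate since only the constant $1/2$ is needed in \eqref{eq:transference_11}, it is more elementary, and it even removes the need for the hypothesis $m,n\ge C_2$; what it does not give is the exponentially-small failure probability of the paper's Lemma 5.4, which the paper reuses elsewhere (e.g.\ for the cut-norm concentration), but that is immaterial for the statement at hand.
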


	The upper bound in \eqref{eq:transference_1} is useful only if the product of the first two terms is not too large.  For this, there must be a sufficient probability $\P(Y\in \T_{\rho}(\r,\c))$ that the maximum likelihood tilted model $Y$ satisfying margin $(\r,\c)$ with $L^{1}$-error $\rho$. For the second part, we show $\P(Y\in \T_{\rho}(\r,\c))\ge 1/2$ if  $\rho\approx \sqrt{mn(m+n)}$. 
	The reason is that the margin of $Y$ has expectation $(\r,\c)$ and the entries of $Y$ are independent with comparably tiled sub-exponential distributions $\mu_{\balpha(i)+\bbeta(j)}$. Hence by Bernstein's inequality, any fixed signed sum of its row/column sum fluctuates by  $O(\sqrt{mn})$ with an exponential tail. Since there are at most $2^{m+n}$ such signed sums, the $L^{1}$ fluctuation of the margin of $Y$ is of order $O(\sqrt{mn(m+n)})$. 
	Hence $Y\in \T_{\rho}(\r,\c)$ with a large enough probability. However, the overal transference cost $2\exp(C\rho)$ is still large, which comes from approximating the margin of $Y$ by $(\r,\c)$, where we only know it is within $L^{1}$-distance $\rho$ away from $(\r,\c)$.

	\vspace{0.1cm}
	
	Next, we establish several `strong transference principles' for exact margin conditioning $\rho=0$ or $\T_{\rho}(\r,\c)$ having measure zero (Assumption \ref{assumption:strong_transference}). In order to state our results, we need to introduce some notations. For each matrix $\z\in \R^{m\times n}$, let $\overline{\z}$ denote its $(m-1)\times (n-1)$ submatrix obtained by deleting the last row and column from $\z$ and let 
	$\check{\z}$ denote the $m+n-1$ entries of $\z$ in its last row and column. 
	Note that $X$ is completely determined by $\overline{X}$ due to the exact margin constraint. More precisely, 
	define the `completion map' $\Gamma_{\r,\c}:\R^{(m-1) \times (n-1)}\rightarrow \R^{m\times n}$ by 
	\begin{align}\label{eq:def_completion_map}
		\Gamma_{\r,\c}(\overline{\x})_{ij}:= \begin{cases}
			\overline{\x}_{ij} & \textup{if $1\le i < m$ and $1\le j < n$}\\
			\r(i) - \overline{\x}_{i\bullet} & \textup{for $1\le i < m$ and $j=n$} \\
			\c(j) - \overline{\x}_{\bullet j} & \textup{for $i=m$ and $1\le j < n$}  \\
			\overline{\x}_{\bullet\bullet} - \left(\sum_{i=1}^{m-1} \r(i)\right) +\c(n) 
			& \textup{if $(i,j)=(m,n)$}. 
		\end{cases}
	\end{align}
	Here, $\overline{\x}_{i\bullet}$, $\overline{\x}_{\bullet j}$, and $\overline{\x}_{\bullet\bullet}$ denote the $i$th row sum,  the $j$th column sum, and the total sum of $\overline{\x}$, respectively. 
	Then given an $(m-1)\times (n-1)$ matrix $\overline{\x}$, $\Gamma_{\r,\c}(\overline{\x})$ is the unique $m\times n$ matrix with margin $(\r,\c)$ such that $\overline{\Gamma_{\r,\c}(\overline{\x})}=\overline{\x}$.
	In particular, $X=\Gamma_{\r,\c}(\overline{X})$. 
	We let $\check{\Gamma}_{\r,\c}(\cdot)$ denote the entries in the last row and column of $\Gamma_{\r,\c}(\cdot)$. 
	
	The result below is our general transference principle for exactly margin-conditioned matrices.

	\begin{theorem}[Strong transference]\label{thm:second_transference} 
		Suppose Assumption \ref{assumption:strong_transference} holds.
		For $\nu$-almost all margins $(\r,\c)$ with an MLE $(\balpha,\bbeta)$, the following hold: 
		\begin{description}[itemsep=0.1cm] 
			\item[(i)]  
			Let $X\sim \lambda_{\r,\c}$ and let $Y\sim \mu_{\balpha\oplus \bbeta}$. Then $X$ has the same distribution as $Y$ conditional on $Y\in \T(\r,\c)$. 
			
			\item[(ii)] 
			Let $\nu_{\overline{\y}}(\cdot)$
			denote the law of $(r(Y),c(Y))$ given $\overline{Y}=\overline{\y}$ and let $p_{\overline{\y}}(\cdot)$ denote the Radon-Nikodym derivative of $\nu_{\overline{\y}}$ w.r.t. the unconditional law $\nu_{Y}$ of $(r(Y), c(Y))$. Then for each bounded measurable function $h:\R^{(m-1)\times(n-1)}\rightarrow \R$, \begin{align}\label{eq:strong_transference_exact}
				\E[ h(\overline{X})] = \E\left[ p_{\overline{Y}}(\r,\c) h(\overline{Y})  \right]\le \big(\sup_{\overline{\y}} p_{\overline{\y}}(\r,\c)\big) \,\,   \E[ h(\overline{Y})]. 
			\end{align} 
			Furthermore, if $\overline{\y}\mapsto p_{\overline{\y}}(\r,\c)$ is proportional to some function $q_{\overline{\y}}(\r,\c)$, then  \begin{align}\label{eq:strong_transference_exact2}
				\sup_{\overline{\y}} p_{\overline{\y}}(\r,\c) =  \big( \sup_{\overline{\y}} q_{\overline{\y}}(\r,\c) \big)\, \E[q_{\overline{Y}}(\r,\c)]^{-1}.
			\end{align}

		\end{description}

	\end{theorem}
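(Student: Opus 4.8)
The approach rests on one structural observation --- tilting the base model by a function of the margin leaves the conditional law given the margin unchanged --- together with Bayes' rule, both carried out inside the disintegration formalism fixed in Section~\ref{sec:strong_transference_pf} for constructing $\lambda_{\r,\c}$ under Assumption~\ref{assumption:strong_transference}.

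For part (i), I would first record that, by the log-likelihood computation preceding Definition~\ref{def:MLE_typical}, the Radon--Nikodym derivative of $\mu_{\balpha\oplus\bbeta}$ with respect to $\mu^{\otimes(m\times n)}$ at a matrix $\x$ equals $\exp\!\big(\langle r(\x),\balpha\rangle+\langle c(\x),\bbeta\rangle-\sum_{i,j}\psi(\balpha(i)+\bbeta(j))\big)$, a strictly positive function $f$ of the margin $\pi(\x)$ \emph{alone}. From this I would deduce, first, that $\nu_{Y}=\pi_{\#}\mu_{\balpha\oplus\bbeta}=f\cdot\nu$ is $\sigma$-finite and mutually absolutely continuous with $\nu$, so Assumption~\ref{assumption:strong_transference} applies verbatim to $\mu_{\balpha\oplus\bbeta}$ and ``$Y$ conditioned on $Y\in\T(\r,\c)$'' is well defined as a disintegration of $\mu_{\balpha\oplus\bbeta}$ along $\pi$; and, second, that since $f\circ\pi$ is constant, equal to $f(\r,\c)$, on each fiber $\T(\r,\c)=\pi^{-1}(\r,\c)$, the \emph{same} family $\{\lambda_{t}\}$ disintegrating $\mu^{\otimes(m\times n)}$ along $\pi$ also disintegrates $\mu_{\balpha\oplus\bbeta}$ along $\pi$ --- only the base measure is rescaled from $\nu$ to $\nu_{Y}$. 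I would verify this by comparing $\int\phi\,d\mu_{\balpha\oplus\bbeta}=\int f(\r,\c)\big(\int\phi\,d\lambda_{\r,\c}\big)\,\nu(d(\r,\c))$ (using that $\lambda_{\r,\c}$ concentrates on $\T(\r,\c)$) with the disintegration of $\mu_{\balpha\oplus\bbeta}$ against $\nu_{Y}=f\cdot\nu$. Since $\lambda_{\r,\c}$ is by construction the disintegration of $\mu^{\otimes(m\times n)}$ at $(\r,\c)$, it is then also a version of ``$Y$ conditioned on $Y\in\T(\r,\c)$'', which is (i).

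For part (ii), I would change coordinates using \eqref{eq:def_completion_map}: every $\y\in\R^{m\times n}$ satisfies $\y=\Gamma_{r(\y),c(\y)}(\overline{\y})$, so $\y\mapsto(\overline{\y},\pi(\y))$ is a linear bijection onto $\R^{(m-1)\times(n-1)}\times\R^{m+n-1}$ with inverse $(\overline{\y},(\r,\c))\mapsto\Gamma_{\r,\c}(\overline{\y})$, under which $\pi$ is projection to the second coordinate; part (i) then says the law of $\overline{X}$ equals the conditional law of $\overline{Y}$ given $\pi(Y)=(\r,\c)$. Next I would identify this by Bayes' rule. Writing $Q_{0}$ for the law of $\overline{Y}$: from $\nu_{Y}=\int\nu_{\overline{\y}}\,Q_{0}(d\overline{\y})$ one has $\nu_{\overline{\y}}\ll\nu_{Y}$ for $Q_{0}$-a.e.\ $\overline{\y}$, so the joint law of $(\overline{Y},\pi(Y))$ equals $Q_{0}(d\overline{\y})\,\nu_{\overline{\y}}(dt)=p_{\overline{\y}}(t)\,Q_{0}(d\overline{\y})\,\nu_{Y}(dt)$; disintegrating this along the second coordinate identifies the conditional law of $\overline{Y}$ given $\pi(Y)=t$ as $p_{\overline{\y}}(t)\,Q_{0}(d\overline{\y})$ for $\nu_{Y}$-a.e.\ $t$, and evaluating at $t=(\r,\c)$ yields the equality in \eqref{eq:strong_transference_exact}. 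Taking $h\equiv 1$ there gives $\E[p_{\overline{Y}}(\r,\c)]=1$; for $h\ge 0$ the inequality follows from $p_{\overline{Y}}(\r,\c)\le\sup_{\overline{\y}}p_{\overline{\y}}(\r,\c)$. Finally, if $p_{\overline{\y}}(\r,\c)=c\,q_{\overline{\y}}(\r,\c)$ with $c$ independent of $\overline{\y}$, then $1=\E[p_{\overline{Y}}(\r,\c)]=c\,\E[q_{\overline{Y}}(\r,\c)]$ forces $c=\E[q_{\overline{Y}}(\r,\c)]^{-1}$, and $\sup_{\overline{\y}}p_{\overline{\y}}(\r,\c)=c\sup_{\overline{\y}}q_{\overline{\y}}(\r,\c)$ gives \eqref{eq:strong_transference_exact2}.

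I expect the main obstacle to be the measure-theoretic bookkeeping rather than the displayed algebra: making sense of conditioning on the (possibly) $\mu^{\otimes(m\times n)}$-null event $\{\pi(Y)=(\r,\c)\}$ through a \emph{regular} disintegration \cite{chang1997conditioning}; choosing jointly measurable versions of all conditional laws and Radon--Nikodym derivatives so that $p_{\overline{Y}}(\r,\c)$, $\E[p_{\overline{Y}}(\r,\c)]$, and the evaluations at $t=(\r,\c)$ above are meaningful; and --- the genuinely delicate point --- collapsing the various ``$\nu$-a.e.\ realized margin $t$'' statements into a single $\nu$-null exceptional set of \emph{parameter} margins $(\r,\c)$, even though $Y$, hence $Q_{0}$, $\nu_{\overline{\y}}$ and $p_{\overline{\y}}$, all depend on $(\r,\c)$ through the MLE. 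The device that resolves this ``diagonal'' issue is precisely the observation from part (i) that tilting rescales only the base measure of the disintegration: the \emph{base} model supplies one $(\r,\c)$-independent exceptional null set, and the passage to the tilted model runs through pointwise identities among the explicit densities $f$, $dQ_{0}/d\mu^{\otimes(m-1)\times(n-1)}$, and the completion map \eqref{eq:def_completion_map}. These ingredients are furnished by the framework of Section~\ref{sec:strong_transference_pf}.
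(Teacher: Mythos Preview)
Your proposal is correct and follows essentially the same approach as the paper: for (i), both observe that the density $d\mu_{\balpha\oplus\bbeta}/d\mu^{\otimes(m\times n)}$ depends only on the margin, so the same $\pi$-disintegration $\{\lambda_{t}\}$ works for both measures (the paper cites \cite[Thm.~3]{chang1997conditioning} for this step); for (ii), both push the joint law of $(\overline{Y},\pi(Y))$ through two disintegrations to obtain Bayes' rule $p_{\r,\c}(\overline{\y})=p_{\overline{\y}}(\r,\c)$, then read off \eqref{eq:strong_transference_exact} and \eqref{eq:strong_transference_exact2} exactly as you describe. Your explicit flagging of the ``diagonal'' issue---that $(\balpha,\bbeta)$ and hence $Y$ depend on the very margin being conditioned on---and its resolution via the $(\r,\c)$-independent disintegration furnished by (i) is a point the paper handles only implicitly.
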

	
	Following our high-level transference principle in \eqref{eq:main_question_ans2}, we expect to establish results describing the structure of $X$ based on the approximation scheme $X\approx Y$, where $Y$ is the (unconditional) maximum-likelihood $(\balpha,\bbeta)$-model for the margin $(\r,\c)$. 
	However,  one should not expect this to give a useful approximation for arbitrary base measure and margin. For instance, consider  $\mu=\textup{Uniform}(\{0,1,\sqrt{2}\})$ and symmetric constant margin $(\r,\r)$ with  $\r=n\mathbf{1}_{n}$. Then $(\r,\r)$ has a positive mass under $\nu$,  
	$\overline{X}=\mathbf{1}\mathbf{1}^{\top}$ almost surely, but $\overline{Y}=\mathbf{1}\mathbf{1}^{\top}$ with probability exponentially small in $n^{2}$. For this example, the supremum of $p_{\overline{\y}}(\r,\c)$ over $\overline{\y}$ is $\P(Y\in \T(\r,\c))^{-1}$, which is exponentially large in $n^{2}$. 
	(See Ex. \ref{ex:counterexample} for details.) 
	
	Next, deduce a useful corollary of Thm. \ref{thm:second_transference} by obtaining a computable form of the proportionality $q_{\overline{\y}}(\r,\c)$. Note that the law $\nu_{\overline{\y}}$ of the margin $(r(Y),c(Y))$ given $\overline{Y}=\overline{\y}$ is the pullback of the law of $\check{Y}$ under the one-to-one map $(\r,\c)\mapsto \check{\Gamma}_{\r,\c}(\overline{\y})$: 
	\begin{align}\label{eq:nu_ybar_pullback}
		\nu_{\overline{\y}}(d(\r,\c)) = \bigotimes_{\textup{$i=m$ or $j=n$}} \mu_{\balpha(i)+\bbeta(j)}\left( d\check{\Gamma}_{\r,\c}(\overline{\y})_{ij} \right). 
	\end{align}
	Hence locally $ \nu_{\overline{\y}}$ is the product of shifts of the tilted measures  $\mu_{\balpha(i)+\bbeta(j)}$ for $i=m$ or $j=n$. Thus, if these shfted and tilted measures have density w.r.t. a common measure, say $\zeta$, on $\R$, then we can easily compute the Radon-Nikodym derivative  $d\nu_{\overline{\y}}/d\zeta^{\otimes(m+n-1)}$. In this case, it would be natural to suspect that $\overline{\y}\mapsto p_{\overline{\y}}(\r,\c)$ is proportional to this Radon-Nikodym derivative. This conclusion indeed holds if $d\zeta^{\otimes(m+n-1)}/d\nu$ makes sense at least locally near each margin $(\r,\c)$ in the support of $\nu$ (see Cor. \ref{cor:second_transference_density}). This latter condition holds for a wide range of discrete and continuous measures as stated in Cor.  \ref{cor:transfer_discrete_Leb} below.

	\begin{corollary}\label{cor:transfer_discrete_Leb}
		Keep the same setting in Thm. \ref{thm:second_transference}. 
		Suppose $\mu$ has density $p\ge 0$ w.r.t. $\zeta$, which is either the counting measure on $\Z$ or the Lebesgues measure on $\R$. In the continuous case, suppose  $\{p>0\}$ is open. Then \eqref{eq:strong_transference_exact2} holds with $q_{\overline{\y}}(\r,\c)=d\nu_{\overline{\y}}/d\zeta^{\otimes(m+n-1)}$. Furthermore, suppose the Radon-Nikodym derivatives of $\mu_{\theta}$ w.r.t. $\zeta$ for $\theta\in [\phi(A_{\delta}),\phi(B_{\delta})]$ is uniformly upper bounded by some constant $M>0$. 
		Then for $\nu$-almost all margins $(\r,\c)$ with an MLE $(\balpha,\bbeta)$, \eqref{eq:strong_transference_exact} holds with \begin{align}\label{eq:p_rc_upper_bd_expectation_density}
			\sup_{\overline{\y}} p_{\overline{\y}}(\r,\c)  \le M^{m+n-1}  \exp(-g^{\r,\c}(\balpha,\bbeta))	 \left(  	 \int  \prod_{ i,j  } p( \Gamma_{\r,\c}(\overline{\x})_{ij} )  \, \, \zeta^{\otimes (m-1)\times (n-1)}(d\overline{\x})  \right)^{-1}.
		\end{align}
		Furthermore, we can take $M=1$  in the discrete case.
	\end{corollary}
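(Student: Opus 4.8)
The plan is to deduce Corollary \ref{cor:transfer_discrete_Leb} from the general density statement Corollary \ref{cor:second_transference_density} and then to evaluate the two quantities appearing in \eqref{eq:strong_transference_exact2} explicitly. Recall that, as explained in the discussion preceding the corollary, when the measures $\mu_{\balpha(i)+\bbeta(j)}$ have $\zeta$-densities and $d\zeta^{\otimes(m+n-1)}/d\nu$ makes sense locally near $\nu$-almost every margin in $\supp(\nu)$, Corollary \ref{cor:second_transference_density} gives that $p_{\overline{\y}}(\r,\c)$ is proportional (in $\overline{\y}$, for fixed $(\r,\c)$) to $d\nu_{\overline{\y}}/d\zeta^{\otimes(m+n-1)}(\r,\c)$, so that $q_{\overline{\y}}(\r,\c)=d\nu_{\overline{\y}}/d\zeta^{\otimes(m+n-1)}(\r,\c)$ is an admissible choice in \eqref{eq:strong_transference_exact2}. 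The first hypothesis is immediate: since $\mu$ has density $p$ w.r.t.\ $\zeta$, every $\mu_\theta$ has $\zeta$-density $x\mapsto e^{\theta x-\psi(\theta)}p(x)$ by \eqref{eq:def_tilt}, and $\zeta$ is translation invariant along $\Z$ (resp.\ $\R$), so the shifts occurring in \eqref{eq:def_completion_map} and \eqref{eq:nu_ybar_pullback} are harmless; in particular $\nu_{\overline{\y}}\ll\zeta^{\otimes(m+n-1)}$.

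Next I would verify that $d\zeta^{\otimes(m+n-1)}/d\nu$ is meaningful locally, treating the two cases separately. In the discrete case $\nu=\pi_{\#}(\mu^{\otimes(m\times n)})$ is purely atomic on $\Z^{m+n-1}$, so $\supp(\nu)$ is exactly its set of atoms; since $d\mu_\theta/d\mu>0$, the measures $\mu_\theta$ and $\mu$ share their null sets, so \eqref{eq:nu_ybar_pullback} shows that each $\nu_{\overline{\y}}$ (for $\overline{\y}$ in the support of the law of $\overline{Y}$) is carried by $\supp(\nu)$, whence $\nu_{\overline{\y}}\ll\nu$ and $d\zeta^{\otimes(m+n-1)}/d\nu$ is well defined on $\supp(\nu)$. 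In the continuous case, the pushforward under the surjective linear margin map $\pi$ of the absolutely continuous measure $p^{\otimes(m\times n)}\,d\zeta^{\otimes(m\times n)}$ is absolutely continuous, and since $\overline{\x}\mapsto\Gamma_{\r,\c}(\overline{\x})$ parameterizes the fibre $\pi^{-1}(\r,\c)$ with unit Jacobian (the linear change of variables $\x\leftrightarrow(\text{margin of }\x,\ \overline{\x})$ being block triangular with unit diagonal), one obtains
\[
\frac{d\nu}{d\zeta^{\otimes(m+n-1)}}(\r,\c)=\int \prod_{i,j} p\big(\Gamma_{\r,\c}(\overline{\x})_{ij}\big)\,\zeta^{\otimes(m-1)\times(n-1)}(d\overline{\x}).
\]
This density is strictly positive exactly on $\pi(\{p>0\}^{m\times n})$, which is open because $\{p>0\}$ is open and a surjective linear map is open; removing the $\nu$-null set where it vanishes, I may assume $(\r,\c)$ lies in this open set, where $\zeta^{\otimes(m+n-1)}\ll\nu$ locally. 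In either case Corollary \ref{cor:second_transference_density} applies, which already proves the first assertion.

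For the quantitative bound I would use \eqref{eq:nu_ybar_pullback} once more: the map $(\r,\c)\mapsto\check{\Gamma}_{\r,\c}(\overline{\y})$ is an affine bijection of $\R^{m+n-1}$ which, by \eqref{eq:def_completion_map}, is triangular with $\pm1$ on the diagonal and hence preserves $\zeta^{\otimes(m+n-1)}$, so
\[
q_{\overline{\y}}(\r,\c)=\prod_{i=m\ \text{or}\ j=n}\frac{d\mu_{\balpha(i)+\bbeta(j)}}{d\zeta}\big(\check{\Gamma}_{\r,\c}(\overline{\y})_{ij}\big).
\]
For a $\delta$-tame margin, applying the increasing inverse $\phi$ of $\psi'$ to Definition \ref{def:ab} gives $\balpha(i)+\bbeta(j)\in[\phi(A_\delta),\phi(B_\delta)]$ for all $i,j$, so each of the $m+n-1$ factors is at most $M$, whence $\sup_{\overline{\y}}q_{\overline{\y}}(\r,\c)\le M^{m+n-1}$; in the discrete case one may take $M=1$ since every $\mu_\theta$ is a probability measure. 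For the expectation, the law of $\overline{Y}$ (with $Y\sim\mu_{\balpha\oplus\bbeta}$) has $\zeta^{\otimes(m-1)\times(n-1)}$-density $\prod_{i<m,\,j<n}\frac{d\mu_{\balpha(i)+\bbeta(j)}}{d\zeta}$, so multiplying it by the displayed formula for $q$ and using $\Gamma_{\r,\c}(\overline{\x})_{ij}=\overline{\x}_{ij}$ for $i<m,j<n$, the product over $i=m$ or $j=n$ recombines with the product over $i<m,j<n$ to give
\[
\E[q_{\overline{Y}}(\r,\c)]=\int \prod_{i,j}\frac{d\mu_{\balpha(i)+\bbeta(j)}}{d\zeta}\big(\Gamma_{\r,\c}(\overline{\x})_{ij}\big)\,\zeta^{\otimes(m-1)\times(n-1)}(d\overline{\x}).
\]
Because $\Gamma_{\r,\c}(\overline{\x})$ has margin $(\r,\c)$, the exponent $\sum_{i,j}\big[(\balpha(i)+\bbeta(j))\Gamma_{\r,\c}(\overline{\x})_{ij}-\psi(\balpha(i)+\bbeta(j))\big]$ equals $\langle\r,\balpha\rangle+\langle\c,\bbeta\rangle-\sum_{i,j}\psi(\balpha(i)+\bbeta(j))=g^{\r,\c}(\balpha,\bbeta)$, which is free of $\overline{\x}$; pulling it out leaves $\E[q_{\overline{Y}}(\r,\c)]=e^{g^{\r,\c}(\balpha,\bbeta)}\int\prod_{i,j}p(\Gamma_{\r,\c}(\overline{\x})_{ij})\,\zeta^{\otimes(m-1)\times(n-1)}(d\overline{\x})$.

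Finally, inserting the last two computations into \eqref{eq:strong_transference_exact2} yields
\[
\sup_{\overline{\y}}p_{\overline{\y}}(\r,\c)=\frac{\sup_{\overline{\y}}q_{\overline{\y}}(\r,\c)}{\E[q_{\overline{Y}}(\r,\c)]}\le M^{m+n-1}e^{-g^{\r,\c}(\balpha,\bbeta)}\Big(\int\prod_{i,j}p(\Gamma_{\r,\c}(\overline{\x})_{ij})\,\zeta^{\otimes(m-1)\times(n-1)}(d\overline{\x})\Big)^{-1},
\]
which is \eqref{eq:p_rc_upper_bd_expectation_density}, and \eqref{eq:strong_transference_exact} then holds with this value of $\sup_{\overline{\y}}p_{\overline{\y}}(\r,\c)$ by Theorem \ref{thm:second_transference}(ii). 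I expect the only genuine difficulty to be the measure-theoretic step in the second paragraph — correctly identifying $\supp(\nu)$, establishing the pushforward density formula with unit Jacobian, and showing that $\zeta^{\otimes(m+n-1)}$ is locally dominated by $\nu$ away from a $\nu$-null set (which is precisely where the openness of $\{p>0\}$ is used in the continuous case); once Corollary \ref{cor:second_transference_density} is in force, everything else reduces to the bookkeeping carried out above.
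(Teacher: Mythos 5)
Your proposal is correct and follows essentially the same route as the paper: verify the hypotheses of Corollary \ref{cor:second_transference_density}, read off $q_{\overline{\y}}(\r,\c)$ from \eqref{eq:nu_ybar_pullback} as a product of $m+n-1$ tilted densities (each at most $M$, and at most $1$ in the discrete case), and factor $\exp(g^{\r,\c}(\balpha,\bbeta))$ out of $\E[q_{\overline{Y}}(\r,\c)]$ using that $\Gamma_{\r,\c}(\overline{\x})$ has margin $(\r,\c)$. The only minor variation is in the continuous case, where you verify the local domination $\zeta^{\otimes(m+n-1)}\ll\nu$ by computing $d\nu/d\zeta^{\otimes(m+n-1)}$ explicitly through a unimodular change of variables (which is where openness of $\{p>0\}$ enters), while the paper argues via openness of the margin map (Prop.~\ref{prop:pi_open_map}) and a support argument; both are valid.
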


	When $\mu$ is the Lebesgue measure on $\R_{\ge 0}$, notice that the integral in \eqref{eq:p_rc_upper_bd_expectation_density} is the volume of the transportation polytope $\T(\r,\c)\cap \R_{\ge 0}^{m\times n}$ of nonnegative matrices with margin $(\r,\c)$. Also when $\mu$ is the counting measure on $\Z_{\ge 0}$, then the same integral is the number of contingency tables with margin $(\r,\c)$. Various upper and lower bounds on these quantities have been extensively studied in the combinatorics literature in the last three decades. For our purpose, we can use the lower bounds on the number of contingency tables \cite{barvinok2009asymptotic, branden2020lower} and of the volume of the transportation polytope \cite{canfield2007asymptotic,barvinok2024quick}. 
	This gives the following strong transference result.  
	
	\begin{theorem}\label{thm:transfer_counting_Leb}
		Suppose Assumption \ref{assumption:strong_transference} holds.
		If $\mu$ is the counting measure on $[0,b)\cap \Z$ for some $b\in [0,\infty]$ or the Lebesgue measure on $\R_{\ge 0}$ and if $(\r,\c)$ is $\delta$-tame, then there exists a  constant $C=C(\mu,\delta)>0$ such that for each bounded measurable function $h:\R^{(m-1)\times(n-1)}\rightarrow \R$,
		\begin{align}\label{eq:thm_double_transfer3_uniform}
			\E[h(\overline{X})] \le   \,\, 
			\exp( C (m+n)\log (m+n) )  \,\, \E[h(\overline{Y})].
		\end{align}
	\end{theorem}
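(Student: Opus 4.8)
The plan is to derive the bound from Corollary \ref{cor:transfer_discrete_Leb} and then control the prefactor in \eqref{eq:p_rc_upper_bd_expectation_density} using the known combinatorial lower bounds for the number of contingency tables and for the volume of transportation polytopes. First I would take $\zeta$ to be the counting measure on $\Z$ (the case $\mu=$ counting measure on $[0,b)\cap\Z$) or the Lebesgue measure on $\R$ (the case $\mu=$ Lebesgue measure on $\R_{\ge 0}$); then $\mu$ has density $p=\mathbf{1}_{[0,b)\cap\Z}$, resp.\ $p=\mathbf{1}_{\{x>0\}}$, with respect to $\zeta$, with $\{p>0\}$ open in the continuous case, so Corollary \ref{cor:transfer_discrete_Leb} applies. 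The uniform density bound it requires is easy to check: in the discrete case the density of $\mu_\theta$ with respect to $\zeta$ is a probability mass function, so $M=1$; in the continuous case $\psi(\theta)=-\log(-\theta)$ and $\phi(z)=-1/z$, hence $[\phi(A_\delta),\phi(B_\delta)]=[-1/\delta,-\delta]$, and the density $x\mapsto(-\theta)e^{\theta x}\mathbf{1}_{\{x\ge 0\}}$ of $\mu_\theta$ is bounded by $-\theta\le 1/\delta$ over this range, so $M=1/\delta$ works. In either case $M^{m+n-1}\le\exp(C_0(m+n))$ for some $C_0=C_0(\mu,\delta)$.

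With these inputs, Corollary \ref{cor:transfer_discrete_Leb} gives, for $\nu$-almost all $\delta$-tame margins $(\r,\c)$ with MLE $(\balpha,\bbeta)$ and all bounded measurable $h\ge 0$,
\begin{align}
\E[h(\overline{X})]\;\le\;M^{m+n-1}\,\exp\bigl(-g^{\r,\c}(\balpha,\bbeta)\bigr)\,N(\r,\c)^{-1}\,\E[h(\overline{Y})],
\end{align}
where $N(\r,\c):=\int\prod_{i,j}p(\Gamma_{\r,\c}(\overline{\x})_{ij})\,\zeta^{\otimes(m-1)\times(n-1)}(d\overline{\x})$. Because $\overline{\x}\mapsto\Gamma_{\r,\c}(\overline{\x})$ is a bijection onto $\T(\r,\c)$, $N(\r,\c)$ is exactly the number of integer matrices in $([0,b)\cap\Z)^{m\times n}$ with margin $(\r,\c)$ in the discrete case, and the $(m-1)(n-1)$-dimensional volume of $\T(\r,\c)\cap\R_{\ge 0}^{m\times n}$ in its $\overline{\x}$-parametrization in the continuous case. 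So it suffices to prove $\exp(-g^{\r,\c}(\balpha,\bbeta))\,N(\r,\c)^{-1}\le\exp(C(m+n)\log(m+n))$. Since the $(\balpha,\bbeta)$-model density is constant on $\T(\r,\c)$ (as used in Section \ref{sec:exp_tilting}), in the discrete case this left-hand side equals $\P(Y\in\T(\r,\c))^{-1}$, so equivalently one wants $\P(Y\in\T(\r,\c))\ge\exp(-C(m+n)\log(m+n))$.

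To establish this, I would invoke strong duality (Theorem \ref{thm:strong_duality_simple}) to write $g^{\r,\c}(\balpha,\bbeta)=H(Z^{\r,\c})=\sum_{i,j}D(\mu_{\phi(z^{\r,\c}_{ij})}\Vert\mu)$, so that $-g^{\r,\c}(\balpha,\bbeta)$ is precisely the (Shannon, resp.\ differential) entropy of the typical product model $\bigotimes_{i,j}\mu_{\phi(z^{\r,\c}_{ij})}$. Since $(\r,\c)$ is $\delta$-tame, every entry of $Z^{\r,\c}$ lies in $[A_\delta,B_\delta]$, hence $\r(i)\in[nA_\delta,nB_\delta]$, $\c(j)\in[mA_\delta,mB_\delta]$, and the total mass satisfies $\lVert\r\rVert_1=\lVert\c\rVert_1\in[mnA_\delta,mnB_\delta]$, so $\log(\lVert\r\rVert_1+\lVert\c\rVert_1)=O(\log(m+n))$ with implied constant depending only on $\delta$. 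The desired inequality is now precisely the assertion that the number of contingency tables (resp.\ the polytope volume) is at least $\exp$ of the typical-model entropy up to a multiplicative loss $\exp(O((m+n)\log(m+n)))$; for the discrete case I would apply the lower bounds of \cite{barvinok2009asymptotic,branden2020lower} and for the continuous case the volume lower bounds of \cite{canfield2007asymptotic,barvinok2024quick}, each of which produces a loss of the form $\exp(O((m+n)\log(\lVert\r\rVert_1+\lVert\c\rVert_1)))$, which by the tame comparability just noted is $\exp(O((m+n)\log(m+n)))$. Combining with $M^{m+n-1}\le\exp(C_0(m+n))$ and absorbing constants gives the claim with $C=C(\mu,\delta)$.

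The step I expect to be the main obstacle is matching the exact normalization and regularity hypotheses of these combinatorial lower bounds to the present setting: one needs a contingency-table lower bound valid in the bounded-entry regime $b<\infty$ (which at $b=2$ specializes to bipartite degree sequences), and one must verify that for $\delta$-tame margins the loss is genuinely $\exp(O((m+n)\log(m+n)))$ rather than something growing with the total mass (which is where the tame comparison $\lVert\r\rVert_1=\Theta_\delta(mn)$ is doing the real work). A self-contained alternative would be to prove a uniform local central limit theorem for the $(m+n-1)$-dimensional margin vector of $Y\sim\mu_{\balpha\oplus\bbeta}$ — whose covariance has entries of order $O(m+n)$ and determinant $\exp(O((m+n)\log(m+n)))$ for tame margins — directly yielding $\P(Y\in\T(\r,\c))\ge\exp(-O((m+n)\log(m+n)))$ in the discrete case and the analogous lower bound on the margin density in the continuous case; this is conceptually cleaner but technically heavier, which is why routing through the combinatorial literature is preferable here.
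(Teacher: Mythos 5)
Your proposal is correct and follows essentially the same route as the paper: reduce via Corollary \ref{cor:transfer_discrete_Leb} (with $M=1$ in the discrete case, $M=1/\delta$ in the continuous case) to lower-bounding $\P(Y\in\T(\r,\c))$, respectively the volume of $\T(\r,\c)\cap\R_{\ge 0}^{m\times n}$, and then invoke the Br\"{a}nd\'{e}n--Leake--Pak and Barvinok--Rudelson lower bounds, using $\delta$-tameness to convert $\log(\lVert\r\rVert_1)$ into $O(\log(m+n))$. The bounded-entry concern you flag is resolved in the paper exactly as you anticipate, through the capacity (Lorentzian-polynomial) formulation of the contingency-table lower bound in Lemma \ref{lem:counting_measure_Y_lowerbd}, which is valid for any entry bound $b\le\infty$ and whose capacity term is bounded below by $\exp(-g^{\r,\c}(\balpha,\bbeta))$.
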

	
	For possibly non-uniform density function $p\ge 0$, the integral in \eqref{eq:p_rc_upper_bd_expectation_density} can be viewed as a weighted volume of the transportation polytope $\T(\r,\c)\cap \supp(\mu)^{m\times n}$ where the weight is proportional to the product of $p(\Gamma_{\r,\c}(\overline{\x})_{ij})$.  
	It seems that there is no known upper bound on such `weighted volume' of the transportation polytope for general density $p$. We circumvent this limitation by lower-bounding the expectation $\E[d\nu_{\overline{Y}}/d\zeta^{\otimes(m+n-1)}]$ through a probabilistic argument. This gives the following strong transference principle that holds for a wide range of discrete and continuous base measures with a looser bound on the transference cost.

	\begin{theorem}\label{thm:second_transference_density_1}
		Suppose Assumption \ref{assumption:strong_transference} holds. Assume that $\mu$ has a density $p\ge 0$ w.r.t. $\zeta$, which is either the counting measure on $\Z$ or the Lebesgue measure on $\R$. Further, assume:
		\begin{description}
			\item{\textup{(1)}} There exists a constant $\gamma>0$ such that $(m\land n)\ge (m\lor n)^{\gamma}$ and $m,n$ are sufficiently large so that  $m\land n \ge 12^{\gamma}$. 
			
			\item{\textup{(2)}} $\supp(\mu) = [A,B] \cap  \supp (\zeta)$ in both cases and 
			the Radon-Nikodym derivatives of $\mu_{\theta}$ w.r.t. $\zeta$ for $\theta\in [\phi(A_{\delta}),\phi(B_{\delta})]$ is uniformly upper bounded by some constant.

		\end{description}
		Then there exists a constant $C=C(\mu,\delta,\gamma)>0$ such that for $\nu$-almost all $\delta$-tame margins $(\r,\c)$ and for each bounded measurable function $h:\R^{(m-1)\times(n-1)}\rightarrow \R$,
		\begin{align}\label{eq:thm_double_transfer3}
			\E[h(\overline{X})] \le   \,\, 
			\exp( C(m \sqrt{n}\log n+ n \sqrt{m}\log m )\log mn ) \,\, \E[h(\overline{Y})].
		\end{align}
	\end{theorem}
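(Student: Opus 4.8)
The plan is to reduce the whole statement to one local-limit-type lower bound. By Thm.~\ref{thm:second_transference} the master inequality \eqref{eq:strong_transference_exact} always holds, and by Corollaries~\ref{cor:transfer_discrete_Leb} and~\ref{cor:second_transference_density}, under hypotheses (1)--(2) and for $\nu$-a.e.\ $\delta$-tame margin $(\r,\c)$ with MLE $(\balpha,\bbeta)$ one may take $q_{\overline\y}(\r,\c)=d\nu_{\overline\y}/d\zeta^{\otimes(m+n-1)}$ (evaluated at $(\r,\c)$) in \eqref{eq:strong_transference_exact2}, so that
\[
\sup_{\overline\y}p_{\overline\y}(\r,\c)=\frac{\sup_{\overline\y}q_{\overline\y}(\r,\c)}{\E[q_{\overline Y}(\r,\c)]} .
\]
Hence \eqref{eq:thm_double_transfer3} amounts to bounding this ratio by $\exp\!\big(C(m\sqrt n\log n+n\sqrt m\log m)\log mn\big)$. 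The numerator is routine: by \eqref{eq:nu_ybar_pullback}, $q_{\overline\y}(\r,\c)$ is a product over the $m+n-1$ last-row/last-column positions of the values $(d\mu_{\balpha(i)+\bbeta(j)}/d\zeta)(\check\Gamma_{\r,\c}(\overline\y)_{ij})$ --- the affine map $(\r,\c)\mapsto\check\Gamma_{\r,\c}(\overline\y)$ has unit Jacobian and $\zeta^{\otimes(m+n-1)}$ is invariant under it --- and $\delta$-tameness forces every $\balpha(i)+\bbeta(j)\in[\phi(A_\delta),\phi(B_\delta)]$, so hypothesis (2) gives $\sup_{\overline\y}q_{\overline\y}(\r,\c)\le M^{m+n-1}=\exp(O(m+n))$, negligible against the target.

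Everything thus reduces to a lower bound on $\E[q_{\overline Y}(\r,\c)]$, and the first step is to identify it: since $\nu_Y=\int\nu_{\overline\y}\,\P(\overline Y\in d\overline\y)$ for $Y\sim\mu_{\balpha\oplus\bbeta}$, one has $\E[q_{\overline Y}(\r,\c)]=\big(d\nu_Y/d\zeta^{\otimes(m+n-1)}\big)(\r,\c)$, the density (a probability in the discrete case) of the margin $(r(Y),c(Y))$ of the independent-entry model at $(\r,\c)$, which by the MLE/typical-table duality of Thm.~\ref{thm:strong_duality_simple} is exactly its mean. So the goal becomes a \emph{crude local-limit lower bound}: the joint law of the $m+n-1$ row/column sums of $m\times n$ independent tame-tilted entries has density at its mean at least $\exp\!\big(-C(m\sqrt n\log n+n\sqrt m\log m)\log mn\big)$. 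This is far weaker than what a sharp multivariate local CLT would yield, and settling for the weaker exponent is precisely what makes a crude, elementary argument feasible.

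For this lower bound I would argue probabilistically. Fix a window $E\subseteq\R$ with $\zeta(E)>0$ on which $d\mu_\theta/d\zeta\ge c>0$ uniformly for $\theta\in[\phi(A_\delta),\phi(B_\delta)]$; such $E$ exists because $p=d\mu/d\zeta$ is bounded below on a positive-$\zeta$-measure subset of $(A,B)$ while $d\mu_\theta/d\mu=e^{\theta x-\psi(\theta)}$ stays bounded below there. Revealing the free block $\overline Y$, the event $\{(r(Y),c(Y))=(\r,\c)\}$ says exactly that each completing entry $\check Y_{ij}$ ($i=m$ or $j=n$) equals the prescribed value $\check\Gamma_{\r,\c}(\overline Y)_{ij}$, and conditionally on $\overline Y$ these are independent with $Y_{ij}\sim\mu_{\balpha(i)+\bbeta(j)}$; hence, using \eqref{eq:nu_ybar_pullback},
\[
\E[q_{\overline Y}(\r,\c)]=\E_{\overline Y}\Big[\prod_{i=m\text{ or }j=n}\tfrac{d\mu_{\balpha(i)+\bbeta(j)}}{d\zeta}\big(\check\Gamma_{\r,\c}(\overline Y)_{ij}\big)\Big]\ \ge\ c^{\,m+n-1}\,\P(\mathcal G),
\]
where $\mathcal G$ is the event that all $m+n-1$ prescribed completion values --- the partial sums $\r(i)-\sum_{j<n}Y_{ij}$ and $\c(j)-\sum_{i<m}Y_{ij}$ and the corner, each centered at a tame mean in $(A_\delta,B_\delta)$ --- lie in $E$. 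The crux, and \emph{the main obstacle}, is the lower bound $\P(\mathcal G)\ge\exp\!\big(-C(m\sqrt n\log n+n\sqrt m\log m)\log mn\big)$: $\mathcal G$ demands that $m-1$ row partial sums and $n-1$ column partial sums of the tame-tilted free block fall in fixed $O(1)$ windows simultaneously, and since these linear statistics are correlated through shared entries one cannot just multiply the one-dimensional anticoncentration probabilities. The plan is to enforce $\mathcal G$ by a block/staircase scheme: partition rows and columns into consecutive blocks of sizes $\lceil\sqrt m\,\rceil$ and $\lceil\sqrt n\,\rceil$, reveal them in a fixed order, and at each step use a fresh block of about $\sqrt n$ (resp.\ $\sqrt m$) unrevealed entries to cancel the discrepancy accumulated so far --- each such correction landing in the required $O(1)$ window with probability at least $(mn)^{-C}$ by a local-limit/anticoncentration estimate for sums of $\asymp\sqrt n$ (resp.\ $\asymp\sqrt m$) bounded-density (resp.\ lattice) summands, with Bernstein's inequality used to control the size of the discrepancies that arise, and there being $O(m\sqrt n+n\sqrt m)$ corrections in all. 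Carrying this out so that the row discrepancies \emph{and} the column discrepancies are driven to zero at once, while keeping the product of these step probabilities above the claimed bound, is the delicate part; hypothesis (1), $m\land n\ge(m\lor n)^\gamma$ with both large, is exactly what keeps the two families of blocks comparable and each block large enough for the local estimate, and the extra $\log n,\log m$ are the anticoncentration-window costs. Finally, $c^{m+n-1}$ being of lower order, combining $\sup_{\overline\y}q_{\overline\y}(\r,\c)\le M^{m+n-1}$ with $\E[q_{\overline Y}(\r,\c)]\ge c^{m+n-1}\P(\mathcal G)$ and substituting into \eqref{eq:strong_transference_exact} gives \eqref{eq:thm_double_transfer3} with $C=C(\mu,\delta,\gamma)$.
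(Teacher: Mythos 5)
Your reduction is exactly the paper's: invoking Theorem \ref{thm:second_transference} and Corollary \ref{cor:transfer_discrete_Leb}, bounding $\sup_{\overline{\y}}q_{\overline{\y}}(\r,\c)\le M^{m+n-1}$ via $\delta$-tameness, and then lower bounding $\E[q_{\overline{Y}}(\r,\c)]\ge c^{m+n-1}\,\P(\mathcal{G})$, where $\mathcal{G}$ asks that all $m+n-1$ completion values land in a fixed compact window with positive density --- this is precisely the paper's Lemma \ref{lem:second_transference_density}, and your observation that each completion value is centered at a tame entry of the typical table is correct. So everything up to and including the reduction is sound and matches the paper.

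The gap is in the one estimate you yourself flag as the crux: the bound $\P(\mathcal{G})\ge\exp(-C(m\sqrt{n}\log n+n\sqrt{m}\log m)\log mn)$. Your staircase scheme is only a plan, and its bookkeeping does not cohere: there are $m+n-1$ window constraints, so ``one correction block per constraint, each succeeding with probability $(mn)^{-C}$'' would give cost $\exp(-C(m+n)\log mn)$, while the asserted count of $O(m\sqrt{n}+n\sqrt{m})$ corrections is never derived --- it appears reverse-engineered from the target --- and the real difficulty, that the correction entries for the row constraints sit inside columns whose constraints must hold simultaneously, is acknowledged but not resolved; no mechanism is given for decoupling the two families of constraints, nor (in the lattice case) for guaranteeing that integer corrections with bounded entries and prescribed partial sums even exist. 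The paper avoids simultaneous anticoncentration altogether: it splits $Y$ into a $2\times 2$ block matrix with boundary widths $m_0=\lfloor\sqrt{m}\log m\rfloor$, $n_0=\lfloor\sqrt{n}\log n\rfloor$, lets the bulk block $Y^{11}$ concentrate for free (Bernstein plus a union bound, which is where hypothesis (1) enters, to get probability $\ge 1/2$), and then simply \emph{forces} the three boundary blocks to complete the margin exactly with all entries in a compact interval, paying $\exp(-Cm_0n_0)$, $\exp(-Cmn_0\log mn)$ and $\exp(-Cm_0n\log mn)$; feasibility of the forced completion uses hypothesis (2) (interval support) together with Lemma \ref{lem:counting_measure_Y_lowerbd}(i) in the discrete case, and an $\eps$-discretization with $\eps=O(\delta/m^2n^2)$ in the continuous case (the source of the $\log mn$ factor), and the block widths $\sqrt{m}\log m$, $\sqrt{n}\log n$ are dictated by the size of the bulk fluctuations the boundary blocks must absorb. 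As it stands, your proposal establishes the framework but not the theorem; to complete it you would either need to carry out a correction scheme of the above forcing type (at which point it becomes the paper's proof) or prove a genuine joint small-ball estimate for the correlated row and column sums, which is a substantially harder statement than anything sketched.
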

	
	\begin{remark}
		In case $(m\land n) \gg \log (m\lor n)$, a minor modification of our argument for Thm. \ref{thm:second_transference_density_1} shows that the similar strong transference holds with transference cost bound $\exp(o(mn))$. 
	\end{remark}

	\begin{remark}
		For enumerating contingency tables, the term $\exp(-g^{\r,\c}(\balpha,\bbeta))$ agrees (see Thm. \ref{thm:strong_duality_simple}) with a well-known upper bound on the number of contingency tables with given margin $(\r,\c)$ \cite{barvinok2010maximum, barvinok2010number}. For the volume 
		of the transportation polytope $\T(\r,\c)\cap \R_{\ge 0}^{m\times n}$, the same term (up to an exponential term in $m\land n$) serves as an upper bound \cite{barvinok2024quick}. Therefore, the upper bound on the transference cost in  \eqref{eq:p_rc_upper_bd_expectation_density} is essentially the gap between these upper bounds and the actual volume of the polytope. For these problems, the corresponding gaps of order $O(n\log n)$ (for $m=n$) give the transference cost in Thm. \ref{thm:transfer_counting_Leb}. 
		Our results in Thm. \ref{thm:second_transference_density_1} imply similar lower bounds on the volume of the polytope with general densities $p$ other than $p(x)=\mathbf{1}(x\ge 0)$.  See Remark \ref{rmk:volume_application} for details. 
	\end{remark}

	\subsection{Limit theory for the comparison model}   
	
	Our next goal is to establish a limit theory for the margin-conditioned random matrices as the sequence of $m\times n$ margins $(\r_{m}, \c_{n})$ converges to a limiting `continuum margin' $(\r,\c)$ in a suitable sense, as $m,n$ both tend to infinity. 
	By transference, it should essentially be enough to establish such a result for the corresponding maximum-likelihood tilted models. This is precisely what we establish in this section with an explicit bound on the rate of convergence of the corresponding typical tables and rescaled MLEs. 
	
	First, let us make the convergence of margins precise. 
	\begin{definition}[Continuum margin]
		A \textit{continuum margin} is a pair $(\r,\c)$ of  integrable functions $\r,\c:(0,1]\rightarrow \R$ such that $\int_{0}^{1} \r(x)\,dx=\int_{0}^{1} \c(y)\,dy$. For a $m\times n$ discrete margin $(\r_{m}, \c_{n})$, define the corresponding continuum step margin $(\bar{\r}_{m}, \bar{\c}_{n})$ as 
		\begin{align}\label{eq:margin_function}
			\bar{\r}_{m}(t)  := n^{-1} \r_{m}( \lceil m t \rceil),\qquad 	\bar{\c}_{n}(t)  := m^{-1} \c_{n}( \lceil n t \rceil).
		\end{align}
		We say a sequence of $m\times n$ margins $(\r_{m}, \c_{n})$ \textit{converges in $L^{1}$} to a continuum margin $(\r,\c)$ if 
		\begin{align}
			\lim_{m,n\rightarrow\infty }	\lVert \r - \bar{\r}_{m} \rVert_{1} + 		\lVert \c - \bar{\c}_{n} \rVert_{1}   =0.
		\end{align}
	\end{definition}

	It will be convenient to compare matrices of different dimensions in the space of kernels. 
	
	\begin{definition}[Kernels]\label{def:kernel}
		A \textit{kernel} is an integrable function $W:[0,1]^{2}\rightarrow \R$. 
		Given an $m\times n$ matrix $A$, define a function $W_A$ on the unit square as follows: Partition the unit square $(0,1]^2$ into $m\times n$ rectangles of the form $R_{ij}:=\Big(\frac{i-1}{m}, \frac{i}{m}\Big]\times \Big(\frac{j-1}{n}, \frac{j}{n}\Big]$, for $i\in [m]$ and $j\in [n]$. Set
		\begin{align}\label{eq:def_block_kernel}
			W_A(x,y):=A_{ij}\text{ if }(x,y)\in R_{ij}.
		\end{align}
		The \textit{$p$-norm} of a kernel $W$ is defined as 
		\begin{align}\label{eq:def_p_norm}
			\lVert W \rVert_{p} :=  \left( \int_{S\times T} |W(x,y)|^{p}\, dx\,dy \right)^{1/p}. 
		\end{align}
	\end{definition}

	In Theorem \ref{thm:main_convergence_conti} below, we show that the typical tables and the rescaled MLEs 
	converge in $L^{2}$ to a limiting kernel characterized by a continuum dual variable, provided the discrete margins are uniformly $\delta$-tame for a fixed $\delta>0$. Furthermore,  the rate of convergence
	(measured in the squared $L^{2}$-distance) is at least the rate of convergence of the margins (measured in  $L^{1}$-distance). 
	
	\begin{theorem}[Scaling limit of typical tables and MLEs]\label{thm:main_convergence_conti}
		Fix $\delta>0$ and let $(\r_{m},\c_{n})$ be a sequence of $m\times n$ $\delta$-tame margins  converging to a continuum margin $(\r,\c)$  in $L^{1}$ as $m,n\to\infty$. 
		\begin{description}[itemsep=0.1cm]
			\item[(i)] There exists bounded measurable functions $\balpha,\bbeta:[0,1]\rightarrow \R$  s.t. $\int \balpha(x) dx =0$, $\phi(A_{\delta})\le \balpha\oplus \bbeta\le  \phi(B_{\delta})$, 
			and  
			the kernel $W^{\r,\c}:= \psi'(\balpha \oplus \bbeta)$
			has continuum margin $(\r,\c)$. 
			
			\item[(ii)] 	Let $C_{\delta}:=2 \max\{ | \phi(A_{\delta})|,\, | \phi(B_{\delta})| \}$, where $A_\delta,B_\delta$ are as in \eqref{eq:ab}. Then 
			\begin{align}\label{eq:main_thm_typical_rate}
				\lVert W^{\r,\c}- W_{Z^{\r_{m},\c_{n}}} \rVert_{2}^{2} \le C_{\delta} \left( \sup_{ \phi(A_{\delta})\le w \le \phi(B_{\delta}) } \psi''(w)  \right) \lVert (\r,\c) - (\bar{\r}_{m},\bar{\c}_{n}) \rVert_{1}.
			\end{align}
			Furthermore, if $(\balpha_{m},\bbeta_{n})$ is the standard MLE for margin $(\r_{m},\c_{n})$ 
			and if we define functions $\bar{\balpha}_{m}, \bar{\bbeta}_{n}:(0,1]\rightarrow \R$ as $\bar{\balpha}_{m}(x):= \balpha_{m}(  \lceil m x \rceil )$ and $\bar{\bbeta}_{n}(y):=\bbeta_{n}( \lceil n y \rceil)$,  then 
			\begin{align}\label{eq:main_thm_typical_rate_MLE}
				\lVert \balpha-\bar{\balpha}_{m} \rVert_{2}^{2} + \lVert \bbeta-\bar{\bbeta}_{n} \rVert_{2}^{2} \le C_{\delta} \left(  \frac{\sup_{ \phi(A_{\delta})\le w \le \phi(B_{\delta}) } \psi''(w) }{ \inf_{ \phi(A_{\delta})\le w \le \phi(B_{\delta}) } \psi''(w) }  \right)  \lVert (\r,\c) - (\bar{\r}_{m},\bar{\c}_{n}) \rVert_{1}. 
			\end{align}
			In particular, $\lVert W^{\r,\c}- W_{Z^{\r_{m},\c_{n}}} \rVert_{2}\rightarrow 0$ and $\lVert (\balpha,\bbeta)-(\bar{\balpha}_{m}, \bar{\bbeta}_{n})  \rVert_{2}\rightarrow 0$ as $m,n\rightarrow\infty$.

		\end{description}
	\end{theorem}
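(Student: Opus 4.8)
Everything follows from the strong convexity of the relative-entropy functional $H$ of Definition~\ref{def:typical_table}, combined with the strong duality of Theorem~\ref{thm:strong_duality_simple}. Write $I:=[\phi(A_\delta),\phi(B_\delta)]$, an interval of length at most $C_\delta$. On the convex box $[A_\delta,B_\delta]^{m\times n}$ one has $\nabla H(Z)_{ij}=\phi(z_{ij})$ and Hessian $\diag\!\big(1/\psi''(\phi(z_{ij}))\big)$, so $H$ is $(\sup_I\psi'')^{-1}$-strongly convex there; and by Theorem~\ref{thm:strong_duality_simple}, $Z^{\r,\c}=\psi'(\balpha\oplus\bbeta)$ for the standard MLE, so $\nabla H(Z^{\r,\c})=\balpha\oplus\bbeta$, whose entries lie in $I$. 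For two $\delta$-tame $m\times n$ margins $(\r,\c),(\r',\c')$, adding the first-order convexity inequalities at $Z^{\r,\c}$ and $Z^{\r',\c'}$ yields
\begin{align}
(\textstyle\sup_I\psi'')^{-1}\,\lVert Z^{\r,\c}-Z^{\r',\c'}\rVert_{\mathrm F}^2\ \le\ \big\langle (\balpha-\balpha')\oplus(\bbeta-\bbeta'),\ Z^{\r,\c}-Z^{\r',\c'}\big\rangle .
\end{align}
Because the kernel $(\balpha-\balpha')\oplus(\bbeta-\bbeta')$ has rank-one-sum structure, the right-hand inner product depends only on the margins and equals $\langle\balpha-\balpha',\r-\r'\rangle+\langle\bbeta-\bbeta',\c-\c'\rangle$; since valid margins have equal row and column totals the summand decomposition may be re-centered so that both vectors have sup-norm $\le|I|\le C_\delta$, and H\"older then gives, after rescaling to block kernels via \eqref{eq:def_block_kernel}--\eqref{eq:margin_function},
\begin{align}\label{eq:plan_stab_primal}
\lVert W_{Z^{\r,\c}}-W_{Z^{\r',\c'}}\rVert_2^2\ \le\ C_\delta\,(\textstyle\sup_I\psi'')\ \lVert(\bar{\r},\bar{\c})-(\bar{\r}',\bar{\c}')\rVert_1 ,
\end{align}
where $(\bar{\r},\bar{\c})$ is the step margin \eqref{eq:margin_function}. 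The same computation on the strictly concave dual objective $-g^{\r,\c}$ of \eqref{eq:typical_Lagrangian}, whose Hessian quadratic form at any point of the (convex) tame region is $\sum_{ij}\psi''\!\big((\balpha\oplus\bbeta)_{ij}\big)(f_i+g_j)^2$ and is therefore $\ge(\inf_I\psi'')\,mn(\lVert\bar f\rVert_2^2+\lVert\bar g\rVert_2^2)$ for any $f,g$ with $\sum_i f_i=0$ — an estimate carrying \emph{no} aspect-ratio penalty in the $L^2$ normalization — similarly gives a stability bound for the standard MLEs:
\begin{align}\label{eq:plan_stab_dual}
\lVert\bar{\balpha}-\bar{\balpha}'\rVert_2^2+\lVert\bar{\bbeta}-\bar{\bbeta}'\rVert_2^2\ \le\ C_\delta\,(\textstyle\inf_I\psi'')^{-1}\ \lVert(\bar{\r},\bar{\c})-(\bar{\r}',\bar{\c}')\rVert_1 .
\end{align}

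\textbf{Passage to the continuum (part (i)).} To compare objects of different dimensions I would invoke the cloning of Example~\ref{ex:cloned_margin}: inflating an $m\times n$ margin to $(pm)\times(qn)$ replaces its MLE by $\balpha_0\otimes\one_p,\ \bbeta_0\otimes\one_q$, hence its typical table by the entrywise $p\times q$ block-up; so inflation leaves unchanged the step margin \eqref{eq:margin_function}, the block kernel \eqref{eq:def_block_kernel} of the typical table, and the step functions $\bar{\balpha},\bar{\bbeta}$, and preserves $\delta$-tameness. Applying \eqref{eq:plan_stab_primal}--\eqref{eq:plan_stab_dual} after inflating two members of the sequence to a common $\operatorname{lcm}(m,m')\times\operatorname{lcm}(n,n')$ size, and using that $(\bar{\r}_m,\bar{\c}_n)\to(\r,\c)$ in $L^1$, shows that $W_{Z^{\r_m,\c_n}}$, $\bar{\balpha}_m$, $\bar{\bbeta}_n$ are Cauchy in $L^2$. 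Let $W^{\r,\c},\balpha,\bbeta$ be their limits. They inherit the uniform $L^\infty$ bounds, the normalization $\int\balpha=0$, and $\phi(A_\delta)\le\balpha\oplus\bbeta\le\phi(B_\delta)$ (along an a.e.-convergent subsequence); continuity of $\psi'$ gives $W^{\r,\c}=\psi'(\balpha\oplus\bbeta)$; and $L^2$-continuity of the marginal operators, together with $(\bar{\r}_m,\bar{\c}_n)\to(\r,\c)$ in $L^1$, shows $W^{\r,\c}$ has continuum margin $(\r,\c)$. This is exactly (i).

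\textbf{The rates (part (ii)).} I would now let $m',n'\to\infty$ in \eqref{eq:plan_stab_primal}--\eqref{eq:plan_stab_dual} applied to the (inflations of the) tail of the sequence: the left sides converge to $\lVert W^{\r,\c}-W_{Z^{\r_m,\c_n}}\rVert_2^2$ and $\lVert\balpha-\bar{\balpha}_m\rVert_2^2+\lVert\bbeta-\bar{\bbeta}_n\rVert_2^2$, while $\lVert(\bar{\r}_{m'},\bar{\c}_{n'})-(\bar{\r}_m,\bar{\c}_n)\rVert_1\to\lVert(\r,\c)-(\bar{\r}_m,\bar{\c}_n)\rVert_1$; this delivers \eqref{eq:main_thm_typical_rate} and a bound of the form \eqref{eq:main_thm_typical_rate_MLE}. (Alternatively, \eqref{eq:main_thm_typical_rate_MLE} follows from \eqref{eq:main_thm_typical_rate} together with the bi-Lipschitz bounds for $\phi=(\psi')^{-1}$ on $[A_\delta,B_\delta]$ and the Pythagorean identity $\lVert\balpha-\bar{\balpha}_m\rVert_2^2+\lVert\bbeta-\bar{\bbeta}_n\rVert_2^2=\lVert(\balpha\oplus\bbeta)-(\bar{\balpha}_m\oplus\bar{\bbeta}_n)\rVert_2^2$, valid since $\int(\balpha-\bar{\balpha}_m)=0$.) The limits $\lVert W^{\r,\c}-W_{Z^{\r_m,\c_n}}\rVert_2\to0$ and $\lVert(\balpha,\bbeta)-(\bar{\balpha}_m,\bar{\bbeta}_n)\rVert_2\to0$ then follow at once.

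\textbf{Where the difficulty lies.} The delicate work is all in the stability lemma and its transfer across dimensions: one must verify that the cloning identity of Example~\ref{ex:cloned_margin} simultaneously preserves the step margin, the typical-table kernel, the standard MLE step functions, and $\delta$-tameness (so that the same-dimension estimates transfer verbatim); and the constant bookkeeping must be done with care — in particular the re-centering of the rank-one-sum decomposition that lands on the constant $C_\delta$, and the observation that the dual Hessian lower bound is aspect-ratio free, which is precisely what allows the theorem to require nothing relating $m$ and $n$. The rest (strong convexity, $L^2$-Cauchy limits, passage of $L^\infty$ bounds and of the marginals to the limit) is routine.
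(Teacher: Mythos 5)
Your proposal is correct in substance but follows a genuinely different route from the paper. For the equal-dimension stability estimates you use strong monotonicity of $\nabla H$ along the primal segment and of $\nabla(-g^{\r,\c})$ along the dual segment, together with the observation that $\langle(\balpha-\balpha')\oplus(\bbeta-\bbeta'),\,Z^{\r,\c}-Z^{\r',\c'}\rangle$ collapses to a pairing of margin differences with (re-centered) dual differences; the paper instead factorizes the Jacobians of the MLE-to-margin and MLE-to-table maps as $Q^{\top}P^{\balpha,\bbeta}Q$ and $P^{\balpha,\bbeta}Q$ (Prop.~\ref{prop:Jacobian_matrix_formula}) and integrates along the dual segment (Lemma~\ref{lem:key_change_margin_Z_factorization}, Thm.~\ref{thm:typical_lipschitz_margins}) — both exploit the same convexity, but your version is more elementary. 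For the passage to the continuum the paper sets up the typical-kernel variational problem, proves lower semicontinuity and the dual characterization (Lemma~\ref{lem:W_typical_Lagrange}), and then a continuum Lipschitz theorem by block-averaging the potentials (Thm.~\ref{thm:typical_kernel_Lipscthiz}, Prop.~\ref{prop:typical_kernel_existence}); you bypass all of this with the cloning/common-refinement trick, keeping every comparison finite-dimensional and obtaining the limit as an $L^{2}$-Cauchy limit, which suffices for the statement (though it does not deliver the intrinsic variational characterization of $W^{\r,\c}$ that the paper uses elsewhere). Two points to tighten: first, cloning must repeat each row (resp.\ column) index \emph{consecutively} — with the Kronecker ordering of Example~\ref{ex:cloned_margin} the step functions agree only up to a measure-preserving rearrangement — and your unequal-factor inflation to $\operatorname{lcm}$ sizes, while valid (the MLE replicates and row/column totals still match), is a mild extension of the example that should be spelled out; second, your constants do not literally reproduce \eqref{eq:main_thm_typical_rate_MLE}: the direct dual argument yields $C_{\delta}\big(\inf_{[\phi(A_{\delta}),\phi(B_{\delta})]}\psi''\big)^{-1}$ and the fallback via Lipschitzness of $\phi$ yields $C_{\delta}\,\sup\psi''/(\inf\psi'')^{2}$, neither of which matches $C_{\delta}\sup\psi''/\inf\psi''$ unless $\sup\psi''\ge 1$, resp.\ $\inf\psi''\ge 1$. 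This is the same bookkeeping slack present in the paper's own deduction of \eqref{eq:conti_MLE_continuity} (an $L$-Lipschitz map is applied with $L$ rather than $L^{2}$ against a squared norm), so it affects only the displayed constant, not the substance or the rate.
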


	\subsection{Phase diagram for tame margins}
	\label{sec:suff_cond_tame}
	
	The assumption of $\delta$-tameness gives good control on the typical table $Z=Z^{\r,\c}$ and on the corresponding standard MLE $(\balpha,\bbeta)$. But since this assumption is implicit, it will only be useful if we can provide a more explicit characterization on the set $\mathcal{M}^{\delta}$ of all $\delta$-tame margins for each $\delta>0$. 
	We will seek for such conditions depending only on the extreme values of the margin. Namely, for each point $(s,t)\in (A,B)^{2}$, $s\le t$ in a two-dimensional phase diagram, we ask if an arbitrary $m\times n$ margin $(\r,\c)$ satisfying  
	\begin{align}\label{eq:r_c_tame_s_t}
		\T(\r,\c)\cap(A,B)^{m\times n}\ne \emptyset \quad \textup{and}\quad 	s \le \r(i)/n, \c(j)/m \le t \quad \textup{for all $(i,j)\in [m]\times [n]$}
	\end{align}
	is $\delta$-tame for some $\delta=\delta(\mu,s,t)>0$ depending only on $\mu$, $s$, and $t$. Let $\Omega(\mu)\subseteq (A,B)^{2}$ denote the set of all such points $(s,t)$ that guarantee such uniform tameness for the base measure $\mu$. 
	Can we obtain the full phase diagram $\Omega(\mu)$ for each base measure $\mu$? See Fig. \ref{fig:PhaseDiagram} for the identified shapes of $\Omega(\mu)$ for various base measures.

	\begin{figure}[hbt]
		\begin{center}
			\includegraphics[width = 1 \textwidth]{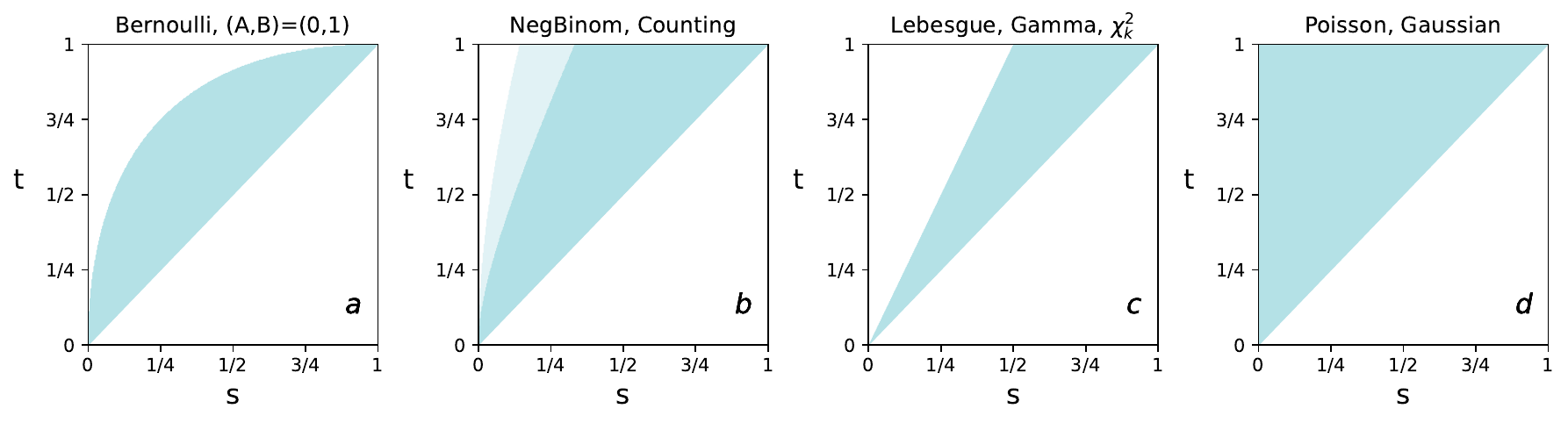}
		\end{center}
		\vspace{-0.4cm} 
		\caption{ Phase diagrams for tame margins for various base measures $\mu$. The upper contours are given by (\textbf{a}) $(s+t)^{2}<2s$, (\textbf{b}) $t\le 1+\sqrt{1+rs^{-1}}$ with $r=5$ for NegBinom and $r=1$ for Counting (so $\Omega(\mu)$ for the former measure is the union of the two shaded regions), (\textbf{c}) 
			$t\le s/2$, and (\textbf{d}) $t=\infty$. 
		}
		\label{fig:PhaseDiagram}
	\end{figure}

	First, when $\mu$ has bounded support (i.e., $|A|,|B|<\infty$), we show that  $\Omega(\mu)$ is determined by a simple quadratic inequality involving only $A,B$. Furthermore,  the shape of $\Omega(\mu)$ is universal among the class of all measures of bounded support (see Fig. \ref{fig:PhaseDiagram}\textbf{a}) up to translation and scaling. This universality result is stated below. 
	
	\begin{theorem}\label{thm:z_uniform_bd_tame}
		Suppose $-\infty<A\le B<\infty$. Then each  $(s,t)\in (A,B)^{2}$ with $s\le t$ belongs to $\Omega(\mu)$ if
		\begin{align}\label{eq:EG_condition_gen}
			(s+t-2A)^{2} < 4 (B-A)(s-A).
		\end{align} 
		Furthermore, $(s,t)$ does not belong to $\Omega(\mu)$ if the reverse inequality in \eqref{eq:EG_condition_gen} holds. 
	\end{theorem}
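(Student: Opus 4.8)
The plan is to work entirely with the \emph{typical table}: by Thm.~\ref{thm:strong_duality_simple} and Def.~\ref{def:ab} (for $\delta$ small enough that $A_\delta=A+\delta$ and $B_\delta=B-\delta$), a margin $(\r,\c)$ satisfying \eqref{eq:r_c_tame_s_t} is $\delta$-tame exactly when every entry of $Z^{\r,\c}=\psi'(\balpha^{\r,\c}\oplus\bbeta^{\r,\c})$ lies in $[A+\delta,B-\delta]$; since $\mu$ has bounded support, $\Theta^\circ=\R$ and $\psi':\R\to(A,B)$ is an increasing bijection with inverse $\phi$, $\phi(A^+)=-\infty$, $\phi(B^-)=+\infty$. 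Write $L:=B-A$, $\sigma:=s-A$, $\tau:=t-A$, so $0<\sigma\le\tau<L$, and set $f(p):=pL+(1-p)\sigma/p$ on $(\sigma/L,1)$; a one-line computation gives $\min f=f(\sqrt{\sigma/L})=2\sqrt{\sigma L}-\sigma$, with $f\to L$ at both endpoints and $\min f>\sigma$. Since \eqref{eq:EG_condition_gen} is equivalent to $\sigma+\tau<2\sqrt{\sigma L}$, i.e.\ to $\tau<\min f$, the theorem asserts: $(s,t)\in\Omega(\mu)$ iff $\tau<\min f$. One further reduction: pushing $\mu$ forward by $x\mapsto A+B-x$ fixes $[A,B]$, replaces a margin by its complement $((A+B)n\mathbf{1}-\r,(A+B)m\mathbf{1}-\c)$, replaces $Z^{\r,\c}$ by $(A+B)\mathbf{1}\mathbf{1}^\top-Z^{\r,\c}$, and replaces $[s,t]$ by $[A+B-t,A+B-s]$; one checks $(\sigma+\tau)^2<4L\sigma$ implies $(2L-\sigma-\tau)^2<4L(L-\tau)$, so the inequality \eqref{eq:EG_condition_gen} is inherited by the complemented interval. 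Hence it suffices to analyze the scenario ``$\min Z^{\r,\c}$ close to $A$''; ``$\max Z^{\r,\c}$ close to $B$'' then follows by complementation.

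For the direction $\tau>\min f\ \Rightarrow\ (s,t)\notin\Omega(\mu)$, the plan is an explicit two-block construction. Choose $p_0\in(\sigma/L,1)$ with $f(p_0)=\tau_0$ for some $\tau_0\in(\min f,\tau)$ (possible as $f$ is continuous onto $[\min f,L)\ni\tau$; note $\tau_0>\min f>\sigma$), and fix $\varepsilon_0>0$ small enough that $w:=A+\sigma/p_0+\varepsilon_0\in(A,B)$, $\tau_0+(1-p_0)\varepsilon_0<\tau$, and $\sigma+p_0\varepsilon_0<\tau$. For $k\ge1$ let $\balpha^{(k)},\bbeta^{(k)}\in\R^k$ each equal $k$ on the first $\lceil p_0k\rceil$ coordinates and $\phi(w)-k$ on the rest, put $Z^{(k)}:=\psi'(\balpha^{(k)}\oplus\bbeta^{(k)})\in(A,B)^{k\times k}$, and let $(\r^{(k)},\c^{(k)})$ be its margin. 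By Thm.~\ref{thm:strong_duality_simple}, $Z^{(k)}$ is the typical table of $(\r^{(k)},\c^{(k)})$ and witnesses feasibility. The four block values are $\psi'(2k)\to B$, $\psi'(2\phi(w)-2k)\to A$, and $\psi'(\phi(w))=w$ on the off-diagonal, so the row/column averages of $Z^{(k)}$ tend to $A+f(p_0)+(1-p_0)\varepsilon_0=A+\tau_0+(1-p_0)\varepsilon_0$ and to $A+\sigma+p_0\varepsilon_0$, both strictly inside $(s,t)$; thus for all large $k$ the margin satisfies \eqref{eq:r_c_tame_s_t}. But $\min Z^{(k)}=\psi'(2\phi(w)-2k)\to A$, so no single $\delta>0$ makes all of them $\delta$-tame, i.e.\ $(s,t)\notin\Omega(\mu)$.

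For the direction $\tau<\min f\ \Rightarrow\ (s,t)\in\Omega(\mu)$, suppose not; then for each $k$ there is an $m_k\times n_k$ margin obeying \eqref{eq:r_c_tame_s_t} with $\min Z^{(k)}<A+1/k$ (the case $\max Z^{(k)}>B-1/k$ reduces to this by complementation). Reorder so $Z^{(k)}=\psi'(\balpha_k\oplus\bbeta_k)$ is coordinatewise nondecreasing, and use the gauge $(\balpha,\bbeta)\mapsto(\balpha+c,\bbeta-c)$ to arrange $\min_i\balpha_k(i)=\min_j\bbeta_k(j)=:\theta_k$; then $\min Z^{(k)}=\psi'(2\theta_k)$, so $\theta_k\to-\infty$. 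Encode the dual vectors by the tail profiles $P_k(\eta):=m_k^{-1}|\{i:\psi'(\balpha_k(i)+\theta_k)>A+\eta\}|$ and $Q_k(\eta):=n_k^{-1}|\{j:\psi'(\theta_k+\bbeta_k(j))>A+\eta\}|$, $\eta\in(0,L)$, which are $[0,1]$-valued and nonincreasing. A layer-cake identity turns the smallest column- and row-averages (which are $\ge s$) into $\int_0^LP_k\ge\sigma$ and $\int_0^LQ_k\ge\sigma$. On the other hand, since $\phi(z_{ij})-\phi(z_{i'j})=\balpha_k(i)-\balpha_k(i')$ is independent of $j$, all rows are vertical translates of one another in $\phi$-coordinates, and combining this with the largest row-/column-averages (which are $\le t$) produces complementary functional inequalities bounding the high-parameter lines from below, schematically: whenever $Q_k(\eta_0)>0$ one has $\eta_0+(L-\eta_0)P_k(\eta_0)\le\tau+o(1)$, and symmetrically with $P_k,Q_k$ exchanged. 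Passing to a subsequence, Helly's theorem gives $P_k\to P$, $Q_k\to Q$ pointwise; by bounded convergence $\int_0^LP\ge\sigma$, $\int_0^LQ\ge\sigma$, and the paired ``$\le\tau$'' constraints persist. A short rearrangement/variational argument then shows the pair $(P,Q)$ making the induced $\tau$ smallest is a single downward step of common height $p^\ast$ and width $\sigma/p^\ast$ (exactly the two-block profile of the previous paragraph), for which the constraint reads $\tau\ge f(p^\ast)$; optimizing over $p^\ast$ yields $\tau\ge\min f$, a contradiction. The argument is insensitive to whether $m_k,n_k\to\infty$ or stay bounded, so bounded dimensions need no separate treatment.

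The necessity direction and the two layer-cake reductions are routine; the main obstacle is the final step of sufficiency. The difficulty is the fixed-point coupling between the two marginals: the set of admissible $\balpha$-values depends on the empirical law of $\bbeta$ and vice versa, so $P$ and $Q$ cannot be optimized separately, and crude bounds (replacing off-block entries by $A$) only recover the linear bound $\tau\ge\sigma$. The genuine content is the rearrangement statement that, among all tail profiles $(P,Q)$ compatible with the averages, the extremal one—saturating $\int P=\int Q=\sigma$ and being a single step—minimizes $\tau$; this is precisely where the quadratic $(s+t-2A)^2=4(B-A)(s-A)$ emerges, via $\min_p\bigl(pL+(1-p)\sigma/p\bigr)=2\sqrt{(B-A)(s-A)}-(s-A)$.
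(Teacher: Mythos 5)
Your reductions (normalization, the complementation symmetry of \eqref{eq:EG_condition_gen}, the layer-cake identities, Helly selection) and your two-block necessity construction are sound — the latter is essentially the same diverging two-block dual-variable witness the paper uses for the reverse inequality. The gap is in the sufficiency direction, at precisely the step you yourself flag as the ``main obstacle'': the rearrangement/variational claim that the limiting profiles $(P,Q)$ force $\tau\ge\min_p f(p)$ is never proved, and, more seriously, it is \emph{not provable} from the functional inequalities you actually record. Your schematic constraint couples $P$ and $Q$ at the same level: whenever $Q(\eta_0)>0$, $\eta_0+(L-\eta_0)P(\eta_0)\le\tau$, and symmetrically. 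Take $L=1$, $\tau=0.4$, $\sigma=0.07$ and $P(\eta)=Q(\eta)=\min\{1,(\tau-\eta)_+/(L-\eta)\}$: these are nonincreasing, $[0,1]$-valued, satisfy your same-level constraints with equality, and $\int_0^L P=\tau-(L-\tau)\log\frac{L}{L-\tau}\approx 0.093>\sigma$, while $(\sigma+\tau)^2=0.2209<0.28=4L\sigma$, i.e.\ $\tau<\min f\approx 0.459$. So admissible profiles exist inside the regime where the theorem asserts tameness, and no rearrangement argument can extract the quadratic threshold from the same-level inequality alone; the decisive information is simply not encoded in it.

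The matrix structure does yield a stronger, cross-level inequality, and with it the conclusion follows in a few lines with no variational work. If $j_0$ is a column whose entry in a minimizing row exceeds $A+\eta_0$, then every entry of that column is at least $A+\eta_0$ (since $\balpha_k(i)\ge\theta_k$), while the rows counted by $P_k(\eta)$ have $\balpha_k(i)+\bbeta_k(j_0)\ge\phi(A+\eta)+\phi(A+\eta_0)-2\theta_k\to\infty$, so those entries are $B-o(1)$; hence $\tau+o(1)\ge \eta_0+(L-\eta_0)P_k(\eta)$ for every fixed $\eta>0$, not just $\eta=\eta_0$. In the limit, writing $a,b$ for the suprema of the supports of $P,Q$ and $u=P(0^+)$, $v=Q(0^+)$, this gives $au\ge\sigma$, $bv\ge\sigma$, $b+(L-b)u\le\tau$, $a+(L-a)v\le\tau$; assuming without loss of generality $a\le b$, the third inequality yields $\tau\ge b+(L-b)\sigma/b\ge 2\sqrt{\sigma L}-\sigma$ by AM--GM, the required contradiction. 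Once this is the engine, your compactness scaffolding is unnecessary: the paper runs exactly this two-scalar-plus-AM--GM argument directly at finite $n$ (after first reducing to symmetric margins via cloning), with $\rho$ the fraction of large MLE coordinates and $z_{1,n}$ playing the roles of $u$ (or $v$) and $A+\eta_0$. So your framework is salvageable, but as written the proof of the main direction is missing its key step, and the inequality you propose to base it on is too weak.
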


	When $\mu$ has unbounded support, the situation changes drastically and the depends of $\Omega(\mu)$ on $\mu$ is more sophisticated. We first investigate the phase diagram restricted to the particular class of $2\times 2$ block margins that we call the `Barvinok margins'. These are symmetric linear margins with two distinct values where a vanishing fraction of the row sums are assigned with the larger value (see \cite[Sec. 1.6]{barvinok2010does} and \cite{dittmer2020phase, lyu2022number}). In this case, the phase diagram is determined by the inequality $	2\phi(t)-\phi(s)<\phi(B)$ as stated below. 
	
	\begin{corollary}[of Prop. \ref{prop:z_tame_unbounded_gen}]\label{cor:barvinok_margin}
		Suppose $\mu$ is a such that $A=0$ and $B=\infty$. Consider a sequence of  symmetric $n\times n$ margins $(\r_{n},\r_{n})$ such that 
		\begin{align}\label{eq:def_barvinok_margin_0}
			\r_{n} :=  \bigl( \underbrace{t_{n }n  ,\ldots, t_{n} n}_{\lfloor n^{\rho} \rfloor} ,   \underbrace{s_{n}n  , \ldots,  s_{n}n }_{n-\lfloor n^{\rho} \rfloor}  \bigr)
			\, \in \, \R_{\ge 0}^{n}
		\end{align}
		where $s_{n}\rightarrow s$ and $t_{n}\rightarrow t$ as $n\rightarrow \infty$ for $0\le s\le t$ and $\rho\in [0,1)$ is fixed. If $2\phi(t)-\phi(s)<\phi(B)$, then for all $n\ge 1$, $\r_{n}$ is $\delta$-tame for some $\delta>0$.  If $2\phi(t)-\phi(s)>\phi(B)$, then the $(1,1)$ entry of the typical table for $(\r_{n},\r_{n})$ diverges to infinity as $n\rightarrow\infty$. 
	\end{corollary}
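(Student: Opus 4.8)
The plan is to track the \emph{typical table} $Z^{(n)}:=Z^{\r_{n},\r_{n}}$, since by Theorem~\ref{thm:strong_duality_simple} the margin $(\r_{n},\r_{n})$ is $\delta$-tame exactly when every entry of $Z^{(n)}$ lies in $[A_{\delta},B_{\delta}]$. Write $a_{n}:=\lfloor n^{\rho}\rfloor$ and $b_{n}:=n-a_{n}$, so $a_{n}/n\to 0$. The Barvinok margin is invariant under permuting the first $a_{n}$ coordinates among themselves, permuting the last $b_{n}$ coordinates among themselves, and transposition; and the objective $H(Z)=\sum_{ij}h(z_{ij})$ with $h(z):=\phi(z)z-\psi(\phi(z))$ is strictly convex, since $h''(z)=\phi'(z)=1/\psi''(\phi(z))>0$. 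Moreover $\mathcal{T}(\r_{n},\r_{n})\cap(0,\infty)^{n\times n}\ne\varnothing$, as witnessed by the independence table $z_{ij}=\r_{n}(i)\r_{n}(j)/\sum_{k}\r_{n}(k)$ (assuming $s_{n}>0$, so $\r_{n}$ has positive entries), so by Theorem~\ref{thm:strong_duality_simple} $Z^{(n)}$ exists, and it is unique by strict convexity. Being the unique minimizer of a symmetric strictly convex objective over a symmetric feasible set, $Z^{(n)}$ inherits all these symmetries: it is a symmetric $2\times 2$ block table with value $z_{11}^{(n)}$ on the heavy–heavy block ($a_{n}\times a_{n}$), $z_{12}^{(n)}$ on the two off-diagonal blocks, and $z_{22}^{(n)}$ on the light–light block. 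Writing $Z^{(n)}=\psi'(\balpha\oplus\bbeta)$ (Theorem~\ref{thm:strong_duality_simple}) and using $z_{12}^{(n)}=z_{21}^{(n)}$ to telescope the block parameters gives the three scalar relations
\[
\phi(z_{11}^{(n)})+\phi(z_{22}^{(n)})=2\phi(z_{12}^{(n)}),\qquad a_{n}z_{11}^{(n)}+b_{n}z_{12}^{(n)}=t_{n}n,\qquad a_{n}z_{12}^{(n)}+b_{n}z_{22}^{(n)}=s_{n}n .
\]

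The asymptotic analysis is then elementary, using $a_{n}/n\to 0$. From $z_{11}^{(n)}\ge A=0$ the second relation yields $z_{12}^{(n)}\le t_{n}n/b_{n}$, so $z_{12}^{(n)}$ is bounded; substituting into the third relation (and using $a_{n}z_{12}^{(n)}=o(n)$) gives $z_{22}^{(n)}\to s$. The second relation also rearranges to $z_{11}^{(n)}=\tfrac{n}{a_{n}}\bigl(t_{n}-z_{12}^{(n)}\bigr)+z_{12}^{(n)}$, so if $z_{12}^{(n)}$ fails to tend to $t$ then $z_{11}^{(n)}\to\infty$ along a subsequence. Now suppose $2\phi(t)-\phi(s)<\phi(B)$; this forces $s>0$, since $A=0$ gives $\phi(A)=-\infty$. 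If along a subsequence $z_{12}^{(n)}\to t^{*}<t$, then $z_{11}^{(n)}\to\infty$ along it, hence $\phi(z_{11}^{(n)})\to\phi(B)$, whereas the first relation gives $\phi(z_{11}^{(n)})\to 2\phi(t^{*})-\phi(s)<2\phi(t)-\phi(s)<\phi(B)$ — a contradiction. Hence $z_{12}^{(n)}\to t$, and then the first relation gives $\phi(z_{11}^{(n)})\to 2\phi(t)-\phi(s)$, which lies strictly inside $\Theta^{\circ}=(\phi(A),\phi(B))$ (it exceeds $\phi(s)>\phi(A)$ and is below $\phi(B)$ by hypothesis). So $z_{11}^{(n)}=\psi'(\phi(z_{11}^{(n)}))$ eventually lies in a fixed compact subinterval of $(0,\infty)$, and likewise $z_{12}^{(n)}\to t$ and $z_{22}^{(n)}\to s$; all entries of $Z^{(n)}$ then lie, for $n\ge N$, in some $[\underline z,\overline z]\subset(0,\infty)$, i.e.\ $(\r_{n},\r_{n})$ is $\delta$-tame with $\delta:=\min(\underline z,1/\overline z)$. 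The finitely many remaining $n$ are each $\delta_{n}$-tame for some $\delta_{n}>0$ (their typical tables exist), and the minimum of $\delta$ with these $\delta_{n}$ is a uniform tameness constant.

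For the converse, assume $2\phi(t)-\phi(s)>\phi(B)$, and suppose for contradiction that $z_{11}^{(n)}\not\to\infty$; pass to a subsequence on which $z_{11}^{(n)}\le M$. Then the second relation gives $z_{12}^{(n)}\to t$ (since $a_{n}z_{11}^{(n)}=o(n)$), the third gives $z_{22}^{(n)}\to s$, and the first relation gives $\phi(z_{11}^{(n)})\to 2\phi(t)-\phi(s)>\phi(B)$, contradicting $\phi(z_{11}^{(n)})\in\Theta^{\circ}$, i.e.\ $\phi(z_{11}^{(n)})<\phi(B)$. Hence $z_{11}^{(n)}\to\infty$; since in the given ordering the heavy block occupies the first coordinates, $z_{11}^{(n)}$ is the $(1,1)$ entry of $Z^{(n)}$, which gives the stated divergence. (This dichotomy is also the Barvinok-margin instance of the general unbounded-support phase diagram in Proposition~\ref{prop:z_tame_unbounded_gen}; the argument above is the direct version.)

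The main obstacle is the $2\times 2$ block reduction rather than the asymptotics: one must establish existence and uniqueness of $Z^{(n)}$ (strict convexity of $H$ together with realizability via the independence table), argue that the unique minimizer inherits the full symmetry group of the problem, and extract from $Z=\psi'(\balpha\oplus\bbeta)$ the single scalar identity $\phi(z_{11})+\phi(z_{22})=2\phi(z_{12})$, which is where transposition symmetry ($z_{12}=z_{21}$) enters. A secondary subtlety is the apparent circularity between ``$z_{11}^{(n)}\to\infty$'' and ``$z_{12}^{(n)}\not\to t$'': it is broken precisely by the \emph{strict} inequality in the hypothesis, which makes the subsequential contradiction go through. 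Finally one should note that $\phi(B)$ may equal $+\infty$ (e.g.\ for super-exponentially decaying $\mu$), in which case $2\phi(t)-\phi(s)<\phi(B)$ holds automatically and the tame conclusion is unconditional; every step above remains valid under this reading.
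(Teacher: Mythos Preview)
Your proof is correct and takes essentially the same approach as the paper's Proposition~\ref{prop:z_tame_unbounded_gen}: reduce by symmetry to a $2\times 2$ block typical table, write down the scalar margin equations, and analyze them asymptotically using $a_n/n\to 0$. The only difference is cosmetic---you work in the primal coordinates $(z_{11},z_{12},z_{22})$ subject to the identity $\phi(z_{11})+\phi(z_{22})=2\phi(z_{12})$, whereas the paper parameterizes directly by the two dual MLE scalars $\alpha_1,\alpha_2$ via $z_{ij}=\psi'(\alpha_i+\alpha_j)$, which is the same system after the change of variables $\phi(z_{ij})=\alpha_i+\alpha_j$.
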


	The above result implies that for base measures $\mu$ with $(A,B)=(0,\infty)$, the interior of $\Omega(\mu)$ is contained in the region defined by $2\phi(t)-\phi(s)<\phi(B)$ and $s\le t$. 
	While there is a possibility that $\Omega(\mu)$ can be much smaller than what Cor. \ref{cor:barvinok_margin} suggests, below in 
	Theorem \ref{thm:tameness_non_compact}, we establish that these two regions in fact coincide whenever $\psi''$ is increasing and log-convex. Recall that $\psi''(\theta)$ is the variance of the tilted measure $\mu_{\theta}$. 
	Hence  $\mu$ must have right-infinite support ($B=\infty$) if $\psi''$ is increasing. Since the more exponential tilting the more mass on the larger values, it is natural to expect that $\psi''$ would be increasing. Log-convexity is an additional condition we crucially use to reduce the analysis for general symmetric margins to a Barvinok margins (see Lem. \ref{lem:reduction_barvinok}). These conditions hold for a wide range of measures with unbounded support such as Gaussian, Poisson, counting on $\Z_{\ge 0}$, Lebesgue on $\R_{\ge 0}$, and Gamma
	(see 
	Sec. \ref{sec:examples}).

	\begin{theorem}\label{thm:tameness_non_compact}
		Suppose $\psi''$ is increasing and log-convex on $\Theta$ (necessarily $B=\infty$). Then each  $(s,t)\in (A,B)^{2}$ with $s\le t$ belongs to $\Omega(\mu)$ if 
		\begin{align}\label{eq:convex_mean_tame_condition}
			\phi(A)< 3 \phi(s) - 2\phi(t) \quad \textup{and}\quad  2 \phi(t)-\phi(s) < \phi(B). 
		\end{align}
		Furthermore, for measures $\mu$ s.t. $\phi(A)=-\infty$, $(s,t)$ does not blong to $\Omega(\mu)$  if $2 \phi(t)-\phi(s) > \phi(B)$. 
	\end{theorem}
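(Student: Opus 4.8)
The plan is to translate $\delta$-tameness into a uniform two-sided bound on the entries of the typical table and then establish the upper and lower bounds separately. By Theorem \ref{thm:strong_duality_simple}, for any margin $(\r,\c)$ with $\T(\r,\c)\cap(A,B)^{m\times n}\ne\emptyset$ the typical table $Z^{\r,\c}$ exists, equals $\psi'(\balpha\oplus\bbeta)$ for the standard MLE $(\balpha,\bbeta)$, and is $\delta$-tame exactly when all its entries lie in $[A_\delta,B_\delta]$; in particular $\phi(Z^{\r,\c})=\balpha\oplus\bbeta$ is a rank-two matrix. Since monotonicity of $\psi''$ forces $B=\infty$, the upper requirement $\max_{i,j}Z^{\r,\c}_{ij}\le B_\delta$ is genuine uniform boundedness ($\le 1/\delta$). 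So the goal is to produce one $\delta=\delta(\mu,s,t)>0$ with $A_\delta\le\min_{i,j}Z^{\r,\c}_{ij}$ and $\max_{i,j}Z^{\r,\c}_{ij}\le B_\delta$ for every margin obeying \eqref{eq:r_c_tame_s_t}, and conversely (when $\phi(A)=-\infty$) to exhibit a family of such margins that is not uniformly $\delta$-tame.

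\textit{Upper bound (the substantial part).} The largest entry of $Z^{\r,\c}$ sits at the row and column carrying the largest values of $\balpha$ and $\bbeta$; I would bound its dual parameter by reducing, after a preliminary passage from general margins to symmetric ones and then via Lemma \ref{lem:reduction_barvinok}, to two-level ``Barvinok'' margins \eqref{eq:def_barvinok_margin_0} with extreme values $s$ and $t$. This is where the hypotheses are used: ``$\psi''$ increasing'' makes the problem non-degenerate ($B=\infty$), and ``$\psi''$ log-convex'' is precisely what licenses the reduction of a general symmetric margin to a Barvinok one. For a Barvinok margin, Proposition \ref{prop:z_tame_unbounded_gen} together with a limit computation (as in Corollary \ref{cor:barvinok_margin}: the dominant block pins the mean of the bottom-right block to $s$, the mixed block pins its mean to $t$, and the rank-two identity $\phi(z_{11})=2\phi(z_{12})-\phi(z_{22})$ closes the loop) gives dual parameter converging to $2\phi(t)-\phi(s)$. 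Hence, uniformly over all margins satisfying \eqref{eq:r_c_tame_s_t} and all $m,n$, $\max_{i,j}\phi(Z^{\r,\c}_{ij})\le 2\phi(t)-\phi(s)$, which by the second inequality in \eqref{eq:convex_mean_tame_condition} is $<\phi(B)$; applying $\psi'$ gives $\max_{i,j}Z^{\r,\c}_{ij}\le B-\delta_1$ and $\le 1/\delta_1$ for some $\delta_1=\delta_1(\mu,s,t)>0$.

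\textit{Lower bound and conclusion of sufficiency.} Given $\max_{i,j}\phi(Z^{\r,\c}_{ij})\le 2\phi(t)-\phi(s)$ uniformly, the lower bound is elementary. Writing $\phi(Z^{\r,\c}_{ij})=\alpha_i+\beta_j$: each row sum of $Z^{\r,\c}$ is $\ge sn$, so every row has an entry $\ge s$, whence $\alpha_i\ge\phi(s)-\max_k\beta_k$ for all $i$; symmetrically $\beta_j\ge\phi(s)-\max_k\alpha_k$ for all $j$. Taking minima and using $\max_k\alpha_k+\max_k\beta_k=\max_{i,j}\phi(Z^{\r,\c}_{ij})$,
\begin{align}
\min_{i,j}\phi(Z^{\r,\c}_{ij})=\min_i\alpha_i+\min_j\beta_j &\ \ge\ 2\phi(s)-\bigl(\max_k\alpha_k+\max_k\beta_k\bigr)\\
&=\ 2\phi(s)-\max_{i,j}\phi(Z^{\r,\c}_{ij})\ \ge\ 3\phi(s)-2\phi(t).
\end{align}
The first inequality in \eqref{eq:convex_mean_tame_condition} gives $3\phi(s)-2\phi(t)>\phi(A)$, so applying $\psi'$ yields $\min_{i,j}Z^{\r,\c}_{ij}\ge A+\delta_2$ and $\ge-1/\delta_2$ for some $\delta_2=\delta_2(\mu,s,t)>0$. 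With $\delta:=\min(\delta_1,\delta_2)$, every margin obeying \eqref{eq:r_c_tame_s_t} lies in $\mathcal{M}^{\delta}$, i.e. $(s,t)\in\Omega(\mu)$.

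\textit{Converse when $\phi(A)=-\infty$.} Assume $2\phi(t)-\phi(s)>\phi(B)$. Take the symmetric Barvinok margins $(\r_n,\r_n)$ of \eqref{eq:def_barvinok_margin_0} with constant values $s_n\equiv s$, $t_n\equiv t$ and, say, $\rho=1/2$; each obeys \eqref{eq:r_c_tame_s_t} with this $(s,t)$ for all $n$. By Corollary \ref{cor:barvinok_margin} (equivalently, Proposition \ref{prop:z_tame_unbounded_gen}), the $(1,1)$ entry of the typical table of $(\r_n,\r_n)$ diverges; since $B=\infty$, no fixed $\delta>0$ satisfies $\max_{i,j}Z^{\r_n,\r_n}_{ij}\le 1/\delta$ for all $n$, so this family is not uniformly $\delta$-tame and $(s,t)\notin\Omega(\mu)$. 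I expect the Barvinok reduction in the upper bound to be the main obstacle: the naive additive inequalities alone only yield the trivial bound $0\le 2(\phi(t)-\phi(s))$, so genuinely exploiting entropy minimality (through Lemma \ref{lem:reduction_barvinok} and Proposition \ref{prop:z_tame_unbounded_gen}) is essential.
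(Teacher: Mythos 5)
Your overall route is the paper's: symmetrize via Lemma \ref{lem:reduction_symm}, use log-convexity of $\psi''$ through Lemma \ref{lem:reduction_barvinok} to pass to a two-level margin (one exceptional row, $n-1$ equal rows) while preserving the top dual coordinate, bound the extreme dual coordinates, and get the converse from the Barvinok-margin phase transition (Prop.~\ref{prop:z_tame_unbounded_gen}), which is exactly how the paper handles the ``furthermore'' part; your lower-bound step $\min_{i,j}\phi(Z_{ij})\ge 2\phi(s)-\max_{i,j}\phi(Z_{ij})$ is also the paper's.

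The genuine gap is in how you certify the upper bound. You claim $\max_{i,j}\phi(Z^{\r,\c}_{ij})\le 2\phi(t)-\phi(s)$ uniformly by citing Prop.~\ref{prop:z_tame_unbounded_gen}/Cor.~\ref{cor:barvinok_margin}, but (i) those results assume $A=0$, $\mu$ supported on $\R_{\ge 0}$ and $\phi(B)<\infty$, so they do not cover base measures such as Poisson or Gaussian ($\phi(B)=\infty$, possibly $A=-\infty$) for which the theorem must hold (and for which the second inequality in \eqref{eq:convex_mean_tame_condition} is vacuous but the bound still has to be produced); and (ii) they are asymptotic statements about a \emph{fixed} convergent sequence of Barvinok margins with a prescribed number $\lfloor n^{\rho}\rfloor$ of large rows, whereas the margin produced by Lemma \ref{lem:reduction_barvinok} has exactly one exceptional row and its two rescaled values $\sigma\le\tau$ depend on the original margin and range over $(s,t)$, so you need a bound uniform in $(\sigma,\tau)$ and in $n$, which the citation does not supply. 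Moreover the exact inequality $\le 2\phi(t)-\phi(s)$ is not what comes out at finite $n$; one only gets $2\balpha^{*}(n)\le 2\phi(\tau)-\phi\bigl(\sigma-O(n^{-1})\bigr)$, and one must exploit the strictness of $2\phi(t)-\phi(s)<\phi(B)$ together with the fact that Lemma \ref{lem:reduction_symm} allows taking $n\ge N_{0}(\mu,s,t)$. The paper closes this step not by invoking the proposition but by a short direct estimate from the two MLE equations of the reduced margin, using monotonicity (or convexity, via the ``$\psi''$ increasing'' hypothesis) of $\psi'$: from the large-row equation $t\ge (1-n^{-1})\psi'(\balpha^{*}(1)+\balpha^{*}(n))+n^{-1}\psi'(2\balpha^{*}(n))\ge\psi'(\balpha^{*}(1)+\balpha^{*}(n))$, and from the small-row equation $s\le(1-n^{-1})\psi'(2\balpha^{*}(1))+n^{-1}t$, whence $2\balpha^{*}(n)\le 2\phi(t)-\phi\bigl(s-O(n^{-1})\bigr)$. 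Your ``rank-two identity'' sketch is the right idea, but as written the chain of citations does not deliver the required uniform, fully general bound; replacing it by this explicit two-equation computation repairs the proof. A smaller presentational point: the lower bound should be run on the symmetric reduced margin (or you should note that the original typical table appears as an off-diagonal sub-block of the symmetric one, so the entrywise bounds transfer), since Lemma \ref{lem:reduction_symm} only transfers tameness, not the max bound, to the original margin directly.
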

	
	The first inequality in \eqref{eq:convex_mean_tame_condition} is often vacuous (e.g., when $\phi(A)=-\infty$), and only the second one matters. For a wide range of measures, we can solve the second inequality explicitly and the phase diagram $\Omega(\mu)$ is determined by the ratio $t/s$ being strictly less than some critical threshold $\lambda_{c}$ as stated in Cor. \ref{cor:sharp_tame_CT} below.

	\begin{corollary}\label{cor:sharp_tame_CT}
		An arbitrary $m\times n$ margin $(\r,\c)$ satisfying \eqref{eq:r_c_tame_s_t} with $0< s< t$ 
		is $\delta$-tame for $\delta$ depending only on $\mu, s, t$ provided $ t/ s<\lambda_{c}$, where 
		\begin{align}\label{eq:critical_ratio}
			\quad 	\lambda_{c}:=
			\begin{cases}
				1+\sqrt{1+ r s^{-1}} & \textup{if $\mu=$ $r$-fold convolution of the counting measure on $\mathbb{Z}_{\ge 0}$ for $r\ge 1$}, \\
				2 & \textup{if $\mu$ has density $e^{-ax}x^{\gamma-1}$ on $\mathbb{R}_{\ge 0}$ w.r.t. Lebesgue measure for $a\ge 0$, $\gamma \ge 1$}  \\
				\infty & \textup{if $\mu=$ Poisson or Gaussian}.
			\end{cases}
		\end{align}
		Furthermore, for $\mu$ as above, if $t/s>\lambda_{c}$, then there exists a sequence of $n\times n$ margins satisfying the hypothesis but some entry in the typical table diverging to infinity as $n\rightarrow\infty$. 
	\end{corollary}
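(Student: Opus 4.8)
The plan is to obtain the corollary by specializing Theorem~\ref{thm:tameness_non_compact} (for the forward direction) and Corollary~\ref{cor:barvinok_margin} (for the converse) to the three families, so the first step is to collect the analytic data, all of which is spelled out in Section~\ref{sec:examples}. For $\mu$ the $r$-fold convolution of the counting measure on $\mathbb{Z}_{\ge 0}$ we have $\psi(\theta)=-r\log(1-e^{\theta})$ on $\Theta^{\circ}=(-\infty,0)$, hence $\psi'(\theta)=re^{\theta}/(1-e^{\theta})$ and $\psi''(\theta)=re^{\theta}/(1-e^{\theta})^{2}$; for the density $e^{-ax}x^{\gamma-1}$ on $\mathbb{R}_{\ge 0}$ we have $\psi(\theta)=\mathrm{const}-\gamma\log(a-\theta)$ on $\Theta^{\circ}=(-\infty,a)$, hence $\psi'(\theta)=\gamma/(a-\theta)$ and $\psi''(\theta)=\gamma/(a-\theta)^{2}$; for Poisson $\psi'(\theta)=\psi''(\theta)=e^{\theta}$ on $\mathbb{R}$; and for Gaussian $\psi'(\theta)=\theta$, $\psi''(\theta)\equiv 1$ on $\mathbb{R}$. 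In all four cases $\psi''$ is increasing on $\Theta$, and $\log\psi''$ is convex there: its second derivative equals $2e^{\theta}/(1-e^{\theta})^{2}$, $2/(a-\theta)^{2}$, $0$, $0$, respectively (for the first family differentiate $\log\psi''(\theta)=\log r+\theta-2\log(1-e^{\theta})$ twice). Thus Theorem~\ref{thm:tameness_non_compact} applies to each family, and the problem reduces to solving the two inequalities in~\eqref{eq:convex_mean_tame_condition}.

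The second step is to invert $\psi'$. Solving $\psi'(\theta)=z$ gives $\phi(z)=\log\bigl(z/(r+z)\bigr)$ for the $r$-fold convolution, $\phi(z)=a-\gamma/z$ for the Gamma density, $\phi(z)=\log z$ for Poisson, and $\phi(z)=z$ for Gaussian. In particular $\phi(A)=-\infty$ in all four cases ($A=0$ in the first three, and $\phi$ is the identity on $(-\infty,\infty)$ in the last), so the first inequality $\phi(A)<3\phi(s)-2\phi(t)$ in~\eqref{eq:convex_mean_tame_condition} is automatic and only $2\phi(t)-\phi(s)<\phi(B)$ remains. Reading off $\phi(B)=0,\;a,\;+\infty,\;+\infty$, respectively, and writing $\lambda=t/s$, a short manipulation rewrites $2\phi(t)-\phi(s)<\phi(B)$ as $\lambda^{2}-2\lambda-rs^{-1}<0$ (equivalently $\lambda<1+\sqrt{1+rs^{-1}}$) in the first case, as $t<2s$ (equivalently $\lambda<2$) in the second, and as an always-true statement ($\lambda<\infty$) in the Poisson and Gaussian cases. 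Hence whenever $0<s<t$ and $t/s<\lambda_{c}$, Theorem~\ref{thm:tameness_non_compact} gives $(s,t)\in\Omega(\mu)$, i.e. every $m\times n$ margin satisfying~\eqref{eq:r_c_tame_s_t} is $\delta$-tame for some $\delta=\delta(\mu,s,t)>0$; this is the forward direction.

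For the converse I may assume $t/s>\lambda_{c}$, in particular $\lambda_{c}<\infty$, so only the first two families occur, and both have $A=0$, $B=\infty$; the same algebra now gives $2\phi(t)-\phi(s)>\phi(B)$. Fix any exponent $\rho\in[0,1)$ and take the symmetric Barvinok margins $(\r_{n},\r_{n})$ as in~\eqref{eq:def_barvinok_margin_0} with constant values $s_{n}\equiv s$ and $t_{n}\equiv t$. These meet~\eqref{eq:r_c_tame_s_t}: indeed $\r_{n}(i)/n\in\{s,t\}\subseteq[s,t]$, and $\mathcal{T}(\r_{n},\r_{n})\cap(0,\infty)^{n\times n}\ne\emptyset$ since the positive matrix $x_{ij}:=\r_{n}(i)\r_{n}(j)\big/\sum_{k}\r_{n}(k)$ has the prescribed margins. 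Corollary~\ref{cor:barvinok_margin} then yields that the $(1,1)$ entry of the typical table of $(\r_{n},\r_{n})$ diverges to $\infty$ as $n\to\infty$, which completes the proof.

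I do not anticipate a genuine obstacle: once Theorem~\ref{thm:tameness_non_compact} and Corollary~\ref{cor:barvinok_margin} are in hand, the corollary is a matter of computing $\phi$ explicitly and doing elementary algebra. The only two points that warrant a little care are the log-convexity of $\psi''$ for the negative-binomial family (the two-line differentiation mentioned above) and the verification that the Barvinok margins used for the converse really do satisfy the standing hypothesis~\eqref{eq:r_c_tame_s_t}, in particular the nonemptiness of $\mathcal{T}(\r_{n},\r_{n})\cap(0,\infty)^{n\times n}$, which is immediate from the explicit positive matrix above.
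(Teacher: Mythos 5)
Your proposal is correct and follows essentially the same route as the paper: the forward direction by verifying that $\psi''$ is increasing and log-convex for each family and then reducing \eqref{eq:convex_mean_tame_condition} (with $\phi(A)=-\infty$) to the explicit inequality $2\phi(t)-\phi(s)<\phi(B)$, which the same algebra turns into $t/s<\lambda_c$; and the converse via the Barvinok-margin blow-up, where you invoke Corollary \ref{cor:barvinok_margin} while the paper cites Proposition \ref{prop:z_tame_unbounded_gen} directly, an immaterial difference. Your added checks (the two-line log-convexity computation for the negative-binomial family and the Fisher--Yates witness that the Barvinok margins satisfy \eqref{eq:r_c_tame_s_t}) are fine and only make the argument more self-contained.
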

	
	The critical threshold $\lambda_{c}$ has not been identified before for any base measure, but there are some previously known bounds on the critical ratio $\lambda_{c}$ for the counting and the Lebesgue cases.
	
	For the counting base measure, 
	Barvinok, Luria, Samorodnitsky, and Yong \cite[Thm. 3.5]{barvinok2010approximation} showed that $\lambda_{c}\le \frac{1+\sqrt{5}}{2}\approx 1.618$, the golden ratio. We learned from Barvinok that this bound was later improved to $2$ by Luria \cite{luria08}. Our Corollary \ref{cor:sharp_tame_CT} identifies the critical threshold as  $\lambda_{c}=1+\sqrt{1+s^{-1}}$, which agrees with the sharp phase transition point for Barvinok margins obtained by Dittmer, Lyu, and Pak \cite{dittmer2020phase}. Barvinok and Hartigan \cite{barvinok2012asymptotic} obtained a Gaussian formula for the number of contingency tables that is asymptotically correct for large  $\delta$-tame margins.  Corollary \ref{cor:sharp_tame_CT} implies this asymptotic Gaussian formula holds for 
	margins satisfying \eqref{eq:r_c_tame_s_t} with $t/s<1+\sqrt{1+s^{-1}}$ and this result cannot be improved. 
	
	For the Lebesgue bese measure on $\R_{\ge 0}$, Barvinok and Rudelson \cite{barvinok2024quick} recently observed that $\lambda_{c}\ge 2$. Namely, they noted that same Barvinok margin $\r=\c=(\lambda n, n,\dots,n)$, now with the Lebesgue base measure (see Ex. \ref{ex:Lebesgue_base_measure} for the corresponding typical table),  is asymptotically $\delta$-tame for $\lambda<2$ and it is not for $\lambda>2$ (see Remark \ref{rmk:Barvionok_Lebesgue}). There was no previously known upper bound on $\lambda_{c}$, and our result above shows that $\lambda_{c}=2$.

	As we have an almost complete understanding of the phase diagram of tame margins, one may ask a finer question about characterizing a necessary and sufficient condition for a given margin to be $\delta$-tame for some $\delta$ independent of the margin. We provide such a result for symmetric margins and general base measures with compact support. The equivalent condition for $\delta$-tameness turns out to be the celebrated Erd\H{o}s-Gallai condition \cite{erdos1960graphen} with a quadratic gap.

	\begin{theorem}[$\delta$-tameness and  Erd\H{o}s-Gallai condition for symmetric margins]\label{thm:tame_EG}
		Suppose $\mu$ has compact support with $A=0$ and $B<\infty$. If an   $n\times n$ symmetric margin $(\r,\r)$ with an MLE $(\balpha,\balpha)$ is $\delta$-tame for some $\delta>0$, then  there exists constants $c_{1},c_{2}\in (0,B)$ and $c_{3}>0$ depending only on $\mu$ and $\delta$ such that  $c_{1} \le \r(i)/n \le c_{2}$ for all $i$ and 
		\begin{align}\label{eq:matrix_EG}
			\min_{I\subseteq  [n],\, |I|\ge c_{1}^{2}n} \left( B|I|^2 + \sum_{i\notin I} B|I|\land \r(i) -\sum_{i\in I}\r(i)  \right) - c_{3} |I|^{2}  \ge 0. 
		\end{align}
		Conversely, if the above condition holds for constants $c_{1},c_{2}\in (0,B)$ and $c_{3}>0$, then $(\r,\r)$ is $\delta$-tame for some $\delta=\delta(\mu,c_{1},c_{2},c_{3})>0$. 
	\end{theorem}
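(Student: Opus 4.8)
The plan is to translate both directions through the strong duality of Theorem \ref{thm:strong_duality_simple}, which identifies $\delta$-tameness with the typical table $Z=Z^{\r,\r}=\psi'(\balpha\oplus\balpha)$ having all entries in $[A_\delta,B_\delta]=[\delta, B-\delta]$ (using $A=0$, and noting $B_\delta=B-\delta$ once $\delta$ is small). Since the margin is symmetric, one checks first that the standard MLE can be taken symmetric, so $Z$ is a symmetric matrix in $\T(\r,\r)$ with entries in $[0,B]$ and, in the tame case, in $[\delta,B-\delta]$. The linear constraints $c_1\le \r(i)/n\le c_2$ and the combinatorial inequality \eqref{eq:matrix_EG} should be read as a quantitative, ``thickened'' version of the feasibility constraints for a symmetric matrix with entries in $[0,B]$ and row sums $\r$: indeed, for $Z\in[0,B]^{n\times n}$ symmetric with $Z_{ij}\le B\wedge(\text{something})$, summing $Z$ over rows indexed by $I$ and splitting the column sums into $j\in I$ and $j\notin I$ gives $\sum_{i\in I}\r(i)=\sum_{i,j\in I}Z_{ij}+\sum_{i\in I,j\notin I}Z_{ij}\le B|I|^2+\sum_{j\notin I}(B|I|\wedge \r(j))$, which is exactly \eqref{eq:matrix_EG} without the $c_3|I|^2$ slack. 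So the $c_3$-slack is what encodes that entries are bounded \emph{away} from $0$ and $B$.

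For the forward direction ($\delta$-tame $\Rightarrow$ \eqref{eq:matrix_EG}): assume $Z_{ij}\in[\delta,B-\delta]$ for all $i,j$. The bounds $c_1\le\r(i)/n\le c_2$ follow immediately with $c_1=\delta$, $c_2=B-\delta$ (each row sum is between $n\delta$ and $n(B-\delta)$). For the matrix inequality, fix $I\subseteq[n]$ with $|I|\ge c_1^2 n$ and run the same row-sum computation but now exploit the upper bound $Z_{ij}\le B-\delta$ on the $|I|^2$ entries with $i,j\in I$ and on the off-diagonal block: $\sum_{i\in I}\r(i)=\sum_{i,j\in I}Z_{ij}+\sum_{i\in I,j\notin I}Z_{ij}\le (B-\delta)|I|^2+\sum_{j\notin I}\big((B-\delta)|I|\wedge\r(j)\big)$. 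Comparing with the target, $B|I|^2+\sum_{j\notin I}(B|I|\wedge\r(j))-\sum_{i\in I}\r(i)\ge \delta|I|^2 + \big(\sum_{j\notin I}(B|I|\wedge\r(j))-\sum_{j\notin I}((B-\delta)|I|\wedge\r(j))\big)\ge \delta|I|^2$, since the min is monotone in its first argument; this gives \eqref{eq:matrix_EG} with $c_3=\delta$ (one may need to shrink $c_3$ slightly to absorb the difference-of-minima term cleanly, but it is nonnegative so $c_3=\delta$ in fact works).

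For the converse ($\eqref{eq:matrix_EG}$ with given $c_1,c_2,c_3$ $\Rightarrow$ $\delta$-tame): this is the substantive direction. The natural route is contrapositive combined with the convexity/complementary-slackness structure of the typical-table optimization \eqref{eq:typical_table_opt}. Suppose $(\r,\r)$ is not $\delta$-tame for any $\delta$ bounded below by a function of $(\mu,c_1,c_2,c_3)$; then along the optimizer $Z$, some entries approach $0$ or $B$. Using symmetry and the KKT conditions — $\phi(Z_{ij})=\balpha(i)+\balpha(j)$ on the ``interior'' and the dual variables at active constraints — one argues that the set $I$ of indices $i$ with $\balpha(i)$ large (equivalently, rows whose entries are pushed toward $B$) has size $\ge c_1^2 n$ (this is where the lower bound $c_1\le\r(i)/n$ is used: rows cannot be too sparse, so a macroscopic index set must carry the excess), and that on the block $I\times I$ the entries $Z_{ij}$ are within $o(1)$ of $B$, while the complementary-slackness forces $Z_{ij}\approx B|I|\wedge\r(j)$-type saturation on the off-diagonal block too. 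Feeding this near-saturation back into the row-sum identity for $I$ shows $B|I|^2+\sum_{j\notin I}(B|I|\wedge\r(j))-\sum_{i\in I}\r(i)=o(|I|^2)$, contradicting the assumed slack $c_3|I|^2$. The main obstacle is making the qualitative ``entries approach the boundary $\Rightarrow$ a near-saturating block exists'' implication quantitative and uniform in $m,n$: one must show that if $\min_{ij}Z_{ij}$ or $B-\max_{ij}Z_{ij}$ is smaller than some $\eta=\eta(c_1,c_2,c_3)$, then one can extract an index set $I$, $|I|\ge c_1^2 n$, on which the Erdős–Gallai-type expression drops below $c_3|I|^2$. This likely requires a careful rounding/thresholding argument on the MLE potential $\balpha$ (grouping indices by dyadic scales of $\balpha(i)$) together with the monotone relation between $\psi'$, $\phi$ and the boundary values $0,B$, and is the step I would expect to occupy most of the work. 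The lower bound $c_1$ on $\r(i)/n$ and the upper bound $c_2$ are used respectively to guarantee the extracted $I$ is macroscopic and to rule out the symmetric degeneracy at the upper end $B$.
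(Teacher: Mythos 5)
Your forward direction is correct and is essentially the paper's argument: with $Z=\psi'(\balpha\oplus\balpha)\in[\delta,B-\delta]^{n\times n}$ one takes $c_1=\delta$, $c_2=B-\delta$, splits $\sum_{i\in I}\r(i)=\sum_{i,j\in I}z_{ij}+\sum_{i\in I,j\notin I}z_{ij}$, bounds the diagonal block by $(B-\delta)|I|^2$ and the off-diagonal block by $\sum_{j\notin I}\bigl(B|I|\wedge\r(j)\bigr)$, giving \eqref{eq:matrix_EG} with $c_3=\delta$.

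The converse, however, is where the substance lies, and your proposal does not prove it — you sketch the right strategy (extract a macroscopic index set $I$ of rows with large potential, show the Erd\H{o}s--Gallai expression over $I$ is $o(|I|^2)$, contradict the slack $c_3|I|^2$) and then explicitly defer the quantitative extraction step, which is precisely the whole proof. Concretely, the paper (adapting Chatterjee--Diaconis--Sly) argues by contradiction that $\balpha(n)\le C(\mu,c_1,c_2,c_3)$: first one proves the nontrivial counting claim $|\{i:\balpha(i)\ge\balpha(n)/4\}|\ge c_1^2 n$, and this uses \emph{both} margin bounds — $c_2$ on $\r(n)$ forces some coordinate $\balpha(j)\le-\balpha(n)/2$, and then $c_1$ on $\r(1)$ caps the number of coordinates below $-\balpha(1)/2$ — so your assertion that the lower bound $c_1$ alone makes $I$ macroscopic is not automatic. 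Second, one pigeonholes the potential values into bands of width $h=\sqrt{\balpha(n)}$ to find a band $D_k$ containing at most $n/((h/16)-1)$ indices, sets $I$ to be the indices above the band, and estimates the expression in \eqref{eq:matrix_EG} by three cases: $j$ above the band ($z_{ij}\ge\psi'(h/2)$, so $B|I|-\sum_{i\in I}z_{ij}$ is small), $j$ below the band ($\r(j)-\sum_{i\in I}z_{ij}=\sum_{i\notin I}z_{ij}\le n\psi'(-h/2)$), and the few $j\in D_k$; this yields an upper bound $o(n^2)$ that contradicts $c_3|I|^2\ge c_3c_1^4n^2$ once $\balpha(n)$ is large, after which $\balpha(1)\ge\phi(c_1)-C$ follows from $c_1n\le\r(1)\le n\psi'(\balpha(1)+C)$ and $\delta$-tameness from Theorem \ref{thm:strong_duality_simple}. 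None of this is carried out in your proposal. A secondary point: your framing via ``KKT conditions with active constraints'' is off, since the typical table never attains the boundary values $0,B$ (the optimization in \eqref{eq:typical_table_opt} is over the open interval and $\phi(z_{ij})=\balpha(i)+\balpha(j)$ holds everywhere); failure of uniform tameness means the MLE coordinates are unbounded along the sequence, which is the quantity the argument must actually control.
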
      
	
	In 2011, Chatterjee, Diaconis, and Sly showed that the condition \eqref{eq:matrix_EG} implies uniform boundedness of the MLE for the $\bbeta$-model with degree sequence $\r$ \cite{chatterjee2011random}. They conjectured that this condition is equivalent to the $\delta$-tameness of Barvinok and Hartigan \cite{barvinok2013number} defined in terms of the typical table. The special case of our Thm. \ref{thm:tame_EG} for Bernoulli base measure in conjunction with the strong duality in Thm. \ref{thm:strong_duality_simple} establishes this conjecture. 
	See Sec. \ref{sec:background_tame_EG} for more discussion.

	\subsection{Generalized Sinkhorn algorithm for computing the MLEs}

	In most cases, there is no close-form expression for an MLE for a given margin $(\r,\c)$. Based on the classical alternating maximization principle applied to \eqref{eq:typical_Lagrangian}, we propose the following iterative algorithm for computing an MLE for a given margin $(\r,\c)$, which we call the \textit{generalized Sinkhorn} algorithm: 
	\begin{align}\label{eq:AM_typical_MLE2}
		\qquad 
		\begin{matrix}
			\textbf{Generalized}	\\
			\textbf{Sinkhorn}
		\end{matrix}
		\quad 
		\begin{cases}
			\textit{For $1\le j \le n$, }	\bbeta_{k}(j) \leftarrow \textit{unique $\beta\in \R$ s.t. }  \c(j)  = \sum_{i=1}^{m} \psi'(\balpha_{k-1}(i) + \beta),\\
			\textit{For $1\le i \le m$, }	\balpha_{k}(i) \leftarrow \textit{unique $\alpha\in \R$ s.t. }  \r(i) =   \sum_{j=1}^{n} \psi'(\alpha + \bbeta_{k}(j)).
		\end{cases}
	\end{align}
	This is simply the alternating maximiazation procedure to solve the MLE problem \ref{eq:typical_Lagrangian}. To explain the connection to the Sinkhorn algorithm in the Schr\"{o}dinger bridge and the optimal transport literature, we first note that the typical table $Z^{\r,\c}$ in \eqref{eq:typical_table_opt} can be viewed as the static Schr\"{o}dinger bridge between the row and column margins $\r$ and $\c$ with divergence $D(\mu_{\phi(\cdot)}\Vert \mu)$ and the uniform prior measure. Also, the MLE problem \eqref{eq:typical_Lagrangian} corresponds to its Kantorovich dual, and the algorithm \eqref{eq:AM_typical_MLE2} is exactly the Sinkhorn algorithm that computes the Schr\"{o}dinger potentials. Taking the Poisson base measure $\mu=\textup{Poisson}(1)$, our divergence $D(\mu_{\phi(\cdot)}\Vert \mu)$ reduces to the KL-divergence, the standard choice in the Schr\"{o}dinger bridge and the optimal transport literature \cite{franklin1989scaling, cuturi2013sinkhorn, leonard2013survey}. (See Sec. \ref{sec:schrodinger} and \ref{sec:Sinkhorn} for more discussions). 
	
	Even with the connection to the existing literature we mentioned above, convergence analysis of the generalized Sinkhorn \eqref{eq:AM_typical_MLE2} is limited beyond the KL-divergence case. This is because the updated coordinates $\bbeta_{k}(j)$ and $\balpha_{k}(i)$ in \eqref{eq:AM_typical_MLE2} in general do not have simple closed-forms as in the KL-divergnece case (see \eqref{eq:AM_typical_MLE3}). Despite this difficulty, we show that, under mild conditions, the procedure \eqref{eq:AM_typical_MLE2} for arbitrary base measure $\mu$ converges at a linear rate in the sense that the dual objective value gap decays exponentially fast. It also yields that the direct sum of the MLEs, $\balpha_{k}\oplus \bbeta_{k}$, converges exponentially fast to the uniquely determined matrix of maximum-likelihood tilt parameters $\balpha^{*}\oplus \bbeta^{*}$.

	\begin{theorem}[Linear convergence of the generalized Sinkhorn]\label{thm:Sinkhorn_conv}
		Let $(\balpha_{k},\bbeta_{k})$, $k\ge 0$ denote the iterates produced by the Sinkhorn algorithm \eqref{eq:AM_typical_MLE2} for some $m\times n$ margin $(\r,\c)$ such that $\T(\r,\c)\cap (A, B)^{m\times n}$ is non-empty.  Fix an MLE $(\balpha^{*}, \bbeta^{*})$ and denote $\Delta_{k}:=g^{\r,\c}(\balpha^{*},\bbeta^{*})-g^{\r,\c}(\balpha_{k},\bbeta_{k})$. For
		each $\eps>0$, let $\sigma_{1}(\eps)^{2}$ (resp., $\sigma_{2}(\eps)^{2}$) denote the infimum (resp., supremum) of $\psi''$ on $(\phi(A_{\eps}),\phi(B_{\eps}))$. 
		\begin{description}
			\item[(i)] (Asymptotic linear convergence) 
			Choose $\delta>0$ small enough so that $(\r,\c)$ is $\delta$-tame. Then there exists an integer $k_{0}=k_{0}(\mu,\r,\c)\ge 0$ such that the following holds with $\eps=\delta/2$:
			\begin{align}\label{eq:sinkhorn_linear_conv}
				\qquad \quad  \frac{\sigma_{1}(\eps)^{2}}{2} \| (\balpha^{*}\oplus \bbeta^{*}) - (\balpha_{k}\oplus \bbeta_{k}) \|_{F}^{2} \, \le\,  \Delta_{k} \,  \le \, \left(1 - \frac{\sigma_{1}(\eps)^{4}}{\sigma_{2}(\eps)^{4}} \right)^{k-k_{0}} \Delta_{k_{0}} \quad \textup{for all $k\ge k_{0}$}.
			\end{align}

			\item[(ii)] (Non-asymptotic linear convergence I) 
			Suppose  $\psi''$ is increasing and  $\balpha_{0}=\mathbf{0}$. Then \eqref{eq:sinkhorn_linear_conv} holds with $k_{0}=1$ and $\eps=\delta$.

			\item[(iii)] (Non-asymptotic linear convergence II) Suppose  there exists $\eps>0$ such that 
			\begin{align}\label{eq:def_L_sinkhorn}
				\phi(A_{\eps})+2\lVert \balpha_{0}-\balpha^{*}\|_{\infty} \le \balpha^{*}\oplus \bbeta^{*}  \le \phi(B_{\eps}) -2\lVert \balpha_{0}-\balpha^{*}\|_{\infty},
			\end{align}
			Then \eqref{eq:sinkhorn_linear_conv} holds with $k_{0}=1$.
		\end{description}

	\end{theorem}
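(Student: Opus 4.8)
Throughout, write $\psi$, $\phi$, $A_\eps$, $B_\eps$ as in the statement.

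\medskip
\noindent\textbf{Step 1: reformulation as two-block coordinate ascent on a strongly concave dual.} The plan is to read the generalized Sinkhorn recursion \eqref{eq:AM_typical_MLE2} as exact two-block coordinate ascent on the concave dual objective $g^{\r,\c}$ of Definition~\ref{def:MLE_typical}, and to run the standard strongly-concave-plus-smooth convergence analysis after removing the gauge degeneracy $(\balpha,\bbeta)\mapsto(\balpha+c\mathbf{1},\bbeta-c\mathbf{1})$. Concretely, set $M:=\balpha\oplus\bbeta$ and regard $g^{\r,\c}$ as a function of $M$ on the linear subspace $\mathcal L:=\{\balpha\oplus\bbeta\}\subseteq\R^{m\times n}$. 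Fixing any $P\in\T(\r,\c)$ (which exists since $\langle\r,\mathbf{1}\rangle=\langle\c,\mathbf{1}\rangle$), one has $\langle\r,\balpha\rangle+\langle\c,\bbeta\rangle=\langle P,M\rangle$ for all $M\in\mathcal L$, so $g^{\r,\c}(M)=\langle P,M\rangle-\sum_{i,j}\psi(M_{ij})$ on $\mathcal L$, with $\langle N,(-\nabla^{2}g^{\r,\c})N\rangle=\sum_{i,j}\psi''(M_{ij})N_{ij}^{2}$ for $N\in\mathcal L$. Hence, whenever every entry of $M$ lies in $(\phi(A_{\eps}),\phi(B_{\eps}))$, the restriction $g^{\r,\c}|_{\mathcal L}$ is $\sigma_{1}(\eps)^{2}$-strongly concave and $\sigma_{2}(\eps)^{2}$-smooth there (with $\sigma_{1}(\eps),\sigma_{2}(\eps)\in(0,\infty)$); globally on its natural domain $g^{\r,\c}|_{\mathcal L}$ is strictly concave, so its maximizer $M^{*}=\balpha^{*}\oplus\bbeta^{*}$ is unique in $\mathcal L$ and independent of which MLE $(\balpha^{*},\bbeta^{*})$ one picks, and $M^{*}$ exists exactly when $\T(\r,\c)\cap(A,B)^{m\times n}\neq\emptyset$ by Theorem~\ref{thm:strong_duality_simple}. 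Once the iterate $M_{k}$ is known to lie in the $\eps$-tame slab, applying strong concavity at $M^{*}$ (where $\nabla g^{\r,\c}|_{\mathcal L}$ vanishes) along the segment $[M_{k},M^{*}]$ --- which then lies in the convex slab --- already yields the left inequality of \eqref{eq:sinkhorn_linear_conv}: $\Delta_{k}\ge\tfrac12\sigma_{1}(\eps)^{2}\lVert M^{*}-M_{k}\rVert_{F}^{2}$.

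\medskip
\noindent\textbf{Step 2: localizing the iterates on a tame slab.} The rest of the work --- and the part that distinguishes (i), (ii), (iii) --- is to certify that the iterates, including the intermediate $M_{k+1/2}:=\balpha_{k}\oplus\bbeta_{k+1}$, lie on an $\eps$-tame slab where $\sigma_{1},\sigma_{2}$ are controlled. For (i): alternating maximization of the strictly concave $g^{\r,\c}|_{\mathcal L}$, with each one-dimensional block update uniquely solvable and attained (here $\T(\r,\c)\cap(A,B)^{m\times n}\neq\emptyset$ enters, via an inductive check that the defining equations remain solvable), converges to $M^{*}$; since $\delta$-tameness places $M^{*}$ in the interior of $[\phi(A_{\delta}),\phi(B_{\delta})]^{m\times n}\subseteq(\phi(A_{\delta/2}),\phi(B_{\delta/2}))^{m\times n}$, all but finitely many iterates lie in the $\tfrac{\delta}{2}$-tame slab, which defines $k_{0}$ and forces $\eps=\delta/2$. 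For (ii): with $\balpha_{0}=\mathbf{0}$ the first half-step gives $\bbeta_{1}(j)=\phi(\c(j)/m)$, and feeding this into the row equations $\tfrac1n\sum_{j}\psi'(\balpha_{1}(i)+\bbeta_{1}(j))=\r(i)/n$ and exploiting convexity of $\psi'$ (the meaning of ``$\psi''$ increasing'') together with the $\delta$-tameness of $(\r,\c)$ pins the entries of $M_{1}$, and inductively of all later iterates, into $[\phi(A_{\delta}),\phi(B_{\delta})]$, so $k_{0}=1$ and $\eps=\delta$. For (iii): I would use the order-preserving (monotone) structure of the Sinkhorn half-maps to show $\lVert\balpha_{k}-\balpha^{*}\rVert_{\infty}\vee\lVert\bbeta_{k}-\bbeta^{*}\rVert_{\infty}\le\lVert\balpha_{0}-\balpha^{*}\rVert_{\infty}$ for all $k$ (modulo gauge), so that hypothesis \eqref{eq:def_L_sinkhorn} keeps every $M_{k}$ and $M_{k+1/2}$ in the $\eps$-tame slab, giving $k_{0}=1$.

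\medskip
\noindent\textbf{Step 3: the per-iteration contraction.} Fix $\eps$ and $k_{0}$ as in the relevant part, so that for $k\ge k_{0}$ the points $M_{k},M_{k+1/2},M_{k+1}$ all lie in the $\eps$-tame slab. The structural point is that the row-constant block $U$ and the column-constant block $W$ satisfy $U+W=\mathcal L$ and $U\cap W=\langle\mathbf{1}\mathbf{1}^{\top}\rangle$, with $U$ and $W\ominus\langle\mathbf{1}\mathbf{1}^{\top}\rangle$ mutually Frobenius-orthogonal, and that the $\balpha$-update closing iteration $k$ is precisely the first-order condition $\nabla g^{\r,\c}|_{\mathcal L}(M_{k})\perp U$ (``the row sums of $\psi'(M_{k})$ equal $\r$''), which therefore forces $\nabla g^{\r,\c}|_{\mathcal L}(M_{k})\in W$. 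The next $\bbeta$-update maximizes $g^{\r,\c}$ over the $W$-slice through $M_{k}$, producing $M_{k+1/2}$ with $\nabla g^{\r,\c}|_{\mathcal L}(M_{k+1/2})\perp W$; combining the two first-order conditions with $\sigma_{2}(\eps)^{2}$-Lipschitzness of $\nabla g^{\r,\c}|_{\mathcal L}$ on the slab yields $\lVert\nabla g^{\r,\c}|_{\mathcal L}(M_{k})\rVert_{F}\le\sigma_{2}(\eps)^{2}\lVert M_{k+1/2}-M_{k}\rVert_{F}$, and $\sigma_{1}(\eps)^{2}$-strong concavity then gives $g^{\r,\c}(M_{k+1/2})-g^{\r,\c}(M_{k})\ge\tfrac12\sigma_{1}(\eps)^{2}\lVert M_{k+1/2}-M_{k}\rVert_{F}^{2}\ge\tfrac{\sigma_{1}(\eps)^{2}}{2\sigma_{2}(\eps)^{4}}\lVert\nabla g^{\r,\c}|_{\mathcal L}(M_{k})\rVert_{F}^{2}$. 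Plugging in the Polyak--\L{}ojasiewicz bound $\lVert\nabla g^{\r,\c}|_{\mathcal L}(M_{k})\rVert_{F}^{2}\ge2\sigma_{1}(\eps)^{2}\Delta_{k}$ (also a consequence of strong concavity, using $\nabla g^{\r,\c}|_{\mathcal L}(M^{*})=0$) and using that the subsequent $\balpha$-update only raises $g^{\r,\c}$ further, one gets $\Delta_{k}-\Delta_{k+1}\ge(\sigma_{1}(\eps)^{4}/\sigma_{2}(\eps)^{4})\Delta_{k}$; iterating from $k_{0}$ gives the right inequality of \eqref{eq:sinkhorn_linear_conv}. Routing the half-step lower bound through $\lVert M_{k+1/2}-M_{k}\rVert_{F}$ and co-coercivity (rather than through a ``gradient step'', which need not be feasible because $g^{\r,\c}$ is only smooth on the bounded slab, not on all of $\mathcal L$) is exactly what produces the fourth power $\sigma_{1}^{4}/\sigma_{2}^{4}$ rather than a square.

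\medskip
\noindent\textbf{Main obstacle.} I expect the genuine difficulty to be the localization of Step 2: ruling out, in (i), that plain alternating maximization lets the iterates drift toward $\partial\Theta$, where $\sigma_{1}$ degenerates and the rate collapses (a coercivity / no-escape argument for $g^{\r,\c}|_{\mathcal L}$ plus the inductive solvability of the Sinkhorn equations), and establishing, in (ii)--(iii), the sharp sup-norm non-expansiveness of the Sinkhorn half-maps toward the MLE that makes $k_{0}=1$ effective. Once one sits on a tame slab, the contraction estimate of Step 3 is essentially routine; its one conceptual ingredient --- that $g^{\r,\c}$ becomes honestly strongly concave only after passing to the quotient $\mathcal L$, together with the Frobenius-orthogonality of the row- and column-constant blocks --- is already in hand from Step 1.
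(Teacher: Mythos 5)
Your overall architecture matches the paper's: localize the iterates on an $\eps$-tame slab, then run the standard two-block strong-concavity/smoothness analysis (your PL-type argument in the $\balpha\oplus\bbeta$ space is equivalent to the paper's coordinate computation with Young's inequality, and both give the rate $1-\sigma_1^4/\sigma_2^4$ together with the left inequality of \eqref{eq:sinkhorn_linear_conv}); your treatment of part (i) via asymptotic convergence of alternating maximization is also the paper's. However, there is a genuine gap in part (ii). Your proposed mechanism --- compute $\bbeta_1(j)=\phi(\c(j)/m)$, then ``convexity of $\psi'$ plus $\delta$-tameness of $(\r,\c)$ pins the entries of $M_1$, and inductively of all later iterates, into $[\phi(A_\delta),\phi(B_\delta)]$'' --- does not work as stated. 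From $\r(i)/n\in[A_\delta,B_\delta]$ and $\bbeta_1(j)\in[\phi(A_\delta),\phi(B_\delta)]$ one can only conclude $\balpha_1(i)\in[\phi(A_\delta)-\phi(B_\delta),\,\phi(B_\delta)-\phi(A_\delta)]$, so the entries of $\balpha_1\oplus\bbeta_1$ are confined merely to the inflated slab $[2\phi(A_\delta)-\phi(B_\delta),\,2\phi(B_\delta)-\phi(A_\delta)]$, which need not even lie in $\Theta^\circ$ when $\Theta$ is a proper interval (counting, Lebesgue, Gamma bases --- exactly the cases where (ii) is the useful statement), and a naive induction degrades further at each step. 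Sup-norm non-expansiveness toward the MLE does not rescue (ii) either, since with $\balpha_0=\mathbf{0}$ it only yields a slab inflated by $\lVert\balpha^*\rVert_\infty$, not the $\delta$-slab with $\eps=\delta$.

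What part (ii) actually requires is the separate monotonicity statement proved in the paper as Lemma \ref{lem:L_infty_non_expansion_sinkhorn}(ii): after sorting coordinates and shifting the MLE so that $\hat\balpha\le\balpha_0=\mathbf{0}$ entrywise, one shows by induction that $\hat\bbeta-\bbeta_k$ has nonnegative increasing coordinates and $\balpha_k-\hat\balpha$ has nonnegative decreasing coordinates; this uses the fact that the differential of each Sinkhorn half-map is the negative of a row-stochastic matrix whose rows shift mass monotonically across columns precisely because $\psi''$ is increasing, and then only the two extreme sums $\balpha_k(1)+\bbeta_k(1)$ and $\balpha_k(m)+\bbeta_k(n)$ need to be compared with $\phi(A_\delta)$ and $\phi(B_\delta)$. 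Nothing in your sketch produces this ordering structure. Relatedly, the $\ell^\infty$-non-expansiveness of the half-maps toward an MLE, which you invoke for (iii) (where, granted it, your argument under \eqref{eq:def_L_sinkhorn} is the paper's), is asserted by appeal to ``order-preserving structure'' but not established; in the paper it follows from an implicit-function-theorem computation showing the Jacobian of the map $\bbeta\mapsto\balpha'$ in \eqref{eq:AM_typical_MLE2} is minus a row-stochastic matrix, hence an average, hence non-expansive in sup-norm. You correctly flag the localization as the main obstacle, but identifying it is not the same as closing it: as written, parts (ii) and (iii) rest on unproven lemmas, and for (ii) the lemma you would need is strictly stronger than the non-expansiveness you name.
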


	
	The asymptotic linear convergence in \textbf{(i)} holds the general base measure $\mu$, margin $(\r,\c)$, and initialization $\balpha_{0}$. If $\psi''$ is increasing (e.g., when $\mu$ is Gaussian, Poisson, Lebesgue measure on $\R_{\ge 0}$, Gamma, and counting measure on $\Z_{\ge 0}$; see Sec. \ref{sec:examples}), then the linear convergence holds for all iterates with zero initialization. If $\psi''$ is not necessarily increasing (e.g., $\mu=$ Binomial) but $\mu$ has thin tails so that $\Theta^{\circ}=\R$ (e.g., $\mu$ has compact support), then $\phi(B_{\eps})\rightarrow\infty$ and $\phi(A_{\eps})\rightarrow -\infty$ as $\eps\searrow 0$, so the hypothesis of \textbf{(iii)} always holds. Even when $\Theta^{\circ}$ is a proper subset of $\R$, \textbf{(iii)}  still holds if $\balpha_{0}$ is initialized within some $O(1)$-radius $L^{\infty}$-ball around $\balpha^{*}$. 

	\subsection{Applications of the general results}
	\label{sec:statement_applications}
	Now we apply our general framework above to establish results on specific aspects of $X$: (1) The (mixtures of) marginal distributions of the entries in $X$, (2) concentration of $X$  around the typical table $Z^{\r,\c}$ in the cut norm, and (3) the empirical singular value distribution of $X-Z^{\r,\c}$. All results are based on transferring properties of the comparison model $Y$ to the conditioned model $X$. Accordingly, throughout this section, we assume the transference hypotheses are satisfied -- Assumption \ref{assumption:weak_transference} for weak transference and Assumption \ref{assumption:strong_transference} for strong transference. For the latter case, we understand all statements to hold for $\nu$-almost all margins, even when not explicitly stated.

	\subsubsection{Marginal distributions}

	First, we show that a mixture of the entry-wise distributions for $X$ is very close to the corresponding mixture for $Y$. 
	We denote by $d_{TV}(\cdot,\cdot)$ the total variation distance between probability measures. 
	
	\begin{definition}[Mixture of entry-wise distribution]\label{def:mixture_dist}
		Let $X\sim \lambda_{\r,\c,\rho}$. Let $\xi_{ij}$ denote the law of $X_{ij}$ for each $i,j$. 
		Fix $I\subseteq [m]$, $J\subseteq [n]$ and define the  mixture distributions 
		\begin{align}
			\widetilde{\xi}_{I,J}:=\frac{1}{|I\times J|} \sum_{(i,j)\in I\times J} \xi_{ij} \quad \textup{ and} \quad \widetilde{\mu}_{\balpha(I)\oplus \bbeta(J)}:=\frac{1}{|I\times J|} \sum_{(i,j)\in I\times J} \mu_{\balpha(i)+\bbeta(j)}.
		\end{align} 
	\end{definition}
	
	\begin{theorem}[Mixture of entry-wise distribution]\label{thm:mixture_dist}
		Fix a $\delta$-tame margin $(\r,\c)$ with an MLE $(\balpha,\bbeta)$ and let $X\sim \lambda_{\r,\c,\rho}$ and $Y\sim \mu_{\balpha\oplus \bbeta}$.   
		Then there exists a constant $C=C(\mu,\delta)>0$ such that  each $\delta$-tame margin $(\r,\c)$, the following holds: If we denote \begin{align}\label{eq:F_r_c_def}
			\qquad F_{\mu}(\r,\c):= 	\begin{cases}
				C\rho & \textup{if $\rho\ge C\sqrt{mn(m+n)}$,} \\
				C (m\lor n)^{3/2}(\log m n) & \textup{if 
					the hypothesis of Thm. \ref{thm:second_transference_density_1} holds,} \\
				C (m+n)\log (m+ n)&\textup{if 
					the hypothesis of Thm. \ref{thm:transfer_counting_Leb} holds,}
			\end{cases}
		\end{align}
		then we have \begin{align}\label{eq:thm_mixture_distribution_2}
			d_{TV}\left( \widetilde{\xi}_{I,J}, \,  \widetilde{\mu}_{\balpha(I)\oplus \bbeta(J)}\right) \le \sqrt{\frac{F_{\mu}(\r,\c)}{|I\times J|}} + \exp(-F_{\mu}(\r,\c)). 
		\end{align} 
	\end{theorem}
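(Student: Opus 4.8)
The plan is to reduce the mixture-distribution comparison to the transference principles already established, using a coupling-free argument based on testing against indicator events. First I would note that by the strong/weak transference results (Theorems \ref{thm:transference}, \ref{thm:transfer_counting_Leb}, \ref{thm:second_transference_density_1}), for any Borel set $\mathcal{E}\subseteq \R^{m\times n}$ one has $\P(X\in \mathcal{E}) \le e^{F_\mu(\r,\c)}\,\P(Y\in \mathcal{E})$ (absorbing the $\P(Y\in \T_\rho(\r,\c))^{-1}\le 2$ factor into the constant in the $\rho\ge C\sqrt{mn(m+n)}$ regime, and using the exact-margin transference in the other two regimes with $\overline{X}\leftrightarrow\overline{Y}$ and the completion map being measurable). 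The key point is that this gives a one-sided domination $\P(X\in\mathcal E)\le e^{F}\P(Y\in\mathcal E)$ for all events, with $F=F_\mu(\r,\c)$.

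Next I would translate this into a bound on $d_{TV}(\widetilde\xi_{I,J},\widetilde\mu_{\balpha(I)\oplus\bbeta(J)})$. Fix a Borel set $S\subseteq \R$. Then $\widetilde\xi_{I,J}(S) = \frac{1}{|I\times J|}\sum_{(i,j)\in I\times J}\P(X_{ij}\in S) = \E\big[\frac{1}{|I\times J|}\sum_{(i,j)\in I\times J}\one(X_{ij}\in S)\big] = \E[N_X/|I\times J|]$, where $N_X := \#\{(i,j)\in I\times J: X_{ij}\in S\}$, and similarly $\widetilde\mu_{\balpha(I)\oplus\bbeta(J)}(S) = \E[N_Y/|I\times J|]$. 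So the difference of the two mixture masses is $\frac{1}{|I\times J|}\big(\E[N_X] - \E[N_Y]\big)$. Now I would bound $\E[N_X]$ from above in terms of $\E[N_Y]$: split into the event $\{N_X \le 2\E[N_Y] + t\}$ and its complement. On the complement, use transference: $\P(N_X > 2\E[N_Y]+t) \le e^F\,\P(N_Y > 2\E[N_Y]+t)$, and bound the latter by a concentration estimate for $N_Y$. Crucially, $N_Y$ is a sum of $|I\times J|$ independent Bernoulli indicators (entries of $Y$ are independent), so Bernstein/Chernoff gives $\P(N_Y > 2\E[N_Y]+t)\le \exp(-c(\E[N_Y]+t))$ for a universal $c$. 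Choosing $t$ of order $F$ makes $e^F\P(N_Y>2\E[N_Y]+t)$ small; then $\E[N_X] \le 2\E[N_Y]+t + |I\times J|\cdot e^F\P(N_Y>\cdots)$. This is not yet tight enough — the factor $2$ is lossy — so I would instead run the argument with $\{N_X \le \E[N_Y] + s\}$ and absorb the slack carefully: $\E[N_X] - \E[N_Y] \le s + |I\times J|\,\P(N_X > \E[N_Y]+s) \le s + |I\times J|\,e^F\,\P(N_Y > \E[N_Y]+s)$, and with $s \asymp \sqrt{F\,|I\times J|} + F$ the Bernstein bound $\P(N_Y>\E[N_Y]+s)\le \exp(-c\min(s^2/|I\times J|, s))$ kills the $|I\times J|e^F$ prefactor down to $\exp(-F)$. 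Dividing by $|I\times J|$ yields $\widetilde\xi_{I,J}(S) - \widetilde\mu_{\balpha(I)\oplus\bbeta(J)}(S)\le \sqrt{F/|I\times J|} + \exp(-F)$ up to constants. The same argument applied to the complement $S^c$ (equivalently, to $-N_X$) gives the matching lower bound, since $\widetilde\xi_{I,J}(S) - \widetilde\mu(S) = -(\widetilde\xi_{I,J}(S^c) - \widetilde\mu(S^c))$ and $N_X$ restricted to $S^c$ is again a sum of independent Bernoullis under $Y$. Taking the supremum over $S$ gives the claimed total variation bound.

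The main obstacle I anticipate is getting the one-sided transference $\P(X\in\mathcal E)\le e^F\P(Y\in\mathcal E)$ to apply to the event $\{N_X > \text{level}\}$ cleanly in all three regimes of $F_\mu(\r,\c)$: in the exact-margin cases the transference is stated for bounded functions of $\overline X$ (the deleted-row/column submatrix) rather than of $X$ directly, so I need the event $\{N_X > \text{level}\}$, which a priori involves all of $X$ including its last row and column, to be expressible as (or dominated by) an event depending only on $\overline X$ via the completion map $\Gamma_{\r,\c}$. Since $X = \Gamma_{\r,\c}(\overline X)$ deterministically, $N_X = N_{\Gamma_{\r,\c}(\overline X)}$ is indeed a measurable function of $\overline X$, so $h(\overline x) := \one(N_{\Gamma_{\r,\c}(\overline x)} > \text{level})$ is a valid bounded test function and Theorems \ref{thm:transfer_counting_Leb}, \ref{thm:second_transference_density_1} apply directly — I should make sure to phrase the whole argument in terms of $\overline X,\overline Y$ and note that the entries of $\overline Y$ are still independent Bernoulli-type indicators when restricted to $S$, so the concentration step is unaffected. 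A secondary technical point is verifying the Bernstein tail for $N_Y$ with constants uniform over $\delta$-tame margins; this follows because each $Y_{ij}$ has law $\mu_{\balpha(i)+\bbeta(j)}$ with $\balpha(i)+\bbeta(j)\in[\phi(A_\delta),\phi(B_\delta)]$, a compact set, so $\P(Y_{ij}\in S)$ and hence $\E[N_Y]$ and the variance proxy are controlled uniformly, and Chernoff's bound for sums of independent indicators is parameter-free. Finally I would double-check the optimization over $s$ to confirm the two error terms combine into exactly $\sqrt{F/|I\times J|} + \exp(-F)$ with the constant $C$ absorbed into $F_\mu(\r,\c)$ as in the statement.
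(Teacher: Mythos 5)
Your proposal follows essentially the same route as the paper's proof: test the mixture measures on indicator events via the empirical count of entries in a set, transfer the tail event from $X$ to $Y$ at cost $\exp(F_{\mu}(\r,\c))$ (noting, as you do, that under exact conditioning the count is a measurable function of $\overline{X}$ through $\Gamma_{\r,\c}$), use concentration of the count for the independent-entry model $Y$ (the paper uses Hoeffding where you use Bernstein/Chernoff, an immaterial difference), and optimize the threshold to get $\sqrt{F_{\mu}(\r,\c)/|I\times J|}+\exp(-F_{\mu}(\r,\c))$. The argument is correct and matches the paper's proof in all essential respects.
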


	Note that if $\r(i)$ and $\c(j)$ are constant for $i\in I$ and $j\in J$, then $\xi_{ij}$ and $\mu_{\balpha(i)+\bbeta(j)}$ do not depend on $(i,j)$ over $I\times J$ so the above results bound the total variation distance between the marginal distributions of the entries in $X$ and $Y$ in the block $I\times J$.

	\begin{remark}\label{rmk:CT_supercritical}
		When $\mu$ is the counting measure on an integer interval and $\rho=0$, then Theorem \ref{thm:mixture_dist} holds for a sequence of non-tame margins as the constant $C$ in the third case in \eqref{eq:F_r_c_def} is an absolute constant (see Thm. \ref{thm:transfer_counting_Leb}). In particular, this result in conjunction with Prop. \ref{prop:z_tame_unbounded_gen} recovers \cite[Thm. 2.1]{dittmer2020phase} by Dittmer, Lyu, and Pak.
	\end{remark}

	The following result is a continuous analog of Thm. \ref{thm:mixture_dist}. 
	\begin{theorem}\label{thm:mixture_dist_exact_conti}
		Keep the same setting as in Theorems \ref{thm:main_convergence_conti} and \ref{thm:mixture_dist}. Then there exists a constant $C=C(\mu,\delta)>0$ such that the following hold for all $m,n\ge 1$:
		Fix each measurable sets $S,T\subseteq [0,1]$ of positive Lebesgue measures and let $(U,V)\sim \textup{Unif}(S\times T)$.  \begin{align}\label{eq:thm_mixture_distribution_conti1}
			d_{TV}\left(\E[\xi_{\lceil mU\rceil, \lceil n V \rceil}] , \, \E[\mu_{\balpha(U)+\bbeta(V)}]\right) \le C\sqrt{ \frac{F_{\mu}(\r,\c) }{\textup{Leb}(S\times T)}}  +  C \lVert (\r,\c) - (\bar{\r}_{m},\bar{\c}_{n}) \rVert_{1}^{1/4}. 
		\end{align}
	\end{theorem}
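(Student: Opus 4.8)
The plan is to compare the conditioned matrix $X\sim\lambda_{\r_{m},\c_{n},\rho}$ to the maximum-likelihood $(\balpha_{m},\bbeta_{n})$-model $Y\sim\mu_{\balpha_{m}\oplus\bbeta_{n}}$ for the discrete margin $(\r_{m},\c_{n})$ via Theorem~\ref{thm:mixture_dist}, and then to compare the entry-wise laws of $Y$ to the continuum tilts $\mu_{\balpha(U)+\bbeta(V)}$ via the quantitative MLE convergence of Theorem~\ref{thm:main_convergence_conti}. Let $(\balpha_{m},\bbeta_{n})$ be the standard MLE for $(\r_{m},\c_{n})$ and, as in Theorem~\ref{thm:main_convergence_conti}, set $\bar{\balpha}_{m}(x):=\balpha_{m}(\lceil mx\rceil)$, $\bar{\bbeta}_{n}(y):=\bbeta_{n}(\lceil ny\rceil)$, so that the law of $Y_{\lceil mU\rceil,\lceil nV\rceil}$ equals $\mu_{\bar{\balpha}_{m}(U)+\bar{\bbeta}_{n}(V)}$. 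By the triangle inequality the left-hand side of the claimed bound is at most $(\mathrm{I})+(\mathrm{II})$, where $(\mathrm{I}):=d_{TV}\bigl(\E[\xi_{\lceil mU\rceil,\lceil nV\rceil}],\E[\mu_{\bar{\balpha}_{m}(U)+\bar{\bbeta}_{n}(V)}]\bigr)$ is the mixture-of-entry-wise-law gap between $X$ and $Y$, and $(\mathrm{II}):=d_{TV}\bigl(\E[\mu_{\bar{\balpha}_{m}(U)+\bar{\bbeta}_{n}(V)}],\E[\mu_{\balpha(U)+\bbeta(V)}]\bigr)$ is the gap between the discrete and the continuum tilts.

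To bound $(\mathrm{I})$ I would invoke Theorem~\ref{thm:mixture_dist}. We may assume $m\,\textup{Leb}(S)\ge1$ and $n\,\textup{Leb}(T)\ge1$, as otherwise the first term on the right of the claimed bound already exceeds $1$ and there is nothing to prove. The law of $(\lceil mU\rceil,\lceil nV\rceil)$ is a product measure $p\otimes q$ on $[m]\times[n]$ with $\lVert p\rVert_{\infty}\le(m\,\textup{Leb}(S))^{-1}$ and $\lVert q\rVert_{\infty}\le(n\,\textup{Leb}(T))^{-1}$, and since $\bar{\balpha}_{m},\bar{\bbeta}_{n}$ are constant on the blocks, $(\mathrm{I})=d_{TV}\bigl(\sum_{i,j}p_{i}q_{j}\xi_{ij},\sum_{i,j}p_{i}q_{j}\mu_{\balpha_{m}(i)+\bbeta_{n}(j)}\bigr)$. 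A probability vector on $[m]$ with sup-norm at most $1/k$ is a convex combination of uniform distributions on $k$-element subsets; taking $k:=\lfloor m\,\textup{Leb}(S)\rfloor$ and $k':=\lfloor n\,\textup{Leb}(T)\rfloor$ lets us write $p\otimes q$ as a convex combination of uniform distributions on rectangles $I\times J$ with $|I\times J|=kk'\ge\tfrac14 mn\,\textup{Leb}(S\times T)$. By convexity of $d_{TV}$ and Theorem~\ref{thm:mixture_dist} applied to each such rectangle, $(\mathrm{I})\le C\sqrt{F_{\mu}(\r,\c)/(mn\,\textup{Leb}(S\times T))}+\exp(-F_{\mu}(\r,\c))$, which is at most the first term $C\sqrt{F_{\mu}(\r,\c)/\textup{Leb}(S\times T)}$ of the claimed bound after enlarging $C$ (the exponential being negligible since $F_{\mu}(\r,\c)\ge1$ in all regimes of \eqref{eq:F_r_c_def}).

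For $(\mathrm{II})$, joint convexity of $d_{TV}$ gives $(\mathrm{II})\le\E_{(U,V)}\bigl[d_{TV}(\mu_{\bar{\balpha}_{m}(U)+\bar{\bbeta}_{n}(V)},\mu_{\balpha(U)+\bbeta(V)})\bigr]$. By $\delta$-tameness of $(\r_{m},\c_{n})$ (Definition~\ref{def:ab}, \eqref{eq:ab}) and Theorem~\ref{thm:main_convergence_conti}(i), all the tilt parameters here lie in the compact interval $[\phi(A_{\delta}),\phi(B_{\delta})]\subset\Theta^{\circ}$, on which $\psi''$ is bounded; the exponential-family identity $D(\mu_{\theta}\Vert\mu_{\theta'})=\tfrac12\psi''(\zeta)(\theta-\theta')^{2}$ for some $\zeta$ between $\theta,\theta'$, combined with Pinsker's inequality, gives $d_{TV}(\mu_{\theta},\mu_{\theta'})\le L\,|\theta-\theta'|$ with $L:=\tfrac12\bigl(\sup_{[\phi(A_{\delta}),\phi(B_{\delta})]}\psi''\bigr)^{1/2}$. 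Hence $(\mathrm{II})\le L\bigl(\E_{U\sim\textup{Unif}(S)}|\bar{\balpha}_{m}(U)-\balpha(U)|+\E_{V\sim\textup{Unif}(T)}|\bar{\bbeta}_{n}(V)-\bbeta(V)|\bigr)$. The $\tfrac14$-power comes from interpolation: writing $Z:=|\bar{\balpha}_{m}-\balpha|$ and $M:=\max(|\phi(A_{\delta})|,|\phi(B_{\delta})|)$ we have $0\le Z\le2M$, hence $Z\le\sqrt{2M}\,Z^{1/2}$ pointwise, so by Cauchy--Schwarz on $S$ and then $\lVert\cdot\rVert_{1}\le\lVert\cdot\rVert_{2}$ on $[0,1]$,
\[
\E_{U\sim\textup{Unif}(S)}[Z]\ \le\ \frac{\sqrt{2M}}{\textup{Leb}(S)}\int_{S}Z^{1/2}\,dx\ \le\ \frac{\sqrt{2M}}{\textup{Leb}(S)^{1/2}}\,\lVert\bar{\balpha}_{m}-\balpha\rVert_{1}^{1/2}\ \le\ \frac{\sqrt{2M}}{\textup{Leb}(S)^{1/2}}\,\lVert\bar{\balpha}_{m}-\balpha\rVert_{2}^{1/2}.
\]
By Theorem~\ref{thm:main_convergence_conti}(ii), inequality \eqref{eq:main_thm_typical_rate_MLE}, the quantities $\lVert\bar{\balpha}_{m}-\balpha\rVert_{2}^{2}$ and $\lVert\bar{\bbeta}_{n}-\bbeta\rVert_{2}^{2}$ are at most a constant times $\lVert(\r,\c)-(\bar{\r}_{m},\bar{\c}_{n})\rVert_{1}$, so $\lVert\bar{\balpha}_{m}-\balpha\rVert_{2}^{1/2}\le C\lVert(\r,\c)-(\bar{\r}_{m},\bar{\c}_{n})\rVert_{1}^{1/4}$; since $\textup{Leb}(S)^{-1/2}+\textup{Leb}(T)^{-1/2}\le2\,\textup{Leb}(S\times T)^{-1/2}$ this gives $(\mathrm{II})\le C\,\textup{Leb}(S\times T)^{-1/2}\lVert(\r,\c)-(\bar{\r}_{m},\bar{\c}_{n})\rVert_{1}^{1/4}$.

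It remains to absorb the $\textup{Leb}$ factor by a one-line case split: if $C\sqrt{F_{\mu}(\r,\c)/\textup{Leb}(S\times T)}\ge1$ the claimed bound is trivial since $d_{TV}\le1$; otherwise $\textup{Leb}(S\times T)>C^{2}F_{\mu}(\r,\c)\ge1$, so $\textup{Leb}(S\times T)^{-1/2}<1$ and $(\mathrm{II})\le C\lVert(\r,\c)-(\bar{\r}_{m},\bar{\c}_{n})\rVert_{1}^{1/4}$; combining $(\mathrm{I})$ with $(\mathrm{II})$ and renaming $C$ yields the assertion. I expect the only genuine obstacle to be step $(\mathrm{I})$ — promoting the uniform-over-a-rectangle estimate of Theorem~\ref{thm:mixture_dist} to the non-uniform average induced by $\textup{Unif}(S\times T)$, which I would handle by the convex decomposition of $p\otimes q$ into uniform-on-$k$-subset mixtures above, the bookkeeping with the floors $k,k'$ and with partial boundary grid cells being the fiddly part. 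Step $(\mathrm{II})$ — convexity of $d_{TV}$, Pinsker together with the second-order Taylor expansion of $\psi$, and the interpolation $Z\le\sqrt{2M}\,Z^{1/2}$ — is routine.
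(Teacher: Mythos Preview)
Your proposal follows the same two-step decomposition as the paper: triangle inequality into (I) the discrete mixture gap, handled by Theorem~\ref{thm:mixture_dist}, and (II) the discrete-to-continuum tilt gap, handled by Pinsker plus the MLE rate of Theorem~\ref{thm:main_convergence_conti}. For (I) you are actually more careful than the paper, which invokes Theorem~\ref{thm:mixture_dist} without addressing that $(\lceil mU\rceil,\lceil nV\rceil)$ is in general not uniform on a product set; your convex-decomposition argument fills that gap cleanly.

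The one genuine difference is in how (II) produces the $\tfrac14$-power. The paper goes directly: from Pinsker and the crude first-order bound $D(\mu_\theta\Vert\mu_{\theta'})\le C_\delta|\theta-\theta'|$ it gets $d_{TV}^4\le C|\theta-\theta'|^2$, and then the power-mean inequality $\E[d_{TV}]\le(\E[d_{TV}^4])^{1/4}$ together with \eqref{eq:main_thm_typical_rate_MLE} finishes in one line. Your route --- the sharper Lipschitz bound $d_{TV}\le L|\theta-\theta'|$, then the boundedness interpolation $Z\le\sqrt{2M}\,Z^{1/2}$ and Cauchy--Schwarz --- reaches the same exponent by a longer path and in fact picks up an extra factor $\textup{Leb}(S\times T)^{-1/4}$ relative to the paper's approach. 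Your final case split (``otherwise $\textup{Leb}(S\times T)>C^2F_\mu\ge1$'') is vacuous since $S,T\subseteq[0,1]$ forces $\textup{Leb}(S\times T)\le1$; this is harmless because the first term of the stated bound then already exceeds $1$, but you should be aware that the paper's proof is equally casual about this $\textup{Leb}$ prefactor on (II).
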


	In 2010, Barvinok asked whether one can obtain the asymptotic distribution of an entry in a uniformly random contingency table  (for $\mu=\textup{Counting}(\Z_{\ge 0})$) for a sequence of cloned margins (see Ex. \ref{ex:cloned_margin}). 
	Namely, he conjectured that the limiting marginal distribution is Bernoulli for binary ($b=1$) contingency tables \cite{barvinok2010number} and geometric for unbounded ($b=\infty$) contingency tables \cite{barvinok2010does} with mean given by the corresponding entry in the typical table. 
	The following corollary of Thm. \ref{thm:mixture_dist_exact_conti} establishes these conjectures. Similar results hold with greater generality for possibly non-uniformly weighted contingency tables and the limiting marginal distribution of an entry is an exponential tilt of the base measure $\mu$.

	\begin{corollary}\label{cor:clone}
		Keep the same setting in Thm. \ref{thm:mixture_dist_exact_conti} with $\rho=0$.
		Suppose $(\r,\c)$ is the $k$-cloning of some $m_{0}\times n_{0}$ margin $(\r_{0},\c_{0})$ with an MLE $(\balpha_{0},\bbeta_{0})$ (see Ex. \ref{ex:cloned_margin}). Then as $k\rightarrow\infty$, 
		\begin{align}
			d_{TV}(\xi_{11}, \mu_{\balpha_{0}(1)+\bbeta_{0}(1)})  = \begin{cases}
				O\left(  k^{-1/4}\log k \right)  
				& \textup{if the hypothesis of Cor.  \ref{thm:second_transference_density_1} holds}, \\
				O\left(  k^{-1/2}\sqrt{\log k} \right) & \textup{if the hypothesis of Thm. \ref{thm:transfer_counting_Leb} holds}.
			\end{cases}
		\end{align}
	\end{corollary}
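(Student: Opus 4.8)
The plan is to deduce the corollary from Theorem~\ref{thm:mixture_dist} (equivalently its continuum version Theorem~\ref{thm:mixture_dist_exact_conti}) by using the block symmetry of a cloned margin to collapse the single-entry total variation distance onto a mixture over a large homogeneous block of entries, and then tracking the size of $F_{\mu}(\r,\c)$ along the cloning sequence.

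First I would record the structure. Write $m=m_{0}k$, $n=n_{0}k$. Since $(\r_{0},\c_{0})$ has an MLE $(\balpha_{0},\bbeta_{0})$, it is $\delta_{0}$-tame for some $\delta_{0}=\delta_{0}(\mu,m_{0},n_{0})>0$, and by Example~\ref{ex:cloned_margin} its $k$-cloning $(\r,\c)$ is $\delta_{0}$-tame for \emph{every} $k$, with MLE $(\balpha,\bbeta)=(\balpha_{0}\otimes\mathbf{1}_{k},\,\bbeta_{0}\otimes\mathbf{1}_{k})$, so that $\balpha(i)=\balpha_{0}(\lceil i/k\rceil)$ and $\bbeta(j)=\bbeta_{0}(\lceil j/k\rceil)$; in particular $\balpha(i)+\bbeta(j)=\balpha_{0}(1)+\bbeta_{0}(1)$ for all $i,j\in\{1,\dots,k\}$.

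Next comes the key step, a symmetrization. Set $I=J=\{1,\dots,k\}$. Permuting the rows within $\{1,\dots,k\}$, or the columns within $\{1,\dots,k\}$, fixes the margin $(\r,\c)$ because $\r$ and $\c$ are constant on these blocks, and it preserves $\mu^{\otimes(m\times n)}$ by exchangeability; hence it preserves $\lambda_{\r,\c}=\lambda_{\r,\c,0}$ (in the strong-transference case this is read for the disintegration version of $\lambda_{\r,\c}$, the conditioning being approximated by small Euclidean balls around $(\r,\c)$, which are invariant under the relevant coordinate permutations). Therefore $X_{ij}$ has the same law as $X_{11}$ for every $(i,j)\in I\times J$, so $\widetilde{\xi}_{I,J}=\xi_{11}$; and by the previous paragraph $\widetilde{\mu}_{\balpha(I)\oplus\bbeta(J)}=\mu_{\balpha_{0}(1)+\bbeta_{0}(1)}$ (which, being $\mu_{\phi(Z^{\r_{0},\c_{0}}_{11})}$, does not depend on the chosen MLE).

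With this reduction I would apply Theorem~\ref{thm:mixture_dist} to the $\delta_{0}$-tame margin $(\r,\c)$ with this $I,J$, so $|I\times J|=k^{2}$ and
\[
d_{TV}\!\left(\xi_{11},\,\mu_{\balpha_{0}(1)+\bbeta_{0}(1)}\right)\ \le\ \sqrt{\frac{F_{\mu}(\r,\c)}{k^{2}}}\ +\ \exp\!\bigl(-F_{\mu}(\r,\c)\bigr),
\]
and then insert the appropriate value of $F_{\mu}$. If the hypothesis of Theorem~\ref{thm:transfer_counting_Leb} holds (a condition on $\mu$ alone, hence for all $k$), then $F_{\mu}(\r,\c)=C(m+n)\log(m+n)=O(k\log k)$, which gives $d_{TV}=O\!\bigl(k^{-1/2}\sqrt{\log k}\bigr)$ since the $\exp(-F_{\mu})$ term is super-polynomially small. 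If instead the hypothesis of Theorem~\ref{thm:second_transference_density_1} holds, I would first check that its dimension condition is met by the cloned margin for large $k$ (e.g.\ with $\gamma=\tfrac12$, $m\land n=(m_{0}\land n_{0})k\ge\sqrt{(m_{0}\lor n_{0})k}=(m\lor n)^{1/2}$ once $k\ge (m_{0}\lor n_{0})/(m_{0}\land n_{0})^{2}$), its other hypothesis again being about $\mu$; then $F_{\mu}(\r,\c)=C(m\lor n)^{3/2}\log(mn)=O\!\bigl(k^{3/2}\log k\bigr)$, giving $d_{TV}=O\!\bigl(k^{-1/4}\sqrt{\log k}\bigr)=O\!\bigl(k^{-1/4}\log k\bigr)$. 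I do not expect a genuine obstacle here: the entire content is the symmetrization that collapses $\xi_{11}$ onto the mixture over the largest index block on which both $\xi_{ij}$ and $\balpha(i)+\bbeta(j)$ are constant, after which the statement is Theorem~\ref{thm:mixture_dist} plus elementary asymptotics of $F_{\mu}$; the only mildly delicate points are verifying the dimension hypothesis of Theorem~\ref{thm:second_transference_density_1} for cloned margins and the usual $\nu$-a.e.\ caveat on the conditional law in the continuous case.
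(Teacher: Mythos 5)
Your proposal is correct and follows essentially the same route as the paper: exploit the exchangeability of the first $k\times k$ block (the cloned margin is constant there, so permutations of those rows/columns preserve $\lambda_{\r,\c}$, giving $\widetilde{\xi}_{I,J}=\xi_{11}$ and $\widetilde{\mu}_{\balpha(I)\oplus\bbeta(J)}=\mu_{\balpha_{0}(1)+\bbeta_{0}(1)}$ for $I=J=[k]$), then apply Theorem \ref{thm:mixture_dist} with $|I\times J|=k^{2}$ and the asymptotics of $F_{\mu}(\r,\c)$ in each transference regime. Your added checks (the dimension hypothesis of Theorem \ref{thm:second_transference_density_1} for cloned margins and the $\nu$-a.e.\ caveat) are details the paper leaves implicit, not a different argument.
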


	In 2010, Chatterjee, Diaconis, and Sly showed that when $X$ is an $n\times n$ uniformly random doubly stochastic matrix, 
	$n \xi_{11}$ is asymptotically the exponential distribution with mean 1 
	\cite{chatterjee2014properties}. Our Cor. \ref{cor:clone} with $\mu=\textup{Leb}(\R_{\ge 0})$ and  $\r=\c=n\mathbf{1}_{n}$ recovers their result and extends it to general cloned margins possibly with non-uniform weights.

	\subsubsection{Scaling limit in cut norm}
	\label{sec:convergence_cutnorm}
	
	Next, we show that $X$ concentrates around the typical table $Z^{\r,\c}$ in the cut-norm, which is commonly used in the theory of dense graph limits. (See Section \ref{sec:proof_typical_kernels} for more discussion on the cut norm. See \cite{borgs2008convergent, borgs2012convergent, lovasz2012large} for more background on the limit theory of dense graphs utilizing cut metric.) 
	
	\begin{definition}[Cut norm for kernels]\label{def:cut_norm}
		The \textit{cut-norm} of a kernel $W$ is defined as 
		\begin{align}\label{eq:def_cut_norm}
			\lVert W \rVert_{\square} := \sup_{S,T\subseteq [0,1]} \left|  \int_{S\times T} W(x,y)\, dx\,dy \right|.
		\end{align}
	\end{definition}

	\begin{theorem}[Scaling limit in cut norm]\label{thm:CT_limit_bdd_integer}
		Keep the same setting as in Theorems \ref{thm:main_convergence_conti} and \ref{thm:mixture_dist}. 
		Let $W^{\r,\c}$ be the limiting typical kernel in Theorem \ref{thm:main_convergence_conti}.
		Then there exists a constant $C=C(\mu,\delta)>0$ such that the following hold for all $m,n\ge 1$. With probability at least $1- 2\exp \left(  - F_{\mu}(\r,\c)  \right)$, 
		\begin{align}\label{eq:exact_conditioning_cor_tight}
			\lVert W_{X} - W^{\r,\c} \rVert_{\square} 
			\le   \sqrt{ \frac{F_{\mu}(\r,\c)}{mn} }  + C\sqrt{ \lVert (\r,\c) - (\bar{\r}_{m},\bar{\c}_{n}) \rVert_{1}}.
		\end{align}
		In particular, $W_{X}\rightarrow W^{\r,\c}$ in cut norm almost surely as $m,n \rightarrow\infty$. 
	\end{theorem}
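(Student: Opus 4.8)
The plan is to decompose the target quantity via the triangle inequality into three pieces and control each separately. Write
\begin{align*}
\lVert W_X - W^{\r,\c}\rVert_\square \le \lVert W_X - W_{Z^{\r_m,\c_n}}\rVert_\square + \lVert W_{Z^{\r_m,\c_n}} - W^{\r,\c}\rVert_\square.
\end{align*}
The second term is \emph{deterministic}: since the cut norm is dominated by the $L^2$-norm (indeed $\lVert W\rVert_\square \le \lVert W\rVert_1 \le \lVert W\rVert_2$), Theorem \ref{thm:main_convergence_conti}(ii) gives $\lVert W_{Z^{\r_m,\c_n}} - W^{\r,\c}\rVert_\square \le \lVert W^{\r,\c} - W_{Z^{\r_m,\c_n}}\rVert_2 \le C_\delta^{1/2}(\sup_{[\phi(A_\delta),\phi(B_\delta)]}\psi'')^{1/2}\,\lVert(\r,\c)-(\bar{\r}_m,\bar{\c}_n)\rVert_1^{1/2}$, which absorbs into the $C\sqrt{\lVert(\r,\c)-(\bar{\r}_m,\bar{\c}_n)\rVert_1}$ term. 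So the work is entirely in the first term.

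For the first term, the strategy is \textbf{transference}: first prove the cut-norm concentration $\lVert W_Y - W_{Z^{\r,\c}}\rVert_\square$ is small with overwhelming probability for the \emph{independent} comparison model $Y\sim\mu_{\balpha\oplus\bbeta}$, then push this to $X$ using Theorems \ref{thm:transference}/\ref{thm:transfer_counting_Leb}/\ref{thm:second_transference_density_1} according to which transference regime the $F_\mu(\r,\c)$ bound corresponds to. For $Y$: note $\E[Y] = \psi'(\balpha\oplus\bbeta) = Z^{\r,\c}$ by Theorem \ref{thm:strong_duality_simple}, so $Y - Z^{\r,\c}$ is a matrix of independent, mean-zero, uniformly sub-exponential entries (the $\delta$-tameness bounds the tilt parameters in $[\phi(A_\delta),\phi(B_\delta)]$, hence bounds the variances $\psi''$ and the sub-exponential Orlicz norms uniformly). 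The cut norm $\lVert W_{Y-Z^{\r,\c}}\rVert_\square = \frac{1}{mn}\max_{S\subseteq[m],T\subseteq[n]}\bigl|\sum_{i\in S,j\in T}(Y_{ij}-Z^{\r,\c}_{ij})\bigr|$; for each fixed pair $(S,T)$ this is a sum of independent centered sub-exponentials, so Bernstein's inequality gives a tail of the form $\exp(-c\min\{t^2/(|S||T|), t\})$ at deviation $t$. Taking $t = \varepsilon mn$ and union-bounding over the $2^{m+n}$ choices of $(S,T)$: as long as $\varepsilon^2 mn \gg m+n$ (equivalently $\varepsilon \gg \sqrt{(m+n)/mn}$), the union bound wins, and we get $\P(\lVert W_Y - W_{Z^{\r,\c}}\rVert_\square > \varepsilon) \le 2\exp(-c'\varepsilon^2 mn + (m+n))$. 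Choosing $\varepsilon = \sqrt{F_\mu(\r,\c)/mn}$ and checking that in each of the three regimes $F_\mu(\r,\c) \gg m+n$ (which holds: $F_\mu \gtrsim (m+n)\log(m+n)$ in the counting/Lebesgue case, $\gtrsim (m\lor n)^{3/2}\log mn$ in the density case, and $\gtrsim \sqrt{mn(m+n)}$ in the $\rho$-regime), this yields $\P(\lVert W_Y - W_{Z^{\r,\c}}\rVert_\square > \sqrt{F_\mu(\r,\c)/mn}) \le \exp(-\Omega(F_\mu(\r,\c)))$. Finally, apply transference to the Borel event $\mathcal{E} = \{\x : \lVert W_{\x} - W_{Z^{\r,\c}}\rVert_\square > \sqrt{F_\mu(\r,\c)/mn}\}$: the transference inequality multiplies the probability by $\exp(O(F_\mu(\r,\c)))$ (this is precisely the role of the $F_\mu$ bound — it is the log of the transference cost), but the $Y$-probability is $\exp(-\Omega(F_\mu(\r,\c)))$ with a large enough implied constant (or, more carefully, one runs the Bernstein estimate with an extra multiplicative slack so the two constants can be made to cooperate), so the product is still small, of order $\exp(-\Omega(F_\mu(\r,\c)))$; matching the claimed $2\exp(-F_\mu(\r,\c))$ after adjusting $C$.

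The main obstacle I anticipate is \textbf{bookkeeping the constants across the union bound and the transference step simultaneously}: the union bound over $2^{m+n}$ cut-sets costs a factor $e^{m+n}$, while transference costs $e^{O(F_\mu)}$, and we need the Bernstein exponent $c'\varepsilon^2 mn = c' F_\mu$ to dominate \emph{both}. This forces care in the choice of the sub-exponential concentration constant relative to the transference constant $C_1$ from Theorem \ref{thm:transference} (or its analogues); the clean fix is to first establish the $Y$-tail at level $\varepsilon = K\sqrt{F_\mu/mn}$ for a free parameter $K$, obtaining $Y$-probability $\le \exp(-c'K^2 F_\mu + (m+n)) \le \exp(-\frac{c'}{2}K^2 F_\mu)$ for $F_\mu \ge 2(m+n)/(c'K^2)$, then choose $K$ large enough (depending only on $\mu,\delta$) that $\frac{c'}{2}K^2 > C_1 + 1$ (or the relevant transference constant), so the product survives. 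A secondary, minor point is verifying $\E[Y]=Z^{\r,\c}$ cleanly and that $\delta$-tameness indeed yields uniform sub-exponential norms — but this is immediate from Definition \ref{def:ab} together with $\psi$ being smooth and finite on $\Theta^\circ$. The "in particular" almost-sure statement then follows by Borel–Cantelli, since $\exp(-F_\mu(\r,\c))$ is summable along any sequence with $m,n\to\infty$ in each regime (as $F_\mu \to \infty$ at least polynomially).
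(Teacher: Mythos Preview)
Your proposal is correct and follows essentially the same approach as the paper: triangle-inequality splitting into a deterministic piece (controlled by Theorem~\ref{thm:main_convergence_conti}) plus a random piece, where the latter is handled by Bernstein-type concentration for $Y-\E[Y]$ combined with a union bound over the $2^{m+n}$ cut-sets (this is precisely the paper's Lemma~\ref{lem:hoeffding_geo}, built on Lemma~\ref{lem:concentration_quadratic_form}), followed by transference and Borel--Cantelli. Your discussion of the constant bookkeeping via the free parameter $K$ is in fact more explicit than the paper's treatment, which simply defers to ``a similar argument as in the proof of the second part of Thm.~\ref{thm:mixture_dist}.''
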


	\subsubsection{Empirical Singular Value Distribution} 
	
	Our last application is in analyzing the fluctuation of $X$ around the typical table $Z^{\r_{m},\c_{n}}$. A natural way to do so is to characterize the limiting spectral measure of the `centered' random matrix $X-Z^{\r_{m},\c_{n}}$ as the margins converge in $L^{1}$ to a continuum margin $(\r,\c)$ as $m,n\rightarrow\infty$. The study of spectral measures of large random matrices is a central topic in the random matrix literature (e.g., see \cite{anderson2010introduction} and references therein). 
	However, there is a relatively scarce literature on limiting spectral measure of margin-conditioned random matrices, only for constant linear margins with  Lebesgue and counting base measures \cite{chatterjee2014properties, nguyen2014random, wu2023asymptotic}.  We aim to address this problem for the \textit{empirical singular value distribution} (ESD) for arbitrary margins and base measures under a mild assumption.

	We establish a general result on the limit of ESD of margin-conditioned random matrices by first establishing it for the maximum likelihood tilted model and then transferring it to the conditioned model. In Sec. \ref{sec:transference_main}, we have discussed the weak transference (Thm. \ref{thm:transference}) for approximate margin conditioning but for the exact margin conditioning, we need to impose some conditions on $\mu$ (e.g., Thm. \ref{thm:second_transference_density_1}). In addition, we need a strong concentration of the ESD of the tilted model so that the strictly super-exponential transference costs can be suppressed. Guionnet and Zeitouni \cite{guionnet2000concentration}  provide a sub-Gaussian concentration of ESD when the entries of the random matrix have either bounded support or satisfy a uniform 
	log-Sobolev inequality. Klochkov and Zhivotovskiy \cite[Lem. 1.4]{klochkov2020uniform} show that the entries being uniformly sub-Gaussian is enough to warrant such sub-Gaussian concentration of ESD. 
	We now state our main result on the limit of ESD of $X-Z^{\r_{m},\c_{n}}$ and  $Y-Z^{\r_{m},\c_{n}}$. Put succinctly, the limiting ESD  is completely determined by the limiting `variance profile' $\psi''(\balpha\oplus \bbeta)$ through the corresponding Dyson equation. 
	
	\begin{theorem}[Limit of ESD]\label{thm:ESD}
		Keep the same setting as in Theorems \ref{thm:main_convergence_conti} and let  $W^{\r,\c}$ and $(\balpha,\bbeta)$ be the limiting typical kernel and limiting MLE.  Suppose $m/n\rightarrow \kappa\in (0,1)$. 
		\begin{description}[itemsep=0.1cm]
			\item[(i)] 
			Let $\xi_{m,n}$ denote the empirical measure on the singular values of $\widetilde{Y}=\frac{1}{\sqrt{(m+n)s^{*}/2}}(Y-Z^{\r,\c})$ with 
			$Y\sim \mu_{\balpha_{m}\oplus \bbeta_{n}}$		and $s^{*}:=\sup \psi''(\balpha\oplus \bbeta)$. Then there exists a probability measure $\xi$ on $\R$ determined by $\psi''(\balpha\oplus \bbeta)$ with support contained in $[0, 2]$ such that $\xi_{m,n}\rightarrow \xi$ in probability as $m,n\rightarrow\infty$ in the weak topology. The Stieltjes transform of $\xi$ is determined by 
			\begin{align}\label{eq:Stieltjes_measure}
				\qquad 	\langle \tau_{\cdot}(z) \rangle	:= \int_{(0,1)} \tau_{x}(z)\, dx	=	\int_{\R}  \frac{1}{t-z} \, \xi(dt) \qquad \textup{for all $z\in \mathbb{H}:=\{ z\in \mathbb{C}\,|\, \textup{Im}(z)>0 \}$},
			\end{align}
			where $\tau:\mathbb{H}\rightarrow L^{\infty}(0,1)$ is the unique uniformly bounded solution to the Dyson equation 
			\begin{align}\label{eq:wishart_QVE}
				- \frac{1}{\tau_{\cdot}(z)} = z - S \frac{1}{1+S^{*} \tau_{\cdot}(z)} \qquad \textup{for all $z\in \mathbb{H}$}, 
			\end{align}
			where $S$ denotes the integral operator with kernel $\psi''(\balpha\oplus \bbeta)$ and $S^{*}$ is the adjoint operator of $S$.
			
			Furthermore, the measure $\xi$ is absolutely continuous w.r.t. the Lebesgue measure apart from a possible point mass at zero, i.e., there is a number $q^{*} \in [0, 1]$ and a locally Hölder-continuous function $q: (0,\infty) \rightarrow [0,\infty)$ such that $\xi(dx) = q^{*} \delta_{0}(dx) + q(x) \mathbf{1}(x>0) dx$.  
			
			\item[(ii)]  The results in \textup{\textbf{(i)}} hold for the ESD of $\tilde{X}=\frac{1}{\sqrt{(m+n)s^{*}/2}}(X-Z^{\r,\c})$ for $X\sim \lambda_{\r_{m},\c_{n};\rho}$ under the following  conditions: 
			\begin{description}[itemsep=0.1cm] 
				\item{(A1)} (Transference) Any one of the following holds:
				\begin{description}
					\item{(i)}   $\rho=o(mn)$ and  $\rho\ge C_{2}\sqrt{mn(m+n)}$ for $C_{2}$ in Thm. \ref{thm:transference};  
					\item{(ii)} $\rho=0$ and the hypothesis of Thm. \ref{thm:second_transference_density_1} is satisfied.
				\end{description}

				\item{(A2)} (Sub-Gaussian ESD concentration) $\mu$ is a sub-Gaussian probability measure. 
				
			\end{description}
		\end{description}

	\end{theorem}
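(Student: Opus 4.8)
The plan is to establish \textbf{(i)} for the independent-entry comparison model and then obtain \textbf{(ii)} by combining it with the transference principles and a sufficiently sharp concentration of the empirical singular value distribution (ESD). First I would reduce \textbf{(i)} to a statement about Wigner-type matrices. By Theorem~\ref{thm:strong_duality_simple}, $Z^{\r,\c}=\psi'(\balpha_m\oplus\bbeta_n)=\E[Y]$, so $Y-Z^{\r,\c}$ has independent, centered entries with variance profile $\E\big[(Y_{ij}-Z^{\r,\c}_{ij})^2\big]=\psi''(\balpha_m(i)+\bbeta_n(j))$. Since $(\r_m,\c_n)$ is $\delta$-tame, $\balpha_m\oplus\bbeta_n$ takes values in a compact subinterval $[\phi(A_\delta),\phi(B_\delta)]\subset\Theta^\circ$ on which $\psi''$ is bounded above and below by positive constants and the exponential moments $e^{\psi(\theta+\lambda)-\psi(\theta)}$ are uniformly finite for small $|\lambda|$; hence the entries of $Y-Z^{\r,\c}$ are uniformly sub-exponential with a uniformly elliptic variance profile. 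Hermitizing $\widetilde Y$ --- passing to the $(m+n)\times(m+n)$ Hermitian matrix with $\widetilde Y,\widetilde Y^{*}$ as off-diagonal blocks and zero diagonal blocks, whose nonzero eigenvalues are $\pm$(singular values of $\widetilde Y$) --- turns \textbf{(i)} into the limiting-law problem for a Wigner-type matrix whose off-diagonal block carries the variance kernel $\tfrac{2}{(m+n)s^{*}}\psi''(\balpha_m\oplus\bbeta_n)$.

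Next I would invoke the now-standard theory of Hermitian random matrices with a variance profile and the associated matrix Dyson / quadratic vector equation: the ESD of the Hermitization converges in probability to the symmetric measure whose Stieltjes transform is the average of the deterministic matrix Dyson solution, and the block structure forces that solution to split into a ``row'' part and a ``column'' part; eliminating the column part produces exactly \eqref{eq:wishart_QVE} for the row part $\tau$, the normalization $(m+n)s^{*}/2$ in the definition of $\widetilde Y$ being chosen so that the bare kernel $\psi''(\balpha\oplus\bbeta)$ appears, and unsymmetrizing yields \eqref{eq:Stieltjes_measure} for $\xi_{m,n}$. Existence and uniqueness of the uniformly bounded $\tau$, the fact that $\langle\tau_\cdot(z)\rangle$ is a Stieltjes transform of a probability measure, the support bound $\supp(\xi)\subseteq[0,2]$ (from $\psi''(\balpha\oplus\bbeta)\le s^{*}$), and the decomposition $\xi=q^{*}\delta_0+q(x)\mathbf{1}(x>0)\,dx$ with $q$ locally Hölder and $q^{*}\in[0,1]$ would all be quoted from the regularity theory of the quadratic vector equation on the complex upper half-plane. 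To pass to the continuum, note that at finite $(m,n)$ the deterministic equation uses the step kernel $\psi''(\bar\balpha_m\oplus\bar\bbeta_n)$; Theorem~\ref{thm:main_convergence_conti} gives $\lVert(\balpha,\bbeta)-(\bar\balpha_m,\bar\bbeta_n)\rVert_2\to0$, and since $\psi''$ is Lipschitz on $[\phi(A_\delta),\phi(B_\delta)]$ this forces $\lVert\psi''(\bar\balpha_m\oplus\bar\bbeta_n)-\psi''(\balpha\oplus\bbeta)\rVert_2\to0$; the quadratic vector equation being stable under such $L^2$-perturbations of its kernel, the deterministic measures converge weakly to $\xi$, and together with the global law this gives $\xi_{m,n}\to\xi$ in probability in the weak topology, which is \textbf{(i)}.

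For \textbf{(ii)} I would fix a bounded Lipschitz $f$ and $\eps>0$ and set $g(\y):=\langle\,\text{ESD of }\tfrac{1}{\sqrt{(m+n)s^{*}/2}}(\y-Z^{\r,\c}),\,f\,\rangle$. By $\sum_k(s_k(A)-s_k(B))^2\le\lVert A-B\rVert_F^2$ and Cauchy--Schwarz, $g$ is $O(1/(m+n))$-Lipschitz in Frobenius norm (using $m/n\to\kappa\in(0,1)$). Under \textup{(A2)}, $\mu$, hence $\mu_\theta$ uniformly over $\theta\in[\phi(A_\delta),\phi(B_\delta)]$, is sub-Gaussian, so $Y$ has independent uniformly sub-Gaussian entries; the concentration of Lipschitz functions of sub-Gaussian vectors of Klochkov and Zhivotovskiy \cite[Lem.~1.4]{klochkov2020uniform} (or Guionnet and Zeitouni \cite{guionnet2000concentration} in the bounded-support / log-Sobolev case) gives $\P\big(|g(Y)-\E g(Y)|>\eps\big)\le 2\exp(-c(m+n)^2\eps^2)$; combined with $\E g(Y)\to\langle\xi,f\rangle$ from \textbf{(i)} and a routine tightness bound for $\xi_{m,n}$ (so that it suffices to test against compactly supported $f$), the event $\{|g(Y)-\langle\xi,f\rangle|>2\eps\}$ has probability $\le\exp(-c'(m+n)^2\eps^2)$ for large $m,n$. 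In the exact-conditioning regime \textup{(A1)(ii)}, since $X=\Gamma_{\r,\c}(\overline X)$ the set $\mathcal E:=\{\x:|g(\x)-\langle\xi,f\rangle|>2\eps\}$ gives an $\overline X$-measurable event, so the strong transference \eqref{eq:thm_double_transfer3} applies with cost $\exp\big(O((m\sqrt n+n\sqrt m)\log mn\,\log n)\big)=\exp\big(O((m+n)^{3/2}\log^2(m+n))\big)$; transferring leaves $g(\Gamma_{\r,\c}(\overline Y))$, and $\Gamma_{\r,\c}(\overline Y)$ differs from $Y$ by a matrix supported on one row and one column, i.e. of rank $\le2$, so $|g(\Gamma_{\r,\c}(\overline Y))-g(Y)|=O(1/\min(m,n))$ by singular-value interlacing and the same tail is inherited. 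In the regime \textup{(A1)(i)} one uses the weak transference \eqref{eq:transference_11} directly, with cost $2\exp(C_1\rho)=\exp(o(mn))$. In both cases the cost is $\exp(o((m+n)^2))$, so $\P(X\in\mathcal E)\le(\text{cost})\cdot\P(Y\in\mathcal E)\to0$; letting $\eps\downarrow0$ and ranging over a countable weak-convergence-determining family of bounded Lipschitz $f$ gives that the ESD of $\widetilde X$ converges to $\xi$ in probability, which is \textbf{(ii)}.

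The hard part will be the middle block of \textbf{(i)}: matching the matrix Dyson equation of the Hermitization to the precise form \eqref{eq:wishart_QVE} (carefully tracking the normalizing constants and the aspect ratio $\kappa$), and establishing --- or locating in the literature and adapting to this rectangular variance-profile setting --- the stability of the quadratic vector equation under $L^2$-convergence of the variance kernel, which is what makes Theorem~\ref{thm:main_convergence_conti} usable and hence delivers the scaling limit. For \textbf{(ii)} the delicate point is quantitative rather than conceptual: only a concentration rate genuinely quadratic in $m+n$ --- precisely what the sub-Gaussianity hypothesis \textup{(A2)} buys via \cite{klochkov2020uniform,guionnet2000concentration} --- is strong enough to overcome the super-exponential transference costs of Theorems~\ref{thm:transference} and \ref{thm:second_transference_density_1}, so \textup{(A2)} cannot be dispensed with.
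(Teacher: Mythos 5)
Your proposal follows essentially the same route as the paper: part \textbf{(i)} is obtained from the Dyson-equation/quadratic-vector-equation theory for matrices with a variance profile (Alt et al.\ and Ajanki et al.), using the $L^{2}$-convergence of the tilted variance kernel supplied by Theorem \ref{thm:main_convergence_conti} together with the QVE stability theorem, and part \textbf{(ii)} combines the weak/strong transference principles with the sub-Gaussian ESD concentration of Guionnet--Zeitouni and Klochkov--Zhivotovskiy, whose $\exp(-c\,(m+n)^{2})$ rate beats the transference cost exactly as in the paper. Your explicit rank-two correction handling $\Gamma_{\r,\c}(\overline{Y})$ versus $Y$ and your bounded-Lipschitz test-function formulation are minor (and slightly more careful) variants of the paper's Wasserstein-distance argument, not a different approach.
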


	\begin{figure}[hbt]
		\begin{center}
			\includegraphics[width = 1 \textwidth]{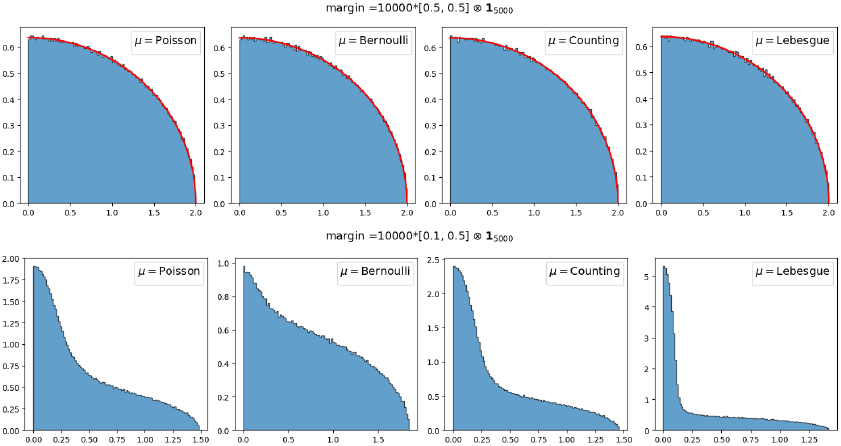}
		\end{center}
		\vspace{-0.4cm} 
		\caption{ Empirical singular value distribution of $(s^{*}n)^{-1/2}(Y-Z^{\r,\r})$ with various base measures $\mu$ and exact margin $(\r,\r)$ with $n^{-1}\r=(a, b)\otimes \mathbf{1}_{n/2}$ for $n=10^{4}$. We take $(a,b)=(0.5, 0.5)$ and $(0.1, 0.5)$. The limiting spectral distribution for the constant margin obeys the quarter-circle law (in red) universally for all $\mu$ satisfying the hypothesis of Thm. \ref{thm:ESD}. For the non-constant margin, the limit is not quarter-circle and depends on $\mu$. 
		}
		\label{fig:ESD}
	\end{figure}

	This result is a rather direct consequence of our main results and existing results in the random matrix theory literature. The ESD of $\widetilde{Y}$  is the square-root-transform of the empirical eigenvalue distribution of the inhomogeneous Wishart matrix $\widetilde{Y}\widetilde{Y}^{*}$, which is known to depend only on the variance matrix $\psi''(\balpha_{m}\oplus \bbeta_{n})$ of $\widetilde{Y}$ through the corresponding  Dyson equation \cite{alt2017local}. 	Our Thm. \ref{thm:main_convergence_conti} guarantees that the rescaled MLEs $(\bar{\balpha}_{m}, \bar{\bbeta}_{n})$ converge to the limiting MLE $(\balpha,\bbeta)$ in $L^{2}$ so a suitable continuity result on the solution of the Dyson equation (e.g., \cite{ajanki2019quadratic}) would suffice to deduce that the limiting ESD of $\widetilde{Y}$ is given by the Dyson equation with the limiting variance profile $\psi''(\balpha\oplus \bbeta)$. Then, the sub-Gaussian concentration of ESD of Wishart matrices (\cite{guionnet2000concentration} and \cite[Lem. 1.4]{klochkov2020uniform}) and our transference principles allow us to conclude the $\tilde{X}$ and $\widetilde{Y}$ have the same limiting ESD under the conditions stated in Thm. \ref{thm:ESD} \textbf{(ii)}.

	There are some important special cases where we can identify the limiting ESD explicitly. If we consider symmetric  constant linear margin, i.e., $\r_{n}=\c_{n}=an \mathbf{1}_{n}$ for some $a\in \R$, then by a symmetry argument the MLEs are given by $\balpha_{n}=\bbeta_{n}=(\phi(a)/2) \mathbf{1}_{n}$. In particular, the limiting variance profile $\psi''(\balpha\oplus \bbeta)$ is the matrix where all entries equal $\psi''(\phi(a))$. In this case, the solution of the Dyson equation \eqref{eq:wishart_QVE} is the Stieltjes transform of the celebrated Marchenko-Pastur quarter-circle law \cite{marchenko1967distribution}. This leads to the following corollary of Thm. \ref{thm:ESD} on the universality of the Marchenko-Pastur law for the ESD for constant linear margins. 	See Fig. \ref{fig:ESD} for illustration.

	\begin{corollary}[Universality of quarter-circle law for uniform margins]\label{cor:quater_circle}
		Assume uniform margins $\r_{n}=\c_{n}=a n \mathbf{1}_{n}$ for some $a\in (A,B)$ and $\rho=0$. 
		Suppose $\mu$ is a sub-Gaussian probability measure satisfying the hypothesis of Thm. \ref{thm:second_transference_density_1}. 
		Then the empirical singular value distribution of  $\widetilde{X}_{n} := (\psi''(\phi(a)) n)^{-1/2} (X- a\mathbf{1}_{n}\mathbf{1}_{n}^{\top} )$ for $X\sim \lambda_{\r_{n},\c_{n}}$ 
		converges weakly to the Marchenko-Pastur quarter-circle law $\frac{1}{\pi}\sqrt{4-x^{2}} \mathbf{1}(x>0) \,dx$ in probability. 
	\end{corollary}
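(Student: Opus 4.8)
\textbf{Proof plan for Corollary \ref{cor:quater_circle}.}

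The plan is to deduce this as a specialization of Theorem \ref{thm:ESD}. First I would verify that the hypotheses of Theorem \ref{thm:ESD} are met for the uniform margin sequence $\r_n = \c_n = an\mathbf{1}_n$. Taking $m=n$ so that $\kappa = 1$, the constant continuum margin $(\r,\c)$ with $\r\equiv a$, $\c\equiv a$ is trivially the $L^1$-limit of the discrete step margins (they are exactly equal), and by Example \ref{ex:cloned_margin} the margin $(\r_n,\c_n)$ is the $n$-cloning of the $1\times 1$ margin $(a,a)$, hence $\delta$-tame for any $\delta>0$ with $A_\delta \le \psi'(\phi(a)) = a \le B_\delta$; fix such a $\delta$. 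Condition (A1)(ii) holds by the standing assumption that $\mu$ satisfies the hypothesis of Theorem \ref{thm:second_transference_density_1} (with $\rho=0$), and (A2) holds since $\mu$ is assumed sub-Gaussian. So Theorem \ref{thm:ESD}(ii) applies and gives that the ESD of $\widetilde X = ((m+n)s^*/2)^{-1/2}(X - Z^{\r,\c})$ converges in probability to the measure $\xi$ characterized by the Dyson equation \eqref{eq:wishart_QVE}.

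Next I would identify all the model-specific quantities. By the symmetry of the problem under simultaneously permuting rows and columns and swapping $\balpha \leftrightarrow \bbeta$, the standard MLE for $(\r_n,\c_n)$ is $\balpha_n = \bbeta_n = \tfrac12\phi(a)\mathbf{1}_n$: indeed $\psi'(\tfrac12\phi(a) + \tfrac12\phi(a)) = \psi'(\phi(a)) = a$, so $\psi'(\balpha_n\oplus\bbeta_n) = a\mathbf{1}_n\mathbf{1}_n^\top \in \T(\r_n,\c_n)$, and by Theorem \ref{thm:strong_duality_simple} this characterizes the MLE; the normalization $\langle\balpha_n,\mathbf{1}\rangle = 0$ forces... wait, actually $\langle\balpha_n,\mathbf 1\rangle = \tfrac n2\phi(a) \ne 0$ in general, so strictly the \emph{standard} MLE is $\balpha_n = \0$, $\bbeta_n = \phi(a)\mathbf 1_n$; either way $\balpha_n\oplus\bbeta_n \equiv \phi(a)$ and the limiting kernels are $\balpha\equiv 0$, $\bbeta\equiv\phi(a)$ (or the symmetric choice), giving $W^{\r,\c} = \psi'(\balpha\oplus\bbeta) \equiv a$, i.e. $Z^{\r,\c} = a\mathbf{1}_n\mathbf{1}_n^\top$ exactly. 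The limiting variance profile is the constant kernel $\psi''(\balpha\oplus\bbeta) \equiv \psi''(\phi(a)) =: s^*$, so the integral operator $S$ in \eqref{eq:wishart_QVE} is $s^*$ times the rank-one projection onto constants, and $S^* = S$. Plugging the constant ansatz $\tau_x(z) \equiv \tau(z)$ into \eqref{eq:wishart_QVE} and recalling $\langle\tau_\cdot(z)\rangle = \tau(z)$, the Dyson equation reduces to the scalar self-consistent equation $-1/\tau = z - s^* \tau/(1 + s^*\tau)$; after rescaling by $s^*$ (which is exactly the rescaling built into $\widetilde X$ via the factor $\sqrt{(m+n)s^*/2} = \sqrt{s^*n}$ when $m=n$) this is precisely the fixed-point equation whose solution is the Stieltjes transform of the Marchenko–Pastur law at aspect ratio $1$; squaring back to singular values gives the quarter-circle density $\tfrac1\pi\sqrt{4-x^2}\mathbf 1(0<x<2)\,dx$.

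Finally I would reconcile the normalizations: $\widetilde X$ in Theorem \ref{thm:ESD} is $((m+n)s^*/2)^{-1/2}(X - Z^{\r,\c})$, and with $m=n$ this equals $(ns^*)^{-1/2}(X - a\mathbf 1_n\mathbf 1_n^\top)$, which is exactly $\widetilde X_n$ in the statement of the corollary; since $s^* = \psi''(\phi(a))$, the two match on the nose. Hence the ESD of $\widetilde X_n$ converges in probability to $\xi$, which we have identified as the quarter-circle law. I would also note that the point mass $q^*\delta_0$ allowed in Theorem \ref{thm:ESD}(i) vanishes here because $\kappa = 1$ and the variance profile is strictly positive and bounded, so the Wishart matrix has no deterministic zero eigenvalue bulk — this is a standard feature of the Marchenko–Pastur law at ratio $1$. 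I expect the main (though still routine) obstacle to be the bookkeeping in the previous step: carefully checking that the constant solution of the operator Dyson equation \eqref{eq:wishart_QVE} is the \emph{unique} uniformly bounded solution (so that no spurious branch is picked up), which follows from the uniqueness clause in Theorem \ref{thm:ESD}(i) once one observes the constant ansatz is admissible, and then matching the resulting scalar equation to the textbook Marchenko–Pastur fixed point equation with the correct aspect-ratio convention.
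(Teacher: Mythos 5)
Your proposal is correct and follows essentially the same route as the paper: specialize Theorem \ref{thm:ESD} to the constant margin (which is $\delta$-tame as an $n$-cloning), observe that the MLE sum $\balpha_n\oplus\bbeta_n\equiv\phi(a)$ gives the constant variance profile $\psi''(\phi(a))$, and reduce the Dyson equation \eqref{eq:wishart_QVE} under the constant ansatz to the scalar Marchenko--Pastur fixed-point equation at aspect ratio $1$, with the transference hypothesis (A1)(ii) and sub-Gaussianity (A2) carrying the conclusion from $Y$ to $X$. The paper's own proof is exactly this specialization (sketched just above the corollary), and your minor hesitation about the normalization of the standard MLE is immaterial since only $\balpha\oplus\bbeta$ enters.
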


	Another interesting corollary of Thm. \ref{thm:ESD} is that the limiting ESD of a margin-conditioned standard Gaussian matrix is universally Marchenko-Pastur regardless of the margin. This is essentially due to the fact that the exponential tilting of a Gaussian distribution is just a translation so that the variance profile is not affected by margin conditioning. 
	The following corollary states this in terms of the eigenvalues of the Gram matrix.

	\begin{corollary}[Universality of MP law margin-conditioned standard Gaussian matrices]\label{cor:Gaussian} 
		Assume $\mu$ is standard Gaussian and let $(\r_{n},\c_{m})$ be a sequence of $m\times n$ margins converging to a continuum margin $(\r,\c)$ in $L^{1}$ with $m/n\rightarrow \kappa\in (0,\infty)$. Let $X \sim \lambda_{\r_{m},\c_{n}}$ and let $Z:=Z^{\r_{m},\c_{n}}$ be as in \eqref{eq:Z_r_c_Gaussian}. 
		Then the empirical eigenvalue distribution of  $\frac{1}{n} (X- Z)(X- Z)^{\top}$  
		converges weakly in probability to the Marchenko-Pastur distribution $(1-\frac{1}{\kappa })\mathbf{1}(\kappa> 1)\delta_{0}(dx)+\frac{\sqrt{(\lambda_{+}-x)(\lambda_{-}+x)}}{2\pi x} \mathbf{1}(\lambda_{-}\le x \le \lambda_{+}) dx$  with $\lambda_{\pm}=(1\pm \sqrt{\kappa})^{2}$. 
	\end{corollary}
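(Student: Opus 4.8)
\medskip
\noindent\textbf{Proof proposal for Corollary~\ref{cor:Gaussian}.}
The plan is to obtain this from Theorem~\ref{thm:ESD}, exploiting the one special feature of the Gaussian base measure: its exponential tiltings are pure translations. For $\mu=N(0,1)$ we have $\psi(\theta)=\theta^{2}/2$, hence $\psi'(\theta)=\theta$, $\psi''(\theta)\equiv 1$, $\phi=\mathrm{id}$, and $\mu_{\theta}=N(\theta,1)$. By the strong duality of Theorem~\ref{thm:strong_duality_simple}, the typical table is therefore the rank-$\le 2$ matrix $Z:=Z^{\r_{m},\c_{n}}=\psi'(\balpha\oplus\bbeta)=\balpha\oplus\bbeta$ of \eqref{eq:Z_r_c_Gaussian}; the $(\balpha,\bbeta)$-model is exactly $Y=Z+G$ with $G$ an $m\times n$ array of i.i.d.\ $N(0,1)$'s; and, no matter what the margin is, the limiting variance profile $\psi''(\balpha\oplus\bbeta)$ is the constant kernel $1$, with $s^{*}=\sup\psi''(\balpha\oplus\bbeta)=1$. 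In particular $\widetilde Y:=\bigl(\tfrac{m+n}{2}\bigr)^{-1/2}(Y-Z)=\bigl(\tfrac{m+n}{2}\bigr)^{-1/2}G$ is just a rescaled real Ginibre matrix.

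Next I would verify the hypotheses of Theorem~\ref{thm:ESD}: $N(0,1)$ is sub-Gaussian, so \textup{(A2)} holds; and it meets the hypotheses of Theorem~\ref{thm:second_transference_density_1}, having a density with respect to Lebesgue measure with open support $\R=[A,B]$, polynomially comparable dimensions since $m/n\to\kappa\in(0,\infty)$, and $d\mu_{\theta}/dx=(2\pi)^{-1/2}e^{-(x-\theta)^{2}/2}\le(2\pi)^{-1/2}$ uniformly in $\theta$, so \textup{(A1)(ii)} applies with $\rho=0$. (One assumes $\kappa<1$ after transposing if necessary; and the statement does not explicitly impose the $\delta$-tameness that Theorem~\ref{thm:ESD} presupposes, which the self-contained route below does not need.) By the classical Marchenko--Pastur theorem the ESD of the rescaled Ginibre matrix $\widetilde Y$ converges in probability to a fixed law $\xi$ supported in $[0,2]$---the square-root push-forward of the $\tfrac{2}{1+\kappa}$-rescaled Marchenko--Pastur law of ratio $\kappa$---which, by uniqueness of the limit in Theorem~\ref{thm:ESD}\textbf{(i)}, is the measure characterized by the Dyson equation \eqref{eq:wishart_QVE} for the constant kernel (the $\kappa$-analogue of the computation in the proof of Corollary~\ref{cor:quater_circle}). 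Theorem~\ref{thm:ESD}\textbf{(ii)} then transfers this to $X\sim\lambda_{\r_{m},\c_{n}}$: the ESD of $\widetilde X:=\bigl(\tfrac{m+n}{2}\bigr)^{-1/2}(X-Z)$ converges to $\xi$ in probability as well. Finally, since $\widetilde X\widetilde X^{\top}=\tfrac{2n}{m+n}\cdot\tfrac1n(X-Z)(X-Z)^{\top}$ and $\tfrac{2n}{m+n}\to\tfrac{2}{1+\kappa}$, the empirical eigenvalue distribution of $\tfrac1n(X-Z)(X-Z)^{\top}$ is a deterministic rescaling by $\tfrac{1+\kappa}{2}$ of that of $\widetilde X\widetilde X^{\top}$, and unwinding this scaling turns the limit into precisely the Marchenko--Pastur law displayed in the statement, with $\lambda_{\pm}=(1\pm\sqrt\kappa)^{2}$.

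A shorter self-contained route, which I would record as a remark, sidesteps Theorem~\ref{thm:ESD} (and the tameness hypothesis) altogether. Since $\psi''\equiv 1$, the relative-entropy functional of Definition~\ref{def:typical_table} is $H(Z')=\tfrac12\lVert Z'\rVert_{F}^{2}$, so $Z$ is the Frobenius-orthogonal projection of $0$ onto the affine space $\T(\r_{m},\c_{n})$; and because the base measure is a nondegenerate Gaussian on $\R^{mn}$ and the margin map is linear, the construction of $\lambda_{\r_{m},\c_{n}}$ by disintegration (valid here since Assumption~\ref{assumption:strong_transference} holds with $\rho=0$) is explicit---for every admissible margin, $X\stackrel{d}{=}Z+P_{K}G$, with $G$ an i.i.d.\ $N(0,1)$ array, $K=\{\x:\ r(\x)=0,\ c(\x)=0\}$, and $P_{K}$ the Frobenius projection onto $K$. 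Here $P_{K}G=G-\widehat G$ where $\widehat G$ (row means plus column means minus grand mean) has rank at most $2$, so the rank inequality for empirical spectral distributions forces $\tfrac1n(X-Z)(X-Z)^{\top}$ and $\tfrac1n GG^{\top}$ to have the same weak limit, namely the Marchenko--Pastur law of ratio $\kappa$. The main obstacle, in either route, is minor: in the first it is the normalization bookkeeping together with the constant-kernel reduction of the Dyson equation; in the second it is the rigorous identification of the disintegrated regular conditional probability with the Gaussian on the affine fiber (classical for Gaussians, but the construction is via disintegration rather than elementary conditioning), plus the standard bounded-rank perturbation estimate. That neither is deep is exactly why the result is stated as a corollary.
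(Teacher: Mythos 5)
Your first route is essentially the paper's own proof: the paper deduces the corollary from Theorem \ref{thm:ESD} by observing (via Example \ref{ex:typical_gaussian}) that $\psi''\equiv 1$ forces the variance profile $\psi''(\balpha\oplus\bbeta)$ to be the all-ones matrix, so the Dyson equation \eqref{eq:wishart_QVE} collapses to the one for independent unit-variance entries and the limit is Marchenko--Pastur; your additional verification of (A1)(ii) and (A2), the $\kappa<1$ transposition, and the $\tfrac{2n}{m+n}\to\tfrac{2}{1+\kappa}$ rescaling bookkeeping are exactly the details the paper leaves implicit. Your second, self-contained route is genuinely different and worth noting: since the Gaussian conditional law on the affine fiber $\T(\r_m,\c_n)$ is explicitly $X\stackrel{d}{=}Z+P_K G$ (this is consistent with the conditional covariance recorded in Example \ref{ex:typical_gaussian}), and $P_KG=G-\widehat G$ with $\operatorname{rank}\widehat G\le 2$, the bounded-rank perturbation inequality reduces the claim directly to the classical Marchenko--Pastur theorem for $\tfrac1n GG^{\top}$. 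What this buys is real: Theorem \ref{thm:ESD} inherits from Theorem \ref{thm:main_convergence_conti} a uniform $\delta$-tameness hypothesis on the margin sequence, which in the Gaussian case means $\sup_{i,j}|\r_m(i)/n+\c_n(j)/m-N/(mn)|=O(1)$ and is \emph{not} implied by $L^1$ convergence of the rescaled margins alone; the corollary as stated (and the paper's one-line proof) glosses over this, whereas your projection argument needs no tameness at all and also dispenses with the sub-Gaussian transference machinery. The only points you would need to make rigorous there are the identification of the disintegration-based $\lambda_{\r_m,\c_n}$ with the classical Gaussian conditional law for $\nu$-almost every margin, and the standard rank inequality for empirical spectral distributions --- both routine, as you say.
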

	
	Note that in the square case $\kappa=1$, the limiting ESD of $n^{-1/2}(X-Z)$  in Cor. \ref{cor:Gaussian} is the quarter-circle law stated in Cor. \ref{cor:quater_circle}.

	\subsection{Organization of the paper}

	The rest of this paper is organized as follows. We provide background on the related literature and discussion on our results in Section \ref{sec:background}. In Section \ref{sec:examples}, we discuss several examples by specializing the base measure $\mu$. In  Section \ref{sec:strong_duality_pf}, we prove the strong duality result stated in Theorem \ref{thm:strong_duality_simple}. In the following section, Section \ref{sec:transference_pf}, we prove the transference principles stated in Section \ref{sec:transference_main}. Theorem  \ref{thm:main_convergence_conti} on scaling limit of the typical tables and the MLEs is proved in Section \ref{sec:pf_scaling_limit}. The results on the phase diagram of tame margins stated in Section \ref{sec:suff_cond_tame} are proved in Section \ref{sec:tameness_pf}. In Section \ref{sec:sinkhorn}, we establish Theorem \ref{thm:Sinkhorn_conv} on the linear convergence of the generalized Sinkhorn algorithm \eqref{eq:AM_typical_MLE2} for computing the MLEs. 
	In Section \ref{sec:proof_applications}, we prove the results on the marginal distribution, scaling limit in the cut norm, and empirical singular value distribution stated in Section \ref{sec:statement_applications}. Lastly in Section \ref{sec:concluding_remarks}, we provide concluding remarks and discuss some open problems. 
	
	\section{Background, Discussions, and Conjectures}
	\label{sec:background}
	
	Our study of large random matrices with i.i.d. entries conditioned on margins has a rich connection to the static Schr\"{o}dinger bridge, matrix scaling, relative entropy minimization,  enumeration of contingency tables, random graphs with given degree sequences, and spectral distribution of inhomogeneous Wishart matrices.

	\subsection{Typical table, static Schr\"{o}dinger bridge, and matrix scaling}\label{sec:schrodinger}
	
	The central optimization problem in this work is the typical table problem described in \eqref{eq:typical_table_opt}, which can be viewed as a special instance of the discrete $f$-divergence static Schrödinger bridge. 
	For a fixed convex function $f$, the \textit{$f$-divergence} of a probability measure $\mathcal{H}$ from a measure $\mathcal{R}$ on the same probability space is
	\begin{align}\label{eq:f_divergence}
		D_{f}(\mathcal{H}\, \Vert \, \mathcal{R}) :=  \int f\left( \frac{d\mathcal{H}}{d\mathcal{R}} \right) \, d\mathcal{R} \qquad \textup{if $\mathcal{H}\ll \mathcal{R}$ and $\infty$ otherwise}, 
	\end{align}
	where $\frac{d\mathcal{H}}{d\mathcal{R}}$ denotes the Radon-Nikodym derivative of $\mathcal{H}$ w.r.t. $\mathcal{R}$, ``$\ll$'' denotes absolute continuity between measures.  Suppose we have a joint measure  $\mathcal{R}$ representing our prior knowledge on the joint behavior of two random variables, say $X_{1}$ and $X_{2}$. 
	Consider finding the joint probability distribution $\mathcal{H}$ minimizing the `$f$-divergence' from $\mathcal{R}$ with marginal distributions $\mu_{1}$ and $\mu_{2}$: 
	\begin{align}\label{eq:RM_min_SB}
		&\min_{\mathcal{H}} \,\,  D_{f}(\mathcal{H}\, \Vert \, \mathcal{R}) \qquad \textup{subject to} \quad  \mathcal{H}\in \Pi(\mu_{1},\mu_{2}), 
	\end{align}
	where $\Pi(\mu_{1},\mu_{2})$ denotes the set of all joint probability measures that have marginal densities $\mu_{1}$ and $\mu_{2}$. 
	The solution of the above problem is called the \textit{$f$-divergence static Schr\"{o}dinger bridge} (SSB) between densities $\mu_{1}$ and $\mu_{2}$ with respect to $\mathcal{R}$. 
	The $f$-divergence $D_{f}$ becomes the KL-divergence $D_{KL}$ when $f(x)=x\log x$. In this case, \eqref{eq:RM_min_SB} becomes the classical SSB (see, e.g., \cite{fortet1940resolution, pavon2021data}). SSBs with general $f$-divergence have been studied more recently 
	\cite{carlier2017convergence, lorenz2022orlicz, terjek2022optimal}.

	Now consider the following discrete setting where the reference measure $\mathcal{R}$ lives on the integer lattice $[m]\times [n]$ and the marginal distributions $\mu_{1}$ and $\mu_{2}$ live on $[m]$ and $[n]$, respectively. 	Writing \eqref{eq:RM_min_SB} as an optimization problem involving the Radon-Nikodym derivative $X =(x_{ij})= \frac{d\mathcal{H}}{d\mathcal{R}}\in \R^{m\times n}$, 
	\begin{align}\label{eq:schrodinger_discrete}
		\qquad \min_{X \in \T(\r,\c) }  \sum_{i,j} f(x_{ij}) \, \mathcal{R}(i,j)\quad \textup{s.t.} \quad \sum_{j=1}^{n} x_{ij} \mathcal{R}(i,j) = \mu_{1}(i),\quad   \sum_{i=1}^{m} x_{ij} \mathcal{R}(i,j)= \mu_{2}(j) \,\, \forall i,j.
	\end{align}
	In particular, the typical table $Z^{\r,\c}$ in \eqref{eq:typical_kernel_opt} is the SSB between the row and column margins w.r.t. the uniform prior measure $\mathcal{R}$ and divergence $f(\cdot)=D(\mu_{\phi(\cdot)}\Vert \mu)$. Also, choosing $\mu$ to be the Poisson base measure  reduces \eqref{eq:schrodinger_discrete} to the KL-divergence SSB problem between the row and column margin w.r.t. $\mathcal{R}$ (see Ex. \ref{ex:Poisson_FY} for more details). In a similar manner, if one takes $\mathcal{R}$ to be the Lebesgue measure on $[0,1]^{2}$ in   \eqref{eq:RM_min_SB} and parameterize the measure $\mathcal{H}$ by $ \frac{d\mathcal{H}}{d\mathcal{R}}$ as before, then it reduces to a continuum version of the typical table problem that we define in Section \ref{sec:proof_typical_kernels} (see \eqref{eq:typical_kernel_opt}).

	Another problem that is closely related to our typical table problem is the classical matrix scaling \cite{sinkhorn1964relationship}. Given a  matrix $R$, can one find diagonal matrices $D_{1}$ and $D_{2}$ such that $S=D_{1}RD_{2}$ has margin $(\r,\c)$? It is well-known that such a rescaled matrix $S$ minimizes the KL-divergence $D_{KL}(\cdot\Vert R)$ among all matrices in $\T(\r,\c)$. 
	It is not hard to see that the entrywise `relative density' $X=S/R$ solves  \eqref{eq:schrodinger_discrete} with $\mu$ Poisson and $\mathcal{R}=R$ (see \cite{idel2016review} for a recent review on matrix scaling).

	There is a striking resemblance of the duality between the typical table and the MLE in Thm. \ref{thm:strong_duality_simple} and the duality between KL-divergence SSBs and the Schr\"{o}dinger potentials (see, e.g., \cite{leonard2013survey, nutz2022entropic}, \cite[Thm. 2.1, 3.2]{nutz2021introduction}, and  \cite[Sec. 3 (B)]{csiszar1975divergence}). Namely, 
	for the KL-divergence and 
	under some condition on the margin $(\mu_{1},\mu_{2})$, there exists functions $\balpha_{1},\balpha_{2}: \R\rightarrow \R$ known as the \textit{Schr\"{o}dinger potentials} that characterize the SSB as 
	\begin{align}\label{eq:SB_decomposition}
		\frac{d \mathcal{H}}{d \mathcal{R}}(x,y) = \exp(\balpha_{1}(x)+\balpha_{2}(y)) \qquad \textup{$\mathcal{R}$-a.s.}
	\end{align}
	The above decomposition may be compared to the characterization of the typical table $Z^{\r,\c}=\psi'(\balpha\oplus \bbeta)$ in Thm. \ref{thm:strong_duality_simple}. Also, the MLE equations \eqref{eq:MLE_eq} correspond to the Schr\"{o}dinger equations determining the  Schr\"{o}dinger potentials and the characterization \eqref{eq:lem_MLE_typical_duality1} of the typical table corresponds to the decomposition of Schr\"{o}dinger bridge using Schr\"{o}dinger potentials above. The shift-invariance of the MLEs in \eqref{eq:params_non_identifiable} corresponds to that for the Schr\"{o}dinger potentials (see, e.g., \cite[Lem. 2.11]{nutz2021introduction}).

	\subsection{Sinkhorn algorithm and entropic optimal transport}
	\label{sec:Sinkhorn}

	Notice that the log-likelihood function $g^{\r,\c}(\balpha,\bbeta)$ in \eqref{eq:typical_Lagrangian} is concave since $\psi$ is strictly convex. It is not however strictly concave since its value is invariant under adding a constant to $\balpha$ and subtracting the same constant to $\bbeta$. A natural algorithm for maximizing a concave function in two blocks of variables is alternating maximization, which is also known as block nonlinear Gauss-Seidel  \cite{bertsekas1997nonlinear, beck2013convergence}:
	\begin{align}\label{eq:AM_typical_MLE}
		\begin{cases}
			\bbeta_{k} \leftarrow \argmax_{\bbeta \in \R^{n}} \, g^{\r,\c}(\balpha_{k-1}, \bbeta),\\
			\balpha_{k} \leftarrow \argmax_{\balpha \in \R^{m}} \, g^{\r,\c}(\balpha, \bbeta_{k}).
		\end{cases}
	\end{align}
	The log-likelihood function $g^{\r,\c}(\cdot,\cdot)$ is strictly concave in each block coordinate, so each block maximization problem in \eqref{eq:AM_typical_MLE} reduces to finding the unique stationary points. 
	Hence 	\eqref{eq:AM_typical_MLE} is equivalent to the generalized Sinkhorn algorithm \eqref{eq:AM_typical_MLE2} we stated in Section \ref{sec:Introduction}.

	A notable special case of our algorithm \eqref{eq:AM_typical_MLE2} is the celebrated Sinkhorn's algorithm for matrix scaling and computing static Schr\"{o}dinger bridges \cite{franklin1989scaling, cuturi2013sinkhorn}. 
	In Sec. \ref{sec:schrodinger}, we observed that the divergence $D(\mu_{\phi(\cdot)}\Vert \mu)$ reduces to the KL-divergence when $\mu=\textup{Poisson}(1)$. In fact, in this case $\psi'(x)=e^{x}$ so we have $\psi'(\alpha+\beta)=\psi'(\alpha)\psi'(\beta)$. This separability simplifies the algorithm \eqref{eq:AM_typical_MLE2} as 
	\begin{align}\label{eq:AM_typical_MLE3}
		\begin{cases}
			\textup{For $1\le i \le n$, }		\bbeta_{k}(j) \leftarrow \log\left( \c(j)\bigg/ \sum_{i=1}^{m} \exp(\balpha_{k-1}(i))  \right),\\
			\textup{For $1\le i \le m$, }	\balpha_{k}(i) \leftarrow \log\left( \r(i)\bigg/ \sum_{j=1}^{n} \exp(\bbeta_{k}(j))  \right).
		\end{cases}
	\end{align}
	This is Sinkhorn's algorithm for matrix scaling with input matrix $R=\mathbf{1}\mathbf{1}^{\top}$ and target margin $(\r,\c)$ and also it is Sinkhorn's algorithm for the KL-divergence Schr\"{o}dinger bridge with uniform prior (see \eqref{eq:schrodinger_discrete}). Due to the special choice of the KL-divergence and the uniform prior,  \eqref{eq:AM_typical_MLE3}  converges to the expectation of the Fisher-Yates distribution for margin $(\r,\c)$, which is also the typical table for that margin with Poisson base measure (see Ex. \ref{ex:Poisson_FY}). However, for general base measure $\mu$, the limit does not admit a closed-form expression in terms of the margin $(\r,\c)$.

	Sinkhorn's algorithm has been extensively studied for a particular instance of the KL-divergence  Schr\"{o}dinger bridge with prior $\mathcal{R}$ taken to be the Gibbs kernel $e^{-c(x,y)/\eps}\mu_{1}(dx)\otimes d\mu_{2}(dy)$, where $c$ is a cost function and $\eps>0$ is the entropic regularization parameter. 
	In this case, it reduces to the entropic optimal transport problem \cite{villani2021topics}
	\begin{align}
		\min_{\mathcal{H} \in \Pi(\mu_{1},\mu_{2}) }   \, \int  c(x,y)  \, \mathcal{H}(dx, dy) + \eps D_{KL}(\mathcal{H}\, \Vert \, \mu_{1}\otimes \mu_{2}).
	\end{align}
	The classical case with quadratic cost is when we set  $c(x,y)=(x-y)^{2}$. 
	The corresponding Sinkhorn algorithm in the discrete case is given by \eqref{eq:AM_typical_MLE3} with both $\exp(\balpha_{k-1}(i))$ and $\exp(\bbeta_{k}(j))$ multiplied by $\mathcal{R}(i,j)$. Franklin and Lorenz showed that the convergence rate of the Sinkhorn in this case is exponential (linear in the log scale) in the space of margins endowed with Hilbert's projective metric \cite{franklin1989scaling}. A similar result was obtained for the continuous setting by Chen and Pavon \cite{chen2016entropic}. R\"{u}schendorf \cite{ruschendorf1995convergence}  established asymptotic convergence of the Sinkhorn algorithm in the continuous case from the perspective of information projection. Marino and Gerolin \cite{marino2020optimal} extended a similar result to the multi-marginal case. Carlier \cite{carlier2022linear} established linear convergence of the multi-marginal Sinkhorn algorithm in the Euclidean metric. 
	
	All of the convergence results mentioned above concern the KL-divergence (i.e., Poisson base measure) and the analysis heavily relies on the explicit form of the Sinkhorn iterates, which is not available for general divergences. There are some recent results on asymptotic convergence of the Sinkhorn algorithm for entropic optimal transport with general divergences (e.g., \cite{terjek2022optimal,lorenz2022orlicz}). 
	Our Theorem \ref{thm:Sinkhorn_conv} establishes non-asymptotic linear convergence of the generalized Sinkhorn algorithm \eqref{eq:AM_typical_MLE2} for general base measures.

	\subsection{Relative entropy minimization and typical tables} 
	\label{sec:min_rel_entropy_discussion}
	
	The definition of the typical table (Def. \ref{def:typical_table}) arises naturally from the classical `least-action principle'. For our context, it roughly says that the conditioned random matrix in \eqref{eq:main_question} should have the `least-action distribution' that minimizes the relative entropy from the base measure constrained on the expected margins being $(\r,\c)$. More precisely, consider the following relative entropy minimization  problem: 
	\begin{align}\label{eq:RM_min_KL}
		\min_{\mathcal{H}\in \mathcal{P}^{m\times n} }  D_{KL}(\mathcal{H} \,\Vert \, \mathcal{R}) \quad \textup{subject to} \quad \textup{ $\E_{X\sim \mathcal{H} }[ (r(X), c(X))] = (\r,\c)$},
	\end{align}
	where $\mathcal{P}^{m\times n}$ denotes the set of all probability measures  on $\R^{m\times n}$. This is a matrix version of the standard relative entropy minimization with first moment constraints, which is also an instance of the information projection \cite{csiszar1975divergence}. 
	
	Interestingly, it turns out that the relative entropy minimization in \eqref{eq:RM_min_KL} is equivalent to our typical table problem \eqref{eq:typical_table_opt} when $\mathcal{R}=\mu^{\otimes (m\times n)}$ is a probability measure. Namely, the optimal probability measure $\mathcal{H}$ solving \eqref{eq:RM_min_KL} is the following product measure 
	\begin{align}\label{eq:rel_entropy_min_Z}
		\mathcal{H} = \bigotimes_{i,j} \mu_{\phi(z_{ij})},
	\end{align}
	where $Z^{\r,\c}=(z_{ij})$ is the typical table for margin $(\r,\c)$. 
	To see this, notice that 
	\eqref{eq:RM_min_KL} 
	can be thought of as an optimization problem involving the Radon-Nikodym derivative $h = \frac{d \mathcal{H}(\x) }{d \mathcal{R}(\x)}: \R^{m\times n}\rightarrow [0,\infty)$. 
	By setting the functional derivative of the corresponding Lagrangian 
	equal to zero, we find 
	\begin{align}\label{eq:minimum_rel_entropy_tilting}
		& \log h(\x)  \propto  \sum_{i,j}  (\alpha_{i}  + \beta_{j})  x_{ij} \qquad \textup{$\mathcal{R}$-a.s.} 
	\end{align}
	where $\alpha_{i}$ and $\beta_{j}$ are the Lagrange multipliers corresponding to the constraints on the expectation of the $i$th row and the $j$th column sums. 
	Thus, the optimal probability measure $\mathcal{H}$ solving \eqref{eq:RM_min_KL}  is a joint exponential tilting of the base distribution $\mathcal{R}$. When $\mu$ is a probability measure on $\R$ and $\mathcal{R}=\mu^{\otimes(m\times n)}$,
	then the joint tilting of $\mathcal{R}$ is the product of entrywise tilts of $\mu$. Thus we can write $\mathcal{H}=\bigotimes_{i,j}\mu_{\theta_{ij}}$, where $\theta_{ij}$ is the unknown tilting parameter for each entry $(i,j)$. Hence reparameterizing  exponential tilting by the mean after the tilt, we deduce \eqref{eq:rel_entropy_min_Z} via 
	\begin{align}
		\eqref{eq:RM_min_KL} \quad \Longleftrightarrow \quad \min_{X=(x_{ij})\in \T(\r,\c)} D_{KL}\left(  \bigotimes_{i,j} \mu_{\phi(x_{ij})} \, \bigg\Vert \,  \mu^{\otimes (m\times n)}  \right) = \eqref{eq:typical_table_opt}.
	\end{align}

	Information projection arises naturally as the rate function in the theory of large deviations (e.g., Sanov's theorem
	\cite{sanov1961probability}). Also, 
	Csisz\'{a}r's conditional limit theorem \cite[Thm. 1]{csiszar1984sanov} states that i.i.d. samples under the condition that the joint empirical distribution satisfying a convex constraint are asymptotically `quasi-independent' (see \cite[Def. 2.1]{csiszar1984sanov}) under the common distribution that solves the corresponding information projection problem. 
	
	Our main question \eqref{eq:main_question} considers a similar problem of conditioning the joint distribution of a collection of $mn$ i.i.d. random variables,  where we first arrange them to form an $m\times n$ random matrix and then condition on its row/column sums.  The variables after conditioning on the margin are not exchangeable unless $m=n$ and the row/column sums are constant. Hence, in the general case, one should not expect that the entries of the margin-constrained random matrix follow asymptotically some common law unlike in Csisz\'{a}r's conditional limit theorem. However, our transference principles (Sec. \ref{sec:transference_main}) bear some similarity in that the conditioned joint distribution of all entries is close to the random matrix ensemble with independent entries given by the information projection \eqref{eq:RM_min_KL}. Also, our dual formulation via maximum likelihood estimation is related to the Fenchel dual approach for information projection \cite{bhattacharya1995general}.

	\subsection{Contingency tables and phase transition} 
	
	Contingency tables are $m\times n$ matrices of nonnegative integer entries with prescribed margins with row sums $\r=(r_{1},\cdots,r_{m})$ and columns sums $\c=(c_{1},\cdots,c_{n})$, whereby $\textup{CT}(\r,\c)$ we denote the set of all such tables. They are fundamental objects in statistics for studying dependence structure between two or more variables and also correspond to bipartite multi-graphs with given degrees and play an important role in combinatorics and graph theory, see e.g.~see e.g.~\cite{barvinok2009asymptotic,diaconis1995rectangular,diaconis1998algebraic}. In the combinatorics literature, counting the number $|\textup{CT}(\r,\c)|$ of contingency tables has been extensively studied in the past decades \cite{barvinok2009asymptotic, barvinok2010approximation, canfield2007asymptotic, canfield2010asymptotic, lyu2022number}. In the statistics literature, sampling a random contingency table has been heavily investigated, mostly by using rejection-sampling type methods \cite{snijders1991enumeration, chen2005sequential, verhelst2008efficient, wang2020fast}  or Markov-chain Monte Carlo methods \cite{besag1989generalized, diaconis1995rectangular, wang2020fast}. These problems are closely related to each other and have many connections and applications to other fields \cite{branden2020lower} (e.g., testing hypothesis on the co-occurrence of species in ecology \cite{connor1979assembly}).

	A historic guiding principle to analyzing large contingency tables is the \textit{independent heuristic} (IH), which was introduced by I.~J.~Good as far back as in~1950 \cite{good1950probability}. The heuristic suggests that for a uniformly random table with the given total sum, the row sum and the column sum constraints should act asymptotically independently as the size of the table grows to infinity. In terms of the enumeration problem, IH yields a simple analytic formula for $|\textup{CT}(\r,\c)|$, say $|\textup{IH}(\r,\c)|$, which has been verified rigorously to be asymptotically correct when the margins are constant or have a bounded ratio close to one \cite{canfield2010asymptotic}. However, in 2009, Barvinok showed that for a sequence of cloned margins (see Ex. \ref{ex:cloned_margin}), the row and column constraints have asymptotic positive correlation and that $|\textup{CT}(\r,\c)|$ is exponentially larger than $|\textup{IH}(\r,\c)|$ 
	\cite{barvinok2009asymptotic}.

	In 2010, Barvinok introduced the concept of `typical tables' for binary \cite{barvinok2010number} and unbounded \cite{barvinok2010does} contingency tables, along with the method of maximum entropy, revolutionizing the analysis of large contingency tables. The key idea is that a uniformly random contingency table with margins \((\r,\c)\) can be approximated by the maximum entropy matrix, which maximizes a specific entropy function over the transportation polytope \(\T(\r,\c)\). Good's IH is closely related to this maximum entropy approach, except that the entropy being maximized pertains to an element of the transportation polytope, viewed as a probability distribution on the lattice $[m] \times [n]$ after rescaling \cite{good1963maximum}. Barvinok's framework treats the entries of random contingency tables as independent Bernoulli or geometric random variables for 0-1 and unbounded tables, respectively. This corresponds precisely to the optimization problem \eqref{eq:typical_table_opt} for our typical tables with Bernoulli and counting base measures.

	In \cite{barvinok2010does}, Barvinok observed that there is a drastic change in the typical tables for $n\times n$ symmetric margins $\r=\c=(\lambda n, n,\dots,n)$ between $\lambda=2$ and $\lambda=3$. Namely, as $n\rightarrow\infty$, the typical tables for $\lambda=2$ stay uniformly bounded ($\delta$-tame) but the $(1,1)$ entry blows up (non-tame) for $\lambda=3$. Based on this observation, he conjectured that there is a sharp phase transition in the behavior of the typical table somewhere between $\lambda=2$ and $3$ and that the uniformly random contingency table also exhibits a phase transition behavior. 
	This conjecture was established by Dittmer, Lyu, and Pak in \cite{dittmer2018contingency} except that the behavior of the random contingency table was established for Barvinok margins with a growing number of the larger value $\lambda n$. 
	They identified that the phase transition occurs at the critical ratio $1+\sqrt{2}$ and the asymptotic distribution of the uniformly random contingency table is geometric whose mean equals the corresponding entry in the typical table. Our Corollaries \ref{cor:barvinok_margin} and \ref{cor:sharp_tame_CT} and Theorem \ref{thm:mixture_dist} generalize these results (see also Rmk. \ref{rmk:CT_supercritical}).

	\subsection{Empirical spectral distribution of random matrices with given margin}

	In comparison to the vast literature on the empirical spectral distribution of various random matrix ensembles, there has not been much work for random matrices with i.i.d. entries conditioned on margins and existing works only consider constant margins. In 2010 (published in 2014)
	Chatterjee, Diaconis, and Sly  \cite{chatterjee2014properties} 
	established the limiting ESD of large $n\times n$ uniformly random doubly stochastic matrices (nonnegative entries with rows and columns summing to one) is the Marchenko-Pastur quarter-circle law. Adopting their approach with an additional combinatorial argument, Wu \cite{wu2023asymptotic} recently obtained the quarter-circle law for the empirical singular value distribution of the uniformly random contingency tables with constant linear margins. These works utilize an elementary form of the transference principle by comparing the corresponding random matrices 
	to the ones with i.i.d. entries from the exponential and the geometric distributions with mean 1.

	Our Cor. \ref{cor:quater_circle} establishes that the same quarter-circle law is universal for random matrices with sub-Gaussian entries conditioned to have constant linear margins. However, our result does not directly imply the two earlier results since the entry-wise distributions there after tilting (exponential and geometric, respectively) have only sub-exponential tails. The authors of the aforementioned works overcome super-exponential transference costs by first showing that the maximum of the margin-conditioned random matrices behaves as that of the corresponding comparison models with i.i.d. entries, so the support of the entries can be truncated to $[0,C \log n]$ with high probability. Then one can apply the sub-Gaussian concentration of ESD for bounded entries in \cite{guionnet2000concentration} based on Talagand's inequality (see, e.g., \cite[Thm. 6.6]{talagrand1996new}).  We conjecture that the behavior of the maximum entry should remain the same for general base measures and $\delta$-tame margins replacing constant linear margins: 
	\begin{conjecture}\label{con:X_max}
		Let $(\r,\c)$ be a $(m\times n)$ $\delta$-tame margin and let $X\sim \lambda_{\r,\c}$. Then $\max_{ij} X_{ij} \le C\log (m+n)$ with high probability for some constant $C=C(\mu,\delta)>0$. 
	\end{conjecture}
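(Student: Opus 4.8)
\textbf{Proof strategy for Conjecture \ref{con:X_max}.} As the statement concerns $X\sim\lambda_{\r,\c}=\lambda_{\r,\c,0}$, the plan is to discard the exponentially lossy transference \emph{bounds} and use instead only the exact distributional identity of Theorem \ref{thm:second_transference}(i): for $\nu$-a.e.\ $\delta$-tame margin (with $\mu$ admitting a density with respect to the counting or the Lebesgue measure, as in Cor.\ \ref{cor:transfer_discrete_Leb}), $X$ is distributed as $Y\sim\mu_{\balpha\oplus\bbeta}$ conditioned on $Y\in\T(\r,\c)$, where $(\balpha,\bbeta)$ is the standard MLE for $(\r,\c)$. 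Since $X$ has at most $(m+n)^2$ entries, a union bound reduces the conjecture to a single-entry sub-exponential tail estimate: it suffices to produce constants $c,C>0$ depending only on $(\mu,\delta)$ with $\P(X_{ij}>t)\le Ce^{-ct}$ for every $(i,j)$ and all $t\ge 1$, since then $\P(\max_{ij}X_{ij}>C'\log(m+n))\le C(m+n)^{2-cC'}\to 0$ once $C'>2/c$.

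\emph{Reduction to a density ratio.} Fix $(i,j)$, put $\theta_{ij}:=\balpha(i)+\bbeta(j)$, and let $Y^{(ij)}$ denote $Y$ with its $(i,j)$ entry deleted; write $g$ for the density (a probability mass function when $\mu$ is discrete) of the $\R^{m+n-1}$-valued margin vector $\big(r(Y^{(ij)}),c(Y^{(ij)})\big)$. Since $X_{ij}$ has the law of $Y_{ij}$ given $Y\in\T(\r,\c)$, and conditioning on $Y_{ij}=s$ turns $\{Y\in\T(\r,\c)\}$ into $\{\big(r(Y^{(ij)}),c(Y^{(ij)})\big)=(\r,\c)-s\,\mathbf{v}_{ij}\}$ for a fixed nonzero vector $\mathbf{v}_{ij}$, we get, for any fixed $K>0$,
\begin{align}
\P(X_{ij}>t)&=\frac{\displaystyle\int_{(t,\infty)}g\big((\r,\c)-s\,\mathbf{v}_{ij}\big)\,d\mu_{\theta_{ij}}(s)}{\displaystyle\int_{\R}g\big((\r,\c)-s\,\mathbf{v}_{ij}\big)\,d\mu_{\theta_{ij}}(s)}\\
&\le\frac{\displaystyle\sup_{y}g(y)}{\displaystyle\inf_{0\le s\le K}g\big((\r,\c)-s\,\mathbf{v}_{ij}\big)}\cdot\frac{\mu_{\theta_{ij}}\big((t,\infty)\big)}{\mu_{\theta_{ij}}\big([0,K]\big)}.
\end{align}
By $\delta$-tameness, $\theta_{ij}\in[\phi(A_\delta),\phi(B_\delta)]$, a compact subinterval of $\Theta^\circ$, so $\mu_{\theta_{ij}}$ has a uniformly sub-exponential right tail and a uniformly positive mass on $[0,K]$; hence, with $K=K(\mu,\delta)$ fixed, the last factor is $\le C(\mu,\delta)e^{-c(\mu,\delta)t}$, and everything reduces to bounding the first factor by $O(1)$.

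\emph{The local limit theorem.} This is the heart of the proof. By Theorem \ref{thm:strong_duality_simple}, $\E\big[(r(Y),c(Y))\big]=(\r,\c)$, so the mean of $\big(r(Y^{(ij)}),c(Y^{(ij)})\big)$ is $(\r,\c)-\psi'(\theta_{ij})\,\mathbf{v}_{ij}$, and for $0\le s\le K$ the point $(\r,\c)-s\,\mathbf{v}_{ij}$ lies within $O(1)$ of this mean. The covariance of $\big(r(Y),c(Y)\big)$, as an operator on $\R^{m+n-1}$, is $\Sigma=\sum_{k,l}\psi''(\theta_{kl})\,\mathbf{v}_{kl}\mathbf{v}_{kl}^{\top}$; since $\delta$-tameness forces $\psi''\ge\inf_{[\phi(A_\delta),\phi(B_\delta)]}\psi''>0$ and the incidence vectors $\mathbf{v}_{kl}$ of $K_{m,n}$ span $\R^{m+n-1}$ robustly, the smallest eigenvalue of $\Sigma$ is $\gtrsim\min(m,n)$. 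A local central limit theorem for $\big(r(Y^{(ij)}),c(Y^{(ij)})\big)$, uniform over the $\delta$-tame class, then yields both $\sup_y g(y)\lesssim(\det\Sigma)^{-1/2}$ and $g(y)\gtrsim(\det\Sigma)^{-1/2}$ for all $y$ within $o(\sqrt{\min(m,n)})$ of the mean; as an $O(1)$ displacement is of this order, the first factor above is $O(1)$, which would complete the proof modulo this local limit theorem.

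\emph{Main obstacle.} The difficulty lies entirely in establishing this local limit theorem with uniformity in $(m,n)$ and in the $\delta$-tame margin, in two regimes: the lattice case (discrete $\mu$, where the relevant aperiodicity must be verified — automatic, e.g., for the counting measure on $\Z_{\ge 0}$, whose tilts generate the full lattice) and the absolutely continuous case (where a smoothing/characteristic-function argument applies, $\big(r(Y^{(ij)}),c(Y^{(ij)})\big)$ being a sum of $mn-1$ nondegenerate independent contributions). The genuinely delicate ingredient is the anticoncentration bound $\sup_y g(y)\lesssim(\det\Sigma)^{-1/2}$ near the \emph{specific} target $(\r,\c)$, which can sit close to the edge of the $\delta$-tame window even though $\Sigma$ remains uniformly nondegenerate; an Esseen-type quantitative local estimate should suffice. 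A more combinatorial alternative that avoids the local limit theorem is a switching argument: from a configuration in $\T(\r,\c)$ with $X_{ij}=s$, perform $\approx s$ successive $2\times 2$ moves transferring the excess of entry $(i,j)$ onto fresh entries in a non-degenerate range (which exist in positive density by $\delta$-tameness), each move altering the base weight by a factor $e^{O(1)}$ controlled by $\|\balpha\oplus\bbeta\|_\infty$; this gives $\P(X_{ij}>t)\le e^{-ct}$ directly, but requires careful control of the resulting injection and of the available ``room'' when the typical-table entries are only bounded below by $A_\delta$.
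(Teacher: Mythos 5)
You should first note that the statement you were asked to prove is Conjecture \ref{con:X_max}: the paper does \emph{not} prove it. It is stated as an open problem, with the only known cases (constant linear margins, Lebesgue or counting base measure) attributed to \cite{chatterjee2014properties, wu2023asymptotic}, whose arguments rest on sharp volume/enumeration estimates for the transportation polytope (Canfield--McKay, Barvinok); the authors explicitly say that this route ``does not seem to generalize well'' to general $\mu$ and non-uniform $\delta$-tame margins. So there is no proof in the paper to compare against, and the question is whether your strategy closes the gap.

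It does not, and you essentially say so yourself. The reduction is reasonable: the identity $X\overset{d}{=}(Y\mid Y\in\T(\r,\c))$ from Theorem \ref{thm:second_transference}(i), the Bayes-type formula for $\P(X_{ij}>t)$ in terms of the density $g$ of the margin of $Y^{(ij)}$, the uniform sub-exponential tail of $\mu_{\theta_{ij}}$ on the compact tilt window $[\phi(A_\delta),\phi(B_\delta)]$, and the union bound over $mn$ entries are all fine (modulo small repairs: $[0,K]$ must be replaced by an interval guaranteed to carry uniformly positive $\mu_{\theta_{ij}}$-mass inside $\supp(\mu)$, and in the continuous case the pointwise evaluation of $g$ in the Bayes formula needs a regularity argument for $\nu$-a.e.\ margin). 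But the entire difficulty of the conjecture is then concentrated in the step you label ``the local limit theorem'': you need $\sup_y g(y)\lesssim(\det\Sigma)^{-1/2}$ \emph{and} $g\gtrsim(\det\Sigma)^{-1/2}$ at the specific shifted target, uniformly over $\delta$-tame margins, where $g$ is the density of a sum of $mn-1$ independent vectors in dimension $m+n-1$ that \emph{grows} with $m,n$. This is not a classical fixed-dimension local CLT; evaluated at the mean, the lower bound is exactly the Gaussian-formula asymptotics for $\P(Y\in\T(\r,\c))$, i.e.\ the asymptotic enumeration of contingency tables / volumes of transportation polytopes (Barvinok--Hartigan type results), which exists only for special base measures and is the very ingredient the paper identifies as unavailable in general. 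So your argument is a reduction of the conjecture to another statement that is itself open at the required generality, not a proof; the ``switching'' alternative is likewise only a sketch, with the injectivity of the map and the availability of ``room'' near the lower edge $A_\delta$ left unresolved. To make progress along your lines, the honest target is a quantitative, growing-dimension local CLT (or equivalently a two-sided estimate on $\P(Y\in\T(\r',\c'))$ for margins $(\r',\c')$ within $O(1)$ of $(\r,\c)$) uniform over the $\delta$-tame class -- and that is the conjecture's real content.
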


	The authors in \cite{chatterjee2014properties, wu2023asymptotic} showed Con. \ref{con:X_max} for constant linear margins for $\mu=\textup{Leb}(\R_{\ge 0})$ and $\textup{Counting}(\Z_{\ge 0})$ by appealing to an accurate volume estimate for the transportation polytope in Canfield and McCay  \cite{canfield2007asymptotic} and Barvinok \cite{barvinok2009asymptotic}, respectively. This approach does not seem to generalize well to the broader class of base measures $\mu$ and non-uniform $\delta$-tame margins. However, at least for the classical Lebesgue or counting base measure cases, we suspect that the above conjecture can be addressed by existing combinatorial techniques, especially when the margin has a small number of blocks.

	Our Theorem \ref{thm:ESD} characterizes the limiting ESD for random matrices with i.i.d. sug-Gaussian entries conditioned to have general $\delta$-tame margins. We conjecture that the sub-Gaussianity assumption in Theorem \ref{thm:ESD} \textbf{(ii)} is not necessary. 
	
	\begin{conjecture}\label{con:ESD_no_subGaussian}
		Theorem \ref{thm:ESD} \textup{\textbf{(ii)}} holds without  $\mu$ being sub-Gaussian. 
	\end{conjecture}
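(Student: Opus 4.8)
The plan is to reduce Conjecture~\ref{con:ESD_no_subGaussian} to Conjecture~\ref{con:X_max} via the truncation strategy of Chatterjee--Diaconis--Sly and Wu \cite{chatterjee2014properties,wu2023asymptotic}, and then to outline an attack on Conjecture~\ref{con:X_max} itself. I focus on the exact case $\rho=0$ under the hypothesis of Theorem~\ref{thm:second_transference_density_1}; the approximate case runs identically with weak transference (Theorem~\ref{thm:transference}) whenever $\rho=o(mn/\log^{2}(m+n))$. Granting Conjecture~\ref{con:X_max}, fix $L:=C_{0}\log(m+n)$ with $C_{0}=C_{0}(\mu,\delta)$ large and, for $M\in\R^{m\times n}$, let $M^{\mathrm{tr}}$ be $M$ with every entry of modulus $>L$ replaced by $0$. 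Since $M\mapsto M^{\mathrm{tr}}$ is Borel and since passing between a matrix and the bulk submatrix on which Theorem~\ref{thm:second_transference} transfers bounded functions only alters one row and one column --- a rank-$\le2$ perturbation, hence an $O(1/(m\wedge n))$ perturbation of the ESD by the rank inequality --- it suffices to establish the ESD limit for the rescaled truncated bulk matrices and then transfer.

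For the comparison model, $Y^{\mathrm{tr}}$ has independent entries bounded by $L$, so the Guionnet--Zeitouni concentration via Talagrand's inequality \cite{guionnet2000concentration,talagrand1996new} (see also \cite[Lem.~1.4]{klochkov2020uniform}) gives that the ESD of $\widetilde{Y^{\mathrm{tr}}}$ concentrates around its mean at rate $\exp(-c\eps^{2}mn/\log^{2}(m+n))$. Because the tilted entry laws $\mu_{\balpha(i)+\bbeta(j)}$ are sub-exponential uniformly over $\delta$-tame margins, taking $C_{0}$ large makes truncation change each entry's mean and variance by $O((m+n)^{-cC_{0}})$ and forces $\E\|\widetilde{Y}-\widetilde{Y^{\mathrm{tr}}}\|_{F}^{2}\to0$; by Hoffman--Wielandt and the stability of the Dyson equation~\eqref{eq:wishart_QVE} under $L^{2}$-perturbations of the variance profile (already used to prove Theorem~\ref{thm:ESD} via \cite{alt2017local,ajanki2019quadratic}), the mean ESD of $\widetilde{Y^{\mathrm{tr}}}$ is within $\eps/2$ of $\xi$ for large $m,n$. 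Hence $\mathbb{P}(d_{\mathrm{BL}}(\xi_{m,n}(\widetilde{Y^{\mathrm{tr}}}),\xi)>\eps)\le\exp(-c'\eps^{2}mn/\log^{2}(m+n))$. In the regime $m/n\to\kappa\in(0,1)$ the strong transference cost of Theorem~\ref{thm:second_transference_density_1} is $\exp(O(n^{3/2}(\log n)^{2}))=\exp(o(mn/\log^{2}(m+n)))$, so transferring the event $\{d_{\mathrm{BL}}(\xi_{m,n}(\widetilde{M^{\mathrm{tr}}}),\xi)>\eps\}$ from $M=Y$ to $M=X$ yields $\mathbb{P}(d_{\mathrm{BL}}(\xi_{m,n}(\widetilde{X^{\mathrm{tr}}}),\xi)>\eps)\to0$. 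Finally, on $\{\max_{ij}|X_{ij}|\le L\}$, which has probability $1-o(1)$ by Conjecture~\ref{con:X_max}, one has $X^{\mathrm{tr}}=X$, and Conjecture~\ref{con:ESD_no_subGaussian} follows.

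The main obstacle is Conjecture~\ref{con:X_max} for general base measures and non-uniform $\delta$-tame margins: the proofs of \cite{chatterjee2014properties,wu2023asymptotic} for constant linear margins rely on sharp volume/enumeration asymptotics for transportation polytopes (\cite{canfield2007asymptotic,barvinok2009asymptotic,barvinok2024quick}) that are not available here. I would attack it via the completion map $\Gamma_{\r,\c}$ of~\eqref{eq:def_completion_map}: conditioning on all but the $(i,j)$-coordinate of $\overline{X}$ and union-bounding over the $mn$ candidate maximizers, it suffices to control the conditional law of a single coordinate of $\overline{X}$ given the rest by the one-dimensional tilted measure $\mu_{\balpha(i)+\bbeta(j)}$ --- which is sub-exponential uniformly over $\delta$-tame margins --- up to at most a polynomial-in-$(m+n)$ distortion. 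Such a distortion bound is a quantitative local-limit estimate for the margin of $\overline{Y}$, i.e.\ for the Radon--Nikodym derivative $p_{\overline{\y}}(\r,\c)$ of Theorem~\ref{thm:second_transference}, and in the $\delta$-tame regime it should follow from a Berry--Esseen/Edgeworth expansion for the sum of the $m+n-1$ independent, uniformly non-degenerate tilted entries in the last row and column; an alternative is to sharpen the strong transference principle to sub-exponential cost $\exp(o(\sqrt{mn}))$ by improving the lower bound on $\E[d\nu_{\overline{Y}}/d\zeta^{\otimes(m+n-1)}]$ in Theorem~\ref{thm:second_transference_density_1}, after which a naive union bound over the maximizer survives transference. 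A secondary, routine point is to make the Dyson-equation stability quantitative enough that the $O((m+n)^{-cC_{0}})$ perturbation of the variance profile produces an $o(1)$ change of the limiting ESD uniformly in $m,n$.
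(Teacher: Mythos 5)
The statement you are trying to prove is stated in the paper as an open conjecture: the paper offers no proof of Conjecture~\ref{con:ESD_no_subGaussian}, and the only guidance it gives is precisely the route you follow, namely ``establish Conjecture~\ref{con:X_max} and use a similar truncation argument as in \cite{chatterjee2014properties} with Theorem~\ref{thm:ESD} \textbf{(i)}.'' Your write-up is therefore not a proof but a conditional reduction: everything after your first paragraph hinges on Conjecture~\ref{con:X_max} for general base measures and general $\delta$-tame margins, which is itself open and which the paper explicitly flags as the hard step (the known cases in \cite{chatterjee2014properties,wu2023asymptotic} rely on sharp polytope volume/enumeration asymptotics available only for constant linear margins). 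Your proposed attack on Conjecture~\ref{con:X_max} --- a polynomial-distortion comparison of the conditional law of a single entry with the tilted measure via a Berry--Esseen/Edgeworth local-limit estimate for $p_{\overline{\y}}(\r,\c)$, or alternatively sharpening the strong transference cost to $\exp(o(\sqrt{mn}))$ --- is a plausible research direction but contains no actual argument: no local limit theorem for the $(m+n-1)$-dimensional margin vector is proved or cited in a form that survives the union bound over $mn$ entries, and no improvement of the lower bound on $\E[d\nu_{\overline{Y}}/d\zeta^{\otimes(m+n-1)}]$ is carried out. So the genuine gap is that the key ingredient remains unproven, and the part you do supply is essentially the reduction the paper already sketches.

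Within the conditional reduction itself the arithmetic is sound (with $L=C_{0}\log(m+n)$ the Guionnet--Zeitouni/Talagrand rate $\exp(-c\eps^{2}mn/\log^{2}(m+n))$ does dominate the transference cost $\exp(O(n^{3/2}\log^{2} n))$ when $m\asymp n$, and the rank-$\le 2$ passage between $X$ and $\overline{X}$ is harmless for the ESD), but two steps are asserted rather than established: (i) the claim that truncation changes the variance profile only by $O((m+n)^{-cC_{0}})$ and that this propagates to an $o(1)$ change of the limiting measure requires a quantitative, uniform-in-$(m,n)$ stability statement for the Dyson equation \eqref{eq:wishart_QVE} that you acknowledge but do not prove (the stability results of \cite{ajanki2019quadratic} used in the paper are qualitative $L^{2}$-stability for the limit, not finite-$n$ quantitative bounds); and (ii) the event you transfer must be expressed as a Borel function of $\overline{X}$ alone for Theorem~\ref{thm:second_transference} to apply, which forces you to work with the truncated completed matrix $\Gamma_{\r,\c}(\overline{X}^{\mathrm{tr}})$ rather than $X^{\mathrm{tr}}$; this is fixable but needs to be written out, since the entries of the last row and column of $\Gamma_{\r,\c}(\overline{X}^{\mathrm{tr}})$ are not simply truncations of the corresponding entries of $X$. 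In short: your plan mirrors the paper's suggested strategy, but neither you nor the paper has a proof, and the statement remains a conjecture.
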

	
	\noindent One way to address this conjecture is to establish Con. \ref{con:X_max} and use a similar truncation argument as in \cite{chatterjee2014properties} with our Thm. \ref{thm:ESD} \textbf{(i)}.

	The authors of \cite{chatterjee2014properties} conjectured that for the uniformly random doubly stochastic matrices, the limiting empirical \textit{eigenvalue} distribution is the circular law. This conjecture was established by Ngyuen \cite{nguyen2014random}. 
	We conjecture that the same holds for constant linear margins universally for arbitrary base measure $\mu$ that satisfies the hypothesis of  Cor. \ref{cor:quater_circle}.
	
	\begin{conjecture}\label{cor:circular}
		Under the same setting as in Cor. \ref{cor:quater_circle}, 
		the empirical eigenvalue distribution of $\widetilde{X}_{n} = (2 \psi''(\phi(a)) n)^{-1/2} (X- a\mathbf{1}\mathbf{1}^{\top} )$ converges weakly to the circular law in probability. 
	\end{conjecture}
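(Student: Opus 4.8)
The plan is to run the Hermitization / logarithmic-potential program of Girko, Tao--Vu, G\"{o}tze--Tikhomirov and Bordenave--Chafa\"{\i} for the matrix $M_n := (2\psi''(\phi(a))\,n)^{-1/2}\bigl(X-a\mathbf{1}\mathbf{1}^{\top}\bigr)$, using transference and the ESD analysis of this paper as black boxes wherever the relevant functional is a Lipschitz function of the spectral measure, and isolating the two places where a new argument is required. Write $v:=\psi''(\phi(a))$; for the constant margin $\r_n=\c_n=an\mathbf{1}_n$ the standard MLE is $\balpha=\bbeta=\tfrac12\phi(a)\mathbf{1}_n$, so the comparison model is $Y\sim\mu_{\phi(a)}^{\otimes n\times n}$, with i.i.d.\ entries of mean $a$ and variance $v$. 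For $z\in\mathbb{C}$ let $\mathcal{H}_n^{z}$ be the $2n\times 2n$ Hermitization of $M_n-zI$ and $\nu_n^{z}$ the symmetrized empirical distribution of the singular values of $M_n-zI$. By Girko's Hermitization lemma it suffices to show that, for Lebesgue-a.e.\ $z$, (i) $\nu_n^{z}\to\nu^{z}$ weakly in probability, where $\nu^{z}$ is the deterministic measure attached to a mean-zero unit-variance i.i.d.\ model, and (ii) $\log(\cdot)$ is uniformly integrable along $\{\nu_n^{z}\}_n$; the identity $\int\log t\,d\nu^{z}=-U_{\mathrm{circ}}(z)$, with $U_{\mathrm{circ}}$ the logarithmic potential of the (appropriately scaled) circular law, is classical.

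For (i): since $X=\Gamma_{\r_n,\c_n}(\overline X)$, any bounded Lipschitz test-function integral of the spectral measure of $\mathcal{H}_n^{z}$ is a bounded function of $\overline X$, so Theorem \ref{thm:second_transference_density_1} transfers it, at cost $e^{O(n^{3/2}\log^{2}n)}$, to the same integral for $\Gamma_{\r_n,\c_n}(\overline Y)$. The rank inequality lets one then replace $\Gamma_{\r_n,\c_n}(\overline Y)$ by the $n\times n$ padding of the \emph{i.i.d.} block $\overline Y$ with a zero row and column (the two differ by rank $\le 2$), which restores the entrywise independence needed for the Guionnet--Zeitouni / Klochkov--Zhivotovskiy speed-$n^{2}$ concentration of linear spectral statistics; since $e^{-cn^{2}}\cdot e^{O(n^{3/2}\log^{2}n)}\to 0$, this transfer is effective. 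Combined with the classical Marchenko--Pastur / circular-law computation for the i.i.d.\ block (whose nonzero mean contributes only a harmless rank-one term), one obtains $\nu_n^{z}\to\nu^{z}$ in probability for a.e.\ $z$; the case $z=0$ is essentially Corollary \ref{cor:quater_circle}, and $z\neq0$ is the routine deformed analogue. Passing from bounded test functions to $\log t$ near $+\infty$ needs the soft-edge bound $\|M_n\|=O(1)$ with high probability: for $\mu$ of bounded support this follows from a moment/trace estimate exploiting that $X-a\mathbf{1}\mathbf{1}^{\top}$ has all margins zero, and for general sub-Gaussian $\mu$ we expect this step to require the maximum-entry bound of Conjecture \ref{con:X_max} (as the paper already suggests for the sister Conjecture \ref{con:ESD_no_subGaussian}), truncating the entries to $[-C\log n,C\log n]$.

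The crux is (ii): uniform integrability of $\log$ near $0$, equivalently $\sigma_{\min}(M_n-zI)\ge n^{-B}$ with probability $1-o(1)$ together with $\tfrac1n\#\{i:\sigma_i(M_n-zI)\le t\}\le\eps(t)$ with $\eps(t)\downarrow0$ uniformly in $n$ — the latter following from (i) and a short deformation argument at the origin. The least-singular-value bound cannot be routed through transference, since the exponential part $e^{-cn}$ of the Rudelson--Vershynin estimate $\P(\sigma_{\min}<\eps)\lesssim\eps\sqrt n+e^{-cn}$ is crushed by $e^{O(n^{3/2}\log^{2}n)}$. Instead I would run the Rudelson--Vershynin compressible/incompressible scheme directly on $X$, substituting for the column-independence destroyed by the margin constraint the local randomness of the conditioned ensemble: for any combinatorial rectangle $\{i_1,i_2\}\times\{j_1,j_2\}$ the Diaconis--Gangolli difference $X_{i_1j_1}+X_{i_2j_2}-X_{i_1j_2}-X_{i_2j_1}$ can be varied with the margins held fixed, and applying transference to this single \emph{one-dimensional} small-ball event — whose comparison-model probability is $\Theta(1)$ and therefore survives the transference cost — shows that the conditional law of such a difference given all other entries is anti-concentrated at scale $\Theta(1)$. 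Summing over $\Omega(n)$ disjoint rectangles yields a small-ball bound $\P\!\bigl(|u^{\top}(M_n-zI)w|\le\eps/\sqrt n\bigr)\le C\eps+e^{-cn}$ for incompressible $u,w$, which with $\|M_n\|=O(1)$ and a net bound over compressible directions gives $\sigma_{\min}(M_n-zI)\ge n^{-B}$ with high probability, uniformly over a fine $z$-grid (the $z$ near the unit circle requiring the customary additional care near the spectral edge).

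Feeding (i)--(ii) into the Tao--Vu / Bordenave--Chafa\"{\i} non-Hermitian replacement lemma then identifies the limiting ESD of $M_n$ as the circular law, proving the conjecture. The main obstacle is the least-singular-value bound: the global margin constraints remove the independence on which every standard hard-edge argument rests, so one must first locate the genuinely free directions of the conditioned ensemble and transfer anti-concentration at the level of individual small-ball probabilities rather than of the global spectral event; a secondary obstacle — soft-edge control for general unbounded sub-Gaussian $\mu$ — appears to reduce to Conjecture \ref{con:X_max}.
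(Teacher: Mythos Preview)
The statement is labelled a \emph{conjecture} in the paper; no proof is given there, so there is nothing to compare your attempt against. Your proposal is a strategy for an open problem, and you yourself flag it as incomplete (dependence on Conjecture~\ref{con:X_max}, and the least-singular-value step described only in outline).

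The overall architecture --- Girko Hermitization, reducing to (i) weak convergence of $\nu_n^{z}$ and (ii) uniform integrability of $\log$ near $0$ --- is the correct one, and your handling of (i) via strong transference, a rank-$\le 2$ correction from $\Gamma_{\r,\c}(\overline{Y})$ to the padded i.i.d.\ block, and speed-$n^{2}$ sub-Gaussian concentration is sound and parallels the paper's proof of Theorem~\ref{thm:ESD}. You also correctly isolate the two real obstacles: the hard edge $\sigma_{\min}(M_n-zI)\ge n^{-B}$, and soft-edge control for unbounded $\mu$.

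The genuine gap is in your proposed mechanism for the least-singular-value bound. You write that ``applying transference to this single one-dimensional small-ball event --- whose comparison-model probability is $\Theta(1)$ and therefore survives the transference cost --- shows that the conditional law of such a [Diaconis--Gangolli] difference given all other entries is anti-concentrated.'' But transference in this paper only furnishes \emph{upper} bounds $\P_X(\mathcal{E})\le C_n\,\P_Y(\mathcal{E})$ with $C_n=e^{O(n^{3/2}\log^{2}n)}$; it cannot certify that an event likely under $Y$ remains likely under $X$, nor can it push through an $O(\eps)$-scale small-ball bound (the cost $C_n$ annihilates any polynomial gain in $\eps$). To get anti-concentration of the DG difference \emph{conditionally on the remaining entries} you would have to analyze that conditional law directly --- e.g.\ write down the one-parameter density along the DG fiber from the disintegration in Section~\ref{sec:strong_transference_pf} and bound its sup-norm --- which is $\mu$-specific work not supplied here. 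The next step, ``summing over $\Omega(n)$ disjoint rectangles yields $\P(|u^{\top}(M_n-zI)w|\le\eps/\sqrt{n})\le C\eps+e^{-cn}$,'' also needs (approximate) conditional independence of the DG increments across disjoint $2\times2$ blocks, which is not automatic: the feasible range of each move depends on the surrounding entries, and these ranges are coupled through the global margin constraint. Nguyen's proof for the Lebesgue/doubly-stochastic case handles precisely these issues with a bespoke argument; extending it to general sub-Gaussian $\mu$ is the content of the conjecture. Your outline is the natural one and you have located the hard step accurately, but the specific device you propose for it does not work as written.
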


	\subsection{Random graphs with given degree sequence}

	Many scientific applications naturally prompt the exploration of graphs with specific topological constraints, such as a fixed number of edges, triangles, and so forth \cite{ying2009graph, del2010efficient, orsini2015quantifying}. As the degree sequence is one of the most fundamental observables in network science, the study of random graphs with a given degree sequence has been a major topic in the field in the last decades \cite{molloy1995critical, molloy1998size,  chung2002connected, chung2008quasi, barvinok2013number}.  
	
	There are two seminal works on this topic.  Diaconis, Chatterjee, and Sly \cite{chatterjee2011random} and Barvinok and Hartigan \cite{barvinok2013number}, taking statistical and combinatorial perspectives, respectively. Under a certain condition on the convergent sequence of degree sequences, the authors of  \cite{chatterjee2011random} showed that uniformly random graphs with a given degree sequence converge almost surely to a limiting graphon in the cut metric and also identified the limit. The limiting graphon turns out to be the limit of the expectation of an underlying maximum likelihood inhomogeneous statistical model for random graphs known as the `$\bbeta$-model'.  
	Similar results were obtained by Barvinok and Hartigan \cite{barvinok2013number} from the combinatorial perspective. Their key observation was that the adjacency matrix of a large random graph with a given degree sequence concentrates around a `maximum entropy matrix', which corresponds to the notion of typical tables for contingency tables \cite{barvinok2010does}. Our present work generalizes and leverages both of these statistical and combinatorial perspectives. In particular, our Thm. \ref{thm:strong_duality_simple} shows that these two approaches are in fact in a strong duality.
	
	\subsection{Tame margins and the Erd\H{o}s-Gallai (EG) condition}	
	\label{sec:background_tame_EG}
	
	For a sequence of nonnegative integers $d_{1} \le \dots \le d_{n}$, the necessary and sufficient condition for this sequence to represent the degree sequence of a simple graph with $n$ nodes is given by the Erd\H{o}s-Gallai (EG) condition \cite{erdos1960graphen}:
	\begin{align}\label{eq:erdos_galai}
		\min_{1 \le k \le n} \left( k(k-1) + \sum_{i=k+1}^{n} d_{i}\land k - \sum_{i=1}^{k} d_{i} \right) \ge 0.
	\end{align}
	Chatterjee, Diaconis, and Sly \cite{chatterjee2011random} assumed the limiting continuum degree distribution satisfies a continuous analog of the EG condition, ensuring the maximum likelihood estimate (MLE) of the underlying inhomogeneous random graph model remains uniformly bounded. Barvinok and Hartigan \cite{barvinok2013number} instead assumed the degree sequence is $\delta$-tame, 
	meaning typical tables stay uniformly away from boundary values 0 and 1. The same notion of $\delta$-tameness with the counting base measure is also crucial for obtaining asymptotic formulas for the number of contingency tables with unbounded nonnegative integer entries \cite{barvinok2010approximation}. These two hypotheses of boundedness of the MLEs and the typical tables are equivalent to our notion of $\delta$-tameness (Def. \ref{def:ab}) due to the strong duality relation $Z^{\r,\c}=\psi'(\balpha\oplus\bbeta)$ in Thm. \ref{thm:strong_duality_simple}.

	Our Theorems \ref{thm:z_uniform_bd_tame} and \ref{thm:tame_EG} are closely related to the EG condition \eqref{eq:erdos_galai} above. Note that if $(A,B)=(0,1)$, then the inequality \eqref{eq:EG_condition_gen} in Thm. \ref{thm:z_uniform_bd_tame} determining the phase diagram becomes $(s+t)^{2} < 4s$. Also, specializing the EG condition for $n$ sufficiently large for the linear degree sequence with two values $sn$ and $tn$, one obtains $( s+ t)^{2} \le 4s$. 
	In fact, Barvinok and Hartigan \cite[Thm. 2.1]{barvinok2013number} showed that the strict inequality here implies uniform tameness of a degree sequence, which corresponds to the special case of our Theorem \ref{thm:z_uniform_bd_tame} with $\mu=\textup{Uniform}(\{0,1\})$ restricted to symmetric margins.

	In 2011, Chatterjee, Diagonis, and Sly	\cite[Lem 4.1]{chatterjee2011random} showed that if a degree sequence is quadratically deep inside the EG-polytope in the sense of \eqref{eq:matrix_EG}, then the corresponding MLE for the $\bbeta$-model  is uniformly bounded. They furthermore conjectured that this condition is also equivalent to the $\delta$-tameness in Barvinok and Hartigan \cite{barvinok2013number}. Our Thm. \ref{thm:tame_EG} on symmetric margins (together with Thm. \ref{thm:strong_duality_simple}) establishes this conjecture not just for the Bernoulli base measure, but also for any base measures with bounded support. It would be interesting to obtain an analogous characterization of $\delta$-tameness for general asymmetric margins.

	\section{Examples}\label{sec:examples}
	
	In this section, we discuss various examples of our framework for analyzing contingency tables with general base measures. The first two examples with the Gaussian and the Poisson base measure are `exactly solvable' in that the typical table can be expressed as an explicit function of the margins. 
	
	\begin{example}[Gaussian base measure]\label{ex:typical_gaussian}
		Fix an $m\times n$ margin $(\r,\c)$. Suppose the base measure $\mu$ is the standard Gaussian distribution on $\R$, given by $\mu(dx)=\frac{e^{-x^2/2}}{\sqrt{2\pi}} dx$.  In this case, we have \begin{align}
			\Theta=\R, \qquad (A,B)=\R, \qquad \psi(\theta)=\frac{\theta^2}{2},\qquad \psi'(\theta)=\theta,\qquad \phi(x)=x.
		\end{align}
		The function $\psi''(\theta)=1$ is increasing and log-convex. 
		The titled measure $\mu_\theta$ is just a Gaussian distribution with mean $\theta$ and variance $1$. Also, the function $D(\mu_{\phi(x)}\Vert \mu)$ defined in \eqref{eq:typical_table_opt} is  
		\begin{align}
			D(\mu_{\phi(x)}\Vert \mu)=x\phi(x)-\psi(\phi(x))=\frac{x^2}{2}.
		\end{align}
		This is the  KL-divergence from the standard normal $\mathcal{N}(0,1)$ to shifted normal $\mathcal{N}(x,1)$.  
		The  typical table $Z^{\r,\c}=(z_{ij})$ that uniquely solves \eqref{eq:typical_table_opt} in this case is given by 
		\begin{align}\label{eq:Z_r_c_Gaussian}
			\hspace{4cm}	z_{ij}=\frac{\r(i)}{n}+\frac{\c(j)}{m}-\frac{N}{mn} \qquad \textup{for all $i\in [m]$ and $j\in [n]$}.
		\end{align}
		In particular, the margin $(\r,\c)$ is $\delta$-tame for a prespecified constant $\delta>0$ if and only if 
		\begin{align}
			\hspace{4cm}	-\delta^{-1} \le \frac{\r(i)}{n}+\frac{\c(j)}{m}-\frac{N}{mn} \le \delta^{-1} \qquad \textup{for all $i\in [m]$ and $j\in [n]$}.
		\end{align}
		Thus in this case the set of all $\delta$-tame margins is a convex polytope.

		Note that  $X\sim \lambda_{\r,\c}$ is distributed according to a degenerate Gaussian distribution on $\R^{m\times n}$ supported on the transportation polytope $\T(\r,\c)$ 
		satisfying  
		\begin{align*}
			\E[X \,|\, \r(X)=\r, \c(X)=\c] = Z^{\r,\c} , \quad 	\textup{Cov}(X_{ij},X_{k\ell}\, |\,  \r(X)=\r, \c(X)=\c)
			=\begin{cases}
				\frac{(m-1)(n-1)}{mn} & \text{ if }i=k, j=\ell,\\
				-\frac{n-1}{mn} & \text{ if }i\ne k, j=\ell,\\
				-\frac{m-1}{mn} & \text{ if }i=k, j\ne \ell,\\
				\frac{1}{mn} & \text{ if }i\ne j, k\ne \ell.
			\end{cases}
		\end{align*}
		One can see that the entries of $X$ after conditioning on the margin are asymptotically independent. Also, the correlation between two distinct entries is negative if they belong to the same row/column and positive otherwise. Finally, note that the maximum likelihood tilted model $Y$ in the Gaussian case has the law of $X+Z^{\r,\c}$, which has the same conditional law as $X$ given their margins as $(\r,\c)$. 
		\hfill $\blacktriangle$
	\end{example}
	
	\begin{example}[Poisson base measure]\label{ex:Poisson_FY}
		Suppose the base measure $\mu$ is the Poisson base measure on $\mathbb{N}\cup \{0\}$ given by $\mu(k)=\frac{1}{k!}$ for $k\in \Z_{\ge 0}$. In this case, we have   
		\begin{align}
			\Theta=\R, \qquad (A,B)=(0,\infty), \qquad \psi(\theta)=e^{\theta},\qquad \psi'(\theta)=e^{\theta}, \qquad \phi(x)=\log x.
		\end{align}
		Note that $\psi''(\theta)=e^{\theta}$ is increasing and log-convex.
		The function $D(\mu_{\phi(x)}\Vert \mu)$ defined in \eqref{eq:typical_table_opt} is  
		\begin{align}
			D(\mu_{\phi(x)}\Vert \mu)=x\, \phi(x)-\psi(\phi(x))=x\log x - x.	
		\end{align}
		Noting that for any matrix $X=(x_{ij})$ with margin $(\r,\c)$ the total sum $\sum_{i,j}x_{ij}$ is constant and equals to $N=\sum_{i=1}^{m} \r(i)=\sum_{j=1}^{n} \c(j)$, 
		the typical table problem in \eqref{eq:typical_table_opt} reduces to 
		\begin{align}\label{eq:poisson_shrodinger}
			\argmin_{X\in \mathcal{T}(\r,\c) \cap (0,\infty)^{m\times n}} \,  \sum_{i,j}\, x_{ij} \log x_{ij}.
		\end{align}
		Notice that \eqref{eq:poisson_shrodinger} is an instance of discrete Schr\"{o}dinger bridge problem \eqref{eq:schrodinger_discrete} with uniform reference measure $\mathcal{R}(i,j)\equiv 1/mn$.

		Since $\psi'(\theta)=e^{\theta}$, the typical table $Z^{\r,\c}=(z_{ij})$ that uniquely solves \eqref{eq:poisson_shrodinger} takes the following form 
		\begin{align}
			\hspace{3.5cm}	z_{ij} = e^{\balpha(i)+\bbeta(j)} \qquad \textup{for all $i\in [m]$ and $j\in [n]$}
		\end{align}
		for some dual variables (Schr\"{o}dinger potentials, see \eqref{eq:SB_decomposition}) $\balpha,\bbeta$. Invoking the margin condition $Z^{\r,\c}\in \T(\r,\c)$, we find that the typical table is precisely the Fisher-Yates table
		\begin{align}\label{eq:possion_typical_table}
			\hspace{3.5cm}	z_{ij}=\frac{ \r(i) \c(j)}{N} \qquad \textup{for all $i\in [m]$ and $j\in [n]$}.
		\end{align}
		In \cite{good1963maximum}, Good referred the typical table above as the `independence table' and observed that it maximizes the entropy function $\sum_{i,j} \frac{x_{ij}}{N}\log \frac{N}{x_{ij}}$ subject to the margin constraint, which is equivalent to \eqref{eq:poisson_shrodinger}.

		The Poisson case is exactly solvable in the sense that the distribution of $X$ conditional on $X\in \T(\r,\c)$ is precisely given as the  hypergeometric (Fisher-Yates) distribution
		\begin{align}\label{eq:Poisson_hyper_geom}
			\P(X=(x_{ij})_{i,j} \,|\, X\in \T(\r,\c) ) = \frac{\prod_{i=1}^m \r(i)! \prod_{j=1}^n \c(j)!}{N!\prod_{i=1}^m \prod_{j=1}^n x_{ij}!}. 
		\end{align}
		This fact holds for a more general model of independence in the statistic literature \cite{diaconis1998algebraic}, where  $X$ has independent Poisson entries $X_{ij}$ with mean $p_{i}q_{j}$ for some $p_{i},q_{j}\ge 0$. 
		Together with \eqref{eq:possion_typical_table}, 
		it follows that the conditional expectation of $X$ is exactly the typical table: 
		\begin{align}
			\E[X \,|\, X\in\T(\r,\c)] = Z^{\r,\c} = \left(  \r(i) \c(j)/N \right)_{i,j}. 
		\end{align}

		In \cite[p.5]{barvinok2010does}, Barbinok wrote that ``\textit{It looks plausible that the independence table $Z^{\r,\c}$ is close with high probability to the random contingency table $X\in \T(\r,\c)$ if it was sampled from the Fisher-Yates distribution \eqref{eq:Poisson_hyper_geom} instead of the uniform distribution.}'' Our Theorem  \ref{thm:CT_limit_bdd_integer} specialized to $\mu=\textup{Poisson}(1)$  confirms this speculation. \hfill $\blacktriangle$
	\end{example}

	\begin{example}[Binomial base measure] 
		Suppose the base measure $\mu$ is the Binomial distribution with parameters $(B,\frac{1}{2})$, given by $\mu(i)={B\choose i} 2^{-B}$ for $i\in \{0,1,\cdots,B\}$. For any $\theta\in \R$, the measure $\mu_\theta$ is the Binomial distribution with parameters $\Big(B,\frac{e^\theta}{1+e^\theta}\Big)$. We also have 
		\begin{align}
			\Theta=\R, \qquad (A,B)=(0,B), \qquad \psi(\theta)=B\log \frac{1+e^\theta}{2},\qquad \psi'(\theta)=\frac{Be^\theta}{1+e^\theta}, \qquad \phi(x)=\log \frac{x}{B-x}.
		\end{align}
		In this case the function $D(\mu_{\phi(x)}\Vert \mu)$ defined in \eqref{eq:typical_table_opt} is  
		\begin{align}
			D(\mu_{\phi(x)}\Vert \mu)=x \phi(x)  -  \psi(\phi(x))  = x\log x+(B-x)\log (B-x)+B\log \frac{2}{B}.
		\end{align}
		The typical table $Z^{\r,\c}=(z_{ij})$ that uniquely solves \eqref{eq:typical_table_opt} is given by 
		\begin{align}
			z_{ij}=\frac{B}{e^{-\balpha(i)-\bbeta(j)}+1},\qquad i\in [m], j\in [n],
		\end{align}
		where $\balpha,\bbeta$ depend on $(\r,\c)$ implicitly so as to satisfy $Z^{\r,\c}\in \T(\r,\c)$.

		In particular, taking $B=1$ we get the Bernoulli base measure,
		which was studied using the maximum entropy principle in Barvinok and Hartigan \cite{barvinok2010maximum}. 
		Note that the function $f$ above (up to a constant) is the negative entropy of the Binomial distribution with mean $x$. Hence  The typical table can be interpreted to be the mean matrix of maximum entropy within the transportation polytope $\mathcal{T}(\r,\c)$.

		Let $X\sim \lambda_{\r,\c}$. Unlike the Gaussian and the Poisson case, with this information alone it is not clear how $X$ given the margin $(\r,\c)$ would look like. Our Thm. \ref{thm:second_transference_density_1} states that $X$ can be approximated by a random matrix $Y$ with independent Binomial entries with mean $\E[Y]=Z^{\r,\c}$. \hfill $\blacktriangle$
	\end{example}

	\begin{example}[Counting base measure]\label{ex:counting_base_measure}
		Suppose the base measure $\mu$ is counting measure  on $\mathbb{Z}_+:=\{0,1,2,\ldots,\}$, given by $\mu(i)=1$ for $i\in \mathbb{Z}_+$. In this case, for any $\theta\in \Theta=\Theta^\circ$, the measure $\mu_\theta$ is a Geometric distribution with parameter $p=1-e^\theta$. Also we have \begin{align}
			\Theta=(-\infty,0),\quad (A,B)=(0,\infty),\quad \psi(\theta)=-\log(1-e^\theta),\quad \psi'(\theta)=\frac{e^\theta}{1-e^\theta},\quad \phi(x)= -\log  (1+x^{-1}).
		\end{align}
		Note that $\psi''(\theta)=e^{\theta}(1-e^{\theta})^{2}$ is increasing and log-convex on $\Theta$. The function $D(\mu_{\phi(x)}\Vert \mu)$ defined in \eqref{eq:typical_table_opt} is  
		\begin{align}
			D(\mu_{\phi(x)}\Vert \mu) = x \phi(x)  -  \psi(\phi(x)) 
			&=   x\log  x - (1+x) \log\left( 1+x  \right)
		\end{align}
		for $x\in (0,\infty)$. The typical table $Z^{\r,\c}=(z_{ij})$ that uniquely solves \eqref{eq:typical_table_opt} with this $f$ is given by 
		\begin{align}
			z_{ij}=\frac{1}{e^{-\balpha(i)-\bbeta(j)}-1},\qquad  i\in [m], j\in [n],
		\end{align}
		where $\balpha,\bbeta$ depend on $(\r,\c)$ implicitly so as to satisfy $Z^{\r,\c}\in \T(\r,\c)$.
		Recall that $f(x)$ is defined to be the relative entropy from the base measure $\mu$ to the geometric distribution  $\mu_{\phi(x)}$  on $\{0,1,\dots \}$ with mean $x$. Since the base measure $\mu$ is the counting measure, it coincides with the negative entropy of $\mu_{\phi(x)}$. Hence our definition of the typical table in this case coincides with that of Barvinok and Hartigan \cite{barvinok2010maximum} formulated through the maximum entropy principle.

		Let $X\sim \lambda_{\r,\c}$.  Clearly, such $X$  is uniformly distributed over the set $\T(\r,\c)\cap \Z_{\ge 0}^{m\times n}$ of all nonnegative inter-valued contingency tables with margin $(\r,\c)$. 
		Our Thm. \ref{thm:second_transference_density_1} states that $X$ can be approximated by a random matrix $Y$ with independent geometric entries with mean $\E[Y]=Z^{\r,\c}$. Also, our Theorem \ref{thm:CT_limit_bdd_integer} shows that $X$ is very close to the deterministic matrix $Z^{\r,\c}$ in cut norm with high probability as long as the margin is $\delta$-tame for some $\delta>0$. This is warranted if the ratios between the extreme values in the row/column sums are strictly less than the critical threshold $\lambda_{c}=1+\sqrt{1+s^{-1}}$, where $s>0$ is a constant lower bounding the linearly rescaled row and column sums (see Cor. \ref{cor:sharp_tame_CT}). 
		\hfill $\blacktriangle$ 
	\end{example}
	
	\begin{example}[Negative binomial base measure]\label{ex:neg_binom}
		Suppose the base measure $\mu$ puts the mass $\mu(i)={r+i-1\choose i}$, for $i\in \mathbb{Z}_{\ge 0}$. Here $r$ is a positive integer, which, following standard negative binomial terminology, can be thought of as the number of heads/successes $r$, and $\mu(i)$ is the number of ways one can get $i$ tails/failures before getting $r$ heads/successes. We note that this $\mu$ is the $r$-fold convolution of the counting measure on $\mathbb{Z}_{\ge 0}$. In this case, we have   
		\begin{align}
			\Theta=(-\infty,0), \quad (A,B)=(0,\infty), \quad \psi(\theta)=-r\log(1-e^\theta),\quad \psi'(\theta)=\frac{re^\theta}{1-e^\theta},\quad \phi(x)= -\log \Big(1+\frac{r}{x}\Big).
		\end{align}
		Also $\psi''(\theta)=r e^{\theta}(1-e^{\theta})^{2}$ is increasing and log-convex on $\Theta$. 
		The titled measure $\mu_\theta$ is a negative binomial distribution with parameters $(r, p=1-e^\theta)$.
		The function $f(x)=D(\mu_{\phi(x)}\Vert \mu)$ defined in \eqref{eq:typical_table_opt} is just $f(x)=-(x+r)\log(x+r)+x\log x+r\log r$, which is the entropy of the negative binomial distribution with mean $x$. The the typical table $Z^{\r,\c}=(z_{ij})$ that uniquely solves \eqref{eq:typical_table_opt} with this $f$ is given by 
		\begin{align}
			z_{ij}=\frac{r}{e^{-\balpha(i)-\bbeta(j)}-1},\qquad i\in [m], j\in [n],
		\end{align}
		where $\balpha,\bbeta$ depend on $(\r,\c)$ implicitly so as to satisfy $Z^{\r,\c}\in \T(\r,\c)$. 
		
		Our Thm. \ref{thm:second_transference_density_1} states that $X\sim\lambda_{\r,\c}$ can be approximated by a random matrix $Y$ with negative binomial  entries with mean $\E[Y]=Z^{\r,\c}$.
		Our Theorem \ref{thm:CT_limit_bdd_integer} yields that $X$ is very close to the typical table $Z^{\r,\c}$ in cut norm with high probability as long as the margin is $\delta$-tame for some $\delta>0$. Note that the threshold for $\delta$-tameness now becomes $ \lambda_{c}=1+\sqrt{1+rs^{-1}}$ (see Cor. \ref{cor:sharp_tame_CT}). 
		\hfill $\blacktriangle$
	\end{example}

	\begin{example}[Lebesgue base measure on positive reals]
		\label{ex:Lebesgue_base_measure}
		
		Suppose the base measure $\mu$ is Lebesgue measure on $(0,\infty)$, given by $\mu(dx)=dx$.  In this case, for any $\theta<0$, the measure $\mu_\theta$ is the Exponential distribution with mean $-\frac{1}{\theta}$. We  also have   
		\begin{align}
			\Theta=(-\infty,0), \qquad (A,B)=(0,\infty), \qquad \psi(\theta)=-\log(-\theta),\qquad \psi'(\theta)=-\frac{1}{\theta},\qquad \phi(x)=-\frac{1}{x}.
		\end{align}
		Note that $\psi''(\theta)=\theta^{-2}$ is increasing and log-convex on $\Theta$. The function $f(x)=D(\mu_{\phi(x)}\Vert \mu)$  defined in \eqref{eq:typical_table_opt} is just $f(x)=-1-\log x$, which is the negative entropy of the exponential distribution with mean $x$. The the typical table $Z^{\r,\c}=(z_{ij})$ that uniquely solves \eqref{eq:typical_table_opt} with this $f$ is given by 
		\begin{align}
			z_{ij}=\frac{-1}{\balpha(i)+\bbeta(j)},\qquad i\in [m], j\in [n],
		\end{align}
		where $\balpha,\bbeta$ depend on $(\r,\c)$ implicitly so as to sayisfy $Z^{\r,\c}\in \T(\r,\c)$. The above matrix is also known as the \textit{analytic center} of $\T(\r,\c)$, which is closely related to the `interior point method' in optimization \cite{renegar1988polynomial}. 
		
		Our Thm. \ref{thm:second_transference_density_1} states that $X$ can be approximated by a random matrix $Y$ with independent exponential entries with mean $\E[Y]=Z^{\r,\c}$.
		Chatterjee, Diaconis, and Sly \cite{chatterjee2014properties} analyzed large uniformly random doubly stochastic matrices, which is the special case of the Lebesgue base measure with constant margin $\r=\c=n\mathbf{1}_{n}$ and exact margin condition $\rho=0$. In this case, the maximum likelihood tilted model is simply the $n\times n$ random matrix of i.i.d. exponential entries with mean one and the typical table is $\mathbf{1}_{n}\mathbf{1}_{n}^{\top}$. 
		\hfill $\blacktriangle$
	\end{example}

	\begin{example}[Gamma distribution]\label{ex:gamma}
		Suppose the base measure $\mu$ has Gamma density $\mu(dx)=e^{-ax} x^{\gamma-1}\mathbf{1}(x>0)\,dx$ with rate parameter $a\ge 0$ and shape parameter $\gamma>0$.  This measure entails several special cases. Taking $a=1/2$ and $\gamma=k/2$ for $k\ge 1$ an integer, $\mu$ becomes $\chi^{2}$-distribution with $k$ degrees of freedom. Also, taking $a=0$ and $\gamma=k$, $\mu$ becomes the $k$-fold convolution of the Lebesgue measure on $\R_{\ge 0}$. In particular, it becomes the Lebesgue measure on $\R_{\ge 0}$ when $a=0$ and $\gamma=1$. For any $\theta<a$, the tilted measure $\mu_\theta$ is the Gamma distribution with parameters $(a-\theta,\gamma)$. 
		Also, we  have   
		\begin{align}
			\Theta=(-\infty,a), \quad (A,B)=(0,\infty), \quad \psi(\theta)=\log \Gamma(\gamma)-\gamma\log(a-\theta),\quad \psi'(\theta)=\frac{\gamma}{a-\theta},\quad \phi(x)=a-\frac{\gamma}{x}.
		\end{align}
		Note that $\psi''(\theta)=\gamma (a-\theta)^{-2}$ is increasing and log-convex on $\Theta$. 
		The function $f(x)=D(\mu_{\phi(x)}\Vert \mu)$  defined in \eqref{eq:typical_table_opt} is  $f(x)=ax - \gamma + \log \Gamma(\gamma) - \gamma \log (x/\gamma)$, which is the negative entropy of the Gamma distribution with mean $x$ and shape parameter $\gamma$. The typical table $Z^{\r,\c}=(z_{ij})$ that uniquely solves \eqref{eq:typical_table_opt} with this $f$ is given by 
		\begin{align}
			z_{ij}=\frac{\gamma}{a-(\balpha(i)+\bbeta(j))},\qquad i\in [m], j\in [n],
		\end{align}
		where $\balpha,\bbeta$ depend on $(\r,\c)$ implicitly so as to satisfy $Z^{\r,\c}\in \T(\r,\c)$.

		Our Thm. \ref{thm:second_transference_density_1} states that $X\sim \lambda_{\r,\c}$ can be approximated by a random matrix $Y$ with independent Gamma entries with mean $\E[Y]=Z^{\r,\c}$.
		Our Theorem \ref{thm:CT_limit_bdd_integer} shows that it is very close to the deterministic matrix $Z^{\r,\c}$ in cut norm with high probability as long as the margin is $\delta$-tame for some $\delta>0$. This is warranted if the ratios between the extreme values in the row/column sums are strictly less than the critical threshold $\lambda_{c}=2$ regardless of the shape parameter $\gamma$ (see Cor. \ref{cor:sharp_tame_CT}). \hfill $\blacktriangle$
	\end{example}

	\begin{example}[Laplace base measure]
		\label{ex:Laplace}
		
		Suppose the base measure is the Laplace measure with mean 0 and scale parameter $b=1$, which has a density on $\R$ with respect to the Lebesgue measure given by $\mu(dx)=\frac{1}{2}\exp(-|x|)\, dx$. In this case, we have   
		\begin{align}
			\Theta=(-1,1), \qquad (A,B)=\R, \qquad \psi(\theta)=  - \log(1-\theta^{2}) ,\qquad \psi'(\theta)= \frac{2\theta}{1-\theta^{2}},\qquad \phi(x)= 
			\frac{x}{1+\sqrt{1+x^{2}}}
		\end{align}
		In this case the function $f(x)=D(\mu_{\phi(x)}\Vert \mu)$  defined in \eqref{eq:typical_table_opt} is just $f(x)=(x+k)/2 - \frac{k}{2}\log x + \frac{k}{2}\log k$. The typical table $Z^{\r,\c}=(z_{ij})$ that uniquely solves \eqref{eq:typical_table_opt} with this $f$ is given by 
		\begin{align}
			z_{ij}=\frac{2(\balpha(i)+\bbeta(j))}{1-(\balpha(i)+\bbeta(j))^{2}},\qquad i\in [m], j\in [n],
		\end{align}
		where $\balpha,\bbeta$ depend on $(\r,\c)$ implicitly so as to sayisfy $Z^{\r,\c}\in \T(\r,\c)$. Our Theorem \ref{thm:CT_limit_bdd_integer} shows that it is very close to the deterministic matrix $Z^{\r,\c}$ in cut norm with high probability as long as the margin is $\delta$-tame for some $\delta>0$. This is warranted if the ratios between the extreme values in the row/column sums are strictly less than the critical threshold $\lambda_{c}=2$ regardless of the shape parameter $\gamma$ (see Cor. \ref{cor:sharp_tame_CT}). 
		\hfill $\blacktriangle$
	\end{example}

	\begin{example}[A counterexample to strong transference]\label{ex:counterexample}
		Suppose $\mu=\textup{Uniform}(\{0,1,\sqrt{2}\})$ and an $n\times n$ margin $(\r,\r)$ with $\r=n \mathbf{1}_{n}$. On the one hand,  note that $U\sim \mu^{\otimes (n\times n)}$ satisfies $U\in \T(\r,\r)$ if and only if $U_{ij}\equiv 1$ for all $i,j$. Thus, the conditioned random matrix $X\sim \lambda_{\r,\r}$  is almost surely the all ones matrix  $\mathbf{1}_{n}\mathbf{1}_{n}^{\top}$.  On the other hand, note that the symmetric MLE is $(\balpha,\balpha)$ for $(\r,\r)$ is given by $\balpha = \alpha^{*} \mathbf{1}_{n}$ with $\alpha^{*}=\phi(1)$, where $\phi=(\psi')^{-1}$ with $\psi(x)=\log((1+\exp(x)+ \exp(x\sqrt{2}) )/3)$.
		A direct computation shows $\alpha^{*}=\log (1+2^{-1/2})\approx 0.6232$. Thus 
		the deterministic matrix $X=\mathbf{1}_{n}\mathbf{1}_{n}^{\top}$ cannot be approximated by the random matrix $Y$ with i.i.d. entries from the non-degenerate probability distribution $\mu_{\alpha^{*}}$. For instance, the event $X=\mathbf{1}_{n}\mathbf{1}_{n}^{\top}$ occurs almost surely, but the transferred event $Y=\mathbf{1}_{n}\mathbf{1}_{n}^{\top}$ occurs with probability $\mu_{\alpha^{*}}(\{1\})^{n^{2}}=\exp(-O(n^{2}))$, where $\mu_{\alpha^{*}}(\{1\})= \exp(\alpha^{*} - \psi'(\alpha^{*}))/3 \approx 0.3532$. Consequently, subsequent results about the marginal distribution of an entry and limiting ESD based on strong transference must also not hold for $X$. For instance, the law of $X_{11}$ has point mass $\delta_{1}$  but the law of $Y_{11}$ is $\mu_{\alpha^{*}}$ with support $\{0,1,\sqrt{2}\}$. Also,  $X=\mathbf{1}_{n}\mathbf{1}_{n}^{\top}$ is rank one so its limiting ESD is the point mass $\delta_{0}$, whereas $Y\sim \mu_{\balpha\oplus \balpha}$ has i.i.d. entries from $\mu_{\alpha^{*}}$, which has limiting ESD the quarter-circle law. 
		\hfill $\blacktriangle$
	\end{example}

	\section{Proof of the strong duality} 
	\label{sec:strong_duality_pf}

	In this section, we prove Theorem \ref{thm:strong_duality_simple} on the strong duality between the MLEs and the typical tables. 
	
	First we remark that the parameters $\balpha\in \R^{m}$ and $\bbeta\in \R^{n}$ in the $(\balpha,\bbeta)$-model is not identifiable, since $(\balpha,\bbeta)$ and $(\balpha+\delta \mathbf{1}_{m}, \bbeta-\delta \mathbf{1}_{n})$ generates the same distribution on $\R^{m\times n}$. More precisely, 
	\begin{align}\label{eq:params_non_identifiable}
		\textup{$(\balpha,\bbeta)$-model}  \,\, \overset{d}{=} \,\,  \textup{$(\balpha',\bbeta')$-model} \qquad \Longleftrightarrow \qquad \textup{$\exists \lambda \in \R$ s.t. $\balpha'-\balpha \equiv \lambda \equiv \bbeta-\bbeta'$},
	\end{align}
	where the condition on the right states that the coordinate-wise difference vectors $\balpha'-\balpha$ and $\bbeta-\bbeta'$ have a constant value of $\lambda$. The above claim is easy to verify. Since we have the total sum condition $\sum_{i=1}^m \r(i) = \sum_{j=1}^n \c(j)$, one of the $m+n$ linear equations that define the polytope $\mathcal{T}(\r,\c)$ is redundant. Consequently, an MLE, even if it exists, is not unique. 
	Also, because of the non-compactness of the parameter space, the MLE may not even exist. 
	
	In the lemma below, we deduce a set of equations characterizing MLEs for a given margin $(\r,\c)$. 
	
	\begin{lemma}[The MLE equation] \label{lem:typical_MLE_equation}
		$(\hat{\balpha},\hat{\bbeta})$ is a solution of \eqref{eq:typical_Lagrangian} if and only if 
		\begin{align}\label{eq:MLE_eq}
			\sum_{j=1}^{n} \psi'(\hat{\balpha}(i)+\hat{\bbeta}(j)) = \r(i) \quad \text{for $i\in [m]$}, \quad 	\sum_{i=1}^{m} \psi'(\hat{\balpha}(i))+\hat{\bbeta}(j)))  =\c(j) \quad  \text{for $j\in [n]$},
		\end{align}
		that is, the expected table $\psi'(\balpha\oplus \bbeta)=\E_{Y\sim \mu_{\balpha+\bbeta}}[Y]$  satisfies the margin $(\r,\c)$ when $(\balpha,\bbeta)=(\hat{\balpha},\hat{\bbeta})$.
		We call \eqref{eq:MLE_eq} the MLE equation for the $(\balpha,\bbeta)$-model. 
	\end{lemma}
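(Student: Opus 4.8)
The plan is to identify \eqref{eq:MLE_eq} as the first-order stationarity condition for the concave program \eqref{eq:typical_Lagrangian} and then invoke the elementary principle that a differentiable concave function on an open convex set attains its maximum exactly at its critical points. First I would record the structure of the objective. Let $\mathcal{D}:=\{(\balpha,\bbeta)\in\R^{m}\times\R^{n}:\balpha(i)+\bbeta(j)\in\Theta^{\circ}\text{ for all }i\in[m],\,j\in[n]\}$ be the feasible set; it is open and convex, being the preimage of the open convex set $(\Theta^{\circ})^{m\times n}$ under the linear map $(\balpha,\bbeta)\mapsto\balpha\oplus\bbeta$. By the standard theory of the log-moment generating function (already used in Definition \ref{def:tilting}), $\psi$ is real-analytic on $\Theta^{\circ}$, with $\psi'(\theta)=\E_{\mu_{\theta}}[X]$ and $\psi''(\theta)=\Var_{\mu_{\theta}}(X)>0$; in particular $\psi$ is convex and $C^{\infty}$ on $\Theta^{\circ}$. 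Hence each summand $(\balpha,\bbeta)\mapsto\psi(\balpha(i)+\bbeta(j))$ is convex (a convex function precomposed with an affine map) and $C^{\infty}$, so $g^{\r,\c}=\langle\r,\balpha\rangle+\langle\c,\bbeta\rangle-\sum_{i,j}\psi(\balpha(i)+\bbeta(j))$ is a $C^{\infty}$ concave function on $\mathcal{D}$.

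Next I would compute the gradient. Differentiating term by term gives
\[
\frac{\partial g^{\r,\c}}{\partial\balpha(i)}=\r(i)-\sum_{j=1}^{n}\psi'(\balpha(i)+\bbeta(j)),\qquad
\frac{\partial g^{\r,\c}}{\partial\bbeta(j)}=\c(j)-\sum_{i=1}^{m}\psi'(\balpha(i)+\bbeta(j)),
\]
so $\nabla g^{\r,\c}(\hat{\balpha},\hat{\bbeta})=\mathbf{0}$ is precisely the system \eqref{eq:MLE_eq}. For the forward implication: if $(\hat{\balpha},\hat{\bbeta})$ solves \eqref{eq:typical_Lagrangian}, then it maximizes $g^{\r,\c}$ over the \emph{open} set $\mathcal{D}$, hence lies in the interior of $\mathcal{D}$, and therefore $\nabla g^{\r,\c}(\hat{\balpha},\hat{\bbeta})=\mathbf{0}$, i.e. \eqref{eq:MLE_eq} holds. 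For the converse: if $(\hat{\balpha},\hat{\bbeta})\in\mathcal{D}$ satisfies \eqref{eq:MLE_eq}, then $\nabla g^{\r,\c}(\hat{\balpha},\hat{\bbeta})=\mathbf{0}$, and concavity gives, for every $(\balpha,\bbeta)\in\mathcal{D}$, $g^{\r,\c}(\balpha,\bbeta)\le g^{\r,\c}(\hat{\balpha},\hat{\bbeta})+\langle\nabla g^{\r,\c}(\hat{\balpha},\hat{\bbeta}),(\balpha,\bbeta)-(\hat{\balpha},\hat{\bbeta})\rangle=g^{\r,\c}(\hat{\balpha},\hat{\bbeta})$, so $(\hat{\balpha},\hat{\bbeta})$ is a global maximizer. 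Finally, since $\psi'(\theta)=\E_{\mu_{\theta}}[X]$, the left-hand sides of \eqref{eq:MLE_eq} are exactly the row and column sums of the matrix $\psi'(\balpha\oplus\bbeta)=\E_{Y\sim\mu_{\balpha\oplus\bbeta}}[Y]$, which is the asserted reformulation that this expected table lies in $\T(\r,\c)$.

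The argument is essentially routine; the only two points that deserve a sentence of care are (i) the justification for differentiating $\psi$ under the integral sign on $\Theta^{\circ}$, which is standard since $\int e^{\theta x}\,d\mu(x)$ is finite on a neighborhood of each interior point, and (ii) the observation used in the forward direction that, because the feasible set $\mathcal{D}$ is \emph{open}, any optimizer automatically lies in its interior, so the plain unconstrained first-order condition applies with no boundary terms or KKT multipliers. I do not anticipate any genuine obstacle beyond these bookkeeping points.
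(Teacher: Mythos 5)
Your proposal is correct and is essentially the same argument as the paper's: the paper also observes that the admissible set is open and nonempty and $g^{\r,\c}$ is smooth and concave, so global maximizers coincide with critical points, and the vanishing of the gradient is exactly \eqref{eq:MLE_eq}. You simply spell out the gradient computation and the concavity inequality that the paper leaves implicit.
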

	\begin{proof}
		The set of admissible dual variables $(\balpha,\bbeta)$ for the MLE problem \eqref{eq:typical_Lagrangian} is open and non-empty and the objective function $g^{\r,\c}$ is smooth and concave. It follows that $(\hat{\balpha},\hat{\bbeta})$ is a global maximizer of $g^{\r,\c}$ if and only if it is a critical point of $g^{\r,\c}$, i.e., $\nabla g^{\r,\c}(\hat{\balpha},\hat{\bbeta})=\mathbf{0}$. This holds precisely if the system of equations  \eqref{eq:MLE_eq} hold. 
	\end{proof}
	
	\begin{remark} 
		When $\mu=\textup{Poisson}(1)$, the MLE equations above correspond to the Schr\"{o}dinger equations with reference measure being the uniform measure on the lattice $[m]\times [n]$ characterizing Schr\"{o}dinger potentials (see Sec. \ref{sec:schrodinger}). For this reason, we can view an MLE for a margin $(\r,\c)$ as potentials characterizing the typical table. 
	\end{remark}

	Instead of arguing the existence of MLEs directly by solving the MLE equations, we will first show that the typical table is uniquely defined under a mild condition and then establish equivalence between typical tables and MLEs. The key advantage in analyzing the typical table first is that the objective function $g$ defining the usual table in \eqref{eq:typical_table_opt} is strictly convex. 
	To see this, denote $f(x)=D(\mu_{\phi(x)}\Vert \mu)$ defined in \eqref{eq:def_rel_entropy}. 
	Note that  $f$ is strictly convex since 
	\begin{align}\label{eq:f_derivatives}
		\qquad 	f'(x) =\phi(x) + x\phi'(x) -  \psi'(\phi(x)) \phi'(x)  =  \phi(x), \,\,\,\,
		f''(x) =  \phi'(x)=\frac{1}{\psi''(\phi(x))}  =\frac{1}{\Var(\mu_{\phi(x)})}> 0.
	\end{align}
	Thus the objective function $g$  in \eqref{eq:typical_table_opt} is strictly convex on the  set $\mathcal{T}(\r,\c) \cap (A,B)^{m\times n}$.  As long as this set it is nonempty, there is a unique minimizer of $g$,  which is exactly the typical table $Z^{\r,\c}$. 
	
	\begin{lemma}[Existence and uniqueness of typical table]\label{lem:typical}
		
		For an $m\times n$ margin $(\r, \c)$, the typical table  $Z^{\r,\c}$  in \eqref{eq:typical_table_opt} exists if and only if the set $\T(\r,\c)\cap (A,B)^{m\times n}$ is non-empty. Furthermore, the typical table is unique if it exists. 
		
	\end{lemma}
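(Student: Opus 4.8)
The plan is to derive uniqueness (and the ``only if'' direction) directly from the strict convexity recorded in \eqref{eq:f_derivatives}, and to establish existence by a compactness‑or‑coercivity argument for the lower semicontinuous closure of the objective, together with a step ruling out minimizers that sit on the relative boundary of the open box $(A,B)^{m\times n}$.

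Write $f(x):=D(\mu_{\phi(x)}\Vert\mu)$ and $K:=\T(\r,\c)\cap(A,B)^{m\times n}$, so $H(Z)=\sum_{i,j}f(z_{ij})$. Since $f''=\phi'=1/\psi''>0$ on $(A,B)$ by \eqref{eq:f_derivatives}, $H$ is finite and strictly convex on the convex set $K$, while $H(Z)=+\infty$ whenever some entry of $Z$ leaves $(A,B)$, because $D(\mu_\theta\Vert\mu)=+\infty$ for $\theta\notin\Theta^{\circ}=(\phi(A),\phi(B))$ by \eqref{eq:def_rel_entropy}. Hence if \eqref{eq:typical_table_opt} admits a minimizer $Z^{\r,\c}$, then $H(Z^{\r,\c})<\infty$ forces $Z^{\r,\c}\in K$ and in particular $K\ne\emptyset$; and strict convexity on $K$ gives uniqueness at once, since two distinct minimizers would both lie in $K$ and the objective would be strictly smaller at their (still feasible) midpoint.

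For the converse, assume $K\ne\emptyset$. I would introduce $\bar f$, the lower semicontinuous convex hull of $f|_{(A,B)}$: it agrees with $f$ on $(A,B)$, equals the monotone one‑sided limits $f(A^{+}),f(B^{-})\in(-\infty,+\infty]$ at finite endpoints, and is $+\infty$ otherwise; put $\bar H(Z):=\sum_{i,j}\bar f(z_{ij})$, a convex lower semicontinuous function on the affine set $\T(\r,\c)$ that coincides with $H$ on $K$. One checks that $\inf_{K}H$ is finite and that $\bar H$ attains its minimum over $\T(\r,\c)$ at some $Z^{\star}$: when $(A,B)$ is bounded this is lower semicontinuity on the compact set $\T(\r,\c)\cap[A,B]^{m\times n}$, and when $(A,B)=\R$, $\bar H=H$ is real‑valued, continuous, and coercive on the affine space $\T(\r,\c)$ (along any nonzero feasible direction $v$ the margin identities give $\sum_{i,j}v_{ij}=0$, so $H(Z+sv)\to+\infty$ with leading term $s\,(\sup\Theta^{\circ}-\inf\Theta^{\circ})\sum_{v_{ij}>0}v_{ij}$). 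Once we know $Z^{\star}\in K$ we are done: then $H(Z^{\star})=\bar H(Z^{\star})=\inf_{\T(\r,\c)}\bar H\le\inf_{K}H\le H(Z^{\star})$, so $Z^{\star}$ is the unique minimizer over $K$, i.e.\ the typical table.

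The remaining point, which I expect to be the main obstacle, is to show $Z^{\star}$ has no entry equal to a finite endpoint of $(A,B)$. The key input is the standard exponential‑family fact (valid since $\Theta\ne\emptyset$ already forces $\mu$ to be locally finite) that a finite endpoint of $\supp(\mu)$ corresponds to an infinite endpoint of $\Theta^{\circ}$: $A>-\infty$ gives $\phi(A)=\inf\Theta^{\circ}=-\infty$ and $B<\infty$ gives $\phi(B)=\sup\Theta^{\circ}=+\infty$, so by \eqref{eq:f_derivatives} the one‑sided derivatives of $\bar f$ satisfy $\bar f'(A^{+})=-\infty$ and $\bar f'(B^{-})=+\infty$. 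Suppose, for contradiction, that $Z^{\star}_{ij}\in\{A,B\}$ for some $(i,j)$. Fix any $W\in K$ and set $v:=W-Z^{\star}$; then $v$ is a feasible direction with $v_{kl}>0$ at every entry where $Z^{\star}_{kl}=A$ and $v_{kl}<0$ at every entry where $Z^{\star}_{kl}=B$, so $Z^{\star}+tv\in K$ for all $t\in(0,1]$. The convex function $t\mapsto\bar H(Z^{\star}+tv)$ is minimized at $t=0$, hence has nonnegative right derivative there; but that right derivative equals $\sum_{k,l}\bar f'(Z^{\star}_{kl};v_{kl})$ (one‑sided directional derivatives), a finite sum in which interior entries contribute finite values and every boundary entry contributes $-\infty$ — a contradiction. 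Therefore $Z^{\star}\in K$, which finishes the proof.
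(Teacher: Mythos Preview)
Your argument is sound and runs closely parallel to the paper's: both rule out minimizers on the boundary of $(A,B)^{m\times n}$ by exploiting the exponential-family fact that $f'=\phi$ diverges at any finite endpoint of $(A,B)$ (equivalently, $A>-\infty\Rightarrow\inf\Theta^\circ=-\infty$ and $B<\infty\Rightarrow\sup\Theta^\circ=+\infty$). The paper does this via a minimizing sequence, passing to subsequential limits $Z^{(\infty)}$ and using the convexity inequality $\langle\nabla H(Z^{(k,\lambda)}),Z^{(k)}-X\rangle\le \frac{1}{k\lambda}$ to reach a contradiction if any $z_{ij}^{(\infty)}$ lands on a finite endpoint; you instead minimize the lsc closure $\bar H$ directly and take a one-sided directional derivative toward an interior point $W$. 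The packaging differs, the mechanism is identical.

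There is one genuine gap in your case analysis for existence of a minimizer of $\bar H$: you treat only the cases ``$(A,B)$ bounded'' and ``$(A,B)=\R$'', omitting the half-infinite cases $A>-\infty,\,B=\infty$ and $A=-\infty,\,B<\infty$, which cover several of the paper's central examples (counting and Lebesgue measures on $\R_{\ge 0}$, Poisson, Gamma, negative binomial). The fix is easy --- if either endpoint is finite the margin constraints force $\T(\r,\c)\cap[A,B]^{m\times n}$ to be bounded (e.g.\ $z_{ij}\ge A$ with $\sum_j z_{ij}=\r(i)$ gives $z_{ij}\le \r(i)-(n-1)A$), hence compact, and your lsc argument applies verbatim --- but as written the proof does not cover these cases. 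A smaller point: in the $(A,B)=\R$ case your ``leading term'' formula $s(\sup\Theta^\circ-\inf\Theta^\circ)\sum_{v_{ij}>0}v_{ij}$ is only literally correct when $\Theta^\circ$ is bounded; when one or both endpoints of $\Theta^\circ$ are infinite the growth is superlinear on the corresponding side, and you should say so (the coercivity conclusion is still correct in all subcases).
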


	\begin{proof}
		If a typical table $Z^{\r,\c}$ exists, it belongs to the intersection $\mathcal{T}(\r,\c)\cap (A,B)^{m\times n}$. To show the other direction, suppose  there exists some $X=(x_{ij})\in \mathcal{T}(\r,\c)\cap (A,B)^{m\times n}$. Then there exists $\delta>0$ such that $X\in [A_{\delta},B_{\delta}]^{m\times n}$ (see Def. \ref{def:ab}). 
		Let $Z^{(k)}$ be a sequence of matrices in $(A,B)^{m\times n}\cap \mathcal{T}(\r,\c)$ such that
		\begin{align}
			H(Z^{(k)})\le \inf_{Z\in \mathcal{T}(\r,\c)}H(Z)+\frac{1}{k}
		\end{align}
		if the infimum is finite, and require $	H(Z^{(k)})\ge k$
		otherwise. We begin by showing that for any $i,j$,
		\begin{align}\label{eq:not_boundary}
			A<\liminf_{k\to\infty}z_{ij}^{(k)}\le \limsup_{k\to\infty}z_{ij}^{(k)}<B.
		\end{align}
		By passing to a subsequence, assume that $z^{(k)}_{ij}\to z^{(\infty)}_{ij}\in [A,B]$ for all $ij$, where $Z^{(\infty)}=(z^{(\infty)}_{ij})$ is a (possibly) extended real-valued matrix. Let 
		$$\mathcal{I}_A:=\{(i,j):z^{(\infty)}_{ij}=A\},\quad \mathcal{I}_B=\{(i,j):z^{(\infty)}_{ij}=B\},\quad \mathcal{I}_{A,B}:=\{(i,j):z^{(\infty)}_{ij}\in (A,B)\}.$$

		For any $\lambda\in [0,1]$  set $Z^{(k,\lambda)}:=(1-\lambda)Z^{(k)}+\lambda X$, and note that $Z^{(k,\lambda)}\in \mathcal{T}(\r,\c)\cap [A_{\lambda \delta},B_{\lambda\delta}]^{m\times n}$. Hence 
		\begin{align}
			H(Z^{(k)}) \le \inf_{Z\in \mathcal{T}(\r,\c)\cap [A_{\lambda\delta}, B_{\lambda\delta}]^{m\times n}	} H(Z) + \frac{1}{k} \le H(Z^{(k,\lambda)})	 
		\end{align}
		for all sufficiently large $k\ge 1$. 
		By convexity of $g$, 
		\begin{align}\label{eq:mvt}
			\frac{1}{k\lambda}\ge \frac{H(Z^{(k)})-H(Z^{(k,\lambda)})}{\lambda}\ge  \langle \nabla H(Z^{(k,\lambda)}) ,\, Z^{(k)}-X  \rangle =  \sum_{i,j} \phi\Big(z_{ij}^{(k,\lambda)}\Big) (z^{(k)}_{ij} - x_{ij}),
		\end{align}
		where we used \eqref{eq:f_derivatives}. Letting $k\to\infty$ followed by $\lambda\searrow 0$ in \eqref{eq:mvt} we get
		\begin{align}
			0 &\ge -\sum_{ij}\phi(z_{ij}^{(\infty)})(z_{ij}^{(\infty)}-x_{ij}) \\
			&= \sum_{(i,j)\in \mathcal{I}_A}\phi(A)(A-x_{ij})+\sum_{(i,j)\in \mathcal{I}_B}\phi(B)(B-x_{ij})+\sum_{(i,j)\in \mathcal{I}_{A,B}}\phi(z_{ij}^{(\infty)})(z_{ij}^{(\infty)}-x_{ij}).
		\end{align}
		Note that $\mathcal{I}_{A}=\emptyset$ if $A=-\infty$ and $\phi(A)=-\infty$ if $A$ is finite. Similarly, $\mathcal{I}_{B}=\emptyset$ if $B=\infty$ and $\phi(B)=\infty$ if $B$ is finite. Since the third term above is finite, the above equality holds only if $\mathcal{I}_A=\mathcal{I}_B=\emptyset$, which gives \eqref{eq:not_boundary}.

		Given \eqref{eq:not_boundary}, we have the existence of $\delta>0$ such that
		\begin{align}
			\inf_{Z\in \mathcal{T}(\r,\c)}H(Z)=\inf_{Z\in \T(\r,\c)\cap [A_\delta,B_\delta]^{m\times n}}H(Z).
		\end{align}
		But the RHS above focuses on a compact set, and minimizes a strictly convex function, 
		as \begin{align}\label{eq:f_derivatives2}
			f''(x) =  \phi'(x)= \frac{1}{\psi''(\phi(x))}  =\frac{1}{\Var(\mu_{\phi(x)})}> 0.
		\end{align}	
		Hence the existence of a unique optimizer follows.
	\end{proof}

	The following lemma establishes most of the strong duality stated in Theorem \ref{thm:strong_duality_simple}.

	\begin{lemma}\label{lem:strong_dual_MLE_typical}
		Fix  an $m\times n$ margin $(\r,\c)$ 
		such that the set $\mathcal{T}(\r,\c)\cap (A,B)^{m\times n}$ is non-empty. Then the following hold:
		\begin{description}[itemsep=0.1cm]
			\item[(i)] There exists a unique standard MLE $(\balpha,\bbeta)$ for margin $(\r,\c)$ which satisfies $Z^{\r,\c}=\psi'(\balpha\oplus \bbeta)$. 
			
			\item[(ii)] The typical table and the MLE problems are in strong duality: 
			\begin{align}\label{eq:strong_duality}
				\inf_{Z\in \mathcal{T}(\r,\c)} \, H(Z) = 
				\sup_{\balpha, \bbeta} \, g^{\r,\c}(\balpha, \bbeta). 
			\end{align}
			Furthermore, if there is a matrix $Z\in \T(\r,\c)\cap (A,B)^{m\times n}$ such that $Z=\psi'(\balpha\oplus \bbeta)$, 
			then $(\balpha,\bbeta)$ is an MLE for $(\r,\c)$ and $Z$ is the typical table $Z^{\r,\c}$. 
			
			\item[(iii)]
			If further $(\r,\c)$ is $\delta$-tame for some $\delta>0$,  the corresponding standard MLE $(\balpha,\bbeta)$ for $(\r,\c)$ satisfies 
			$\lVert \balpha \rVert_{\infty} \le 2C$ and $\lVert \bbeta \rVert_{\infty}\le C$ where $C:=\max\{ | \phi(A_{\delta})|,\, | \phi(B_{\delta})| \} $.
		\end{description}
	\end{lemma}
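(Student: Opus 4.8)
The plan is to derive the whole lemma from the existence and uniqueness of the typical table (Lemma~\ref{lem:typical}) together with the Legendre duality between $\psi$ and the entrywise relative-entropy function $f(z)=\phi(z)z-\psi(\phi(z))$, which is exactly the convex conjugate of $\psi$: $f(z)=\sup_{\theta\in\Theta}(\theta z-\psi(\theta))$, with the supremum attained at $\theta=\phi(z)$ when $z\in(A,B)$. Recall $H(Z)=\sum_{i,j}f(z_{ij})$ and, by \eqref{eq:f_derivatives}, $f'=\phi$.

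For part~\textbf{(i)}: since $\T(\r,\c)\cap(A,B)^{m\times n}\neq\emptyset$, Lemma~\ref{lem:typical} gives a unique typical table $Z^{\r,\c}$ lying in the open set $(A,B)^{m\times n}$ and minimizing the differentiable convex $H$ over the affine slice $\T(\r,\c)$. First I would run the first-order optimality argument: for every $M$ with vanishing row and column sums, $Z^{\r,\c}+tM\in\T(\r,\c)\cap(A,B)^{m\times n}$ for small $|t|$, so $\langle\nabla H(Z^{\r,\c}),M\rangle=0$; since $\nabla H(Z^{\r,\c})=(\phi(z_{ij}))_{i,j}$ and the orthogonal complement of $\{M:\text{all row/column sums zero}\}$ is exactly $\{\balpha\oplus\bbeta:\balpha\in\R^{m},\bbeta\in\R^{n}\}$ (a dimension count: $\dim=m+n-1$ against $(m-1)(n-1)$), there are vectors $\balpha,\bbeta$ with $\phi(z_{ij})=\balpha(i)+\bbeta(j)$, i.e.\ $Z^{\r,\c}=\psi'(\balpha\oplus\bbeta)$ using $\psi'\circ\phi=\mathrm{id}$; the shift $\balpha\mapsto\balpha-\bar\alpha\mathbf{1}$, $\bbeta\mapsto\bbeta+\bar\alpha\mathbf{1}$ makes $(\balpha,\bbeta)$ standard. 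Because $\psi'(\balpha\oplus\bbeta)=Z^{\r,\c}$ has margin $(\r,\c)$, the MLE equations \eqref{eq:MLE_eq} hold, so $(\balpha,\bbeta)$ is an MLE by Lemma~\ref{lem:typical_MLE_equation}. For uniqueness, any standard MLE $(\balpha',\bbeta')$ satisfies \eqref{eq:MLE_eq}, so $\psi'(\balpha'\oplus\bbeta')\in\T(\r,\c)\cap(A,B)^{m\times n}$ and, since $\nabla H$ there equals $\balpha'\oplus\bbeta'$, it is a critical point of the convex $H$ and hence equals $Z^{\r,\c}$; then $\balpha'(i)+\bbeta'(j)=\phi(z_{ij})=\balpha(i)+\bbeta(j)$ for all $i,j$ forces $\balpha'-\balpha\equiv\bbeta-\bbeta'\equiv\lambda$ by \eqref{eq:params_non_identifiable}, and pairing with $\mathbf{1}$ while both $\balpha,\balpha'$ are mean-zero gives $\lambda=0$.

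For part~\textbf{(ii)}: weak duality is the entrywise Fenchel inequality — for $Z\in\T(\r,\c)$ and admissible $(\balpha,\bbeta)$,
\[
H(Z)=\sum_{i,j}f(z_{ij})\ \ge\ \sum_{i,j}\bigl[(\balpha(i)+\bbeta(j))z_{ij}-\psi(\balpha(i)+\bbeta(j))\bigr]=\langle\r,\balpha\rangle+\langle\c,\bbeta\rangle-\sum_{i,j}\psi(\balpha(i)+\bbeta(j))=g^{\r,\c}(\balpha,\bbeta),
\]
using $\sum_j z_{ij}=\r(i)$ and $\sum_i z_{ij}=\c(j)$, so $\inf_Z H(Z)\ge\sup g$. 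Equality holds because at $Z^{\r,\c}$ and its standard MLE from part~\textbf{(i)} one has $\phi(z_{ij})=\balpha(i)+\bbeta(j)$, making every Fenchel step tight, whence $\inf_Z H(Z)\le H(Z^{\r,\c})=g^{\r,\c}(\balpha,\bbeta)\le\sup g$; this is \eqref{eq:strong_duality} and also shows both extrema are attained. For the ``furthermore'' claim, if $Z=\psi'(\balpha\oplus\bbeta)$ lies in $\T(\r,\c)\cap(A,B)^{m\times n}$ then $\phi(z_{ij})=\balpha(i)+\bbeta(j)$ again gives $H(Z)=g^{\r,\c}(\balpha,\bbeta)$; sandwiching between weak duality and \eqref{eq:strong_duality} forces $H(Z)=\inf H$ and $g^{\r,\c}(\balpha,\bbeta)=\sup g$, so $Z=Z^{\r,\c}$ by uniqueness in Lemma~\ref{lem:typical} and $(\balpha,\bbeta)$ is an MLE.

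For part~\textbf{(iii)}: $\delta$-tameness of $(\r,\c)$ means $A_\delta\le\psi'(\balpha\oplus\bbeta)=Z^{\r,\c}\le B_\delta$ entrywise; applying the increasing function $\phi$ yields $\phi(A_\delta)\le\balpha(i)+\bbeta(j)\le\phi(B_\delta)$, hence $|\balpha(i)+\bbeta(j)|\le C$ for all $i,j$. Summing over $i$ and using $\langle\balpha,\mathbf{1}\rangle=0$ gives $m\,|\bbeta(j)|=\bigl|\sum_i(\balpha(i)+\bbeta(j))\bigr|\le mC$, so $\lVert\bbeta\rVert_\infty\le C$, and then $|\balpha(i)|\le|\balpha(i)+\bbeta(j)|+|\bbeta(j)|\le 2C$, so $\lVert\balpha\rVert_\infty\le 2C$. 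I expect the only step needing genuine care to be the first-order optimality/annihilator identification in part~\textbf{(i)} (justifying that an interior minimizer over an affine slice has gradient in that slice's annihilator, and identifying the annihilator as $\{\balpha\oplus\bbeta\}$); once the relation $Z^{\r,\c}=\psi'(\balpha\oplus\bbeta)$ is secured, everything else is bookkeeping with the Legendre pair $(\psi,f)$.
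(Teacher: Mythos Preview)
Your proof is correct and follows essentially the same architecture as the paper's: Lagrange/first-order conditions on the typical table to produce the MLE in part~\textbf{(i)}, the identity $H(Z^{\r,\c})=g^{\r,\c}(\balpha,\bbeta)$ for part~\textbf{(ii)}, and the entrywise bound $\phi(A_\delta)\le\balpha(i)+\bbeta(j)\le\phi(B_\delta)$ for part~\textbf{(iii)}.

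Two minor differences are worth noting. In part~\textbf{(ii)} you frame things via the Legendre pair $(\psi,f)$ and obtain weak duality $\inf H\ge\sup g$ directly from the entrywise Fenchel inequality; the paper instead carries out the explicit computation $H(X^{\balpha,\bbeta})=g^{\r,\c}(\balpha,\bbeta)-(\text{margin-violation terms})$ and then sandwiches. Your route is a bit more transparent conceptually and gives weak duality for free, while the paper's computation makes the role of the margin constraints more visible. In part~\textbf{(iii)} you use the mean-zero normalization $\langle\balpha,\mathbf{1}\rangle=0$ (averaging over $i$), which is exactly the paper's Definition~\ref{def:MLE_typical} of a \emph{standard} MLE; the paper's own proof at this point actually normalizes by $\balpha(1)=0$, which is a slightly different (though equally valid) convention and mildly inconsistent with its stated definition. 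Both normalizations yield the same bounds $\lVert\bbeta\rVert_\infty\le C$ and $\lVert\balpha\rVert_\infty\le 2C$.
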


	\begin{proof}
		We first show \textbf{(i)}. Since $\T(\r,\c)\cap (A,B)^{m\times n}$ is non-empty, the typical table $Z=Z^{\r,\c}$ for $(\r,\c)$ uniquely exists by Lemma \ref{lem:typical}. 
		From \eqref{eq:f_derivatives}, we have $	\nabla  g (Z)  = \left( \phi(z_{ij})\right)_{ij}$. 
		Since $\phi$ is differentiable,  we can apply the multivariate Lagrange multiplier method, to conclude the existence of dual variables $\balpha^{*}\in \R^{m}$ and $\bbeta^{*}\in \R^{n}$ such that $\phi(z_{ij}) = \balpha^{*}_{i} + \bbeta^{*}_{j}$, 
		or equivalently, 
		\begin{align}\label{eq:typical_lagrange_multiplier_formula_pf}
			z_{ij} = \psi'(  \balpha^{*}_{i} + \bbeta^{*}_{j}).
		\end{align}
		Since $Z$ satisfies the margins $(\r,\c)$, Lemma \ref{lem:typical_MLE_equation} yields that $(\balpha^{*}, \bbeta^{*})$ is an MLE. Then by using the shift equivalence of dual variables in \eqref{eq:params_non_identifiable}, it follows that a standard MLE for $(\r,\c)$ exists.

		For the uniqueness of the standard MLE, suppose $(\balpha',\bbeta')$ is another standard MLE for the margin $(\r,\c)$. Again using Lemma \ref{lem:typical_MLE_equation} 
		we conclude
		\begin{align}
			\balpha(i)+\bbeta(j) = \phi(z_{ij}) = 	\balpha'(i)+\bbeta'(j)
		\end{align}
		for all $i,j$. Since $\balpha(1)=\balpha'(1)=0$, the above relation yields $\bbeta=\bbeta'$. Consequently, the above relation yields $\balpha=\balpha'$ 
		and we have completed the proof of part \textbf{(i)}.
		
		Next, we show \textbf{(ii)}. 	For each $(\balpha,\bbeta)\in \R^{m}\times \R^{n}$, define $X^{\balpha,\bbeta}=(x_{ij})_{i,j} \in \R^{m\times n}$ by 
		\begin{align}\label{eq:lem_MLE_typical_duality3}
			x_{ij} := \psi'(\balpha(i)+\bbeta(j)) \quad \textup{for all $1\le i \le m$ and $1\le j \le n$}. 
		\end{align}
		Fix dual variables $(\balpha,\bbeta)$ and let  $X^{\balpha, \bbeta}=(x_{ij})_{i,j}$ be as in \eqref{eq:lem_MLE_typical_duality3}. Then
		\begin{align}\label{eq:entropy_likelihood_computation}
			-	H(X^{\balpha,\bbeta}) 
			&=  \sum_{i,j} \psi(\phi(x_{ij})) -  \sum_{i=1}^{m}\sum_{j=1}^{n}  (\balpha(i)+\bbeta(j))x_{ij}  \\
			&= \sum_{i,j} \psi( \balpha(i)+\bbeta(j) )  -  \sum_{i=1}^{m} \balpha(i) \sum_{j=1}^{m} x_{ij} - \sum_{j=1}^{n} \bbeta(j) \sum_{i=1}^{n} x_{ij}  \\
			&= -g^{\r,\c}(\balpha, \bbeta) + \sum_{i=1}^{m} \balpha(i) \left(\r(i)-\sum_{j=1}^{m} x_{ij}\right) + \sum_{j=1}^{n} \bbeta(j) \left(\c(j) - \sum_{i=1}^{n} x_{ij} \right),
		\end{align}
		where $g^{\r,\c}(\balpha, \bbeta)$ is defined in \eqref{eq:typical_Lagrangian}.

		Now suppose $(\hat{\balpha},\hat{\bbeta})$ is any MLE for margin $(\r,\c)$, which exists by the previous part. Then by Lemma \ref{lem:typical_MLE_equation}, $X^{\hat{\balpha},\hat{\bbeta}}\in \mathcal{T}(\r,\c)\cap (A,B)^{m\times n}$. Thus the above yields 
		\begin{align}\label{eq:strong_duality_pf1}
			H(X^{\hat{\balpha},\hat{\bbeta}})  = g^{\r,\c}(\hat{\balpha},\hat{\bbeta}) =  \sup_{\balpha,\bbeta} g^{\r,\c}(\balpha, \bbeta). 
		\end{align}

		It remains to show that $X^{\hat{\balpha},\hat{\bbeta}}$ is the typical table for margin $(\r,\c)$. Denote the typical table for margin $(\r,\c)$ as $Z^{\r,\c}=(z_{ij})\in (A,B)^{m\times n}$, which exists by Lemma \ref{lem:typical}. 
		By part (a), there exists a (standard) MLE $(\balpha^{*}, \bbeta^{*})$ for margin $(\r,\c)$ such that $Z^{\r,\c}=X^{\balpha^{*},\bbeta^{*}}$. Then by using \eqref{eq:strong_duality_pf1} with $(\balpha^{*}, \bbeta^{*})$ and $(\hat{\balpha}, \hat{\bbeta})$, it follows that  
		\begin{align}
			\sup_{\balpha,\bbeta} g^{\r,\c}(\balpha, \bbeta) =H(X^{\balpha^{*}, \bbeta^{*} }) 	=H(Z^{\r,\c}) \le H(X^{\hat{\balpha},\hat{\bbeta}})  = g^{\r,\c}(\hat{\balpha},\hat{\bbeta}) =  \sup_{\balpha,\bbeta} g^{\r,\c}(\balpha, \bbeta).
		\end{align}
		Thus all terms that appear above must equal, verifying \eqref{eq:strong_duality}.

		Lastly, suppose there exists $X\in \T(\r,\c)\cap (A,B)^{n\times m}$ that admits the decomposition $X=X^{\balpha,\bbeta}$ for some $(\balpha,\bbeta)\in \R^{m}\times \R^{n}$.  Since $X$ has margin $(\r,\c)$, it follows that $(\balpha,\bbeta)$ is an MLE for $(\r,\c)$ due to Lemma \ref{lem:typical}. Furthermore, by \eqref{eq:strong_duality_pf1} and  \eqref{eq:strong_duality}, 
		\begin{align}\label{eq:strong_duality2}
			H(X) =  -\sup_{\balpha, \bbeta} \, g^{\r,\c}(\balpha, \bbeta) = 	\sup_{X\in \mathcal{T}(\r,\c)\cap (A,B)^{m\times n}} \, H(X),
		\end{align}
		so $X$ is a typical table for $(\r,\c)$. By the uniqueness, $X=Z^{\r,\c}$. 
		
		Lastly, we show \textbf{(iii)}. 	By part \textbf{(i)}, there exists a unique standard MLE $(\balpha,\bbeta)$ for margin $(\r,\c)$ such that for all $i,j$, $	\phi(z_{ij}) = \balpha(i)+\bbeta(j)$ 
		and $\balpha(1)=0$. Since $Z$ is $\delta$-tame, we get $\phi(A_{\delta})	\le 	\balpha(i) + \bbeta(j)  \le \phi(B_{\delta})$ for all $i,j$. 
		Setting $i=1$ and recalling that $\balpha(1)=0$, the above gives $\phi(A_{\delta})	\le  \bbeta(j)  \le \phi(B_{\delta})$ for all $j$. 
		In turn, it follows that  
		\begin{align}
			\phi(A_{\delta}) - \phi(B_{\delta})	\le  \balpha(i)  \le \phi(B_{\delta}) - \phi(A_{\delta})  \quad \textup{for all $i$}. 
		\end{align}
	\end{proof}
	
	Theorem  \ref{thm:strong_duality_simple} can now be deduced easily from the lemmas above. 
	
	\begin{proof}[\textbf{Proof of Theorem \ref{thm:strong_duality_simple}}]
		In Lemma \ref{lem:typical}, we have already shown that $Z^{\r,\c}$ exists if and only if $\T(\r,\c)\cap (A,B)^{m\times n}$ is non-empty. The first equivalence in \eqref{eq:lem_MLE_typical_duality1} is the content of Lemma \ref{lem:typical_MLE_equation}. Now 
		if $Z^{\r,\c}$ exists, then by Lemma \ref{lem:strong_dual_MLE_typical} \textbf{(i)}, a unique standard MLE $(\balpha,\bbeta)$ for $(\r,\c)$ exists and $Z^{\r,\c}=\psi'(\balpha\oplus \bbeta)$. Conversely, if an MLE $(\balpha,\bbeta)$ exists, then by Lemma \ref{lem:typical_MLE_equation} $Z:=\E[\mu_{\balpha\oplus \bbeta}]\in \T(\r,\c)\cap (A,B)^{m\times n}$. Then Lemma \ref{lem:strong_dual_MLE_typical} \textbf{(ii)} yields that $Z$ is the typical table $Z^{\r,\c}$. This shows that $Z^{\r,\c}$ exists if and only if an MLE exists and also that the second equivalence in \eqref{eq:lem_MLE_typical_duality1} holds. 
	\end{proof}

	\section{Proof of the transference principles} 
	\label{sec:transference_pf}
	
	\subsection{The weak transference principle}

	In this section, we prove the weak transference principle in Thm. \ref{thm:transference}. To prove the second part, we need the following lemma.

	\begin{lemma}[Concentration of quadratic forms for the $(\balpha,\bbeta)$-model]\label{lem:concentration_quadratic_form}
		Let $(\r,\c)$ be an $m\times n$ $\delta$-tame margin and $Y\sim \mu_{\balpha\oplus \bbeta}$, where $(\balpha,\bbeta)$ is an MLE for the margin $(\r,\c)$. Define positive constants 
		\begin{align}
			&L^{-}:= \min\left\{  \phi(A_{\delta})-\phi(A_{\delta/2}),\,  \phi(B_{\delta/2}) - \phi(B_{\delta}) \right\}, \quad L^{+}:=\sup_{|s|\le L^{-}} \psi''(s). 
		\end{align}
		Denote $\tilde{Y}=Y-\E[Y]$. For each $t>0$, $\x \in [-1,1]^{m}$, and $\y\in [-1,1]^{n}$, 
		\begin{align}\label{eq:gen_hoeffding_pf_00}
			\P\left(   \x^{\top}\tilde{Y}\y    \ge t \lVert \x \rVert^{2} \lVert \y \rVert^{2} \right) \le 2  \exp\left( -\lVert \x \rVert^{2} \lVert \y \rVert^{2} \frac{ t (t\land L^{-}L^{+})}{2L^{+}}    \right).
		\end{align}
		Furthermore, for $s\in [0, L^{-}L^{+}]$,
		\begin{align}\label{eq:Y_margin_prob_lower_bd}
			\P(Y\in \T_{smn }(\r,\c)  ) 			&\ge 1- 3^{m+n}  \exp\left( -\frac{s^{2}mn}{2L^{+}} \right).
		\end{align}  
		In particular, for $\rho= \sqrt{8L^{+} mn(m+n)}$ and if $m,n\ge 1$ are large enough so that $\frac{1}{m} + \frac{1}{n} \le (L^{-})^{2} L^{+}/8$, 
		\begin{align}\label{eq:Y_margin_prob_lower_bd2}
			\P(Y\in \T_{\rho}(\r,\c)) \ge 1 - (1/3)^{m+n}. 
		\end{align}
	\end{lemma}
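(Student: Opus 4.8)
The plan is to derive a Bernstein-type upper tail for the bilinear form $\x^{\top}\widetilde Y\y$ from an entrywise cumulant estimate, and then to read off the margin bounds by specializing to all-ones vectors and union-bounding over sign patterns. The crux is the entrywise estimate. Fixing $i,j$ and writing $\theta:=\balpha(i)+\bbeta(j)$, the exponential-family identity gives, for any $\lambda$,
\[
\log\E\bigl[e^{\lambda(Y_{ij}-\psi'(\theta))}\bigr]=\psi(\theta+\lambda)-\psi(\theta)-\lambda\psi'(\theta)=\tfrac{\lambda^{2}}{2}\,\psi''(\theta+\eta\lambda)
\]
for some $\eta\in(0,1)$, by Taylor's theorem. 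Since $(\r,\c)$ is $\delta$-tame we have $\psi'(\theta)\in[A_{\delta},B_{\delta}]$, hence $\theta\in[\phi(A_{\delta}),\phi(B_{\delta})]$; the point of the definition of $L^{-}$ is that whenever $|\lambda|\le L^{-}$ the argument $\theta+\eta\lambda$ remains in $[\phi(A_{\delta/2}),\phi(B_{\delta/2})]\subseteq\Theta^{\circ}$, a compact interval on which $\psi''$ is bounded by $L^{+}$. This yields $\log\E[e^{\lambda\widetilde Y_{ij}}]\le\tfrac{\lambda^{2}}{2}L^{+}$ for all $|\lambda|\le L^{-}$.

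With this in hand I would pass to the bilinear form: for $\x\in[-1,1]^{m}$, $\y\in[-1,1]^{n}$ set $W:=\x^{\top}\widetilde Y\y=\sum_{i,j}x_{i}y_{j}\widetilde Y_{ij}$. For $|\mu|\le L^{-}$ one has $|\mu x_{i}y_{j}|\le L^{-}$, so by the entrywise bound and independence
\[
\log\E[e^{\mu W}]\le\frac{\mu^{2}L^{+}}{2}\sum_{i,j}x_{i}^{2}y_{j}^{2}=\frac{\mu^{2}L^{+}}{2}\,\|\x\|^{2}\|\y\|^{2}.
\]
The exponential Markov inequality, with the exponent optimized over $\mu\in[0,L^{-}]$ (take $\mu=t/L^{+}$ when $t\le L^{-}L^{+}$ and $\mu=L^{-}$ otherwise), then gives \eqref{eq:gen_hoeffding_pf_00}; the one-sided bound in fact holds without the factor $2$.

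For the margin estimates, the MLE equation (Lemma~\ref{lem:typical_MLE_equation}) supplies $\E[r(Y)]=\r$ and $\E[c(Y)]=\c$, so $r(Y)-\r=\widetilde Y\mathbf 1_{n}$ and $c(Y)-\c=\widetilde Y^{\top}\mathbf 1_{m}$. Applying the bilinear bound with $\y=\mathbf 1_{n}$ and $t=s\le L^{-}L^{+}$ to a fixed sign vector $\x\in\{-1,1\}^{m}$ gives $\P(\x^{\top}\widetilde Y\mathbf 1_{n}\ge smn)\le2\exp(-s^{2}mn/2L^{+})$; since $\|r(Y)-\r\|_{1}=\max_{\x\in\{-1,1\}^{m}}\x^{\top}\widetilde Y\mathbf 1_{n}$, a union bound over the $2^{m}$ choices of $\x$ together with the symmetric bound for the columns yields $\P(Y\notin\T_{smn}(\r,\c))\le2(2^{m}+2^{n})\exp(-s^{2}mn/2L^{+})\le3^{m+n}\exp(-s^{2}mn/2L^{+})$ for $m,n\ge1$, i.e.\ \eqref{eq:Y_margin_prob_lower_bd}. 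Finally, with $\rho=\sqrt{8L^{+}mn(m+n)}$ one sets $s:=\rho/(mn)=\sqrt{8L^{+}(m^{-1}+n^{-1})}$; the hypothesis $m^{-1}+n^{-1}\le(L^{-})^{2}L^{+}/8$ is exactly $s\le L^{-}L^{+}$, and $s^{2}mn/2L^{+}=4(m+n)$, so $\P(Y\in\T_{\rho}(\r,\c))\ge1-(3e^{-4})^{m+n}\ge1-(1/3)^{m+n}$ since $3e^{-4}<1/3$, which is \eqref{eq:Y_margin_prob_lower_bd2}.

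The hard part is the entrywise estimate — keeping the argument $\theta+\eta\lambda$ of $\psi''$ inside a region where it is uniformly bounded. This is exactly where \emph{uniform} $\delta$-tameness is needed: it confines $\theta$ to the fixed interval $[\phi(A_{\delta}),\phi(B_{\delta})]$, and the buffer $L^{-}$ guarantees the $\lambda$-perturbation does not leave $\Theta^{\circ}$; once that is set up, the remaining steps are a standard sub-gamma Chernoff argument and a union bound over $\{-1,1\}$-valued vectors.
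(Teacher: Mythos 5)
Your proof is correct and follows essentially the same route as the paper's: a Taylor/Chernoff bound on the entrywise moment generating function using the $\delta$-tameness buffer $L^{-}$ to keep the tilted argument inside $[\phi(A_{\delta/2}),\phi(B_{\delta/2})]$, tensorized over independent entries, optimized at $\mu=(t\land L^{-}L^{+})/L^{+}$, and then specialized to sign vectors with a union bound (your $2(2^{m}+2^{n})\le 3^{m+n}$ count is in fact slightly sharper than the paper's enumeration over $\{-1,0,1\}$-vectors) and the same choice $s=\sqrt{8L^{+}(m^{-1}+n^{-1})}$ at the end.
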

	
	\begin{proof}
		We will first show \eqref{eq:gen_hoeffding_pf_00}. Note that if $T\sim \mu_{\theta}$, then 
		\begin{align}
			&\E[\exp(s (T-\E[T]))] 
			= \exp(\psi(s+\theta)-\psi(\theta) - s\psi'(\theta)).
		\end{align}
		Write $t':= t \lVert \x \rVert^{2} \lVert \y \rVert^{2}$ and $s'_{ij}=s\, \x(i)\y(j)$. For each $s\ge 0$, we have 
		\begin{align}
			\P\left(   \x^{\top}\tilde{Y}\y    \ge  t' \right) 
			&\le   \exp\left( -s t'   + \sum_{i,j}  \psi(s'_{ij}+ \theta_{ij} ) - \psi(\theta_{ij}) - s'_{ij}\psi'(\theta_{ij}) \right) .
		\end{align}
		Denote $\varphi(s):= \psi(s+\theta) - \psi(\theta) - s\psi'(\theta)$. Then 
		\begin{align}
			& \varphi'(s)= \psi'(s+\theta) - \psi'(\theta),\quad  \varphi'(0) = 0,\quad 		 \varphi''(s)= \psi''(s+\theta)\ge 0.
		\end{align}
		Hence $\varphi$ is concave and is minimized at $0$. Let $Z=(z_{ij})$ denote the typical table for $(\r,\c)$. Since $(\r,\c)$ is $\delta$-tame, by Lemma \ref{lem:strong_dual_MLE_typical}, $\E[Y]=Z\in [A_{\delta},B_{\delta}]^{m\times n}$. So we have  
		\begin{align}\label{eq:gen_hoeffding_pf_1}
			\phi(A_{\delta/2})	\le 	s+\phi(z_{ij})  \le 	\phi(B_{\delta/2})
		\end{align}
		whenever $|s| \le L^{-}$. Hence $\sup_{|s|\le L^{-} } |\varphi''(s)| \le L^{+}$.  Then by Taylor's theorem, 
		\begin{align}
			\varphi(s) \le \varphi(0) + \varphi'(0)s + \frac{L^{+}}{2}s^{2} = \frac{L^{+}}{2}s^{2} \qquad \textup{for all $s\in [-L^{-}, L^{-}]$}.
		\end{align}
		Applying the above bound for $\theta=\phi(\E[Y_{ij}])$ and $s=s'_{ij}$ all $i,j$, from the previous inequality we get 
		\begin{align}\label{eq:gen_hoeffding_pf_2}
			\P\left(   \x^{\top}\tilde{Y}\y    \ge t \lVert \x \rVert^{2} \lVert \y \rVert^{2} \right) \le \exp\left( -\lVert \x \rVert^{2} \lVert \y  \rVert^{2} \left( s t - \frac{L^{+}}{2} s^{2}  \right)  \right) \quad \textup{for all $s\in [-L^{-}, L^{-}]$},
		\end{align}
		where we have also used $\sum_{i,j} \x(i)^{2}\y(j)^{2} = \lVert \x \rVert^{2} \lVert \y  \rVert^{2}$. 
		In order to optimize the above bound, denoting $\bar{t} = t\land L^{-}L^{+}$, write 
		\begin{align}
			s t - \frac{L^{+}}{2} s^{2}  &=  s(t-\bar{t}) +   \left( s  \bar{t} - \frac{L^{+}}{2} s^{2} \right). 
		\end{align}
		The quadratic function in $s$ in the second term of the right-hand side above is minimized at $s=\bar{t} / L^{+}  \in [-L^{-},L^{-}]$ with minimum value $\bar{t}^{2}/2L^{+}$. For this choice of $s$, and noting $t\ge \bar{t}$, the above is at least $\frac{t\bar{t}}{2L^{+}}$. Hence we deduce \eqref{eq:gen_hoeffding_pf_00} from \eqref{eq:gen_hoeffding_pf_2}.

		Next, we deduce \eqref{eq:Y_margin_prob_lower_bd}. Fix $\x\in [-1,1]^{m}$ and $\y\in [-1,1]^{n}$. Substituting $t = s \frac{mn}{\lVert \x \rVert^{2} \rVert \y \rVert^{2} }$ for $s\in [0,L^{-}L^{+} ]$ in \eqref{eq:gen_hoeffding_pf_00}, we get 
		\begin{align}\label{eq:gen_hoeffding_pf_000}
			\P\left(   \x^{\top}\tilde{Y}\y   \ge s mn \right) &\le 2 \exp\left( -\frac{smn}{2L^{+}}  \left(  \frac{smn}{\lVert \x \rVert^{2} \rVert \y \rVert^{2} } \land L^{-}L^{+}  \right)  \right) \le 2 \exp\left( -\frac{s^{2}mn}{2L^{+}}   \right),
		\end{align}
		where the last inequality uses $\lVert \x \rVert^{2} \le m$ and $\lVert \y\rVert^{2}\le n$.  Now observe that 
		\begin{align}
			\lVert r(\tilde{Y})  \rVert_{1}  &=  \lVert r(\tilde{Y})^{+}  \rVert_{1} + \lVert r(\tilde{Y})^{-}  \rVert_{1}  \le 2 \max_{\x\in \{0,1\}^{m}} | \x^{\top}\tilde{Y} \mathbf{1}_{n} | \le 2 \max_{\x\in \{-1,0,1\}^{m}, \y\in \{-1,0,1\}^{n}}  \x^{\top}\tilde{Y} \y .
		\end{align}
		The same upper bound holds for $			\lVert c(\bar{Y})  \rVert_{1}$. Thus,
		\begin{align}
			\P(Y\notin \T_{smn }(\r,\c)  ) &= \P\left( \min\left\{ \lVert r(Y)-\r  \rVert_{1} ,\, \lVert c(Y)-\c  \rVert_{1} \right\}  \ge s mn \right) \\
			&\le  \P\left( \max_{\x\in \{-1,0,1\}^{m}, \y\in \{-1,0,1\}^{n}}  \x^{\top}\bar{Y} \y   \ge s mn \right) \le 3^{m+n} \exp\left( -\frac{s^{2}mn}{2L^{+}} \right). 
		\end{align}

		Lastly, choose $s=\sqrt{8L^{+}(m^{-1}+n^{-1})}$. Then $s\le L^{-}L^{+}$ since $\frac{1}{m} + \frac{1}{n} \le (L^{-})^{2} L^{+}/8$, so we can apply \eqref{eq:Y_margin_prob_lower_bd}. The bound \eqref{eq:Y_margin_prob_lower_bd2} then follows immediately. 
	\end{proof}

	\begin{proof}[\textbf{Proof of Theorem \ref{thm:transference}}]

		Let $C=2\max\{ \phi(A_{\delta}),\phi(B_{\delta}) \}$. Then by Lemma \ref{lem:strong_dual_MLE_typical} \textbf{(iii)}, we can choose $(\balpha,\bbeta)$ so that their  $L^{\infty}$-norm is at most $C$. Now for each $\x\in \T_{\rho}(\r,\c)$, by H\"{o}lder's inequality, 
		\begin{align}
			|	g^{r(\x),c(\x)}(\balpha,\bbeta) - g^{\r,\c}(\balpha,\bbeta) | 
			&\le C \|(r(\x),c(\x))-(\r,\c)\|_{1} 
			\le C\rho =: D. 
		\end{align}
		Hence 
		\begin{align}
			&	\sup_{\x\in \T_{\rho}(\r,\c)} \exp \left( g^{r(\x),c(\x)}(\balpha,\bbeta )\right) \le \exp\left( g^{\r,\c}(\balpha,\bbeta) + D\right), \\
			&	\inf_{\x\in \T_{\rho}(\r,\c)} \exp \left( g^{r(\x),c(\x)}(\balpha,\bbeta) \right) \ge \exp\left( g^{\r,\c}(\balpha,\bbeta) -D\right). 
		\end{align}
		Recall the hypothesis $ \mu^{\otimes (m\times n)}\left( \T_{\rho}(\r,\c) \right)  \in (0,\infty)$. For each measurable set  $\mathcal{E}\subseteq \R^{m\times n}$, 
		\begin{align}\label{eq:Y_T_rho_pf}
			\P( Y\in \mathcal{E} \,|\,   Y\in \T_{\rho}(\r,\c)) &= \frac{ \int_{\T_{\rho}(\r,\c)}  \mathbf{1}(\x\in \mathcal{E}) \exp(g^{r(\x), c(\x)}(\balpha',\bbeta')) \, \mu^{\otimes (m\times n)}(d\x) }{\int_{\T_{\rho}(\r,\c)}  \exp(g^{r(\x), c(\x)}(\balpha,\bbeta)) \,\, \mu^{\otimes (m\times n)}(d\x)  } \\
			&\ge \frac{\exp\left( g^{\r,\c}(\balpha',\bbeta') - D \right) }{\exp\left( g^{\r,\c}(\balpha',\bbeta') + D \right)  } \frac{ \int_{\T_{\rho}(\r,\c)}  \mathbf{1}(\x\in \mathcal{E})  \, \, \mu^{\otimes (m\times n)}(d\x) }{\int_{\T_{\rho}(\r,\c)} \,\,  \mu^{\otimes (m\times n)}(d\x)  } \ge \exp(-2D) 	\, \P( X\in \mathcal{E} ). 
		\end{align}
		This is enough to conclude \eqref{eq:transference_1}. The second part in \eqref{eq:transference_11} follows immediately from  \eqref{eq:transference_1} and Lemma \ref{lem:concentration_quadratic_form}. 
	\end{proof}

	\subsection{The strong transference principles}
	\label{sec:strong_transference_pf}

	Next, we prove the strong transference principle stated in Theorem \ref{thm:second_transference} and prove the subsequent transference results in Lem. \ref{lem:second_transference_density}, Thm. \ref{thm:second_transference_density_1}, and \ref{thm:transfer_counting_Leb}.  Note that here we do not necessarily assume that the conditioning set $\T(\r,\c)$ has a positive measure under $\mu^{\otimes(m\times n)}$.

	We give some further details of the disintegration construction of $\lambda_{\r,\c}$ under Assumption \ref{assumption:strong_transference}. 
	Since $\mu$ is assumed to be $\sigma$-finite, according to \cite[Thm. 1 and 2]{chang1997conditioning}, there exists a family of $\sigma$-finite Borel  measures  $\{ \lambda_{\mathbf{t}} \}$ on $\R^{m+n}$ that disintegrate $\mu^{\otimes (m\times n)}$ w.r.t. $\pi$ (i.e., \textit{$\pi$-disintegration}):   
	\begin{description}[leftmargin=0.55cm, itemsep=0.1cm]
		\item[(i)] For $\eta$-almost all $\mathbf{t}\in \R^{m+ n}$, $\lambda_{\mathbf{t}}$  lives on $\{ \pi = \mathbf{t} \}$, that is, $\lambda_{\mathbf{t}}\{ \pi \ne \mathbf{t} \} =0$. 
		Also, each $\lambda_{\mathbf{t}}$ is a probability measure.
		
		\item[(ii)] For each nonnegative measurable function $h:\R^{m\times n}\rightarrow \R$, $\mathbf{t}\mapsto \int h \,d\lambda_{\mathbf{t}}$ is measurable and 
		\begin{align}\label{eq:disintegration_formula1}
			\int h(\x) \, \mu^{\otimes(m\times n)}(d\x) = \iint  h(\x) \,\lambda_{\mathbf{t}}(d\x) \, \nu(d\mathbf{t}).
		\end{align}
	\end{description}
	Furthermore, such a $\pi$-disintegration of $\mu^{\otimes (m\times n)}$ is unique up to an almost-sure equivalence: if $\{ \lambda_{\mathbf{t}}^{*} \}$ is another $\pi$-disintegration of $\mu^{\otimes (m\times n)}$, then $\nu\{  \mathbf{t}\,:\, \lambda_{\mathbf{t}}\ne \lambda_{\mathbf{t}}^{*} \}=0$.

	Since $\nu$ is assumed to be $\sigma$-finite under Assumption \ref{assumption:strong_transference}, 
	the above properties ensure that for 	$\nu$-almost all $(\r,\c)\in \R^{m\times n}$, and for each $\mu^{\otimes(m\times n)}$-integrable random variable $S$, $\int S \, d\lambda_{\mathbf{t}}$ is a version of the conditional expectation $\E[S\,|\, \pi=(\r,\c)]$. Hence $\lambda_{\r,\c}$ gives the law $\P(\cdot\,|\, \pi=(\r,\c))$ of the margin-conditioned random matrix $X$ for $\nu$-almost all margins.

	\begin{proof}[\textbf{Proof of Theorem \ref{thm:second_transference}}]

		Recall that the the law of $Y$ is absolutely continuous w.r.t. $\mu^{\otimes(m\times n)}$ with probability density $	\exp(g^{r(\cdot),c(\cdot)}(\balpha,\bbeta))$,  
		which is a positive constant over each  fiber $\pi^{-1}(\r,\c)$. 
		Let $\nu_{Y}:=\pi_{\#}(\mu_{\balpha\oplus \bbeta})$ denote the pushforward of the law $\mu_{\balpha\oplus \bbeta}$ of $Y$ on  $\R^{m+n}$. Then $\nu_{Y}$ is absolutely continuous w.r.t. $\nu=\pi_{\#}(\mu^{\otimes (m\times n)})$ with positive density $\exp(g^{\r(\cdot),\c(\cdot)}(\balpha,\bbeta))$. Since $\nu$ is assumed to be $\sigma$-finite, it follows that $\nu_{Y}$ also $\sigma$-finite. Hence  \cite[Thm. 3]{chang1997conditioning} implies that the $\pi$-disintegration  $\{ \lambda_{\mathbf{t}} \}$ for $\mu^{\otimes (m\times n)}$ is the $\pi$-disintegration for $\mu_{\balpha\oplus \bbeta}$ as well. It follows that the law of  $Y$ conditional on $Y\in \T(\r,\c)$ is given by $\lambda_{\r,\c}$, which is the law of $X$ conditional on $X\in \T(\r,\c)$ for $\nu$-almost all margins. This shows \textbf{(i)}. 
		
		Next, we show \textbf{(ii)}.  Let 	$\overline{\lambda}_{\r,\c}$ denote the law of $\overline{Y}$ conditional on $Y\in \T(\r,\c)$. Let $\overline{\mu}_{\balpha\oplus \bbeta}$ denote the law of unconditional $\overline{Y}$. Note that $\overline{\lambda}_{\r,\c}$ is the pullback measure of $\lambda_{\r,\c}$ via the completion map $\Gamma_{\r,\c}$. 
		To show $\overline{\lambda}_{\r,\c}\ll \overline{\mu}_{\balpha\oplus \bbeta}$ for $\nu$-almost all margins, fix $S\subseteq \R^{(m-1)\times (n-1)}$ write $\y=(\overline{\y},\check{\y})$. Using the disintegration property, 
		\begin{align}
			\P(\overline{Y}\in S) &= \E\left[ \P\left(\overline{Y}\in S \,\bigg|\, r(Y),c(Y) \right) \right] = \int \overline{\lambda}_{\r',\c'}(S) \exp(g^{\r',\c'}(\balpha,\bbeta)) \,\, \nu (d (\r',\c') ). 
		\end{align}
		Thus if $	\overline{\mu}_{\balpha\oplus \bbeta}(S)=0$, then $\overline{\lambda}_{\r',\c'}(S)=0$ for $\nu$-almost all margins $(\r',\c')$. This yields the relative density $p_{\r,\c}$ for which $	\overline{\lambda}_{\r,\c}(d\overline{\y}) = p_{\r,\c}(\overline{\y}) \overline{\mu}_{\balpha\oplus\bbeta}(d\overline{\y}) $ for $\nu$-almost all margins. 
		By using \textbf{(i)}, for $\nu$-almost all margins and measurable functions $h$, 
		\begin{align}
			\E[h(\overline{X})] = \int h(\overline{\x})\,\overline{\lambda}_{\r,\c}(d\overline{\x})=\int h(\overline{\y})p_{\r,\c}(\overline{\y}) \overline{\mu}_{\balpha\oplus\bbeta}(d\overline{\y}) = \E[p_{\r,\c}(\overline{Y}) h(\overline{Y})].
		\end{align}

		Now in order to conclude \eqref{eq:strong_transference_exact} from the above, it is enough to justify Bayes' theorem to deduce $p_{\r,\c}(\overline{\y})= p_{\overline{\y}}(\r,\c)$ for $\overline{\mu}_{\balpha\oplus \bbeta} \otimes \nu$-almost all $(\overline{\y}, (\r,\c))$. 
		
		Given $\overline{\y}\in \R^{(m-1)\times (n-1)}$, $\check{\y}\in \R^{m+n-1}$  is in one-to-one correspondence with the margin $(\r,\c)$ by the relationship $\y_{ij}=\Gamma_{\r,\c}(\overline{\y})$ for $(i,j)$ with $i=m$ or $j=n$. That is, $\y_{in}=\r(i)-\overline{\y}_{i\bullet}$ for $1\le i<m$, $\y_{mj}= \c(j)-\overline{\y}_{\bullet j}$ for $1\le j <n$, and $\y_{mn}=\overline{\y}_{\bullet\bullet} -( \sum_{i=1}^{m-1}\r(i) )+\c(n)$. Accordingly, we define a new measure $\zeta$ on $\R^{m\times n}$ as the pushforward of pushforward of the law of $Y$ via the one-to-one map $S:\y\mapsto (\overline{\y},(r(\y),c(\y)))$.  Let $\pi_{i}$ for $i=1,2$ denote the projection of $(\overline{\y},(r(\y),c(\y)))$ onto the $i$th coordinate. Clearly $(\pi_{1})_{\#}(\zeta) = \overline{\mu}_{\balpha\oplus \bbeta}$ and $(\pi_{2})_{\#}(\zeta)=\nu_{Y}$. Then   $\zeta(d\overline{\y}, d(\r,\c)):= \overline{\lambda}_{\r,\c}(d\overline{\y}) \, \nu_{Y}( d(\r,\c) ) $, meaning that the $\pi_{2}$-disintegration of $\zeta$ is given by $\overline{\lambda}_{\r,\c}$. Indeed, for nonnegative functions $h$, using the fact that the law of $\overline{Y}$ given its margin $(\r,\c)$ is $\overline{\lambda}_{\r,\c}$, 
		\begin{align}
			\E[h(\overline{Y}, r(Y), c(Y))] = \E\left[ (h\circ S)(Y) \right] &= \E[ \E[ (h\circ S)(Y) \,|\, r(Y),c(Y) ]  ] \\
			&=  \iint h(\overline{\y}, (\r,\c))\, \overline{\lambda}_{\r,\c}(d\overline{\y})\, \nu_{Y}(d(\r,\c)). 
		\end{align}
		
		Next, we disintegrate the measure $\zeta$ using the other projection $\pi_{1}: (\overline{\y}, (\r,\c))\mapsto \overline{\y}$. Let $\nu_{\overline{\y}}(d(\r,\c)) $ give the $\pi_{1}$-disintegration of $\zeta$, which are the laws of the margin of $Y$ given $\overline{Y}=\overline{\y}$. Since the pushforward of $\zeta$ via this projection map is $\overline{\mu}_{\balpha\oplus \bbeta}$, the two different ways of disintegrating $\zeta$ give the following Bayes' theorem: 
		\begin{align}
			\overline{\lambda}_{\r,\c}(d\overline{\y}) \, \nu_{Y}( d(\r,\c) )  =\nu_{\overline{\y}}(d(\r,\c))  \,  \overline{\mu}_{\balpha\oplus \bbeta}(d\overline{\y}).
		\end{align}

		\noindent  Using $\P((r(Y),c(Y))\in E) = \E[ \P((r(Y),c(Y))\in E) \,|\, \overline{Y} ) ]$, we see that $\nu_{\overline{\y}}\ll \nu_{Y}$ for $\overline{\mu}_{\balpha\oplus \bbeta}$-almost all $\overline{\y}$'s. Hence the relative density $p_{\overline{\y}}(\r,\c)$ exists such that $\nu_{\overline{\y}}(d(\r,\c))=p_{\overline{\y}}(\r,\c)  \nu_{Y}(d(\r,\c))$. 
		It follows that 
		\begin{align}
			p_{\r,\c}(\overline{\y}) \overline{\mu}_{\balpha\oplus \bbeta}(d\overline{\y})  \nu_{Y}(d(\r,\c))  =  p_{\overline{\y}}(\r,\c)  \nu_{Y}(d(\r,\c)) \, \overline{\mu}_{\balpha\oplus \bbeta}(d\overline{\y}).
		\end{align}
		From this we conclude $p_{\r,\c}(\overline{\y})=p_{\overline{\y}}(\r,\c)$ for $\overline{\mu}_{\balpha\oplus \bbeta} \otimes \nu_{Y}$-almost all $(\overline{\y}, (\r,\c))$. Since $\nu_{Y}$ has a positive density w.r.t. $\nu$, this finishes the proof. 
	\end{proof}

	Next, we prove Corollary \ref{cor:transfer_discrete_Leb}. We extract the key mechanism behind this result in the following corollary of Thm. \ref{thm:second_transference}. 
	
	\begin{corollary}[Sufficient condition for strong transference]\label{cor:second_transference_density}  
		Keep the same setting as in Thm. \ref{thm:second_transference}.
		Further assume that there exists a Borel measure $\zeta$ on $\R$ and a constant $K>0$ such that 
		\begin{description}[leftmargin=0.9cm]
			\item{(a)} \,\, For almost all $\overline{\y}$ under the law of $\overline{Y}$, $  \nu_{\overline{\y}}(\cdot)\ll \zeta^{\otimes (m+n-1)}$ with bounded Radon-Nikodym derivative $q_{\overline{\y}}(\r,\c)\le K^{m+n-1}$; and 
			
			\item{(b)} \,\, $\zeta^{\otimes (m+n-1)}\ll \nu$ locally $\nu$-a.s. That is, there exists a $\sigma$-finite Borel set  $V$ such that $\nu_{Y}(V)=1$ and for each $(\r',\c')\in V$, there exists an open neighborhood $U$ of $(\r',\c')$ such that $\zeta^{\otimes (m+n-1)}|_{U}\ll \nu|_{U}$.

		\end{description}
		Then  \eqref{eq:strong_transference_exact} in Thm. \ref{thm:second_transference} holds with 
		\begin{align}\label{eq:thm_double_transfer2}
			\sup_{\overline{\y}} p_{\overline{\y}}(\r,\c)  =	\big( \sup_{\overline{\y}} q_{\overline{\y}}(\r,\c) \big) \, \E[q_{\overline{Y}}(\r,\c)]^{-1}.
		\end{align}
	\end{corollary}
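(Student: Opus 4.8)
The plan is to reduce the claim to equation \eqref{eq:strong_transference_exact2} of Theorem \ref{thm:second_transference}: all that must be shown is that, for $\nu_Y$-almost every fixed margin $(\r,\c)$, the map $\overline{\y}\mapsto p_{\overline{\y}}(\r,\c)$ is proportional to $\overline{\y}\mapsto q_{\overline{\y}}(\r,\c)$, with a proportionality factor depending only on $(\r,\c)$. Once this proportionality is in hand, \eqref{eq:strong_transference_exact2} immediately gives $\sup_{\overline{\y}} p_{\overline{\y}}(\r,\c)=\big(\sup_{\overline{\y}} q_{\overline{\y}}(\r,\c)\big)\E[q_{\overline{Y}}(\r,\c)]^{-1}$, and substituting into \eqref{eq:strong_transference_exact} completes the proof. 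So the entire content of the corollary is the verification of the proportionality hypothesis of that equation.

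First I would record that $\nu_Y=\pi_\#(\mu_{\balpha\oplus\bbeta})$ and $\nu$ are mutually absolutely continuous: this was shown in the proof of Theorem \ref{thm:second_transference}, where $\nu_Y$ has the \emph{positive} $\nu$-density $\exp(g^{r(\cdot),c(\cdot)}(\balpha,\bbeta))$. Hence hypothesis (b) holds verbatim with $\nu$ replaced by $\nu_Y$: there is a $\sigma$-finite Borel set $V$ with $\nu_Y(V)=1$ such that every point of $V$ has an open neighborhood $U$ with $\zeta^{\otimes(m+n-1)}|_U\ll\nu_Y|_U$. Since $\R^{m+n-1}$ is second countable, this open cover of $V$ admits a countable subcover $\{U_k\}$, and then $\nu_Y(A)=0$ with $A\subseteq V$ forces $\nu_Y(A\cap U_k)=0$ hence $\zeta^{\otimes(m+n-1)}(A\cap U_k)=0$ for every $k$, so $\zeta^{\otimes(m+n-1)}|_V\ll\nu_Y|_V$. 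Consequently there is a measurable $w\colon V\to[0,\infty)$ with $d(\zeta^{\otimes(m+n-1)}|_V)=w\,d(\nu_Y|_V)$; crucially $w$ is a function of $(\r,\c)$ only.

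Next I would invoke hypothesis (a) together with the chain rule. For $\overline{\mu}_{\balpha\oplus\bbeta}$-almost every $\overline{\y}$ one has $\nu_{\overline{\y}}\ll\zeta^{\otimes(m+n-1)}$ with density $q_{\overline{\y}}$, and moreover $\nu_{\overline{\y}}(V^c)=0$ because $0=\nu_Y(V^c)=\int\nu_{\overline{\y}}(V^c)\,\overline{\mu}_{\balpha\oplus\bbeta}(d\overline{\y})$ by the disintegration identity used in the proof of Theorem \ref{thm:second_transference}. Hence on $V$ we may chain $\nu_{\overline{\y}}\ll\zeta^{\otimes(m+n-1)}\ll\nu_Y$ to get
\begin{align}
p_{\overline{\y}}(\r,\c)\;=\;\frac{d\nu_{\overline{\y}}}{d\nu_Y}(\r,\c)\;=\;q_{\overline{\y}}(\r,\c)\,w(\r,\c)\qquad\text{for $\nu_Y$-a.e.\ $(\r,\c)$,}
\end{align}
valid for $\overline{\mu}_{\balpha\oplus\bbeta}$-a.e.\ $\overline{\y}$, where $p_{\overline{\y}}$ is the density identified in Theorem \ref{thm:second_transference}(ii). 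This is exactly the proportionality $\overline{\y}\mapsto p_{\overline{\y}}(\r,\c)\propto\overline{\y}\mapsto q_{\overline{\y}}(\r,\c)$ demanded by \eqref{eq:strong_transference_exact2}, so the corollary follows. (The bound $q_{\overline{\y}}\le K^{m+n-1}$ in (a) is not needed for this identity itself; it is what makes the resulting transference cost $\sup_{\overline{\y}} p_{\overline{\y}}(\r,\c)$ finite and controllable, and it is used only downstream in Corollary \ref{cor:transfer_discrete_Leb}.)

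The main obstacle is purely measure-theoretic bookkeeping: upgrading the \emph{local} absolute continuity of hypothesis (b) to a genuinely global Radon-Nikodym derivative $w$ on a $\nu_Y$-conull set, and ensuring the chain-rule identity holds in the correct almost-everywhere sense — namely for $\overline{\mu}_{\balpha\oplus\bbeta}\otimes\nu_Y$-almost every pair $(\overline{\y},(\r,\c))$, which is precisely the sense in which the proportionality hypothesis of Theorem \ref{thm:second_transference} is phrased. Everything beyond that is a direct quotation of \eqref{eq:strong_transference_exact2}.
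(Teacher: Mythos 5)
Your proposal is correct and follows essentially the same route as the paper: both arguments establish that, for ($\nu$-almost every) fixed margin, $\overline{\y}\mapsto p_{\overline{\y}}(\r,\c)$ is proportional to $q_{\overline{\y}}(\r,\c)$ with proportionality factor the (local) Radon--Nikodym derivative of $\zeta^{\otimes(m+n-1)}$ with respect to $\nu_{Y}$ supplied by hypothesis (b) (using that $\nu$ and $\nu_{Y}$ are mutually absolutely continuous), and then conclude via \eqref{eq:strong_transference_exact2}, which is exactly the normalization $\E[p_{\overline{Y}}(\r,\c)]=1$ the paper re-derives by taking $h\equiv 1$. The only cosmetic difference is that you patch the local absolute continuity into a single global density $w$ on $V$ via a countable subcover, whereas the paper argues neighborhood-by-neighborhood around each $(\r,\c)\in V$; the almost-everywhere bookkeeping is handled equivalently in both.
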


	\begin{proof}
		
		We claim that for each $(\r,\c)\in V$, the map $\overline{\y}\mapsto p_{\overline{\y}}(\r,\c)$ is proportional to $q_{\overline{\y}}(\r,\c)$. This will be enough to conclude. Indeed, taking $h\equiv 1$ in \eqref{eq:strong_transference_exact} in Thm. \ref{thm:second_transference}, 
		we have $\E[p_{\overline{Y}}(\r,\c) ]=1$. Hence from the claim and since $\nu_{Y}(V)=1$, the normalizing constant for $p_{\overline{\y}}(\r,\c)$ must be $\E[q_{\overline{Y}}(\r,\c)]^{-1}$. This and \eqref{eq:strong_transference_exact} yield 
		\begin{align}
			\E[h(\overline{X})] = \E\left[ \frac{q_{\overline{Y}}(\r,\c)}{\E[q_{\overline{Y}}(\r,\c)]} h(\overline{Y})\right] \le 
			K^{m+n-1} \, \E[q_{\overline{Y}}(\r,\c)]^{-1}  \E[h(\overline{Y})],
		\end{align}
		as desired. 
		
		It remains to justify the claim. For simplicity denote $\tilde{\zeta}:=\zeta^{\otimes (m+n-1)}$. 
		First note that the hypothesis, for $\overline{\mu}_{\balpha\oplus \bbeta}\otimes \nu_{Y}$-almost all $(\overline{\y}, (\r,\c))$,
		\begin{align}\label{eq:p_q_nu_zeta}
			p_{\overline{\y}}(\r,\c) \, \nu_{Y}(d(\r,\c)) =    \nu_{\overline{\y}}(d(\r,\c))  = q_{\overline{\y}}(\r,\c) \,\, \tilde{\zeta}(d(\r,\c)).
		\end{align}
		Fix $(\r,\c)\in V$. By the hypothesis, there exists an open neighborhood $U\in \R^{m+n-1}$ of $(\r,\c)$ such that $\tilde{\zeta}|_{U}\ll \nu|_{U}$. Hence the local Radon-Nikodym derivative $\frac{d\tilde{\zeta}|_{U}}{d \nu_{Y}|_{U}}$ exists. From \eqref{eq:p_q_nu_zeta}. it follows that, for $\overline{\mu}_{\balpha\oplus \bbeta}$-almost all $\overline{\y}$,
		\begin{align}   
			p_{\overline{\y}}(\r,\c) = q_{\overline{\y}}(\r,\c) \frac{d\tilde{\zeta}|_{U} }{d\nu_{Y}|_{U}}(\r,\c). 
		\end{align}
		This shows the claim, as desired. 
	\end{proof}

	\begin{prop}\label{prop:pi_open_map}
		The map $\pi:\R^{m\times n}\rightarrow \R^{m+n-1}$ defined by  $\pi(\x)=(r_{1}(\x),\dots,r_{m-1}(\x),c_{1}(\x),\dots,c_{n}(\x) )$ is an open map. 
	\end{prop}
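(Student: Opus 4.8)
The plan is to recognize that $\pi$ is a surjective linear map between finite-dimensional normed spaces and then to invoke a self-contained version of the open mapping theorem. First I would record that $\pi$ is linear and continuous, so the only substantive point is surjectivity onto $\R^{m+n-1}$. Given an arbitrary target $(t_1,\dots,t_{m-1},u_1,\dots,u_n)\in\R^{m+n-1}$ (prescribing the first $m-1$ row sums and all $n$ column sums), set $N:=\sum_{j=1}^n u_j$ and $t_m:=N-\sum_{i=1}^{m-1}t_i$, so that $(t_1,\dots,t_m)$ and $(u_1,\dots,u_n)$ form a consistent $m\times n$ margin. Then the explicit matrix $x_{ij}:=t_i/n+u_j/m-N/(mn)$ — the Gaussian typical table of Example \ref{ex:typical_gaussian} — has row sums $t_i$ and column sums $u_j$ by a one-line computation, and in particular $\pi(\x)$ equals the prescribed vector. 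Hence $\pi$ is onto. (No case split on the sign or vanishing of $N$ is needed.)

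Next I would prove the general fact that a surjective linear map $T:\R^a\to\R^b$ is open, and apply it with $a=mn$, $b=m+n-1$. Choose a linear complement $W$ of $\ker T$ in $\R^a$; then $T|_W:W\to\R^b$ is injective and surjective, hence a linear isomorphism and therefore a homeomorphism. Let $P:\R^a\to W$ be the projection along $\ker T$; since the canonical map $\ker T\times W\to\R^a$ is a homeomorphism and coordinate projections out of a product of topological spaces are open, $P$ is an open map. Because $T=(T|_W)\circ P$ is a composition of open maps, $T$ is open, which gives the proposition. Equivalently, one may simply cite the finite-dimensional open mapping theorem once surjectivity is established.

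I do not expect any genuine obstacle here: the openness of a surjective linear map is classical, so the entire content of the proof is the surjectivity check, and that is handled immediately by exhibiting the rank-structured matrix above. The only mild care needed is in phrasing the surjectivity statement correctly for the $(m+n-1)$-dimensional identification of the margin space (dropping $t_m$), which the completion construction around \eqref{eq:def_completion_map} already makes transparent.
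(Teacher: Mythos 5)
Your proof is correct: the surjectivity check via the explicit matrix $x_{ij}=t_i/n+u_j/m-N/(mn)$ works (both row and column sums come out as prescribed once $t_m:=N-\sum_{i<m}t_i$), and the openness of a surjective linear map between finite-dimensional spaces is standard and argued soundly. The paper omits the proof entirely ("straightforward"), and your argument is precisely the kind of routine verification the authors had in mind, so there is nothing to compare beyond noting that your write-up legitimately fills the omitted details.
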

	
	\begin{proof}
		The proof is straightforward and we omit the details.
	\end{proof}

	\begin{proof}[\textbf{Proof of Corollary \ref{cor:transfer_discrete_Leb}}]
		We will verify the hypothesis of Cor. \ref{cor:second_transference_density} holds.  Given that, let  $p^{ij}$ denote the Radon-Nikodym derivative of  $\mu_{\balpha(i)+\bbeta(j)}$ w.r.t. $\zeta$. 
		Then using the expression in \eqref{eq:nu_ybar_pullback} for $\nu_{\overline{\y}}$, 
		\begin{align}\label{eq:q_y_density}
			q_{\overline{\y}}(\r,\c)=\frac{d\nu_{\overline{\y}}}{d\widetilde{\zeta}} = \prod_{\textup{$i=m$ or $j=n$}} 
			p^{ij}(\Gamma_{\r,\c}(\overline{\y})_{ij}). 
		\end{align}
		Specifically, under the hypothesis, 
		\begin{align}\label{eq:p_rc_density_formula}
			p^{ij}(x) = \exp(x(\balpha(i)+\bbeta(j))- \psi(\balpha(i)+\bbeta(j)) ) p(x). 
		\end{align}
		It follows that 
		\begin{align}\label{eq:p_rc_density_formula2}
			\E[q_{\overline{Y}}(\r,\c)]  = 	\exp(g^{\r,\c}(\balpha,\bbeta))	 \int  \prod_{ i,j  } p( \Gamma_{\r,\c}(\overline{\x})_{ij} )  \, \, \zeta^{\otimes (m-1)\times (n-1)}(d\overline{\x}).
		\end{align}

		Now we justify Cor. \ref{cor:second_transference_density} and give the remaining details for the discrete and the continuous cases. First, when $\zeta$ is the counting measure on $\Z$, then for each $(\r,\c)$ in the support of $\nu$, $\nu_{Y}\{(\r,\c)\}=\P(r(Y)=\r,c(Y)=\c)>0$ so using the expression in \eqref{eq:nu_ybar_pullback} for $\nu_{\overline{\y}}$, 
		\begin{align}
			p_{\overline{\y}}(\r,\c) = \frac{d\nu_{\overline{\y}}}{d\nu_{Y}}(\r,\c) = \frac{\P(\check{Y}= \check{\Gamma}_{\r,\c}(\overline{\y}))}{\P(r(Y)=\r, c(Y)=\c)}.
		\end{align} 
		Then we can choose $q_{\overline{\y}}(\r,\c)=\P(\check{Y}= \check{\Gamma}_{\r,\c}(\overline{\y}))$ and the counting measure $\zeta^{\otimes (m+n-1)}$ is trivially locally absolutely continuous w.r.t. 
		$\nu_{Y}$ on the support of $\nu_{Y}$. This justifies the hypothesis of  Cor. \ref{cor:second_transference_density}. (In fact, the above identity is trivially true in the discrete case without appealing to Cor. \ref{cor:second_transference_density}.) Noting that the supremum of $\P(\check{Y}= \check{\Gamma}_{\r,\c}(\overline{\y}))$ over $\overline{\y}$ is one and using \eqref{eq:p_rc_density_formula2}, 
		\begin{align}\label{eq:discrete_transference_cost}
			\quad \sup_{\overline{\y}} \, p_{\overline{\y}}(\r,\c) =  \frac{1}{\P(Y\in \T(\r,\c))} = 	\exp(-g^{\r,\c}(\balpha,\bbeta))\left(	 \int  \prod_{ i,j  } p( \Gamma_{\r,\c}(\overline{\x})_{ij} )  \, \, \zeta^{\otimes (m-1)\times (n-1)}(d\overline{\x}) \right)^{-1}, 
		\end{align}
		as desired.

		Next, assume $\zeta$ is the Lebesgue measure on $\R$. 
		Note that the support of the law $\overline{\mu}_{\balpha\oplus \bbeta}$ of $\overline{Y}$ is $\supp(p)^{m\times n}$, where $\supp(p)$ is the closure of $\mathcal{C}:=\{p>0\}$. Since $\mathcal{C}$ is an open subset in $\R$, it follows that $\supp(p)\setminus \mathcal{C}$ has Lebesgue measure zero. Hence letting $V=\pi(\{p>0\}^{m\times n})$ where $\pi$ is the margin map, we have $\nu_{Y}(V)=1$. Also, by Prop. \ref{prop:pi_open_map}, $V$ is an open subset of $\R^{m+n-1}$. We will verify that Cor. \ref{cor:second_transference_density}(b) holds.  Indeed, fix $(\r,\c)\in V$. Since $V$ is open in $\R^{m+n-1}$, we can choose an open neighborhood $U_{\r,\c}$ of this margin that is contained in $V$. Now it must be that    $\zeta^{\otimes(m+n-1)} \ll \nu_{Y}$ on $U_{\r,\c}$. If this is not the case,  then there must be some open ball $\mathcal{B}\subseteq U_{\r,\c}$ such that $\nu_{Y}(\mathcal{B})=0$, which must have positive  Lebesgue measure $\zeta^{\otimes(m+n-1)}(\mathcal{B})>0$. However,  $\mathcal{B}\subseteq V$ and by construction $V\subseteq \supp(\nu_{Y})$. Hence the open ball $\mathcal{B}$ contains some margin $(\r',\c')$ in $V$, so  $\nu_{Y}(\mathcal{B})>0$, which is a contradiction. 
	\end{proof}

	We will also use the lower bound on the number of integer-valued contingency tables due to Br\"{a}nd\'{e}n, Leake, and Pak \cite{branden2020lower}, which were obtained by using Lorentzian polynomials by Br\"{a}nd\'{e}n and Huh \cite{branden2020lorentzian}.

	\begin{lemma}\label{lem:counting_measure_Y_lowerbd}
		Let $(\r,\c)$ be a $(m\times n)$ integer-valued margin such that $0<\r(i)/n < \lfloor D \rfloor$ and $0< \c(j)/m < \lfloor D \rfloor$ for all $i,j$ for some $D>1$. Then the following hold: 
		\begin{description}[itemsep=0.1cm]
			\item[(i)] $\T(\r,\c) \cap (\{0,1,\dots,\lceil D\rceil )^{m\times n} $ is non-empty. 
			\item[(ii)] 
			Let $\mu$ be the counting measure on $[0,B]\cap \Z_{\ge 0}$ for any $B\in \{\lceil D \rceil,\lceil D \rceil+1,\dots  \}\cup \{\infty\}$. Let $Y\sim \mu_{\balpha\oplus \bbeta}$ where $(\balpha,\bbeta)$ is an MLE for margin $(\r,\c)$ and let $N:=\sum_{i}\r(i)=\sum_{j}\c(j)$. Then 
			\begin{align}\label{eq:BLP_lower_bd}
				\P\bigl(Y\in \T(\r,\c)\bigr)  \ge N^{-(11/2)(m+n)}. 
			\end{align}
		\end{description}
	\end{lemma}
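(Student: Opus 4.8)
The two parts are essentially independent, and part~\textbf{(i)} will be used to set up part~\textbf{(ii)}. For \textbf{(i)}, the plan is to recall that the transportation polytope with box constraints, $\T(\r,\c)\cap[0,\lceil D\rceil]^{m\times n}$, is an integral polytope (its constraint matrix is totally unimodular), so it suffices to produce a \emph{real} point in it; equivalently, by the Gale--Hoffman feasibility criterion for transportation problems with capacities, one must check, for all $P\subseteq[m]$ and $Q\subseteq[n]$,
\[
\sum_{i\in P}\r(i)\ \le\ \lceil D\rceil\,|P|\,(n-|Q|)+\min\Bigl(\sum_{j\in Q}\c(j),\ \lceil D\rceil\,|P|\,|Q|\Bigr),
\]
together with the symmetric inequality with rows and columns interchanged. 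These follow from the hypotheses $\r(i)/n,\ \c(j)/m<\lfloor D\rfloor\le\lceil D\rceil$ together with $\sum_i\r(i)=\sum_j\c(j)$: the branch where the minimum equals $\lceil D\rceil|P||Q|$ is immediate since $\sum_{i\in P}\r(i)<|P|n\lfloor D\rfloor\le\lceil D\rceil|P|n$, and the other branch is an elementary case analysis (splitting off the columns in $Q$ and using the total-sum identity). Total unimodularity then upgrades a real feasible point to an integer table with entries in $\{0,\dots,\lceil D\rceil\}$. Since $B\ge\lceil D\rceil$, the same table lies in $\{0,\dots,B\}^{m\times n}$, so the set $\textup{CT}_B(\r,\c):=\T(\r,\c)\cap\{0,\dots,B\}^{m\times n}$ is non-empty; this, together with the strict positivity of $\r,\c$ and the strict inequality $\r(i)/n,\c(j)/m<\lfloor D\rfloor$, gives $\T(\r,\c)\cap(0,B)^{m\times n}\ne\emptyset$, so the MLE $(\balpha,\bbeta)$ exists by Theorem~\ref{thm:strong_duality_simple} and the quantities appearing in \textbf{(ii)} are well defined.

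For \textbf{(ii)}, the starting point is the \emph{sufficiency of the margin}: since $\mu$ is a counting measure, $\mu(\{k\})=1$ on its support, so for every $\x\in\textup{CT}_B(\r,\c)$,
\[
\P(Y=\x)=\prod_{i,j}\exp\bigl(\x_{ij}(\balpha(i)+\bbeta(j))-\psi(\balpha(i)+\bbeta(j))\bigr)=\exp\bigl(g^{\r,\c}(\balpha,\bbeta)\bigr),
\]
a constant independent of $\x$ (the exponent telescopes into $\langle\r,\balpha\rangle+\langle\c,\bbeta\rangle-\sum_{i,j}\psi(\balpha(i)+\bbeta(j))$). Hence
\[
\P\bigl(Y\in\T(\r,\c)\bigr)=\bigl|\textup{CT}_B(\r,\c)\bigr|\cdot\exp\bigl(g^{\r,\c}(\balpha,\bbeta)\bigr),
\]
and in particular $|\textup{CT}_B(\r,\c)|=\P(Y\in\textup{CT}_B(\r,\c))\,e^{-g^{\r,\c}(\balpha,\bbeta)}\le e^{-g^{\r,\c}(\balpha,\bbeta)}$, which is exactly Barvinok's maximum-entropy upper bound (recall $g^{\r,\c}(\balpha,\bbeta)=H(Z^{\r,\c})$ by the strong duality of Theorem~\ref{thm:strong_duality_simple}). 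Thus $\P(Y\in\T(\r,\c))\in(0,1]$, and to prove \eqref{eq:BLP_lower_bd} it is enough to show that $|\textup{CT}_B(\r,\c)|$ is within a factor $N^{-(11/2)(m+n)}$ of the maximum-entropy value $e^{-g^{\r,\c}(\balpha,\bbeta)}$.

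This last step is where the Brändén--Leake--Pak bound enters. The plan is to invoke their Lorentzian-polynomial lower bound on the number of contingency tables with margin $(\r,\c)$ (from \cite{branden2020lower}, built on \cite{branden2020lorentzian}) --- in the unbounded form when $B=\infty$, and in the version with entries capped at $B$ otherwise --- which yields an explicit expression in $r_i,c_j,N$, and then to compare that expression with $e^{-H(Z^{\r,\c})}$ entry by entry using Stirling's formula, the decomposition $Z^{\r,\c}=\psi'(\balpha\oplus\bbeta)$, and the crude bound $z_{ij}\le\min(\r(i),\c(j))\le N$ on the typical-table entries; the per-row and per-column discrepancies are then $O(\log N)$, and summing gives the factor $N^{(11/2)(m+n)}$. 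Combining with the displayed identity finishes the proof. The main obstacle is precisely this comparison: extracting the stated constant $11/2$ from the Brändén--Leake--Pak estimate and carrying the argument through uniformly in $B$ (the capped case being needed because a lower bound on unrestricted tables does not by itself bound $|\textup{CT}_B|$ from below); the Gale--Hoffman verification in part~\textbf{(i)}, while elementary, also requires some bookkeeping in the case split.
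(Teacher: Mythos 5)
Your reduction of part \textbf{(ii)} to the identity $\P(Y\in\T(\r,\c))=|\textup{CT}_B(\r,\c)|\,\exp(g^{\r,\c}(\balpha,\bbeta))$ coincides with the paper's, but your proposal stops exactly where the real work begins. The paper does \emph{not} compare the Br\"{a}nd\'{e}n--Leake--Pak bound with $\exp(-H(Z^{\r,\c}))$ by Stirling-type, entry-by-entry estimates. It uses the fact that the BLP lower bound (their Thm.~2.1 together with their Sec.~7.1) is already stated as $\textup{CT}_B(\r,\c)\ge N^{-(11/2)(m+n)}\,\textup{Cap}(\r,\c)$, where $\textup{Cap}(\r,\c)=\inf_{\x\in(0,1)^m,\,\y\in(0,1)^n}\bigl(\prod_i x_i^{-\r(i)}\prod_j y_j^{-\c(j)}\prod_{i,j}\frac{1-(x_iy_j)^{B+1}}{1-x_iy_j}\bigr)$, and then observes in one line that $\log\textup{Cap}(\r,\c)\ge -g^{\r,\c}(\balpha,\bbeta)$: enlarge the infimum from $(0,1)^{m+n}$ to $(0,\infty)^{m+n}$ and substitute $x_i=e^{\balpha(i)}$, $y_j=e^{\bbeta(j)}$; the resulting expression is exactly the negative of the dual objective, whose supremum is attained at the MLE. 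Thus the factor $N^{-(11/2)(m+n)}$ comes entirely from BLP and no further comparison loss is incurred; the constant $11/2$ is not something to re-derive. Your plan explicitly defers this comparison (``the main obstacle''), and the route you sketch --- an ``explicit expression in $r_i,c_j,N$'' compared entrywise with $e^{-H(Z^{\r,\c})}$ via Stirling and crude bounds on $z_{ij}$ --- is not available in the capped case (the BLP bound there is a capacity, not a closed form) and would not obviously yield the stated constant uniformly in $B$. This missing step is the heart of the lemma, so the proposal has a genuine gap.

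For part \textbf{(i)}, the paper simply reads it off from \textbf{(ii)} with $B=\lceil D\rceil$: positive probability forces $\T(\r,\c)\cap\{0,\dots,\lceil D\rceil\}^{m\times n}\neq\emptyset$. Your independent Gale--Hoffman/total-unimodularity route is a reasonable idea, but the feasibility inequalities you assert do not follow from the stated margin bounds alone, so the ``elementary case analysis'' cannot be completed as written: take $m=n=3$, $D=2$, $\r=\c=(5,1,1)$, which satisfies $0<\r(i)/n<\lfloor D\rfloor$ for all $i$; columns $2$ and $3$ have sum $1$, so $x_{12},x_{13}\le 1$ and $x_{11}\ge 3>\lceil D\rceil$, i.e.\ the capacitated transportation problem is infeasible (equivalently, your displayed inequality fails for $P=Q=\{1\}$). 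So nonemptiness, and with it the existence of the MLE that your part \textbf{(ii)} presupposes (your passage from an integer table with entries in $\{0,\dots,B\}$ to a point of $\T(\r,\c)\cap(0,B)^{m\times n}$ is also only asserted), cannot be obtained from the margin bounds by this polyhedral argument; in the paper these facts are imported from the BLP machinery applied to the capped counting measure rather than proved by hand.
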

	
	\begin{proof}
		Note that \textbf{(i)} follows immediately from \textbf{(ii)} with $B=\lceil D \rceil$, so it suffices to show \textbf{(ii)}.
		Existence of MLE $(\balpha,\bbeta)$ for the margin $(\r,\c)$ follows since  the Fisher-Yates table $\left( \r(i)\c(j)/N\right)_{i,j}$ takes entries from the open interval $(0,B)$ and Lemmas \ref{lem:typical} and \ref{lem:strong_dual_MLE_typical}. 
		Recall that the log-likelihood of $Y$ at any matrix in $\T(\r,\c)$ with entries from $[0,B]\cap \Z_{\ge 0}$ w.r.t. the counting measure $\mu^{\otimes (m\times n)}$ is $g^{\r,\c}(\balpha,\bbeta)$. So 
		\begin{align}
			\P\bigl(Y\in \T(\r,\c)\bigr)  =  \exp \left( g^{\r,\c}(\balpha, \bbeta ) \right) \textup{CT}(\r,\c), 
		\end{align}
		where $\textup{CT}(\r,\c)=|\mathcal{T}(\r,\c)\cap \{0,1,\dots,B\}^{m\times n}|$ denotes the number of contingency tables with margin $(\r,\c)$ and entries from $\{0,1,\dots,B\}$. 
		Then the result will follow once the following inequality is verified: 
		\begin{align}\label{eq:counting_lower_bd1}
			\textup{CT}(\r, \c) \ge N^{-(11/2)(m+n)}  \exp \left( -g^{\r,\c}(\balpha, \bbeta) \right). 
		\end{align}

		To deduce \eqref{eq:counting_lower_bd1}, we use the lower bound on $\textup{CT}(\r,\c)$ in \cite[Thm. 2.1]{branden2020lower} in conjunction with the discussion in \cite[Sec. 7.1]{branden2020lower} to get 
		\begin{align}
			\textup{CT}(\r, \c) \ge N^{-(11/2)(m+n)} \textup{Cap}(\r,\c),
		\end{align}
		where 
		\begin{align}
			\textup{Cap}(\r,\c) := \inf_{\x\in (0,1)^{m},\y\in (0,1)^{n}} \,\left( \prod_{i=1}^{m} x_{i}^{-r_{i}} \prod_{j=1}^{n} y_{j}^{-c_{j}}	\prod_{i,j} \frac{1-(x_{i}y_{j})^{B+1}}{1-x_{i}y_{j}} \right).
		\end{align}
		By taking the infimum over the larger set $(\x,\y)\in (0,\infty)^{m}\times (0,\infty)^{n}$ and making change of variables $x_{i}\mapsto e^{\balpha(i)}$ and $y_{j}\mapsto e^{\bbeta(j)}$, 
		\begin{align}
			\log \, \textup{Cap}(\r,\c) &\ge  \inf_{\x,\y}  \left( -\sum_{i=1}^{m} \r(i)\log x_{i} - \sum_{j=1}^{n} \c(y)\log y_{j}  + \sum_{i,j} \log  \frac{1-(x_{i}y_{j})^{B+1}}{1-x_{i}y_{j}}   \right)\\
			&=\inf_{\balpha, \bbeta} \left(  	-\sum_{i=1}^{m} \r(i) \balpha(i)  -\sum_{j=1}^{n} \c(j)\bbeta(j)  + \sum_{i,j} \psi(\balpha(i)+\bbeta(j)) \right)
			=  -g^{\r,\c}(\balpha,\bbeta).
		\end{align}
		This is enough to conclude \eqref{eq:counting_lower_bd1}.
	\end{proof}

	By Cor. \ref{cor:transfer_discrete_Leb} and known results in the combinatorics literature, we can easily deduce Thm. \ref{thm:transfer_counting_Leb}.

	\begin{proof}[\textbf{Proof of Theorem \ref{thm:transfer_counting_Leb} }]

		First assume $\mu$ is the counting measure on $[0,b]\cap Z$ for some $b\in [0,\infty]$. By Lemma \ref{lem:counting_measure_Y_lowerbd}, $\P(Y\in \T(\r,\c))\ge N^{-(11/2)(m+n)}$ with $N=\sum_{i} \r(i)$. Now notice that the upper bound on the transference cost in Cor. \ref{cor:transfer_discrete_Leb} is simply $\P(Y\in \T(\r,\c))$ (see the proof of Cor. \ref{cor:transfer_discrete_Leb}; one can also use Thm. \ref{thm:transference} with $\rho=0$ instead of Cor. \ref{cor:transfer_discrete_Leb}.) 
		
		Next, assume $\zeta$ is the Lebesgue measure on $\R$ and let $h(x)=\mathbf{1}(x> b 0)$. Recently, Barvinok and Rudelson \cite{barvinok2024quick} obtained the following lower bound on the volume of the transportation polytope: 
		\begin{align}
			&  \int  \prod_{ i,j  } h( \Gamma_{\r,\c}(\overline{\x})_{ij} )  \, \, \zeta^{\otimes (m-1)\times (n-1)}(d\overline{\x}) = \textup{Vol}_{\R^{(m-1)\times (n-1)}}\left(\mathcal{T}(\r,\c)\cap \R_{\ge 0}^{m\times n} \right)\\ 
			&\hspace{2cm} \ge (4/e)\sqrt{\pi}  \left( \frac{1}{e\sqrt{2\pi}} \right)^{m+n-1} \frac{1}{\sqrt{m+2}}\, \exp(-H(Z^{\r,\c})) \min_{i,j}\, Z^{\r,\c}_{ij}.  \label{eq:BR_volume}
		\end{align}
		Note that $H(Z^{\r,\c})=g^{\r,\c}(\balpha,\bbeta)$ by Thm. \ref{thm:strong_duality_simple} and $\min_{i,j}Z^{\r,\c}\ge \delta$ by $\delta$-tameness. 
		Combining these results, 
		\begin{align}
			\eqref{eq:p_rc_density_formula2} \quad \ge \quad  C  (4/e)\sqrt{\pi}  \left( \frac{1}{e\sqrt{2\pi}} \right)^{m+n-1} \frac{\delta}{\sqrt{m+2}}. 
		\end{align}
		Thus \eqref{eq:thm_double_transfer3_uniform} follows from Cor. \ref{cor:transfer_discrete_Leb}. 
	\end{proof}
	
	\begin{remark}
		Chatterjee, Diaconis, and Sly obtained a similar result in \cite[Lem. 2.1]{chatterjee2014properties} for constant margins and Lebesgue base measure by relying on (the lower bound of) the volume estimate of the Birkoff polytope (i.e., $\T(\mathbf{1},\mathbf{1})$) due to Canfield and McKay \cite{canfield2007asymptotic}. 
		In our proof of Theorem \ref{thm:transfer_counting_Leb}, we used a recent volume estimate for transportation polytopes with general margins due to Barvionok and Rudelson \cite{barvinok2024quick}.
	\end{remark}

	In the rest of this section, we prove Thm. \ref{thm:second_transference_density_1}. Recall that $\Gamma_{\r,\c}(\overline{Y})$ is the $m\times n$ matrix obtained from the interior matrix $\overline{Y}$ uniquely completing it so that the margin is $(\r,\c)$ (see \eqref{eq:def_completion_map}). In the following lemma, we show that it is enough to lower bound the probability that the entries in the last row and the last column of $\Gamma_{\r,\c}(\overline{Y})$ take values from a compact interval.

	\begin{lemma}\label{lem:second_transference_density}  
		Keep the same setting as in Cor. \ref{cor:transfer_discrete_Leb}. 
		Fix a Borel set $I\subseteq \R$ and assume that  $c_{I}:=\inf_{x\in I}p(x)>0$. Then  \eqref{eq:strong_transference_exact} in Thm. \ref{thm:second_transference} holds with 
		\begin{align}\label{eq:thm_double_transfer22}
			\sup_{\overline{\y}} p_{\overline{\y}}(\r,\c)  \le  (C	c_{I})^{m+n} \, \P\left( \Gamma_{\r,\c}(\overline{Y})_{ij}\in I \,\, \textup{$\forall (i,j)$ s.t. $i=m$ or $j=n$ }   \right)^{-1}
		\end{align}
		for some constant $C=C(\mu,\delta)>0$.
	\end{lemma}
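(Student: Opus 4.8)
The plan is to estimate separately the two factors in the identity $\sup_{\overline{\y}}p_{\overline{\y}}(\r,\c)=\bigl(\sup_{\overline{\y}}q_{\overline{\y}}(\r,\c)\bigr)\E[q_{\overline{Y}}(\r,\c)]^{-1}$ furnished by \eqref{eq:strong_transference_exact2} in Cor.~\ref{cor:transfer_discrete_Leb}, and then to invoke Thm.~\ref{thm:second_transference}(ii) to convert the resulting bound on $\sup_{\overline{\y}}p_{\overline{\y}}(\r,\c)$ into \eqref{eq:strong_transference_exact}. Recall from \eqref{eq:q_y_density} and \eqref{eq:p_rc_density_formula} that $q_{\overline{\y}}(\r,\c)=\prod_{\text{$i=m$ or $j=n$}}p^{ij}\bigl(\Gamma_{\r,\c}(\overline{\y})_{ij}\bigr)$ is a product of $m+n-1$ factors, where $p^{ij}(x)=\tfrac{d\mu_{\balpha(i)+\bbeta(j)}}{d\zeta}(x)=e^{x(\balpha(i)+\bbeta(j))-\psi(\balpha(i)+\bbeta(j))}\,p(x)$. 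The numerator is immediate: by $\delta$-tameness and Lem.~\ref{lem:strong_dual_MLE_typical} every exponent $\balpha(i)+\bbeta(j)$ lies in the compact interval $[\phi(A_{\delta}),\phi(B_{\delta})]\subseteq\Theta^{\circ}$, and in the setting of Cor.~\ref{cor:transfer_discrete_Leb} the densities $d\mu_{\theta}/d\zeta$ for $\theta$ in that range are uniformly bounded by a constant $M=M(\mu,\delta)$ (with $M=1$ in the discrete case), so $\sup_{\overline{\y}}q_{\overline{\y}}(\r,\c)\le M^{m+n-1}$.

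The real work is the lower bound on the denominator, and here I would simply discard all of the mass except on the favourable event
\[
G:=\bigl\{\Gamma_{\r,\c}(\overline{Y})_{ij}\in I\ \text{ for all }(i,j)\text{ with }i=m\text{ or }j=n\bigr\},
\]
which is Borel since $\overline{\y}\mapsto\Gamma_{\r,\c}(\overline{\y})$ is affine. On $G$ each of the $m+n-1$ border entries $\Gamma_{\r,\c}(\overline{Y})_{ij}$ lies in $I$, whence $p(\Gamma_{\r,\c}(\overline{Y})_{ij})\ge c_{I}$, while the tilt factor (with exponent $\theta_{ij}:=\balpha(i)+\bbeta(j)$) obeys $e^{\Gamma_{\r,\c}(\overline{Y})_{ij}\theta_{ij}-\psi(\theta_{ij})}\ge\kappa$, where $\kappa:=\inf\{e^{x\theta-\psi(\theta)}:x\in I,\ \theta\in[\phi(A_{\delta}),\phi(B_{\delta})]\}$; positivity of $\kappa$ follows because $(x,\theta)\mapsto e^{x\theta-\psi(\theta)}$ is continuous and strictly positive on the compact set $\overline{I}\times[\phi(A_{\delta}),\phi(B_{\delta})]$, which is the one place one uses that $I$ is bounded (the only case needed in the applications, where $I$ is a bounded interval chosen in terms of $\mu$ and $\delta$). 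Thus each factor of $q_{\overline{Y}}(\r,\c)$ is at least $c_{I}\kappa$ on $G$, giving
\[
\E[q_{\overline{Y}}(\r,\c)]\ \ge\ \E\bigl[\mathbf{1}_{G}\,q_{\overline{Y}}(\r,\c)\bigr]\ \ge\ (c_{I}\kappa)^{m+n-1}\,\P(G).
\]
Combining the two estimates, $\sup_{\overline{\y}}p_{\overline{\y}}(\r,\c)\le\bigl(M/(c_{I}\kappa)\bigr)^{m+n-1}\P(G)^{-1}$; choosing $C=C(\mu,\delta)$ large enough that $Cc_{I}\ge\max\{1,\,M/(c_{I}\kappa)\}$ (possible because $c_{I},\kappa,M$ depend only on $\mu,\delta$ for the bounded $I$ in play) upgrades this to $\sup_{\overline{\y}}p_{\overline{\y}}(\r,\c)\le(Cc_{I})^{m+n}\P(G)^{-1}$, and then \eqref{eq:strong_transference_exact} is exactly Thm.~\ref{thm:second_transference}(ii).

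The main obstacle --- and the reason the lemma is isolated at all --- is the denominator step. Expanding $\E[q_{\overline{Y}}(\r,\c)]$ as in \eqref{eq:p_rc_density_formula2} merely rewrites it as a weighted volume of $\T(\r,\c)$, and no usable lower bound on such weighted volumes is known for general densities $p$. The device above sidesteps this by keeping only the event $G$, on which the $m+n-1$ surviving density factors are each bounded below by the dimension-free constant $c_{I}\kappa$, thereby reducing the whole estimate to the purely probabilistic quantity $\P(G)$ --- which is then controlled, for the range of $(m,n)$ and base measures in question, by the Bernstein-type tail bounds on the border entries of the completed $(\balpha,\bbeta)$-model that feed into Thm.~\ref{thm:second_transference_density_1}. (If $\P(G)=0$ the asserted bound is vacuous; it has content only once $\P(G)>0$ is known, in which case $\E[q_{\overline{Y}}(\r,\c)]\ge(c_{I}\kappa)^{m+n-1}\P(G)>0$ also shows all quantities above are finite.)
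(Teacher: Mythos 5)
Your argument is correct and is essentially the paper's own proof: the paper likewise bounds $\sup_{\overline{\y}}q_{\overline{\y}}(\r,\c)\le M^{m+n-1}$ and lower-bounds $\E[q_{\overline{Y}}(\r,\c)]$ by restricting to the event that all border entries $\Gamma_{\r,\c}(\overline{Y})_{ij}$ lie in $I$, on which each factor $p^{ij}$ is at least $c_I$ times $\inf\{e^{x\theta-\psi(\theta)}:x\in I,\ \theta\in[\phi(A_\delta),\phi(B_\delta)]\}$, and then invokes Cor.~\ref{cor:transfer_discrete_Leb}. Your added remarks (boundedness of $I$ to guarantee positivity of the tilt-factor infimum, and the bookkeeping needed to cast the bound in the stated form $(Cc_I)^{m+n}$) are points the paper passes over silently, and your handling of them is sound.
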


	\begin{proof} 
		Let $T= \prod_{\textup{$i=m$ or $j=n$}} 
		p^{ij}(\Gamma_{\r,\c}(\overline{Y})_{ij})$ be a nonnegative random variable where $p^{ij}$s are the Radon-Nikodym derivatives of $\mu_{\balpha(i)+\bbeta(j)}$ w.r.t. $\zeta$. Note that $T$ equals  $q_{\overline{\y}}(\r,\c)=d\nu_{\overline{\y}}/d\zeta^{\otimes(m+n-1)}$ in \eqref{eq:q_y_density} evaluated at $\overline{Y}$. Let $\mathcal{E}$ denote the event in the probability in \eqref{eq:thm_double_transfer22}. Denote 
		\begin{align}
			C  = \inf_{\phi(A_{\delta})\le \theta \le \phi(B_{\delta})} \,  \inf_{x\in I} e^{x\theta-\psi(\theta)} \in (0,\infty).
		\end{align}
		Then on the event $\mathcal{E}$, 
		\begin{align}
			T  \ge     \prod_{\textup{$i=m$ or $j=n$}} 
			\inf_{x\in I} \, p^{ij}(x) \ge  (Cc_{I})^{m+n}. 
		\end{align}
		It follows that by Cor. \ref{cor:transfer_discrete_Leb}, 
		\begin{align}
			\sup_{\overline{\y}} p_{\overline{\y}}(\r,\c) \le M^{m+n}  \, \E[T]^{-1} \le (MC c_{I})^{m+n} \P(\mathcal{E})^{-1},
		\end{align}
		where $M=M(\mu,\delta)>0$  is a constant defined in the statement of Cor. \ref{cor:transfer_discrete_Leb}. 
	\end{proof}

		\begin{proof}[\textbf{Proof of Theorem \ref{thm:second_transference_density_1}}]
		
		Let $\mathcal{E}(I)$ denote the event in \eqref{eq:thm_double_transfer22}. By Lemma \ref{lem:second_transference_density}, it suffices to lower bound the probability of $\mathcal{E}(I)$ for some compact interval $I$ in the support of $\mu$. Our argument is based on partitioning $Y$ into a $2\times 2$ block matrix where the size of the $(2,2)$ block is about $\sqrt{m}\times \sqrt{n}$. By concentration inequalities and union bounds, the largest block $Y^{11}$ will have its row and column sums close to its expectations with probability bounded away from zero. On this event, we will show that there is a sufficient probability to assign values of the entries in the remaining three blocks \textit{except} the ones in the last row and the law column in a way that the required values to complete $\overline{Y}$  to $\Gamma_{\r,\c}(\overline{Y})$ all lie in a compact interval in the support of $\mu$. 
		
		In the rest of the proof, we will use $C=C(\mu,\delta)>0$ to denote a positive constant depending only on $\mu$ and $\delta$ whose value can change from line to line.

		Denote $m_{0}:=\lfloor \sqrt{m}\log m\rfloor $, $n_{0}:=\lfloor \sqrt{n}\log n\rfloor $, $m':=m-m_{0}$ and $n':=n-m_{0}$. 
		Partition the row indices into  sets $\{1,\dots, m'\}$ and $\{m'+1,\dots,m\}$. Make a similar partitioning for the column indices. 
		Also, we have  the following $2\times 2$ block partitioning of the $m\times n$ random matrix $Y$: 
		\begin{align}
			Y = \begin{bmatrix}
				Y^{11} & Y^{12}  \\
				Y^{21} & Y^{22}  \\
			\end{bmatrix}
		\end{align}
		where $Y^{22}$ is $m_{0}\times n_{0}$. 
		The expected margin of $Y$ is exactly $(\r,\c)$. Partition the typical table $Z$ similarly.
		Denote $Z^{k\ell}$ for $1\le k,\ell\le 2$ for the block entries of the corresponding partitioning of the $Z$. Note that $\E[Y]=Z$ so $\E[Y^{k\ell}]=Z^{k\ell}$ for all $k,\ell\in \{1,2\}$. Note that each $Y_{ij}\sim \mu_{\balpha(i)+\bbeta(j)}$ with $\phi(A_{\delta})\le \balpha(i)+\bbeta(j)\le \phi(B_{\delta})$ by $\delta$-tameness of $(\r,\c)$. Hence $\{Y_{ij}\}'$s have a uniformly bounded sub-exponential norm, so by Bernstein's inequality for sums of independent sub-exponential random variables (see, e.g., \cite[Thm. 2.8.1]{vershynin2018high}), 
		\begin{align}
			\P\left( \left| Y^{11}_{\bullet\bullet} - Z^{11}_{\bullet\bullet} \right| \le  c \sqrt{mn \log mn} \right) &\ge 1- 2 \exp\left( - 2\gamma \log m n\right), \\
			\P\left( \left| Y^{11}_{i\bullet} - Z^{11}_{i\bullet} \right| \le c \sqrt{n\log n} \right) & \ge 1- 2 \exp\left( - 2\gamma \log n \right) \quad \textup{for $1\le i \le m'$}, \\
			\P\left( \left| Y^{11}_{\bullet j} - Z^{11}_{\bullet j} \right| \le c \sqrt{m\log m} \right) &\ge 1- 2 \exp\left( - 2\gamma \log m \right) \quad \textup{for $1\le j \le n'$}
		\end{align}
		for some constant $c=c(\mu,\delta)>0$. Let $\mathcal{D}^{11}$ denote the intersection of all events that appear on the left-hand side of the above inequalities.
		By a union bound, 
		\begin{align}\label{eq:Y11_prob_bd}
			\P(\mathcal{D}^{11}) \ge 
			1 - 6(m\lor n) (m\land n)^{-2\gamma} \ge 1-6(m\land n)^{-\gamma}.
		\end{align}
		By the hypothesis  $m,n\ge 1$ are sufficiently large so that the last expression above is at least $1/2$. Then $\P(\mathcal{D}^{11})\ge 1/2$.  Hence by Lemma \ref{lem:second_transference_density}, it suffices to show 
		\begin{align}\label{eq:E_I_claim}
			\P(\mathcal{E}(I)\,|\, \mathcal{D}^{11}) \ge 	\exp( -C(m \sqrt{n}\log n+ n \sqrt{m}\log m )\log mn ), 
		\end{align}
		where  $I\subseteq \R$ is a Borel set such that $c_{I}=\inf_{x\in I}p(x)>0$.

		We first argue for the discrete case when 
		$\zeta$ is the counting measure on $\Z$ and $\mu$ has probability density $p$ w.r.t. $\zeta$. By the hypothesis, $\{p>0\}=[A,B]\cap \Z$. By translating $\mu$, without loss of generality we assume $\lfloor A_{\delta/2} \rfloor =0$. Then $p(0)>0$. We also have $1, \lceil B_{\delta/2} \rceil\in \{ p>0 \}$ since $p$ cannot be concentrated at a single value (otherwise $\nu$ is not $\sigma$-finite). Take $I:=[0, \lceil B_{\delta/2} \rceil]\cap \Z$ and write	 $\r=(\r^{1},\r^{2})$ and $\c=(\c^{1},\c^{2})$ where $\r^{2}\in \R^{m_{0}}$ and $\c^{2}\in \R^{n_{0}}$. Define events 
		\begin{align}
			\mathcal{D}^{22} &:= \left\{ Y^{22}_{\bullet\bullet}=Z^{22}_{\bullet\bullet}+Y^{11}_{\bullet\bullet}-Z^{11}_{\bullet\bullet},\, Y^{22} \in  I^{m_{0}\times n_{0}} \right\}, \\
			\mathcal{D}^{12} &:= \left\{ \| \pi(Y^{12}) - (\r^{1}-r(Y^{11}), \c^{2}-c(Y^{22}) ) \|_{\infty}=0, \, Y^{12} \in I^{m'\times n_{0}} \right\}, \\
			\mathcal{D}^{21} &:= \left\{ \| \pi(Y^{21}) - (\r^{2}-r(Y^{22}), \c^{1}-c(Y^{11}) ) \|_{\infty}=0  ,\,  Y^{21} \in I^{m_{0}\times n'}   \right\}, 
		\end{align}
		where $\pi$ is the map that sends a matrix $\x$ to its margin $(r(\x),c(\x))$. We claim 
		\begin{align}\label{eq:D22_D12_D21_claim}
			\P(\mathcal{D}^{22}\,|\, \mathcal{D}^{11}) &\ge \exp(-Cm_{0}n_{0}), \\
			\P(\mathcal{D}^{12}\,|\, \mathcal{D}^{22}\cap \mathcal{D}^{11}) &\ge \exp(-Cm n_{0}), \\ \nonumber
			\P(\mathcal{D}^{21}\,|\, \mathcal{D}^{22}\cap \mathcal{D}^{11}) &\ge \exp(-Cm_{0} n). \nonumber
		\end{align}
		Since $\mathcal{D}^{22}\cap \mathcal{D}^{12}\cap \mathcal{D}^{21}$ implies $\mathcal{E}(I)$ and since $\mathcal{D}^{12}$, $\mathcal{D}^{21}$ are conditionally independent on $\mathcal{D}^{11}\cap\mathcal{D}^{22}$, this would be enough to conclude \eqref{eq:E_I_claim}. 
		
		To justifty the first inequality in \eqref{eq:D22_D12_D21_claim}, recall that since $(\r,\c)$ is $\delta$-tame, 
		$Z^{22}_{\bullet\bullet} \in [A_{\delta} m_{0}n_{0}, B_{\delta} m_{0}n_{0}]$. Since  $|Y^{11}_{\bullet\bullet}-Z^{11}_{\bullet\bullet}|\le c \sqrt{mn\log mn}\ll  m_{0} n_{0}$ on $\mathcal{D}^{11}$, 
		\begin{align}\label{eq:Y22_range}
			A_{\delta/2} m_{0}n_{0}  \le	Z^{22}_{\bullet\bullet} + Y^{11}_{\bullet\bullet} - Z^{11}_{\bullet\bullet} < B_{\delta/2} m_{0}n_{0} \quad \textup{on $\mathcal{D}^{11}$} 
		\end{align}
		for $m,n$ large enough depending only on $\delta$. The term in the middle is an integer since $Z^{22}_{\bullet\bullet}-Z^{11}_{\bullet\bullet} = \sum_{n'<j\le n} \c(j) - \sum_{1\le i \le m'} \r(i)$. Note that since $\mu$ assigns a positive probability for all values in the support, so does its exponential tilt with finite tilting parameter. Hence, there exists a matrix $\y^{22}\in I^{m_{0}\times n_{0}}$ such that $\y^{22}_{\bullet\bullet}=Z^{22}_{\bullet\bullet}+Y^{11}_{\bullet\bullet}-Z^{11}_{\bullet\bullet}$ and $\P(Y^{22}=\y^{22})>0$. Since the $m_{0}n_{0}$ entries of $Y^{22}$ are independent, it follows that 
		\begin{align}
			\P(\mathcal{D}^{22}\,|\, \mathcal{D}^{11}) \ge \P(Y^{22}=\y^{22}\,|\, \mathcal{D}^{11}) = \P(Y^{22}=\y^{22}) \ge \exp(-Cm_{0}n_{0}). 
		\end{align}

		Next, consider the second inequality in \eqref{eq:D22_D12_D21_claim}. The event $\mathcal{D}^{12}$ 
		requires the row and column sums of $Y^{12}$ to take the prescribed values so that the first $m'$ row sums and the last $n_{0}$ columns sums of $Y$ match the corresponding values in the target margin $(\r,\c)$. Denote the required row and column sums for $Y^{12}$ as $\r^{12}:=\r^{1}-r(Y^{11})$ and $\c^{12}:=\c^{2}-c(Y^{22})$, respectively. On $\mathcal{D}^{22}$, they have the same total sum (hence $(\r^{12}, \c^{12})$ is a margin for $Y^{12}$)   since 
		\begin{align}
			\sum_{i} \r^{12}(i)	=	\sum_{1\le i \le m'} \r(i) - Y^{11}_{\bullet\bullet} = 	Z^{11}_{\bullet\bullet} + Z^{12}_{\bullet\bullet} - Y^{11}_{\bullet\bullet},  \\
			\sum_{j} \c^{12}(j)	= \sum_{n'< j \le n} \c(j) - Y^{22}_{\bullet\bullet}  = Z^{12}_{\bullet\bullet} + Z^{22}_{\bullet\bullet} - Y^{22}_{\bullet\bullet}
		\end{align}
		and the last expressions in each line above coincide on $\mathcal{D}^{22}$. Proceeding as before, it remains to show that there is at least one realization of $Y^{12}$ in $I^{m'\times n_{0}}$ with margin $(\r^{12},\c^{12})$. 
		According to Lemma \ref{lem:counting_measure_Y_lowerbd} \textbf{(i)}, we only need to verify that 
		\begin{align}
			A_{\delta/2} n_{0}	
			&< \r(i) - Y^{11}_{i\bullet } < B_{\delta/2} n_{0} \quad \textup{for all $1\le i\le m'$} \quad \textup{and}\\
			A_{\delta/2} m_{0}	 
			& < \c(j) - Y^{22}_{\bullet j} < B_{\delta/2} m_{0}  \quad \textup{for all $n'< j\le n$}.
		\end{align}
		Writing $\r(i) = Z^{11}_{i\bullet} + Z^{12}_{i\bullet}$ for $1\le i \le m'$ and $\c(j)=Z^{12}_{\bullet j}+Z^{22}_{\bullet_{j}}$ for $n'<j\le n$, and noting that $Z\in [A_{\delta}, B_{\delta}]^{m\times n}$, 
		one can see that the above inequalities hold almost surely for all sufficiently large $m,n$ (depending only on $c,\delta$) on the event $\mathcal{D}^{11}$. A symmetric argument applies for $Y^{21}$. This shows \eqref{eq:D22_D12_D21_claim}, completing the proof for the discrete case.

		Lastly, we argue for the continuous case with $I=[A_{\delta/4}, B_{\delta/4}]$. 
		By a discretization and perturbation argument, we will show that \eqref{eq:D22_D12_D21_claim} still holds with some negligible error. Assume $\zeta=\textup{Leb}(\R)$ and $\supp(p)=[A,B]\cap \R$. 
		Consider the rescaled integer grid $\eps \Z$ in $\R$ for $\eps=O(\delta/m^{2}n^{2})$. For each $i,j=1,2$,  let $[Y^{ij}]$ denote the random matrix whose entries are the nearest point in $\eps\Z$ to the corresponding entries in $Y^{ij}$. Since the support of $p$ is an interval, the support of each entry of $[Y^{ij}]$ is a set of consecutive points in $\eps\Z$. By the same argument for $Y^{22}$ in the discrete case, we have 
		\begin{align}\label{eq:pf_Y22_conti}
			\P\left( \left|  [Y^{22}]_{\bullet\bullet}- \left( Z^{22}+Y^{11}_{\bullet\bullet}-Z^{11}_{\bullet\bullet}\right) \right| \le \eps,\,  [Y^{22}]\in J_{\delta}\cap \eps\Z  \,\bigg|\, \mathcal{D}^{11} \right) \ge \exp(-Cm_{0}n_{0}),
		\end{align}
		where $J_{\delta} = [\eps \lfloor \eps^{-1} A_{\delta/2} \rfloor, \eps \lceil \eps^{-1} B_{\delta/2} \rceil]$. 
		Note that for each matrix $\y^{22}\in I^{m_{0}\times n_{0}}$, $[Y^{22}]=\y^{22}$ with probability at least $\Omega(\eps^{m_{0}n_{0}}) \ge \exp(-Cm_{0}n_{0}\log mn)$. Hence denoting $J_{\delta}\pm \eps=\{ x\pm \eps\,:\, x\in J_{\delta} \}$, 
		\begin{align}\label{eq:pf_Y22_conti2}
			\P\left( \left|  Y^{22}_{\bullet\bullet}- \left( Z^{22}+Y^{11}_{\bullet\bullet}-Z^{11}_{\bullet\bullet}\right) \right| \le (m_{0}n_{0}+1)\eps, \,\,  Y^{22} \in (J_{\delta}\pm \eps)^{m_{0}\times n_{0}} \,\bigg|\, \mathcal{D}^{11} \right) \ge  \exp(-C m_{0}n_{0}\log mn ).  
		\end{align}
		Let $\mathcal{D}^{22}_{\eps}$ denote the event in \eqref{eq:pf_Y22_conti} and define 
		\begin{align}
			\mathcal{D}^{12}_{\eps} &:= \left\{ \| \pi(Y^{12}) - (\r^{1}-r(Y^{11}), \c^{2}-c(Y^{22}) ) \|_{\infty}\le (m+n)\eps, \, Y^{12} \in (J_{\delta}\pm \eps)^{m'\times n_{0}}  \right\}, \\
			\mathcal{D}^{21}_{\eps} &:= \left\{ \| \pi(Y^{21}) - (\r^{2}-r(Y^{22}), \c^{1}-c(Y^{11}) ) \|_{\infty}\le (m+n)\eps  ,\,  Y^{21} \in (J_{\delta}\pm \eps)^{m_{0}\times n'} \right\}.
		\end{align}
		Similarly, we can show 
		\begin{align}\label{eq:pf_Y12_Y21_conti}
			\P(\mathcal{D}^{12}_{\eps}\,|\, \mathcal{D}^{11}\cap \mathcal{D}^{22}_{\eps}  ) \ge \exp(-C m n_{0} \log mn ), \\
			\P(\mathcal{D}^{21}_{\eps}\,|\, \mathcal{D}^{11}\cap \mathcal{D}^{22}_{\eps}  ) \ge \exp(-C m_{0} n \log mn ). 
		\end{align}
		Combining the above, we deduce 
		\begin{align}
			\P\left( \Gamma_{\r,\c}(\overline{Y})_{ij}\in J_{\delta} \pm (m+n+1)\eps \, \,\, \textup{$\forall (i,j)$ s.t. $i=m$ or $j=n$ }  \,\bigg|\, \mathcal{D}^{11}  \right) \ge \exp(-C (m n_{0}+m_{0}n )\log mn ),
		\end{align}
		Hence for $m,n$ large enough dependong only on $\delta$ so that $J_{\delta}\pm (m+n+1)\eps\subseteq [A_{\delta/4}, B_{\delta/4}]=I$, 
		we verify \eqref{eq:E_I_claim}. 
	\end{proof}

	\begin{remark}[Combinatorial applications]\label{rmk:volume_application}
		Our probabilistic argument above gives some interesting combinatorial results. In the proof of Thm. \ref{thm:second_transference_density_1} we have bounded above the transference cost \eqref{eq:strong_transference_exact2}  with $q_{\overline{\y}}(\r,\c)$ given by \eqref{eq:q_y_density}. Then using \eqref{eq:p_rc_density_formula2}, we obtain 
		\begin{align}
			&	\int  	\prod_{ i,j  } h( \Gamma_{\r,\c}(\overline{\x})_{ij} )  \, \, \zeta^{\otimes (m-1)\times (n-1)}(d\overline{\x}) \\ &\qquad \ge 	\frac{\exp(-g^{\r,\c}(\balpha,\bbeta)) }{ \sup_{\y}  q_{\overline{\y}}(\r,\c) } 	 \exp( -C(m \sqrt{n}\log n+ n \sqrt{m}\log m )\log mn ). 
		\end{align}
		The integral on the left-hand side above is a $h$-weighted volume of the transportation polytope $\T(\r,\c)$ in units of basic cells in $\R^{(m-1)\times (n-1)}$. When $p(x)=\mathbf{1}(x\ge 0)$ so that $\mu$ is the Lebesgue measure on $\R_{\ge 0}$, it gives a lower bound on the volume of the nonnegative transportation polytope (in this case $\sup_{\y}  q_{\overline{\y}}(\r,\c)=1$), which may be compared to the result \eqref{eq:BR_volume} of Barvinok and Rudelson \cite{barvinok2024quick}. Our lower bound is weaker than their result but ours applies for more general non-constant densities $p$. 
		
		In the discrete case, the integral above becomes the weighted count $	\sum_{\x \in \T(\r,\c)\cap \Z^{m\times n}} \prod_{ij} p(\x_{ij})$ of the inter points in the transportation polytope $\T(\r,\c)$ and $\sup_{\y}  q_{\overline{\y}}(\r,\c)=1$. 
		Setting $p(x) = \mathbf{1}(x\ge 0)$, it gives a lower bound on the number of contingency tables with margin $(\r,\c)$. This gives a weaker lower bound than the known result  \eqref{eq:counting_lower_bd1} of Br\"{a}nd\'{e}n, Leake, and Pak \cite{branden2020lower} but ours also applies for non-constant densities $p$. For the discrete case, we essentially lower bounded $\P\bigl(Y\in \T(\r,\c)\bigr)$, which equals  $\exp \left( g^{\r,\c}(\balpha, \bbeta ) \right) \mu^{\otimes(m\times n)}(\T(\r,\c))$ noting that  $g^{\r,\c}(\balpha,\bbeta)$ is the log-likelihood of $Y$ taking any matrix in $\T(\r,\c)$ w.r.t. $\mu^{\otimes (m\times n)}$. This is why our upper bound on the transference cost implies a lower bound on the corresponding weighted volume of the polytope. It would be interesting to know if the above results can be obtained by a purely combinatorial argument. 
	\end{remark}

	\section{Proof of scaling limit of the typical tables and MLEs} 
	\label{sec:pf_scaling_limit}

	Our goal in this section is to prove Theorem \ref{thm:main_convergence_conti}. This will take several steps.

	\subsection{Lipschitz continuity of typical tables and standard MLEs}

	In this section, we establish a stability result (Thm. \ref{thm:typical_lipschitz_margins} below) that bounds the perturbation on the typical tables in terms of the perturbation of the margin. This result is central to our limit theory and it will be extended to the continuum setting (Thm. \ref{thm:typical_kernel_Lipscthiz}), which will be used critically in proving Theorem \ref{thm:main_convergence_conti}.

	\begin{theorem}[Lipschitz continuity of typical table w.r.t. margin]\label{thm:typical_lipschitz_margins}
		Let $C_\delta=\max\{|\phi(A_\delta)|,\, |\phi(B_\delta)| \}$ be as in Lemma \ref{lem:strong_dual_MLE_typical}. There  for each $m\times n$ $\delta$-tame margins  $(\r,\c)$ and $(\r',\c')$, we have
		\begin{align}\label{eq:typical_margin_lipschiz_continuity}
			\lVert Z^{\r,\c}-Z^{\r',\c'} \rVert_{F}^{2} \le 4C_{\delta}\left( \sup_{ \phi(A_{\delta})\le w \le \phi(B_{\delta}) } \psi''(w) \right) \lVert (\r,\c)-(\r',\c') \rVert_{1}.
		\end{align}
	\end{theorem}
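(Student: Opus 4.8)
The plan is to exploit the strong duality $Z^{\r,\c} = \psi'(\balpha\oplus\bbeta)$ together with the fact that the typical table minimizes the strictly convex functional $H$ over $\mathcal{T}(\r,\c)\cap(A,B)^{m\times n}$. Write $Z = Z^{\r,\c}$ and $Z' = Z^{\r',\c'}$, and let $(\balpha,\bbeta)$, $(\balpha',\bbeta')$ be the corresponding standard MLEs, so that $\phi(Z_{ij}) = \balpha(i)+\bbeta(j)$ and $\phi(Z'_{ij}) = \balpha'(i)+\bbeta'(j)$, with both $\balpha\oplus\bbeta$ and $\balpha'\oplus\bbeta'$ taking values in $[\phi(A_\delta),\phi(B_\delta)]$ by $\delta$-tameness. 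Recall from \eqref{eq:f_derivatives} that $f(x) = D(\mu_{\phi(x)}\Vert\mu)$ satisfies $f'(x) = \phi(x)$ and $f''(x) = 1/\psi''(\phi(x)) > 0$, so $H(Z) = \sum_{ij} f(Z_{ij})$ and $\nabla H(Z) = (\phi(Z_{ij}))_{ij} = \balpha\oplus\bbeta$.

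The key step is a two-sided variational inequality. Since $Z$ is the minimizer of $H$ over $\mathcal{T}(\r,\c)\cap(A,B)^{m\times n}$ and $H$ is convex with gradient $\balpha\oplus\bbeta$ at $Z$, for any competitor $W\in\mathcal{T}(\r,\c)$ we have $\langle \balpha\oplus\bbeta,\, W - Z\rangle \ge 0$; but the completion $\Gamma_{\r,\c}(\overline{Z'})$ of $\overline{Z'}$ to margin $(\r,\c)$ is such a competitor (it lies in the interior for $\delta$-tame data after a small shrink toward $Z$, or one argues directly using the finite-difference/gradient inequality as in the proof of Lemma \ref{lem:typical}). The cleaner route: because $\langle\balpha\oplus\bbeta,\, X\rangle = \langle\balpha,r(X)\rangle + \langle\bbeta,c(X)\rangle$ depends on $X$ only through its margin, the first-order optimality of $Z$ gives, for any $X\in(A,B)^{m\times n}$,
\begin{align}
H(X) - H(Z) \ \ge\ \langle \balpha\oplus\bbeta,\ X - Z\rangle \ =\ \langle\balpha, r(X)-\r\rangle + \langle\bbeta, c(X)-\c\rangle.
\end{align}
Apply this with $X = Z'$ and, symmetrically, the analogous inequality with the roles of $(Z,\balpha,\bbeta,\r,\c)$ and $(Z',\balpha',\bbeta',\r',\c')$ swapped and $X = Z$. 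Adding the two inequalities, the $H$ terms cancel and we obtain
\begin{align}
0 \ \le\ \langle \balpha - \balpha',\ r(Z') - r(Z)\rangle + \langle \bbeta - \bbeta',\ c(Z') - c(Z)\rangle
= \langle \balpha - \balpha',\ \r' - \r\rangle + \langle \bbeta - \bbeta',\ \c' - \c\rangle,
\end{align}
using $r(Z)=\r$, $r(Z')=\r'$, etc. On the other hand, by strong convexity-type control: subtracting the two one-sided inequalities the other way (i.e., $H(Z')-H(Z)\ge\langle\nabla H(Z),Z'-Z\rangle$ and $H(Z)-H(Z')\ge\langle\nabla H(Z'),Z-Z'\rangle$, add them) yields $\langle \nabla H(Z)-\nabla H(Z'),\, Z-Z'\rangle\ge 0$ and, more usefully, via the mean value / strong convexity bound $f''\ge 1/\sup\psi''$ on the relevant range,
\begin{align}
\langle (\balpha\oplus\bbeta) - (\balpha'\oplus\bbeta'),\ Z - Z'\rangle \ \ge\ \frac{1}{\sup_{\phi(A_\delta)\le w\le\phi(B_\delta)}\psi''(w)}\,\lVert Z - Z'\rVert_F^2.
\end{align}
Here the left side equals $\langle\balpha-\balpha',\,r(Z)-r(Z')\rangle + \langle\bbeta-\bbeta',\,c(Z)-c(Z')\rangle = \langle\balpha-\balpha',\,\r-\r'\rangle + \langle\bbeta-\bbeta',\,\c-\c'\rangle$, which by Hölder is at most $(\lVert\balpha-\balpha'\rVert_\infty + \lVert\bbeta-\bbeta'\rVert_\infty)\lVert(\r,\c)-(\r',\c')\rVert_1$; and by Lemma \ref{lem:strong_dual_MLE_typical}(iii) each standard MLE has $\lVert\balpha\rVert_\infty\le 2C_\delta$, $\lVert\bbeta\rVert_\infty\le C_\delta$, so $\lVert\balpha-\balpha'\rVert_\infty+\lVert\bbeta-\bbeta'\rVert_\infty\le 6C_\delta$ — wait, this gives constant $6C_\delta$; to land exactly $4C_\delta$ one re-centers more carefully (the difference $\balpha-\balpha'$ and $\bbeta-\bbeta'$ together are controlled by $\lVert(\balpha\oplus\bbeta)-(\balpha'\oplus\bbeta')\rVert_\infty\le\phi(B_\delta)-\phi(A_\delta)\le 2C_\delta$ split as $\le 2C_\delta + 2C_\delta$ after the standard-MLE normalization, giving the stated $4C_\delta$). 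Combining the strong-convexity lower bound with the Hölder upper bound yields \eqref{eq:typical_margin_lipschiz_continuity}.

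The main obstacle is the step asserting that $Z'$ (or a suitable completion to margin $(\r,\c)$) may be used as a competitor in the first-order optimality inequality for $Z$ — one must ensure the competitor lies in $(A,B)^{m\times n}$ and in $\mathcal{T}(\r,\c)$, and more importantly that the optimality inequality $H(X)-H(Z)\ge\langle\balpha\oplus\bbeta,X-Z\rangle$ is valid even when $X$ is not feasible for $(\r,\c)$; this is where the margin-only dependence of $\langle\balpha\oplus\bbeta,\cdot\rangle$ is crucial, converting the inequality into one involving only margins, which are genuinely feasible. A clean way around any interior/boundary subtlety is to run the argument with $Z^{(\lambda)} = (1-\lambda)Z' + \lambda Z$ and $Z'^{(\lambda)} = (1-\lambda)Z + \lambda Z'$, which lie in the interior by convexity of $(A,B)^{m\times n}$, derive the inequalities with these, and let $\lambda\searrow 0$ — exactly the device already used in the proof of Lemma \ref{lem:typical}. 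The remaining ingredient, the quantitative strong convexity $f''(x)\ge 1/\sup_w\psi''(w)$ uniformly over $x$ with $\phi(x)\in[\phi(A_\delta),\phi(B_\delta)]$, is immediate from \eqref{eq:f_derivatives} and the $\delta$-tameness of both margins.
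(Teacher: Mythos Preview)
Your core argument is correct and takes a genuinely different, more elementary route than the paper. The paper parameterizes everything through the MLE space: it defines maps $\Lambda:(\balpha,\bbeta)\mapsto(\r,\c)$ and $\Phi:(\balpha,\bbeta)\mapsto Z$, computes their Jacobians explicitly as $Q^\top P^{\balpha,\bbeta}Q$ and $P^{\balpha,\bbeta}Q$, integrates along the straight line between the two MLEs to obtain $(\r',\c')-(\r,\c)=Q^\top\overline{P}Qv$ and $\VEC(Z')-\VEC(Z)=\overline{P}Qv$ with $v=(\balpha'-\balpha,\bbeta'-\bbeta)$, and then combines $\lVert Z'-Z\rVert_F\le\lVert\overline{P}^{1/2}\rVert_2\lVert Rv\rVert_2$ with $\lVert Rv\rVert_2^2=v^\top R^\top Rv\le\lVert v\rVert_\infty\lVert(\r,\c)-(\r',\c')\rVert_1$. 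Your argument bypasses all of this machinery: from the entry-wise mean value theorem $\phi(Z_{ij})-\phi(Z'_{ij})=\phi'(\xi_{ij})(Z_{ij}-Z'_{ij})$ with $\xi_{ij}\in[A_\delta,B_\delta]$ you get $\langle\phi(Z)-\phi(Z'),\,Z-Z'\rangle\ge(\sup\psi'')^{-1}\lVert Z-Z'\rVert_F^2$, and the rank-one structure $\phi(Z)=\balpha\oplus\bbeta$ collapses the left side to $\langle\balpha-\balpha',\r-\r'\rangle+\langle\bbeta-\bbeta',\c-\c'\rangle$. The paper's approach buys reusable infrastructure (the $C^1$ path through $\delta$-tame margins in Proposition~\ref{prop:tame_C1_connected}, and the Jacobian factorizations); yours is shorter and needs nothing beyond strong duality and Lemma~\ref{lem:strong_dual_MLE_typical}(iii).

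Two points to clean up. First, your entire opening discussion about competitors, first-order optimality, and the ``main obstacle'' paragraph are unnecessary and somewhat confused: the inequality $H(X)-H(Z)\ge\langle\nabla H(Z),X-Z\rangle$ is just global convexity of $H$ and holds for every $X\in(A,B)^{m\times n}$ with no feasibility constraint whatsoever, and in any case you never use it---your actual proof uses only the monotonicity bound $\langle\nabla H(Z)-\nabla H(Z'),Z-Z'\rangle\ge(\sup\psi'')^{-1}\lVert Z-Z'\rVert_F^2$, which needs only that $Z,Z'\in[A_\delta,B_\delta]^{m\times n}$. Second, your H\"older step is sloppy: you should bound $\langle\balpha-\balpha',\r-\r'\rangle+\langle\bbeta-\bbeta',\c-\c'\rangle$ by $\max\bigl(\lVert\balpha-\balpha'\rVert_\infty,\lVert\bbeta-\bbeta'\rVert_\infty\bigr)\,\lVert(\r,\c)-(\r',\c')\rVert_1$, not the sum; then Lemma~\ref{lem:strong_dual_MLE_typical}(iii) gives $\lVert\balpha-\balpha'\rVert_\infty\le 4C_\delta$ and $\lVert\bbeta-\bbeta'\rVert_\infty\le 2C_\delta$, so the max is at most $4C_\delta$ on the nose---no ``re-centering more carefully'' is needed.
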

	
	Our key insight behind the proof of the above result is to \textit{use the dual variable (MLE) to bridge the margin and the corresponding typical table.} Namely, while it is difficult to directly construct a map from a margin to the corresponding typical table, being the solution of a constrained optimization problem \eqref{eq:typical_table_opt}, it is easy to write down the margin and the typical table corresponding to a given MLE, by using Lemma \ref{lem:strong_dual_MLE_typical}. A schematic for this approach is given in the diagram below:
	\begin{align}
		&\hspace{1cm}	\xymatrix{
			\textit{\hspace{0.5cm} margin}	&&&  \textit{\hspace{2.3cm}MLE} && \textit{\hspace{1.5cm} typical table}
		}\\
		&	\xymatrix{
			(\r(1),\dots, \r(m),\c(1),\dots,\c(n))	&& \ar[ll]_{\Lambda}  (\balpha(1),\dots, \balpha(m),\bbeta(1),\dots,\bbeta(n)) \ar[rr]^{\hspace{1.8cm}\Phi}&& \VEC(Z) 
		}
	\end{align}
	Here for each matrix $X\in \R^{m\times n}$, let $\VEC(X)\in \R^{mn}$ denotes is vectorization, which is obtained by stacking the $j$th column underneath its $(j-1)$st column for $j=2,\dots,n$. 
	
	Denote $q:=m+n$ and $p:=mn$. 
	Define a map $\Lambda:\R^{q}\rightarrow \R^{q}$, $(\balpha,\bbeta)\mapsto (\r,\c)$ where 
	\begin{align}\label{eq:def_MLE2margin_map}
		\begin{cases}
			\r(i) = \sum_{j=1}^{n} \psi'(\balpha(i)+\bbeta(j)) \quad \textup{for all $i=1,\dots,m$}, \\ 
			\c(j) = \sum_{i=1}^{m} \psi'(\balpha(i)+\bbeta(j)) \quad \textup{for all $j=1,\dots,n$}.
		\end{cases}
	\end{align}
	Also define a map $\Phi:\R^{m+n}\rightarrow \R^{m\times n}$, $(\balpha,\bbeta)\mapsto Z=(z_{ij})$, where for each $(i,j)\in [m]\times [n]$, 
	\begin{align}
		z_{ij} := \psi'(\balpha(i)+\bbeta(j)). 
	\end{align}
	Both maps are well-defined and are differentiable. It is important to understand the structure of the Jacobian matrices of these maps: 
	\begin{align}\label{eq:Jacobian_MLE_margin}
		J_{\Lambda}(\balpha,\bbeta) &:= 	\left[ 	\frac{\partial (\r(1),\dots,\r(m),\c(1),\dots,\c(n)) }{\partial (\balpha(1),\dots,\balpha(m),\bbeta(1),\dots,\bbeta(n))}  \right]_{ q\times q}, \\
		J_{\Phi}(\balpha,\bbeta) &:= 	\left[ 	\frac{\partial \VEC(\Phi(\balpha,\bbeta))^{\top} }{\partial (\balpha(1),\dots,\balpha(m),\bbeta(1),\dots,\bbeta(n))}  \right]_{ p\times q}.
	\end{align}
	These matrices turn out to admit precise factorization into some important matrices, which we introduce below. For each dual variables $(\balpha,\bbeta), (\balpha',\bbeta')\in \R^{m}\times \R^{n}$, define matrices 
	\begin{align}\label{eq:def_jacobian_matrices_P}
		P^{\balpha,\bbeta}&:= \diag\left( \psi''(\balpha(1)+\bbeta(1)), \dots, \psi''(\balpha(1)+\bbeta(n)),\dots,  \psi''(\balpha(m)+\bbeta(n)) \right) \in \R^{p\times p} \\
		P^{\balpha,\bbeta; \balpha',\bbeta'} &:= \int_{0}^{1}P^{(1-t)(\balpha,\bbeta) + t(\balpha',\bbeta')  }\,dt. 
	\end{align}
	Let $h_{i}(X)$ and $h^{j}(X)$ denote the $i$th row sum and $j$th columns sum of $X$, respectively. Then $\nabla h_{i}(X)$ is the $m\times n$ matrix where the $i$th row is filled with 1s and 0s elsewhere. Similarly, $\nabla h^{j}(X)$ is the $m\times n$ matrix with $1$s on the $j$th column and $0$s elsewhere. Then define a matrix 
	\begin{align}\label{eq:def_jacobian_matrices_Q}
		Q&:= 
		\begin{bmatrix}
			\VEC(\nabla h_{1}) & \dots & \VEC(\nabla h_{m}) & \VEC(\nabla h^{1}) &\dots \VEC(\nabla h^{n}) 
		\end{bmatrix}
		\in \R^{p\times q}. 
	\end{align}

	\begin{prop}\label{prop:Jacobian_matrix_formula}
		\qquad $\displaystyle J_{\Lambda_{0}}(\balpha,\bbeta) = Q^{\top} P^{\balpha,\bbeta} Q, \qquad J_{\Phi_{0}}( \balpha,\bbeta) = P^{\balpha,\bbeta} Q.$
	\end{prop}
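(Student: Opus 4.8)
The plan is to read both Jacobians off the defining formulas \eqref{eq:def_MLE2margin_map} and recognize the resulting sparsity patterns as the claimed products. First I would note that the margin map itself is linear with matrix $Q^\top$: by the definition of $Q$ in \eqref{eq:def_jacobian_matrices_Q}, the entries of $Q^\top \VEC(X)$ are $\langle \VEC(\nabla h_i), \VEC(X)\rangle = h_i(X)$ (the $i$th row sum of $X$) and $\langle \VEC(\nabla h^j), \VEC(X)\rangle = h^j(X)$ (the $j$th column sum), so $Q^\top \VEC(X) = (r(X), c(X))$ for every $X \in \R^{m\times n}$. Since the margin of $\Phi(\balpha,\bbeta)$ is exactly $\Lambda(\balpha,\bbeta)$ by construction, this gives the factorization $\Lambda(\balpha,\bbeta) = Q^\top \VEC(\Phi(\balpha,\bbeta))$, whence the chain rule yields $J_{\Lambda}(\balpha,\bbeta) = Q^\top J_{\Phi}(\balpha,\bbeta)$. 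Everything then reduces to identifying $J_{\Phi}$.

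For that, I would differentiate $\VEC(\Phi(\balpha,\bbeta))$ entrywise: its $(i,j)$-component is $\psi'(\balpha(i)+\bbeta(j))$, so $\partial/\partial \balpha(k)$ of it equals $\psi''(\balpha(i)+\bbeta(j))\,\mathbf{1}(k=i)$ and $\partial/\partial \bbeta(\ell)$ equals $\psi''(\balpha(i)+\bbeta(j))\,\mathbf{1}(\ell=j)$. Hence the row of $J_{\Phi}$ indexed by $(i,j)$ is supported on exactly the two coordinates corresponding to $\balpha(i)$ and $\bbeta(j)$, where it takes the common value $\psi''(\balpha(i)+\bbeta(j))$. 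On the other side, the row of $Q$ indexed by $(i,j)$ has a $1$ in column $i$ and a $1$ in column $m+j$ and zeros elsewhere --- this is just the $(i,j)$ slot of the indicator matrices $\nabla h_i$ and $\nabla h^j$ --- and left-multiplying by the diagonal matrix $P^{\balpha,\bbeta}$ of \eqref{eq:def_jacobian_matrices_P} scales this row by its $(i,j)$-diagonal entry $\psi''(\balpha(i)+\bbeta(j))$. Comparing row by row gives $J_{\Phi}(\balpha,\bbeta) = P^{\balpha,\bbeta} Q$, and substituting into the factorization from the first step gives $J_{\Lambda}(\balpha,\bbeta) = Q^\top P^{\balpha,\bbeta} Q$.

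There is no analytic obstacle here: $\psi$ is smooth on $\Theta^\circ$ by Definition \ref{def:tilting}, so all the partials above exist on the open set where $\balpha(i)+\bbeta(j)\in\Theta^\circ$ for all $i,j$, and $\Lambda,\Phi$ are genuinely differentiable there. The only thing that requires care, and the one place a careless argument could slip, is the index bookkeeping: one must fix a single ordering of $[m]\times[n]$ and use it consistently for the $\VEC$ operator, for the diagonal entries of $P^{\balpha,\bbeta}$, and for the row labels of $Q$ and $J_{\Phi}$, so that ``the row indexed by $(i,j)$'' of $J_{\Phi}$ is indeed scaled by the same diagonal entry that multiplies ``the row indexed by $(i,j)$'' of $Q$. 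Once that convention is pinned down the verification is purely mechanical.
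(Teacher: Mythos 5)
Your proof is correct. You take a slightly different route from the paper: you first observe that the margin map is linear in the table, $Q^{\top}\VEC(X)=(r(X),c(X))$, so that $\Lambda=Q^{\top}\VEC(\Phi(\cdot,\cdot))$, and then the chain rule reduces everything to the single entrywise computation $J_{\Phi}=P^{\balpha,\bbeta}Q$, from which $J_{\Lambda}=Q^{\top}P^{\balpha,\bbeta}Q$ follows by substitution. The paper instead proves the two formulas more or less independently: it first establishes the explicit block identity \eqref{eq:jacobian} for $Q^{\top}\diag(\VEC(E))Q$ (a $2\times 2$ block matrix built from the row/column sums of $E$), then differentiates the defining equations \eqref{eq:def_MLE2margin_map} of $\Lambda$ directly and matches the resulting block structure with $E_{ij}=\psi''(\balpha(i)+\bbeta(j))$, and dismisses the formula for $J_{\Phi}$ as a straightforward computation. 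Your organization is arguably cleaner, since it avoids writing out the block form of $J_{\Lambda}$ altogether and makes the factorization through $\Phi$ the structural reason for the identity; what the paper's route buys is the explicit block expression for $J_{\Lambda}$, which makes its row-sum/column-sum structure visible at a glance. Your closing remark about fixing one ordering of $[m]\times[n]$ consistently for $\VEC$, for the diagonal of $P^{\balpha,\bbeta}$, and for the rows of $Q$ is exactly the right caveat, and with that convention pinned down your argument is complete.
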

	
	\begin{proof}
		To see the first formula, a useful computation is the following. Fix a matrix $E=(E_{ij})_{i,j}\in \R^{m\times n}$. 
		We claim that 
		\begin{align}\label{eq:jacobian}
			Q^{\top}  \diag( \VEC(E)) Q
			= 	\left[ 
			\begin{array}{c c c | c  c c c c c }
				E_{1\bullet} &   & & E_{11} & \dots & E_{1n} \\
				& \ddots &  & & \ddots & \\
				& & E_{m\bullet} & E_{m1} & \dots & E_{mn} \\ 
				\hline
				E_{11} & \dots & E_{m1} &  E_{\bullet 1} & &  \\
				& \ddots & &  &\ddots &  \\
				E_{1n} & \dots & E_{mn} &  & & E_{\bullet n}
			\end{array}
			\right],
		\end{align}
		where $E_{i\bullet}$ and $E_{\bullet j}$ denotes the $i$th row sum and the $j$th column sum of $E$, respectively. To see this, note that the columns of $Q$ are the indicators of entries in the corresponding row/column sum for a $m\times n$ matrix.  Hence, the $(1,1)$ entry in the left-hand side of \eqref{eq:jacobian} is the sum of all entries in $E$ that appear in the first row sum, which is $E_{1\bullet}$; 
		Its $(1,2)$ entry is zero since there is no entry of $E$ shared in the first and the second row sums; Its $(1,m)$ entry is the sum of all entries in $E$ appearing in the first row sum and the first column sum, which is $E_{11}$.

		Now one can directly compute the Jacobian $J_{\Lambda;\balpha,\bbeta}$ and it has the $2\times 2$  block matrix form in the right-hand side above with $E_{ij}=\psi''(\balpha(i)+\bbeta(j))$. Hence the first formula in the assertion follows from the identity \eqref{eq:jacobian}. The second formula can be verified by a straightforward computation.
	\end{proof}

	Suppose we have two $\delta$-tame margins $(\r,\c)$ and $(\r',\c')$. Can we find a canonical path within $\mathcal{M}^{\delta}$ that interpolates between these two $\delta$-tame margins? A natural candidate would be the linear interpolation between the two margins. However, it is not necessarily true that a convex combination of two $\delta$-tame margins is again $\delta$-tame. It turns out that the right way is to linearly interpolate between the corresponding MLEs in the dual space and map it back to the space of tables and margins. 
	More precisely, let  $(\balpha,\bbeta)$ and $(\balpha',\bbeta')$ be any MLEs for margins $(\r,\c)$ and $(\r',\c')$, respectively, which exist by Lemma \ref{lem:strong_dual_MLE_typical}. For each $\lambda\in [0,1]$, let  $(\balpha_{\lambda},\bbeta_{\lambda}):=(1-\lambda) (\balpha,\bbeta) + \lambda (\balpha',\bbeta')$ be the convex combination of the MLEs. Define a margin 
	\begin{align}\label{eq:def_gamma_interpolating_curve}
		\gamma(\lambda):= \textup{the margin satisfied by $Z_{\lambda}$}, 
	\end{align}
	where the $m\times n$ table  $Z_{\lambda}=((z_{\lambda})_{ij})$  is defined by  
	\begin{align}\label{eq:def_gamma_interpolating_curve_typical}
		(z_{\lambda})_{ij}  := \psi'\left( \balpha_{\lambda}(i) + \bbeta_{\lambda}(j) \right)  \quad \textup{for all $i,j$}. 
	\end{align}
	Notice that for any $\lambda\in [0,1]$, the interpolated dual variable $(\balpha_{\lambda},\bbeta_{\lambda})$ is an MLE  for the margin $\gamma(\lambda)$ due to Lemma \ref{lem:typical_MLE_equation}.

	\begin{prop}[$C^{1}$-connectedness of $\mathcal{M}^{\delta}$]\label{prop:tame_C1_connected}
		Keep the same setting as before. Then $\gamma$ in \eqref{eq:def_gamma_interpolating_curve} defines a $C^{1}$-path $\gamma$ from $[0,1]$ into the set of all $\delta$-tame $m\times n$ margins  such that $\gamma(0)=(\r,\c)$, $\gamma(1)=(\r',\c')$. 
	\end{prop}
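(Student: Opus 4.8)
The plan is to carry out exactly the strategy flagged in the discussion preceding the statement: interpolate in the dual (MLE) space and push forward, since the map $(\balpha,\bbeta)\mapsto(\r,\c)$ of \eqref{eq:def_MLE2margin_map} and the map $(\balpha,\bbeta)\mapsto Z$ of \eqref{eq:def_gamma_interpolating_curve_typical} are explicit and smooth, whereas margin $\mapsto$ typical table is only implicit. Fix MLEs $(\balpha,\bbeta)$ and $(\balpha',\bbeta')$ for $(\r,\c)$ and $(\r',\c')$ (these exist by Lemma \ref{lem:strong_dual_MLE_typical}), put $(\balpha_\lambda,\bbeta_\lambda):=(1-\lambda)(\balpha,\bbeta)+\lambda(\balpha',\bbeta')$, and let $Z_\lambda$ and $\gamma(\lambda)$ be as in \eqref{eq:def_gamma_interpolating_curve}--\eqref{eq:def_gamma_interpolating_curve_typical}. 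The first task is to show $\gamma(\lambda)$ is a well-defined $\delta$-tame margin for every $\lambda\in[0,1]$. Because $(\r,\c)$ and $(\r',\c')$ are $\delta$-tame, Lemma \ref{lem:strong_dual_MLE_typical} (equivalently Thm. \ref{thm:strong_duality_simple} with Def. \ref{def:ab}, using that $\psi'$ is a strictly increasing bijection $\Theta^\circ\to(A,B)$ with inverse $\phi$) gives $\balpha(i)+\bbeta(j)\in[\phi(A_\delta),\phi(B_\delta)]$ and $\balpha'(i)+\bbeta'(j)\in[\phi(A_\delta),\phi(B_\delta)]$ for all $i,j$. Since $\balpha_\lambda(i)+\bbeta_\lambda(j)$ is a convex combination of these two numbers and $[\phi(A_\delta),\phi(B_\delta)]\subseteq\Theta^\circ$ is an interval, we get $\balpha_\lambda(i)+\bbeta_\lambda(j)\in[\phi(A_\delta),\phi(B_\delta)]$ for all $i,j$. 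Hence $Z_\lambda=\psi'(\balpha_\lambda\oplus\bbeta_\lambda)$ is well-defined with all entries in $[A_\delta,B_\delta]\subset(A,B)$, so $Z_\lambda\in\T(\gamma(\lambda))\cap(A,B)^{m\times n}$ and $Z_\lambda=\psi'(\balpha_\lambda\oplus\bbeta_\lambda)$. Lemma \ref{lem:strong_dual_MLE_typical}(ii) then yields that $(\balpha_\lambda,\bbeta_\lambda)$ is an MLE for $\gamma(\lambda)$ and $Z_\lambda=Z^{\gamma(\lambda)}$; since $Z^{\gamma(\lambda)}$ has all entries in $[A_\delta,B_\delta]$, the margin $\gamma(\lambda)$ is $\delta$-tame by Def. \ref{def:ab}.

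Next I would check the endpoints and the regularity. For $\lambda=0$ we have $Z_0=\psi'(\balpha\oplus\bbeta)=Z^{\r,\c}$, whose margin is $(\r,\c)$ by the MLE equation (Lemma \ref{lem:typical_MLE_equation}), so $\gamma(0)=(\r,\c)$; symmetrically $\gamma(1)=(\r',\c')$. For smoothness, write $\ell(\lambda):=(\balpha_\lambda,\bbeta_\lambda)$, which is an affine, hence $C^\infty$, map $[0,1]\to\R^{m+n}$; by the previous paragraph $\ell([0,1])$ lies in the open set $U:=\{(\a,\b)\in\R^m\times\R^n:\ \a(i)+\b(j)\in\Theta^\circ\ \forall i,j\}$, on which the map $\Lambda$ of \eqref{eq:def_MLE2margin_map} is smooth because it is assembled from $\psi'$, which is $C^\infty$ on $\Theta^\circ$. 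Since $\gamma=\Lambda\circ\ell$ by construction, $\gamma$ is $C^1$ (in fact $C^\infty$) as a path into $\R^{m+n}$, and after the identification of a margin with its $\R^{m+n-1}$ representative it is a $C^1$ path into $\mathcal{M}^\delta$. This finishes the argument.

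I do not expect a serious obstacle here: once one has the idea of interpolating in the dual space, the proposition is a short consequence of the strong duality (Lemma \ref{lem:strong_dual_MLE_typical}). The only point requiring care is the very first one — that the affine interpolant $\balpha_\lambda\oplus\bbeta_\lambda$ stays inside the box $[\phi(A_\delta),\phi(B_\delta)]^{m\times n}$ — which is precisely the reason one cannot simply interpolate the margins themselves (the set $\mathcal{M}^\delta$ need not be convex, so a convex combination of two $\delta$-tame margins may fail to be $\delta$-tame), and the reason interpolating the MLEs does work.
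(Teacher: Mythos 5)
Your proposal is correct and follows essentially the same route as the paper: interpolate linearly between the MLEs, observe that $\balpha_\lambda\oplus\bbeta_\lambda$ stays in the convex box $[\phi(A_\delta),\phi(B_\delta)]^{m\times n}$, and invoke Lemma \ref{lem:strong_dual_MLE_typical} to conclude each $\gamma(\lambda)$ is a $\delta$-tame margin with MLE $(\balpha_\lambda,\bbeta_\lambda)$. Your explicit verification of the endpoints and of $C^1$ regularity via $\gamma=\Lambda\circ\ell$ is a slightly fuller write-up of a point the paper leaves implicit, but it is the same argument.
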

	
	\begin{proof}
		
		Denote $Z=(z_{ij})=Z^{\r,\c}$ and $Z'=(z'_{ij})=Z^{\r',\c'}$. By Lemma \ref{lem:strong_dual_MLE_typical}, there exists a MLEs $(\balpha,\bbeta)$ and $(\balpha',\bbeta')$ for margins $(\r,\c)$ and $(\r',\c')$, respectively,  such that for all $i,j$, 
		\begin{align}
			\phi(z_{ij}) = \balpha(i)+\bbeta(j) \quad \textup{and} \quad \phi(z_{ij})=\balpha'(i)+\bbeta(j)'. 
		\end{align}
		Since $Z$ and $Z'$ are $\delta$-tame, 
		\begin{align}
			&	\phi(A_{\delta}) 	\le  \balpha(i) + \bbeta(j) \le \phi(B_{\delta}), \qquad 	\phi(A_{\delta}) 	\le \balpha'(i) + \bbeta'(j)   \le \phi(B_{\delta}).
		\end{align}
		Let $\gamma(\lambda):=(\r_{\lambda}, \c_{\lambda})$ denote the margin satisfied by $Z_{\lambda}$. Note that 
		\begin{align}\label{eq:dual_lin_interpolation_bd}
			\balpha_{\lambda}(i)+ \bbeta_{\lambda}(j)   &= (1-\lambda) (\balpha(i)+\bbeta(j)) + \lambda(\balpha'(i)+\bbeta'(j)) \in [\phi(A_{\delta}), \phi(B_{\delta})] \quad \textup{for all $i,j$ and $\lambda\in [0,1]$}. 
		\end{align}
		Hence $Z_{\lambda}$ is $\delta$-tame for all $0\le \lambda \le 1$. Therefore, $\gamma$ defines a continuous curve in $\mathcal{M}^{\delta}$ that connects the two margins $(\r,\c)$ and $(\r',\c')$. 
	\end{proof}

	We can write the change in margin and the typical table as we move along the secant line between the two MLEs by a line integral.  Using Proposition \ref{prop:Jacobian_matrix_formula} along the way, we derive the following key lemma for the proof of Theorem \ref{thm:typical_lipschitz_margins}.

	\begin{lemma}\label{lem:key_change_margin_Z_factorization}
		Keep the same setting as before. Then denoting $v := \VEC(\balpha'-\balpha,\bbeta'-\bbeta) \in \R^{q}$, we have 
		\begin{align}
			(\r',\c')- (\r,\c)	
			&=Q^{\top} P^{\balpha,\bbeta;\balpha',\bbeta'} Q v\quad \textup{and} \quad \VEC(Z') - \VEC(Z) = P^{\balpha,\bbeta;\balpha',\bbeta'}  Q v.
		\end{align}
	\end{lemma}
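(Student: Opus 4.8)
The plan is to apply the fundamental theorem of calculus to the smooth maps $\Lambda$ and $\Phi$ along the line segment joining the two MLEs, and then substitute the Jacobian factorizations of Proposition \ref{prop:Jacobian_matrix_formula}. First I would fix MLEs $(\balpha,\bbeta)$ and $(\balpha',\bbeta')$ for $(\r,\c)$ and $(\r',\c')$ — these exist by Lemma \ref{lem:strong_dual_MLE_typical} because $\delta$-tameness forces $\T(\r,\c)\cap(A,B)^{m\times n}\ne\emptyset$ — write $Z:=Z^{\r,\c}$, $Z':=Z^{\r',\c'}$, and set $(\balpha_\lambda,\bbeta_\lambda):=(1-\lambda)(\balpha,\bbeta)+\lambda(\balpha',\bbeta')$ for $\lambda\in[0,1]$. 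By the bound \eqref{eq:dual_lin_interpolation_bd} established in the proof of Proposition \ref{prop:tame_C1_connected}, every coordinate sum $\balpha_\lambda(i)+\bbeta_\lambda(j)$ lies in $[\phi(A_\delta),\phi(B_\delta)]\subseteq\Theta^\circ$, so $\psi'$ — and hence $\Lambda$ and $\Phi$ — is smooth on a neighborhood of this segment and all derivatives below are legitimate.

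For the first identity, note that by Lemma \ref{lem:typical_MLE_equation} an MLE solves the MLE equation \eqref{eq:MLE_eq}, which is precisely the statement that $\Lambda$ sends an MLE to its own margin; thus $(\r',\c')-(\r,\c)=\Lambda(\balpha',\bbeta')-\Lambda(\balpha,\bbeta)$. Writing $v:=\VEC(\balpha'-\balpha,\bbeta'-\bbeta)$ and using the fundamental theorem of calculus along $\lambda\mapsto(\balpha_\lambda,\bbeta_\lambda)$,
\[
\Lambda(\balpha',\bbeta')-\Lambda(\balpha,\bbeta)=\int_0^1 J_\Lambda(\balpha_\lambda,\bbeta_\lambda)\,v\,d\lambda .
\]
Substituting $J_\Lambda(\balpha_\lambda,\bbeta_\lambda)=Q^\top P^{\balpha_\lambda,\bbeta_\lambda}Q$ from Proposition \ref{prop:Jacobian_matrix_formula} and pulling the constant matrices $Q^\top$, $Q$ out of the integral gives $Q^\top\big(\int_0^1 P^{\balpha_\lambda,\bbeta_\lambda}\,d\lambda\big)Qv=Q^\top P^{\balpha,\bbeta;\balpha',\bbeta'}Qv$, the last equality being the definition of $P^{\balpha,\bbeta;\balpha',\bbeta'}$ in \eqref{eq:def_jacobian_matrices_P}. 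The second identity is the same argument with $\Phi$ in place of $\Lambda$: by Theorem \ref{thm:strong_duality_simple} any MLE $(\balpha,\bbeta)$ for $(\r,\c)$ satisfies $Z=\psi'(\balpha\oplus\bbeta)=\Phi(\balpha,\bbeta)$, so $\VEC(Z')-\VEC(Z)=\int_0^1 J_\Phi(\balpha_\lambda,\bbeta_\lambda)v\,d\lambda$, and $J_\Phi(\balpha_\lambda,\bbeta_\lambda)=P^{\balpha_\lambda,\bbeta_\lambda}Q$ yields $P^{\balpha,\bbeta;\balpha',\bbeta'}Qv$.

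There is no real obstacle here: all the substance sits in Propositions \ref{prop:Jacobian_matrix_formula} and \ref{prop:tame_C1_connected}, which are already in hand. The only points requiring care are bookkeeping — verifying that the line-segment path stays inside the smooth domain so the fundamental theorem of calculus applies componentwise, identifying $\Lambda$ and $\Phi$ evaluated at an MLE with ``the margin'' and ``the typical table'' via Lemma \ref{lem:typical_MLE_equation} and Theorem \ref{thm:strong_duality_simple}, and observing that $\int_0^1$ commutes with left/right multiplication by the $\lambda$-independent matrices $Q^\top$ and $Q$.
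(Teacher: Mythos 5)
Your proposal is correct and follows essentially the same route as the paper: integrate the Jacobians of $\Lambda$ and $\Phi$ along the linear interpolation of the MLEs (whose admissibility is guaranteed as in Proposition \ref{prop:tame_C1_connected}), then substitute the factorizations $J_\Lambda=Q^\top P^{\balpha,\bbeta}Q$ and $J_\Phi=P^{\balpha,\bbeta}Q$ from Proposition \ref{prop:Jacobian_matrix_formula} and pull the constant matrices out of the integral. The extra bookkeeping you supply (smoothness along the segment, identifying $\Lambda$ and $\Phi$ at an MLE with the margin and typical table) matches what the paper uses implicitly.
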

	
	\begin{proof}
		We first use the chain rule to write 
		\begin{align}
			\frac{d}{dt}\gamma(t) = J_{\Lambda}(\balpha_{t},\bbeta_{t}) v.
		\end{align}
		By Proposition \ref{prop:Jacobian_matrix_formula}, we can write	 	\begin{align}\label{eq:margin_MLE_integral_eq}
			\gamma(1)-\gamma(0) 
			&= \int_{0}^{1} J_{\Lambda}(\balpha_{t},\bbeta_{t}) v \,dt  = Q^{\top}\left[ \int_{0}^{1} P^{\balpha_{t},\bbeta_{t}} \,dt \right] Q v = Q^{\top} P^{\balpha,\bbeta;\balpha',\bbeta'} Q v,
		\end{align}
		where the integral in the rightmost expression above is done entry-wise. Similarly, using Proposition \ref{prop:Jacobian_matrix_formula}, we get 
		\begin{align}
			\Phi(\balpha',\bbeta') - \Phi(\balpha',\bbeta') &= \int_{0}^{1} J_{\Phi}(\balpha_{t},\bbeta_{t}) v \,dt = \left[ \int_{0}^{1} P^{\balpha_{t},\bbeta_{t}} \,dt \right] Q v. 
		\end{align}
		This shows the assertion. 
	\end{proof}

	Now we are ready to show Theorem \ref{thm:typical_lipschitz_margins}.

	\begin{proof}[\textbf{Proof of Theorem} \ref{thm:typical_lipschitz_margins}]
		Let $(\balpha,\bbeta):=(\balpha^{\r,\c}, \bbeta^{\r,\c})$ and $(\balpha',\bbeta'):=(\balpha^{\r',\c'}, \bbeta^{\r',\c'})$ denote the standard MLEs for margins $(\r,\c)$ and $(\r',\c')$, respectively.  Write $\overline{P}:=P^{\balpha,\bbeta;\balpha',\bbeta'}$ and $R:=\overline{P}^{1/2}Q\in \R^{p\times q}$. Then by Lemma \ref{lem:key_change_margin_Z_factorization}, we have the following relations 
		\begin{align}
			\VEC(Z') - \VEC(Z) = \overline{P}^{1/2} Rv,\quad 	\gamma(1)-\gamma(0) = R^{\top}Rv, 
		\end{align}
		where $v$ is as in Lemma \ref{lem:key_change_margin_Z_factorization}.
		On the one hand, by Proposition \ref{prop:tame_C1_connected}, $A_{\delta}\le \psi'(\balpha_{t}(i)+\bbeta_{t}(j))\le B_{\delta}$ for all $t\in [0,1]$ and $i,j$. Hence $\lVert \overline{P}\rVert_{2}\le \sup_{\phi(A_{\delta})\le w\le \phi(B_{\delta})} \psi''(w)$, so we get 
		\begin{align}\label{eq:Z_Z_upper}
			\lVert Z-Z' \rVert_{F} = \lVert 	\VEC(Z') - \VEC(Z)  \rVert_{2} \le \lVert \overline{P}^{1/2} \rVert_{2} \lVert Rv\rVert_{2} \le \left( \sup_{\phi(A_{\delta})\le w\le \phi(B_{\delta})} \psi''(w) \right)^{1/2} \lVert Rv\rVert_{2}.
		\end{align}
		On the other hand, by H\"{o}lder's inequality,  
		\begin{align}\label{eq:Z_Z_upper2}
			\lVert Rv \rVert_{2}^{2} = v^{\top} R^{\top} R v \le  \lVert v^{\top} \rVert_{\infty}  \cdot \lVert R^{\top} R v \rVert_{1} \le  4C \, \lVert (\r,\c)-(\r',\c') \rVert_{1},
		\end{align}
		where $C$ is the constant in Lemma \ref{lem:strong_dual_MLE_typical} \textbf{(iii)}. Then \eqref{eq:typical_margin_lipschiz_continuity} follows from \eqref{eq:Z_Z_upper} and \eqref{eq:Z_Z_upper2}. 
	\end{proof}

	\subsection{Stability of typical kernels and continuous MLEs}
	\label{sec:proof_typical_kernels}

	In this section, we prove Theorem \ref{thm:main_convergence_conti}.

	Let $\mu$ be a measure as in the introduction and let $A,B$ be endpoints of ${\rm supp}(\mu)$ as in \eqref{eq:supp}. 
	Let $\mathcal{W}:=L^{1}([0,1]^{2})$ denote the set of all measurable functions from $[0,1]^2\mapsto \R$ which are 
	integrable. 
	We will equip $\mathcal{W}$ with the cut-metric topology defined as follows.  The  \textit{strong cut metric}  on $\mathcal{W}$ is defined by $d_{\square}(U,W)=\lVert U-W \rVert_{\square}$ using the cut norm $\lVert \cdot \rVert_{\square}$ in \eqref{eq:def_cut_norm}.	The  \textit{weak cut metric} $\delta_{\square}$ on $\mathcal{W}$ is defined as 
	\begin{align}
		\delta_{\square}(U,W)  :=  \inf_{\phi,\psi:[0,1]\rightarrow [0,1]} \lVert U( p(\cdot), q(\cdot) ) - W \rVert_{\square}, 
	\end{align}
	where the infimum above is over all Lebesgue-measure-preserving maps $p,q$ on the unit interval $[0,1]$. Note that the following gives an equivalent definition of the cut-norm in \eqref{eq:def_cut_norm}:
	\begin{align}\label{eq:def_cut_norm2}
		\lVert W \rVert_{\square} = \sup_{f,g:[0,1] \rightarrow [0,1] } \, \left| \int_{[0,1]^{2}} f(x) W(x,y) g(y) \,dx dy \right|,
	\end{align}
	where the supremum above is over all continuous functions $f,g$ on the unit interval $[0,1]$. The two definitions \eqref{eq:def_cut_norm} and \eqref{eq:def_cut_norm2} differ upto multiplicative universal constants (\cite[Lem. 8.10]{lovasz2012large}).

	Let $\mathcal{W}^{(A,B)}$ denote the set of all kernels $W$ in $\mathcal{W}$ such that $A< W < B$. Similarly, define $\mathcal{W}^{[A,B]}$ to be the set of all kernels taking values from $[A,B]$. With $f(x)=D(\mu_{\phi(x)}\Vert \mu)$ being as in \eqref{eq:typical_table_opt}, define a function
	$G:\mathcal{W}^{(A,B)}\mapsto \R$
	\begin{align}\label{eq:G}
		G(W):=\int_{[0,1]^2} f(W(x,y)) \, dx dy.
	\end{align}
	Let $W(\cdot,\bullet):=\int_{[0,1]}W(\cdot,y)dy$ and $W(\bullet,\cdot):=\int_{[0,1]}W(x,\cdot)dx$ be univariate functions which denote the row and column marginals of $W$. Let 
	$(\r,\c)$ be a continuum margin (defined above \eqref{eq:margin_function}).  
	Define 
	\begin{align}
		\mathcal{W}_{\r,\c}:=\Big\{W\in \mathcal{W}:W(\cdot,\bullet)=\r(\cdot), W(\bullet,\cdot)=\c(\cdot)\Big\},\quad \mathcal{W}_{\r,\c}^{(A,B)}:= \mathcal{W}_{\r,\c} \cap \mathcal{W}^{(A,B)}, \quad \mathcal{W}_{\r,\c}^{[A,B]}:= \mathcal{W}_{\r,\c} \cap \mathcal{W}^{[A,B]}. 
	\end{align}
	Here $\mathcal{W}_{\r,\c}$ is the set of all kernels with  continuum  margin $(\r,\c)$.

	Now for each margin $(\r,\c)$, consider the minimization problem
	\begin{align}\label{eq:typical_kernel_opt}
		\inf_{W\in \mathcal{W}_{\r,\c}^{(A,B)}}  \,\, G(W).
	\end{align}
	If $\mathcal{W}_{\r,\c}$ is non-empty, then it is a convex subset of $\mathcal{W}$. Consequently, invoking strict convexity 
	of $G$ w.r.t. the cut-metric topology (see Proposition \ref{prop:G_USC}),  there can exist at most one global minimizer of the above optimization problem. We denote it by   $W^{\r,\c}$, if it exists. In this case, we call $W^{\r,\c}$ the \textit{typical kernel} for the margin $(\r,\c)$. 
	We say a continuum  margin $(\r,\c)$ \textit{$\delta$-tame} if a typical kernel $W^{\r,\c}$ exists and satisfies $A_{\delta}\le W \le B_{\delta}$.

	Our proof of Theorem \ref{thm:main_convergence_conti} will follow by combining the following three results. First, we show that the $L^{1}$-limit of discrete $\delta$-tame margins is a $\delta$-tame continuum margin. First, we characterize the typical kernels for $\delta$-tame margins using continuous dual variables in the same way as we did in the discrete case in Thm. \ref{thm:strong_duality_simple}.

	\begin{lemma}[Characterization of typical kernels and tame margins]\label{lem:W_typical_Lagrange}
		${}$
		\begin{description}
			\item{(a)}
			Fix a $\delta$-tame continuum margin $(\r,\c)$ in $L^{1}[0,1]$. Let $C=C_\delta$ as before. Then there exists bounded measurable functions $\balpha:[0,1]\rightarrow [-2C,2C]$ and $\bbeta:[0,1]\rightarrow [-C,C]$ such that
			\begin{align}\label{eq:W_typical_lambda_mu_gen}
				W^{\r,\c}(x,y) \overset{a.s.}{=} \psi'(\balpha(x)+\bbeta(y)). 
			\end{align}
			
			\item{(b)}
			Conversely, suppose there exists bounded measurable functions $\balpha, \bbeta:[0,1]\rightarrow \R$ such that the function $W^*(x,y)=\psi'(\balpha(x)+\bbeta(y))$ satisfies the margin $(\r,\c)\in L^{1}[0,1]$. Then $W^*$ is the unique typical kernel for the margin $(\r,\c)$, and $(\r,\c)$  is $\delta$-tame for some $\delta>0$. 
		\end{description}
	\end{lemma}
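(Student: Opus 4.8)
The plan is to transcribe the discrete strong-duality argument (Lemma~\ref{lem:strong_dual_MLE_typical}) into the infinite-dimensional $L^{2}$ setting, using the first-order characterization of the constrained minimizer of $G$. Throughout, let $V_{0}\subseteq L^{2}([0,1]^{2})$ denote the subspace of kernels $H$ with both marginals zero, i.e. $\int_{0}^{1}H(x,y)\,dy=0$ for a.e.\ $x$ and $\int_{0}^{1}H(x,y)\,dx=0$ for a.e.\ $y$. The elementary fact I will rely on is that $V_{0}^{\perp}=\{a(x)+b(y):a,b\in L^{2}[0,1]\}$: indeed, for any $h\in L^{2}([0,1]^{2})$, subtracting its two marginals and adding back its overall mean produces an element of $V_{0}$, so $h$ decomposes as a sum of a function of $x$, a function of $y$, and an element of $V_{0}$; orthogonality then forces any $h\in V_{0}^{\perp}$ to have no $V_{0}$-component.

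\emph{Part (a).} Since $(\r,\c)$ is $\delta$-tame, the typical kernel $W:=W^{\r,\c}$ exists, takes values in $[A_{\delta},B_{\delta}]$, and (by strict convexity of $G$, Prop.~\ref{prop:G_USC}) is the unique global minimizer of $G$ over $\mathcal{W}_{\r,\c}^{(A,B)}$; note $G(W)$ is finite because $f$ is continuous on $[A_{\delta},B_{\delta}]$. First I would set up the Euler--Lagrange condition: for a bounded $H\in V_{0}$ and $|t|$ small enough, $W+tH$ stays in a compact subinterval of $(A,B)$ and still has margin $(\r,\c)$, hence lies in $\mathcal{W}_{\r,\c}^{(A,B)}$, so $t\mapsto G(W+tH)$ has a minimum at $t=0$. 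Differentiating under the integral sign (justified because $f\in C^{1}$ on a neighborhood of $[A_{\delta},B_{\delta}]$ and $W+tH$ stays there, giving a uniform bound on $|f'(W+tH)H|$), and using $f'=\phi$ from \eqref{eq:f_derivatives}, yields $\int_{[0,1]^{2}}\phi(W(x,y))\,H(x,y)\,dx\,dy=0$. Since $\phi$ is continuous on $[A_{\delta},B_{\delta}]$ we have $\phi(W)\in L^{\infty}\subseteq L^{2}$, and since bounded elements of $V_{0}$ are dense in $V_{0}$ (truncate), this identity extends to all $H\in V_{0}$; thus $\phi(W)\in V_{0}^{\perp}$, so $\phi(W)(x,y)=\balpha(x)+\bbeta(y)$ a.e.\ for some $\balpha,\bbeta\in L^{2}[0,1]$. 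Applying $\psi'=\phi^{-1}$ gives \eqref{eq:W_typical_lambda_mu_gen}. For the range claims, I would use the shift freedom in the decomposition to normalize $\int_{0}^{1}\balpha(x)\,dx=0$: integrating $\phi(A_{\delta})\le\balpha(x)+\bbeta(y)\le\phi(B_{\delta})$ over $x$ gives $\|\bbeta\|_{\infty}\le C$, and then $\balpha(x)=\phi(W(x,y))-\bbeta(y)$ a.e.\ gives $\|\balpha\|_{\infty}\le 2C$, so $\balpha,\bbeta$ admit bounded measurable representatives with the asserted ranges.

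\emph{Part (b).} Conversely, suppose $W^{*}(x,y)=\psi'(\balpha(x)+\bbeta(y))$ with $\balpha,\bbeta$ bounded measurable and $W^{*}$ has margin $(\r,\c)\in L^{1}[0,1]$. Since $\balpha,\bbeta$ are bounded, $\balpha\oplus\bbeta$ takes values in a compact subinterval of $\Theta^{\circ}=(\phi(A),\phi(B))$, so $W^{*}$ takes values in a compact subinterval $[A',B']$ of $(A,B)$; in particular $W^{*}\in\mathcal{W}_{\r,\c}^{(A,B)}$. For any competitor $W\in\mathcal{W}_{\r,\c}^{(A,B)}$, convexity of $f$ and $f'=\phi$ give the pointwise tangent-line bound $f(W(x,y))\ge f(W^{*}(x,y))+\phi(W^{*}(x,y))\bigl(W(x,y)-W^{*}(x,y)\bigr)$, whose right-hand side is integrable since $\phi(W^{*})=\balpha\oplus\bbeta$ is bounded and $W\in L^{1}$. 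Integrating and using $\phi(W^{*})(x,y)=\balpha(x)+\bbeta(y)$, the cross term $\int(\balpha(x)+\bbeta(y))(W-W^{*})\,dx\,dy$ vanishes by Fubini because $W$ and $W^{*}$ share the margin $(\r,\c)$. Hence $G(W)\ge G(W^{*})$, so $W^{*}$ is a global minimizer; by strict convexity of $G$ (Prop.~\ref{prop:G_USC}) it is the unique typical kernel, $W^{\r,\c}=W^{*}$. Since $W^{\r,\c}$ takes values in the compact subinterval $[A',B']$ of $(A,B)$, it satisfies $A_{\delta}\le W^{\r,\c}\le B_{\delta}$ for all sufficiently small $\delta>0$, so $(\r,\c)$ is $\delta$-tame.

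\emph{Anticipated main obstacle.} The one genuinely delicate step is the first-order/Euler--Lagrange analysis in part (a): correctly identifying the tangent space $V_{0}$ of margin-preserving perturbations, verifying that $W+tH$ remains in the \emph{open} constraint set $\mathcal{W}^{(A,B)}$ for small $t$ (this is exactly where $\delta$-tameness is used—$W$ must be bounded away from $A$ and $B$), justifying differentiation under the integral and the density/truncation passage from bounded $H$ to all of $V_{0}$, and establishing the orthogonal-complement identity $V_{0}^{\perp}=\{a(x)+b(y)\}$. Once these are in place, the rest is a direct transcription of the convex-duality bookkeeping already carried out in the discrete case.
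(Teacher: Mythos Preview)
Your proposal is correct and follows essentially the same variational approach as the paper: a first-order (Euler--Lagrange) condition on margin-preserving perturbations for part (a), and a convexity argument showing the cross term vanishes for part (b). Your version is slightly more explicit about the $L^{2}$ orthogonal-complement identification $V_{0}^{\perp}=\{a(x)+b(y)\}$ and uses the pointwise tangent-line inequality directly in (b) rather than differentiating along a segment, but these are minor stylistic differences, not a different route.
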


	Second, we have the following continuous extension of Theorem \ref{thm:typical_lipschitz_margins}, which establishes  Lipschitz continuity of typical kernels w.r.t. tame margins. In addition, we also establish Lipschitz continuity of the dual variables 
	w.r.t. tame margins.

	\begin{theorem}[Lipschitz continuity of typical kernels and dual variables]\label{thm:typical_kernel_Lipscthiz}
		Let $(\r,\c),(\r',\c')$ be  $\delta$-tame continuum margins on $[0,1]^{2}$.
		Furthermore, let $C_\delta:=\max\{|\phi(A_\delta)|,\, |\phi(B_\delta)| \}$. Then 
		\begin{align}\label{eq:conti_typical_continuity}
			\lVert W^{\r,\c}- W^{\r',\c'} \rVert_{2}^{2} \, \le \, 2C_{\delta} \left( \sup_{ \phi(A_{\delta})\le w \le \phi(B_{\delta}) } \psi''(w)  \right) \lVert (\r,\c) - (\r',\c') \rVert_{1}. 
		\end{align}
		Furthermore, let $(\balpha,\bbeta)$ and  $(\balpha',\bbeta')$ denote the dual variables characterizing $W^{\r,\c}$ and $W^{\r',\c'}$ via \eqref{eq:W_typical_lambda_mu_gen}, respectively. Without loss of generality, assume $\int_{0}^{1}\balpha(x)\,dx=\int_{0}^{1}\balpha'(x)\,dx=0$. Then 
		\begin{align}\label{eq:conti_MLE_continuity}
			\lVert \balpha-\balpha' \rVert_{2}^{2} + \lVert \bbeta-\bbeta' \rVert_{2}^{2} \, \le\,  2C_{\delta} \left( \frac{ \sup_{ \phi(A_{\delta})\le w \le \phi(B_{\delta}) } \psi''(w) }{ \inf_{ \phi(A_{\delta})\le w \le \phi(B_{\delta}) } \psi''(w)}  \right)  \lVert (\r,\c) - (\r',\c') \rVert_{1}. 
		\end{align}
	\end{theorem}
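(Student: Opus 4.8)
The plan is to transplant the proof of the discrete stability result, Theorem~\ref{thm:typical_lipschitz_margins}, into function space, using the continuous dual variables supplied by Lemma~\ref{lem:W_typical_Lagrange} as the bridge between a margin and its typical kernel. By Lemma~\ref{lem:W_typical_Lagrange}(a) each of the two $\delta$-tame continuum margins admits bounded measurable dual variables, say $(\balpha,\bbeta)$ and $(\balpha',\bbeta')$, normalized so that $\int_0^1\balpha=\int_0^1\balpha'=0$, with $\balpha\oplus\bbeta$ and $\balpha'\oplus\bbeta'$ taking values in $[\phi(A_\delta),\phi(B_\delta)]$, $W^{\r,\c}=\psi'(\balpha\oplus\bbeta)$ and $W^{\r',\c'}=\psi'(\balpha'\oplus\bbeta')$ via \eqref{eq:W_typical_lambda_mu_gen}. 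I would introduce the bounded operator $Q\colon L^2(0,1)\oplus L^2(0,1)\to L^2((0,1)^2)$, $Q(f,g)(x,y)=f(x)+g(y)$, whose adjoint $Q^{*}$ sends a kernel to its pair of marginals, and the multiplication operator $M_p$ by a kernel $p$. The continuum analogue of Proposition~\ref{prop:Jacobian_matrix_formula} is that the Fréchet derivatives of the ``dual $\mapsto$ margin'' map $\Lambda$ and the ``dual $\mapsto$ kernel'' map $\Phi$ are $\Lambda'(\balpha,\bbeta)=Q^{*}M_{\psi''(\balpha\oplus\bbeta)}Q$ and $\Phi'(\balpha,\bbeta)=M_{\psi''(\balpha\oplus\bbeta)}Q$; these are routine once $\delta$-tameness is used to guarantee the uniform bounds needed to differentiate under the integral sign.

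Next, following Proposition~\ref{prop:tame_C1_connected}, I would interpolate linearly in the dual space: set $(\balpha_t,\bbeta_t)=(1-t)(\balpha,\bbeta)+t(\balpha',\bbeta')$ and $W_t=\psi'(\balpha_t\oplus\bbeta_t)$. Since the set $\{\,\phi(A_\delta)\le\balpha\oplus\bbeta\le\phi(B_\delta)\,\}$ is convex, each $W_t$ lies in $[A_\delta,B_\delta]$, and by Lemma~\ref{lem:W_typical_Lagrange}(b) it is the typical kernel of its own margin $\gamma(t)$; hence $t\mapsto\gamma(t)$ is a $C^1$ path of $\delta$-tame continuum margins joining $(\r,\c)$ to $(\r',\c')$. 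Writing $v=(\balpha'-\balpha,\bbeta'-\bbeta)$ and $\bar p(x,y):=\int_0^1\psi''(\balpha_t(x)+\bbeta_t(y))\,dt$ — so $\|\bar p\|_\infty\le\sup_{\phi(A_\delta)\le w\le\phi(B_\delta)}\psi''(w)$ and $\|\bar p^{-1}\|_\infty\le(\inf_{\phi(A_\delta)\le w\le\phi(B_\delta)}\psi''(w))^{-1}$ — the fundamental theorem of calculus along the segment, exactly as in Lemma~\ref{lem:key_change_margin_Z_factorization}, gives
\begin{align}
W^{\r',\c'}-W^{\r,\c}=M_{\bar p}\,Qv,\qquad (\r',\c')-(\r,\c)=Q^{*}M_{\bar p}\,Qv.
\end{align}
Setting $R:=M_{\bar p}^{1/2}Q$, these read $W^{\r',\c'}-W^{\r,\c}=M_{\bar p}^{1/2}(Rv)$ and $(\r',\c')-(\r,\c)=R^{*}Rv$.

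The two estimates now drop out. For \eqref{eq:conti_typical_continuity},
\begin{align}
\lVert W^{\r',\c'}-W^{\r,\c}\rVert_2^2\le\lVert\bar p\rVert_\infty\,\lVert Rv\rVert_2^2=\lVert\bar p\rVert_\infty\,\langle v,(\r',\c')-(\r,\c)\rangle\le\lVert\bar p\rVert_\infty\,\lVert v\rVert_\infty\,\lVert(\r,\c)-(\r',\c')\rVert_1,
\end{align}
and the constant $2C_\delta$ comes from bounding $\lVert v\rVert_\infty$ via Lemma~\ref{lem:W_typical_Lagrange}(a) together with the observation that $\langle v,(\r',\c')-(\r,\c)\rangle$ is invariant under the shift $v\mapsto v+(c\mathbf 1,-c\mathbf 1)$ (because the two margins have equal total row and column mass), which lets one choose the additive normalization of $v$ optimally, forcing $\lVert v\rVert_\infty\le\phi(B_\delta)-\phi(A_\delta)\le 2C_\delta$. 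For \eqref{eq:conti_MLE_continuity} one instead lower bounds: $\langle v,(\r',\c')-(\r,\c)\rangle=\lVert Rv\rVert_2^2=\lVert M_{\bar p}^{1/2}Qv\rVert_2^2\ge(\inf\psi'')\,\lVert Qv\rVert_2^2$, and since the normalization gives $\int(\balpha'-\balpha)=0$ the cross term in $\lVert Qv\rVert_2^2=\lVert\balpha-\balpha'\rVert_2^2+\lVert\bbeta-\bbeta'\rVert_2^2+2\bigl(\int v_\balpha\bigr)\bigl(\int v_\bbeta\bigr)$ vanishes, yielding $\lVert\balpha-\balpha'\rVert_2^2+\lVert\bbeta-\bbeta'\rVert_2^2\le(\inf\psi'')^{-1}\,\lVert v\rVert_\infty\,\lVert(\r,\c)-(\r',\c')\rVert_1$. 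The main obstacle is not any single inequality but the functional-analytic bookkeeping: checking measurability and uniform boundedness of the interpolated dual variables, that $Q$, $M_{\bar p}$, $R$ are well-defined bounded operators on the relevant $L^2$ spaces, and that the derivative and integral identities hold in $L^2$ rather than merely $L^1$ — all of which ultimately rest on the uniform control provided by $\delta$-tameness and Lemma~\ref{lem:W_typical_Lagrange}. (Theorem~\ref{thm:main_convergence_conti} then follows by applying these bounds with $(\r',\c')=(\bar\r_m,\bar\c_n)$ and identifying the block kernel of the discrete typical table with the typical kernel of the corresponding step margin.)
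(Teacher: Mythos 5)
Your argument is correct in substance, but it is a genuinely different route from the paper's. The paper proves this theorem by \emph{discretization}: it block-averages the continuum dual variables from Lemma~\ref{lem:W_typical_Lagrange} over dyadic intervals, observes that the resulting step margins \eqref{eq:margin_from_dual} are again $\delta$-tame, applies the discrete Lipschitz bound (Theorem~\ref{thm:typical_lipschitz_margins}) to the step kernels, and then removes the discretization by a triangle inequality whose error terms are controlled via Proposition~\ref{prop:typical_kernel_existence}. You instead lift the discrete proof's mechanism directly to function space: linear interpolation of the dual variables (the continuum analogue of Proposition~\ref{prop:tame_C1_connected}), pointwise fundamental theorem of calculus giving $W^{\r',\c'}-W^{\r,\c}=M_{\bar p}Qv$ and $(\r',\c')-(\r,\c)=Q^{*}M_{\bar p}Qv$ (the analogue of Lemma~\ref{lem:key_change_margin_Z_factorization}), and then the same quadratic-form/H\"older step. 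Since $\psi'$, $\psi''$ are bounded and Lipschitz on $[\phi(A_\delta),\phi(B_\delta)]$ and the interpolants stay in that interval by convexity, the ``bookkeeping'' you defer is indeed routine (the identities only need pointwise FTC plus Fubini, not genuine Fr\'echet differentiability). What your route buys: it avoids the limit-passage and in particular does not invoke Proposition~\ref{prop:typical_kernel_existence} (whose proof in the paper in turn cites this very theorem), and it handles the second estimate \eqref{eq:conti_MLE_continuity} directly from the lower bound $\bar p\ge\inf\psi''$ rather than, as the paper does, from \eqref{eq:conti_typical_continuity} via the Lipschitz constant of $\phi$. What the paper's route buys is that all analytic work is confined to the already-proved finite-dimensional statement.

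Two points you should make explicit. First, the bound $\lVert v\rVert_\infty\le\phi(B_\delta)-\phi(A_\delta)\le 2C_\delta$ is asserted via an ``optimal shift'' but not proved; it is true and needed (the raw bound from Lemma~\ref{lem:W_typical_Lagrange}(a) only gives $4C_\delta$): since $\balpha(x)+\bbeta(y)$ and $\balpha'(x)+\bbeta'(y)$ both lie in $[\phi(A_\delta),\phi(B_\delta)]$ a.e., one has $v_\balpha(x)+v_\bbeta(y)\in[-D,D]$ with $D=\phi(B_\delta)-\phi(A_\delta)$, and an elementary check shows some constant $c$ satisfies $\lVert v_\balpha+c\rVert_\infty\le D$ and $\lVert v_\bbeta-c\rVert_\infty\le D$, while both the pairing $\langle v,(\r'-\r,\c'-\c)\rangle$ and $Qv$ are invariant under this shift. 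Second, your derivation of \eqref{eq:conti_MLE_continuity} yields the constant $2C_\delta(\inf\psi'')^{-1}$ rather than the stated $2C_\delta\,\sup\psi''/\inf\psi''$; this implies the stated inequality whenever $\sup_{[\phi(A_\delta),\phi(B_\delta)]}\psi''\ge 1$ but is formally a different constant otherwise, so you should either state your (cleaner) constant or insert the trivial extra factor to match the theorem as written. Neither point is a structural gap.
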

	
	Lastly, we show that the $L^{1}$-limit of discrete $\delta$-tame margins is a $\delta$-tame continuum margin. 
	
	\begin{prop}\label{prop:typical_kernel_existence}
		Let $(\r_{k},\c_{k})_{k\ge 1}$ be a sequence of $(m_{k}\times n_{k})$ $\delta$-tame  discrete margins converging to some continuum margin $(\r,\c)$ in $L^{1}$. Then $(\r,\c)$ is $\delta$-tame and 
		\begin{align}
			W^{(\bar{\r}_{k},\bar{\c}_{k})} \rightarrow	W^{\r,\c} \quad \textup{in $L^{2}$ as $k\rightarrow\infty$}. 
		\end{align}
	\end{prop}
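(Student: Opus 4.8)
The plan is to reduce the continuum statement to the Lipschitz estimates already available (Theorems \ref{thm:typical_lipschitz_margins} and \ref{thm:typical_kernel_Lipscthiz}) together with the dual characterization of typical kernels (Lemma \ref{lem:W_typical_Lagrange}): produce the comparison step kernels, show they are Cauchy in $L^2$, pass to an $L^2$-limit, and identify that limit as $W^{\r,\c}$ via Lemma \ref{lem:W_typical_Lagrange}(b), which simultaneously yields $\delta$-tameness of $(\r,\c)$.

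First I would set up the comparison objects. For each $k$ let $(\balpha_k,\bbeta_k)$ be the standard MLE for the discrete margin $(\r_k,\c_k)$, which exists by Theorem \ref{thm:strong_duality_simple}, so that $Z^{\r_k,\c_k}=\psi'(\balpha_k\oplus\bbeta_k)$; pass to the step functions $\bar{\balpha}_k,\bar{\bbeta}_k$ on $[0,1]$ defined as in Theorem \ref{thm:main_convergence_conti} and the block kernel $W_k:=W_{Z^{\r_k,\c_k}}=\psi'(\bar{\balpha}_k\oplus\bar{\bbeta}_k)$ a.e.\ on $[0,1]^2$. By $\delta$-tameness and Lemma \ref{lem:strong_dual_MLE_typical} these satisfy $\|\bar{\balpha}_k\|_\infty\le 2C_\delta$, $\|\bar{\bbeta}_k\|_\infty\le C_\delta$, $\phi(A_\delta)\le\bar{\balpha}_k\oplus\bar{\bbeta}_k\le\phi(B_\delta)$ a.e.\ (hence $A_\delta\le W_k\le B_\delta$), and $\int_0^1\bar{\balpha}_k(x)\,dx=0$; a routine check with the normalization in \eqref{eq:margin_function} shows $W_k$ has continuum margin $(\bar{\r}_k,\bar{\c}_k)$. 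Feeding $(\bar{\balpha}_k,\bar{\bbeta}_k)$ into Lemma \ref{lem:W_typical_Lagrange}(b) then shows each step margin $(\bar{\r}_k,\bar{\c}_k)$ is itself a $\delta$-tame continuum margin whose typical kernel is $W^{(\bar{\r}_k,\bar{\c}_k)}=W_k$. This reduces the claim to proving $W_k\to W^{\r,\c}$ in $L^2$.

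Next I would use the Lipschitz estimate to extract the limit. Since $(\bar{\r}_k,\bar{\c}_k)\to(\r,\c)$ in $L^1$, this sequence is $L^1$-Cauchy, so Theorem \ref{thm:typical_kernel_Lipscthiz} applied to the $\delta$-tame continuum margins $(\bar{\r}_k,\bar{\c}_k)$ and $(\bar{\r}_\ell,\bar{\c}_\ell)$ bounds $\|W_k-W_\ell\|_2^2$ and $\|\bar{\balpha}_k-\bar{\balpha}_\ell\|_2^2+\|\bar{\bbeta}_k-\bar{\bbeta}_\ell\|_2^2$ by a fixed multiple of $\|(\bar{\r}_k,\bar{\c}_k)-(\bar{\r}_\ell,\bar{\c}_\ell)\|_1\to0$; so $(W_k)$ is Cauchy in $L^2([0,1]^2)$ and $(\bar{\balpha}_k),(\bar{\bbeta}_k)$ are Cauchy in $L^2([0,1])$. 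Let $W_\infty,\balpha,\bbeta$ be the $L^2$-limits; the bounds $\|\balpha\|_\infty\le 2C_\delta$, $\|\bbeta\|_\infty\le C_\delta$, $\int_0^1\balpha=0$, and $\phi(A_\delta)\le\balpha\oplus\bbeta\le\phi(B_\delta)$ a.e.\ all survive the limit. Since $\psi'$ is Lipschitz on $[\phi(A_\delta),\phi(B_\delta)]$ (constant $\sup_{[\phi(A_\delta),\phi(B_\delta)]}\psi''$) and $\bar{\balpha}_k\oplus\bar{\bbeta}_k\to\balpha\oplus\bbeta$ in $L^2$, we get $W_k=\psi'(\bar{\balpha}_k\oplus\bar{\bbeta}_k)\to\psi'(\balpha\oplus\bbeta)$ in $L^2$, so $W_\infty=\psi'(\balpha\oplus\bbeta)$ a.e. Because marginalization is a bounded operator $L^2([0,1]^2)\to L^1([0,1])$, the margins of $W_k$, which equal $(\bar{\r}_k,\bar{\c}_k)$, converge in $L^1$ to the margins of $W_\infty$; combined with $(\bar{\r}_k,\bar{\c}_k)\to(\r,\c)$ this forces $W_\infty$ to have continuum margin $(\r,\c)$. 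Then Lemma \ref{lem:W_typical_Lagrange}(b) applies with $(\balpha,\bbeta)$: $(\r,\c)$ admits the typical kernel $W^{\r,\c}=W_\infty$, and it is $\delta$-tame since $A_\delta\le W_\infty\le B_\delta$. Hence $W^{(\bar{\r}_k,\bar{\c}_k)}=W_k\to W_\infty=W^{\r,\c}$ in $L^2$, as claimed.

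The genuinely substantial inputs are Theorem \ref{thm:typical_kernel_Lipscthiz} and Lemma \ref{lem:W_typical_Lagrange}(b), which I am assuming; granting those, the only place needing real care is the passage to the limit in the last step. There one must keep $\balpha\oplus\bbeta$ inside the fixed compact interval $[\phi(A_\delta),\phi(B_\delta)]$ where $\psi'$ is Lipschitz — which is exactly what the uniform $L^\infty$-bounds on the dual variables from Lemma \ref{lem:strong_dual_MLE_typical}(iii), stable under $L^2$-limits, provide — and one must note that the $L^1$-limit of the marginals of $W_k$ is, on the one hand, $(\r,\c)$ (convergence of the step margins) and, on the other, the marginal of the $L^2$-limit $W_\infty$ (continuity of marginalization), so that the two coincide. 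Everything else is bookkeeping about step functions and the definitions.
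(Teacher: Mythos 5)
Your proposal is correct and follows essentially the same route as the paper: apply the Lipschitz stability of typical kernels and dual variables (Theorem \ref{thm:typical_kernel_Lipscthiz}, which for step margins reduces to Theorem \ref{thm:typical_lipschitz_margins}) to get $L^{2}$-limits $W_\infty=\psi'(\balpha\oplus\bbeta)$ with margin $(\r,\c)$, then invoke Lemma \ref{lem:W_typical_Lagrange}(b) to identify $W_\infty=W^{\r,\c}$ and conclude $\delta$-tameness. The only cosmetic difference is that you identify $W_k$ as the typical kernel of the step margin via Lemma \ref{lem:W_typical_Lagrange}(b) applied to the step dual variables, whereas the paper cites Proposition \ref{prop:W_typical_averaging} (block-averaging/Jensen); both are valid and interchangeable here.
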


	Assuming these three results above, we deduce Theorem \ref{thm:main_convergence_conti} below.

	\begin{proof}[\textbf{Proof of Theorem} \ref{thm:main_convergence_conti}]
		By the hypothesis and Proposition \ref{prop:typical_kernel_existence}, the limiting continuum margin $(\r,\c)$ is $\delta$-tame. Hence the existence of continuum dual variable $(\balpha,\bbeta)$ and the claimed characterization of the typical kernel $W^{\r,\c}$ in part \textbf{(i)} follows from Lemma \ref{lem:W_typical_Lagrange}. 	Part \textbf{(ii)} follows directly from Theorem \ref{thm:typical_kernel_Lipscthiz} and Lemma \ref{lem:W_typical_Lagrange}. 
	\end{proof}

	The rest of this section is devoted to showing the three results stated above. We first prove Lemma \ref{lem:W_typical_Lagrange}.

	\begin{proof}[\textbf{Proof of Lemma} \ref{lem:W_typical_Lagrange}]
		We first show (a). By the hypothesis of part (a), the typical table $W^{\r,\c}$ exists and it is $\delta$-tame. Let  $U(\cdot,\cdot)$ be a measurable function from $[0,1]^2$ to $[-1,1]$ which satisfies $U(\cdot,\bullet)=U(\bullet,\cdot)\stackrel{a.s.}{=}0.$ Then the function $\widetilde{W}^{(\lambda)}:=W^{\r,\c}+\lambda U\in \mathcal{W}_{\r,\c}$ for all $\lambda\in (-\delta,\delta)$. Since $W^{\r,\c}$ is the typical table, the function $\lambda\mapsto G(\widetilde{W}^{(\lambda)})$ is uniquely maximized on $(-\delta,\delta)$ at $\lambda=0$, which gives
		\begin{align*}
			0=\frac{\partial}{\partial \lambda}G(\widetilde{W}^{(\lambda)})\Big|_{\lambda=0}=\int_{[0,1]^2}\phi\left(W^{\r,\c}(x,y)\right) U(x,y)dx dy.
		\end{align*}
		Since this holds for all bounded measurable $U$ which integrates to $0$ along both marginals, it follows that there exists functions $\balpha,\bbeta:[0,1]\mapsto \R$ such that
		$$\phi\left(W^{\r,\c}(x,y)\right)\stackrel{a.s.}{=}\balpha(x)+\bbeta(y),\text{ which implies } W^{\r,\c}(x,y)\stackrel{a.s.}{=}\psi'(\balpha(x)+\bbeta(y)).$$ Finally, the fact that $W^{\r,\c}$ is $\delta$-tame implies $\phi(A_\delta)\stackrel{a.s.}{\le }\balpha(x)+\bbeta(y)\stackrel{a.s.}{\le} \phi(B_\delta)$. Since changing $(\balpha,\bbeta)$ to $(\balpha+\eta,\bbeta-\eta)$ has no impact on $W^{\r,\c}$ for any $\eta\in \R$, we can assume without loss of generality that $\int_{[0,1]}\balpha(x)dx=0$. This, along with the previous display implies
		$$\phi(A_\delta)\stackrel{a.s.}{\le} \bbeta(y)\stackrel{a.s.}{\le} \phi(B_\delta),$$
		which in turn implies $$ \phi(A_\delta)-\phi(B_\delta)\stackrel{a.s}{\le}\balpha(x)\stackrel{a.s}{\le} \phi(B_\delta)-\phi(A_\delta).$$
		The desired conclusion of part (a) follows.

		Next, we show (b). Let $\widetilde{W}\in \mathcal{W}_{\r,\c}$ be arbitrary. It suffices to show that $G(W^*)\ge G(\widetilde{W})$. Since $G$ is strictly convex, it suffices to show that the function $\lambda\mapsto G\left((1-\lambda)W^*+\lambda \widetilde{W}\right)$ on $[0,1]$ has a derivative which vanishes at $\lambda=0$. This is equivalent to checking
		$$\int_{[0,1]^2}\phi\left(W^*(x,y)\right) U(x,y)dxdy =0,\text{ where }U=W^*-\widetilde{W}.$$
		But this follows on noting that $\phi\left(W^*(x,y)\right)=\balpha(x)+\bbeta(y)$, and $U(\cdot,\bullet)=U(\bullet,\cdot)\stackrel{a.s.}{=}0$.
	\end{proof}

	Our next aim is to prove Theorem \ref{thm:typical_kernel_Lipscthiz}. We will need some preparation. 
	We first recall the following standard fact about step function approximation. 
	
	\begin{lemma}[Approximation by stepfunctions in $L^{1}$]\label{lem:approximation_step_L1}
		Suppose $|A|,|B|<\infty$ and let $h:[A,B]^{d}\rightarrow \R$ be a measurable function for some integer $d\ge 1$. Then there exists a sequence of stepfunctions $(h_{n})_{n\ge 1}$ such that $\lVert h_{n}-h \rVert_{1}\rightarrow 0$ as $n\rightarrow\infty$. 
	\end{lemma}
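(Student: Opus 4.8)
The plan is to give the classical density argument; for it to make sense we of course read $h$ as lying in $L^{1}([A,B]^{d})$ (this is automatic in every application of the lemma, where $h$ is a bounded function on a compact box, and a step function on a compact box is bounded hence integrable). The cleanest route I would take is via martingale convergence. For each $n\ge 1$, partition $[A,B]^{d}$ into $2^{nd}$ congruent half-open subcubes by bisecting each coordinate $n$ times, and let $\mathcal{F}_{n}$ be the finite $\sigma$-algebra generated by this partition; the $\mathcal{F}_{n}$ are increasing and $\bigvee_{n}\mathcal{F}_{n}$ equals the Borel $\sigma$-algebra on $[A,B]^{d}$ modulo Lebesgue-null sets. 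Setting $h_{n}:=\mathbb{E}[h\mid \mathcal{F}_{n}]$, conditional expectation with respect to normalized Lebesgue measure, each $h_{n}$ is by construction a step function, being constant on each subcube of the $n$-th partition, and $(h_{n},\mathcal{F}_{n})$ is a uniformly integrable martingale with $\sup_{n}\mathbb{E}|h_{n}|\le \mathbb{E}|h|<\infty$. By the $L^{1}$ martingale convergence theorem (L\'evy's upward theorem), $h_{n}\to \mathbb{E}[h\mid \bigvee_{n}\mathcal{F}_{n}]=h$ both almost surely and in $L^{1}$, which is exactly the assertion.

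Alternatively, and at the level of first principles, I would argue as follows. Since simple functions are dense in $L^{1}$, it suffices to approximate $\mathbf{1}_{E}$ in $L^{1}$ by step functions for an arbitrary Borel set $E\subseteq [A,B]^{d}$; by linearity and a $2^{-n}$-budget argument this yields the sequence $(h_{n})$ for general $h$. Given $\varepsilon>0$, outer regularity of Lebesgue measure produces an open set $G\supseteq E$ with $\mathrm{Leb}(G\setminus E)<\varepsilon/2$; writing $G$ as a countable union of almost-disjoint half-open boxes and discarding all but finitely many of them, we obtain a finite union $R$ of half-open boxes with $\mathrm{Leb}(E\triangle R)<\varepsilon$. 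Refining the finitely many boxes to a common axis-aligned grid, $\mathbf{1}_{R}$ is constant on the cells of that grid, i.e.\ it is a step function, and $\lVert \mathbf{1}_{E}-\mathbf{1}_{R}\rVert_{1}<\varepsilon$.

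Neither route presents a genuine obstacle. The only point requiring a little care in the second argument is pure bookkeeping: checking that a finite union of arbitrary axis-aligned boxes in $[A,B]^{d}$ can be realized as a function constant on the cells of a single product-grid partition of $[A,B]^{d}$, so that it qualifies as a step function in the sense used elsewhere in the paper; this is handled by taking the common refinement of the coordinate endpoints of the finitely many boxes. I would present the martingale proof as the main argument, since it produces the approximating sequence explicitly and sidesteps this bookkeeping entirely.
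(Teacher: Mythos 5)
Your main argument is exactly the paper's (omitted) proof: the paper sketches precisely this construction, block-averaging over dyadic partitions and invoking L\'evy's upward convergence theorem for the $L^{1}$ limit, so your martingale route matches it, and your reading of ``measurable'' as ``integrable'' is the intended one since the lemma is only applied to bounded functions on a compact box. The alternative simple-function/outer-regularity argument is a fine extra, but it is not needed.
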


	Typical construction of such stepfunctions in Lemma \ref{lem:approximation_step_L1} is by block-averaging over diadic partitions. The $L^{1}$ convergence can be shown by applying L\'{e}vy's upward convergence theorem. We omit the details. 
	
	Next, we establish the basic properties of the function $G$ in \eqref{eq:G}. 
	
	\begin{prop}\label{prop:G_USC}
		The function $G$ in \eqref{eq:G}  
		is well-defined and strictly convex. 
		Furthermore, for any $\delta>0$, $G$ restricted on $\mathcal{W}^{[A_{\delta}, B_{\delta}]}$ is lower semi-continuous with respect to the cut distance $\delta_\square$.
	\end{prop}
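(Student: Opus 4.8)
The plan is to verify the three assertions in turn, starting from the elementary ones. First, for well-definedness, recall from \eqref{eq:f_derivatives} that $f(x) = D(\mu_{\phi(x)}\Vert\mu) = \phi(x)\psi'(\phi(x)) - \psi(\phi(x))$ with $f'(x) = \phi(x)$ and $f''(x) = 1/\psi''(\phi(x)) > 0$ on $(A,B)$; in particular $f$ is continuous (indeed smooth) on $(A,B)$, hence bounded on every compact subinterval $[A_\delta, B_\delta]$. Thus for $W \in \mathcal{W}^{(A,B)}$ with values confined to such a compact interval the integral $\int_{[0,1]^2} f(W(x,y))\,dxdy$ is finite; in general (when $W$ can approach the boundary $A$ or $B$) we interpret $G(W)\in(-\infty,+\infty]$ via the convention $f = +\infty$ off $(A,B)$ and monotone convergence, which is consistent with Definition \ref{def:typical_table}. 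Strict convexity is immediate: for $W_0, W_1 \in \mathcal{W}^{(A,B)}$ with $W_0 \ne W_1$ on a positive-measure set, pointwise strict convexity of $f$ (from $f'' > 0$) gives $f\bigl((1-t)W_0(x,y)+tW_1(x,y)\bigr) < (1-t)f(W_0(x,y))+tf(W_1(x,y))$ on that set and $\le$ everywhere, so integrating yields $G((1-t)W_0 + tW_1) < (1-t)G(W_0) + tG(W_1)$ for $t \in (0,1)$.

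The substantive claim is lower semicontinuity of $G$ restricted to $\mathcal{W}^{[A_\delta, B_\delta]}$ with respect to the weak cut metric $\delta_\square$. The strategy is the classical one for convex integral functionals: represent $G$ as a supremum of cut-continuous affine functionals. Since $f$ is convex on $[A_\delta, B_\delta]$, for each $c \in [A_\delta, B_\delta]$ we have the supporting-line inequality $f(x) \ge f(c) + \phi(c)(x - c)$ for all $x \in [A_\delta, B_\delta]$, with equality at $x = c$; hence
\begin{align}
f(x) = \sup_{c \in [A_\delta, B_\delta] \cap \mathbb{Q}} \bigl( f(c) + \phi(c)(x-c) \bigr) \qquad \text{for all } x \in [A_\delta, B_\delta].
\end{align}
More usefully, take a countable dense family of \emph{step functions} $(\varphi_k)_{k \ge 1}$ on $[0,1]^2$ with values in $[A_\delta, B_\delta]$ (dense in $L^1$ by Lemma \ref{lem:approximation_step_L1}), and set
\begin{align}
L_k(W) := \int_{[0,1]^2} \Bigl[ f(\varphi_k(x,y)) + \phi(\varphi_k(x,y))\bigl(W(x,y) - \varphi_k(x,y)\bigr) \Bigr] \, dx\, dy.
\end{align}
Each $L_k$ is affine in $W$, and by the supporting-line inequality $G(W) \ge L_k(W)$ for every $W \in \mathcal{W}^{[A_\delta,B_\delta]}$; moreover, approximating a general $W$ in $L^1$ by step functions and using uniform continuity of $f$ and $\phi$ on $[A_\delta, B_\delta]$, one gets $G(W) = \sup_k L_k(W)$. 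Thus it suffices to show each $L_k$ is $\delta_\square$-continuous (hence so is the supremum lower semicontinuous). Since $\varphi_k$ is a step function, $\phi(\varphi_k)$ and $f(\varphi_k)$ are step functions too, so $L_k(W)$ is an affine combination of the constant $\int (f(\varphi_k) - \phi(\varphi_k)\varphi_k)$ and finitely many integrals of the form $\int_{R} W(x,y)\,dxdy$ over dyadic rectangles $R$. Each such block integral is a finite linear combination of the quantities $\int_{S\times T} W$ with $S, T$ intervals, which are exactly the functionals controlled by the cut norm; they are invariant under measure-preserving rearrangements up to the infimum in $\delta_\square$, and continuous in $\delta_\square$ because $|\int_{S\times T}(W - W')| \le \|W - W'\|_\square$. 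Hence $|L_k(W) - L_k(W')| \le C_k\, \delta_\square(W, W')$ for a constant $C_k$ depending on $\varphi_k$ (through $\|\phi(\varphi_k)\|_\infty$ and the number of blocks), giving continuity of $L_k$ and therefore lower semicontinuity of $G = \sup_k L_k$.

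The main obstacle I anticipate is the rearrangement-invariance bookkeeping in the last step: the weak cut metric $\delta_\square$ allows independent measure-preserving relabelings of the two coordinates, so $L_k(W)$ is not literally $\delta_\square$-continuous unless one is careful — a step function $\varphi_k$ is not invariant under such relabelings. The fix is standard but needs to be stated precisely: one shows instead that $G$ itself is invariant under measure-preserving maps of each coordinate (obvious, since $f$ acts pointwise and the relabelings are measure-preserving), and then it suffices to prove lower semicontinuity with respect to the \emph{strong} cut metric $d_\square$ on the quotient, i.e.\ along sequences $W_n \to W$ in $d_\square$ after optimally aligning; equivalently, one proves: if $d_\square(W_n, W) \to 0$ then $\liminf G(W_n) \ge G(W)$, which follows verbatim from the $L_k$ argument above since each $L_k$ is $d_\square$-Lipschitz. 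Combining this with $G$-invariance under relabeling upgrades the statement to $\delta_\square$. (An alternative, perhaps cleaner, route is to invoke that bounded sequences converging in $\delta_\square$ have subsequences converging weakly-$*$ in $L^\infty([0,1]^2)$ after relabeling, and that $W \mapsto \int f(W)$ is weakly-$*$ lower semicontinuous for convex $f$ by the classical theorem of Ioffe/De Giorgi; but the supporting-hyperplane argument is self-contained and I would prefer it here.)
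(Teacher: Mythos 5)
Your treatment of well-definedness has a genuine gap. For $W\in\mathcal{W}^{(A,B)}$ the issue is not the boundary values of $(A,B)$ but the fact that $f$ can be unbounded \emph{below} on $(A,B)$ when $\mu$ is not a probability measure: for the counting measure on $\mathbb{Z}_{\ge 0}$, $f(x)=x\log x-(1+x)\log(1+x)\to-\infty$ as $x\to\infty$, so for an integrable but unbounded $W$ the integral $\int f(W)$ could a priori be $-\infty$ or an ill-defined $\infty-\infty$; the convention $f\equiv+\infty$ off $(A,B)$ and an appeal to monotone convergence say nothing about the negative part. The paper closes this by picking $\theta_0\in\Theta$ with $\mu_{\theta_0}$ a probability measure and using nonnegativity of the KL divergence, $D(\mu_\theta\Vert\mu)=D(\mu_\theta\Vert\mu_{\theta_0})+\theta_0\psi'(\theta)-\psi(\theta_0)\ge\theta_0\psi'(\theta)-\psi(\theta_0)$, i.e.\ the affine lower bound $f(x)\ge\theta_0x-\psi(\theta_0)$, which together with $W\in L^1$ gives $G(W)>-\infty$. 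Your argument only covers $W$ whose values lie in a compact subinterval; you should add the affine lower bound (or an equivalent) to prove the stated claim. The strict convexity argument is fine and is the same as the paper's.

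For the lower semicontinuity, which is the substantive claim, your argument is correct but follows a genuinely different route. Both you and the paper first reduce $\delta_\square$ to the strong cut norm by using invariance of $G$ under measure-preserving relabelings of each coordinate (your final paragraph does this correctly; the earlier remark that the block integrals are ``invariant under rearrangements up to the infimum in $\delta_\square$'' is not needed and should be dropped). After that, you write $G$ on $\mathcal{W}^{[A_\delta,B_\delta]}$ as $\sup_k L_k$ over supporting-line functionals anchored at a countable $L^1$-dense family of step kernels, observe that each $L_k$ is $\lVert\cdot\rVert_\square$-Lipschitz because $\phi(\varphi_k)$ is a step function, and conclude LSC as a supremum of cut-continuous affine maps; the identity $G=\sup_k L_k$ uses boundedness of $f$ and $\phi$ on $[A_\delta,B_\delta]$ plus bounded convergence, and is verified correctly. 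The paper instead block-averages: by convexity and Jensen $G(W_k)\ge G(W_k^L)$, $G$ is continuous on $L\times L$ step kernels under cut convergence, and $G(W^L)\to G(W)$ by a.e.\ convergence of the block averages and bounded convergence. Your affine-envelope argument is the classical proof of weak-$*$ LSC for convex integrands and avoids the final limit in $L$; the paper's avoids introducing the dense family and the verification of $G=\sup_k L_k$. Both rely essentially on the restriction to $\mathcal{W}^{[A_\delta,B_\delta]}$, where $f$ and $\phi$ are bounded.
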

	
	\begin{proof}
		By the assumption on $\mu$, there exists some tilting parameter $\theta_{0}\in \Theta$, for which $\mu_{\theta_{0}}$  is a probability measure. Fix $\theta\in \Theta$. Noting that the KL divergence between two probability distributions is nonnegative, 
		\begin{align}
			D(\mu_{\theta} \Vert \mu) 
			&= D(\mu_{\theta} \, \Vert \, \mu_{\theta_{0}})+ \theta_{0} \psi'(\theta) - \psi(\theta_{0}) \ge \theta_{0} \psi'(\theta) - \psi(\theta_{0}). 
		\end{align}
		It follows that for the function $f(x)=	D(\mu_{\phi(x)} \Vert \mu) $ in \eqref{eq:typical_table_opt},  $f(x) \ge \theta_{0} x- \psi(\theta_{0})$. 
		This yields that $G(W)\in (-\infty, \infty]$ for all $W\in \mathcal{W}^{(A,B)}$ since  
		\begin{align}
			G(W) = \int_{[0,1]^{2}} f(W(x,y))\,dx dy \ge \theta_{0} \int_{[0,1]^{2}} W(x,y)\,dx dy -\psi(\theta_{0})>-\infty. 
		\end{align}
		For strict convexity, fix $\lambda\in (0,1)$. Since $f$ is strictly convex on $(A,B)$ (see \eqref{eq:f_derivatives2}), we have $f(\lambda y + (1-\lambda) x)<\lambda f(y) + (1-\lambda) f(x)$ for $x,y\in (A,B)$ and $x\ne y$. It follows that $G(\lambda  W' + (1-\lambda) W)<\lambda G(W') + (1-\lambda) G(W)$ for $W,W'\in \mathcal{W}^{(A,B)}$ with $W\ne W'$ almost surely.

		Next, we consider $G$ restricted on $\mathcal{W}^{[A_{\delta},B_{\delta}]}$.
		To show lower semi-continuity, let $W_k$ be a sequence of functions in $\mathcal{W}^{[A_{\delta},B_{\delta}]}$ converging to $W\in \mathcal{W}^{[A_{\delta},B_{\delta}]}$ in $\delta_{\square}$-metric. We wish to show that
		\begin{align}\label{eq:G_LSC_pf}
			\liminf_{k\rightarrow\infty} \, G(W_k) \ge   G(W).
		\end{align} 
		Noting that $G(W)=G(W(\xi(\cdot), \eta(\cdot)))$  for every measure-preserving transformations $\xi,\eta$ on $[0,1]$, without loss of generality we can assume $\lVert W_{k}-W \rVert_{\square}\rightarrow 0$ as $k\rightarrow \infty$. 
		
		Define $W^{L}$ to be the $L\times L$ block-ageraging of $W$ for every $W\in \mathcal{W}$ and $L>0$. By convexity of $G$ and Jensen's inequality,  $G(W_{k})\ge G(W_{k}^{L})$. For fixed $L$, $G(W_{k}^{L})\rightarrow G(W^{L})$ using the continuity of $G$ on $L\times L$ stepfunctions. It follows that 
		\begin{align}\label{eq:G_LSC_pf2}
			\liminf_{k\rightarrow\infty} \, G(W_k) \ge   \liminf_{L\rightarrow\infty}\,  \liminf_{k\rightarrow\infty} \, G(W_k^{L}) = \liminf_{L\rightarrow\infty}\,  G(W^{L}). 
		\end{align} 
		Hence it suffices to show that 
		\begin{align}\label{eq:G_LSC_pf3}
			\liminf_{L\rightarrow\infty}\,  G(W^{L}) \ge G(W). 
		\end{align} 
		To this end, let $U,V$ be independent uniform $[0,1]$ variables. Then $W^{L}(U,V)$ converges to $W(U,V)$ in probability. 	 For every integrable function  $W':[0,1]\rightarrow \R$, we have $				G(W') = \E[f(W'(U,V))]$. 
		Then noting that $f$ is continuous and  the range of $f\circ W^{L}$ is in $[A_{\delta},B_{\delta}]$ for all $L\ge 1$, we get 
		\begin{align}
			\lim_{L\rightarrow\infty}	\,	G(W^{L})  = 			\lim_{L\rightarrow\infty}	\,	\E[f(W^{L}(U,V))] = 	\E[f(W(U,V))]  = 			G(W),
		\end{align}
		where the equality in the middle follows from the bounded convergence theorem noting that $f\circ W$ is bounded for $W$ taking values from $[A_{\delta},B_{\delta}]$. 
	\end{proof}

	\begin{prop}\label{prop:W_typical_averaging}
		Let $(\r,\c)$ be an $m\times n$ margin with typical table $Z^{\r,\c}\in (A,B)^{m\times n}$. Then $W_{Z^{\r,\c}}$ is the unique typical kernel for the continuum margin $(\bar{\r}, \bar{\c})$. 
	\end{prop}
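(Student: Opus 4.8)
The plan is to move the finite-dimensional characterization of the typical table (Thm. \ref{thm:strong_duality_simple}, via Lem. \ref{lem:strong_dual_MLE_typical}) into the continuum setting by way of Lem. \ref{lem:W_typical_Lagrange}(b). First I would recall that, since $Z^{\r,\c}\in (A,B)^{m\times n}$ is the typical table for the discrete margin $(\r,\c)$, the set $\T(\r,\c)\cap (A,B)^{m\times n}$ is nonempty, so Lem. \ref{lem:strong_dual_MLE_typical}(i) gives a (standard) MLE $(\balpha,\bbeta)\in\R^m\times\R^n$ with $Z^{\r,\c}=\psi'(\balpha\oplus\bbeta)$, i.e. $Z^{\r,\c}_{ij}=\psi'(\balpha(i)+\bbeta(j))$ for all $i,j$. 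Now promote these vectors to stepfunctions: define $\widetilde{\balpha}:[0,1]\to\R$ by $\widetilde{\balpha}(x):=\balpha(\lceil mx\rceil)$ and $\widetilde{\bbeta}:[0,1]\to\R$ by $\widetilde{\bbeta}(y):=\bbeta(\lceil ny\rceil)$. These are bounded measurable functions, and by construction the kernel $W^*(x,y):=\psi'(\widetilde{\balpha}(x)+\widetilde{\bbeta}(y))$ is exactly the block kernel $W_{Z^{\r,\c}}$ associated to the matrix $Z^{\r,\c}$ in the sense of Def. \ref{def:kernel}, since on each rectangle $R_{ij}$ it takes the constant value $\psi'(\balpha(i)+\bbeta(j))=Z^{\r,\c}_{ij}$.

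The second step is to check that $W^*$ has continuum margin $(\bar{\r},\bar{\c})$. Fix $x\in R_{i}:=(\tfrac{i-1}{m},\tfrac{i}{m}]$; then
\begin{align}
\int_0^1 W^*(x,y)\,dy = \sum_{j=1}^n \frac{1}{n}\,\psi'(\balpha(i)+\bbeta(j)) = \frac{1}{n}\sum_{j=1}^n Z^{\r,\c}_{ij} = \frac{r_i(\x)}{n} = \frac{\r(i)}{n},
\end{align}
where the last two equalities use that $Z^{\r,\c}$ has margin $(\r,\c)$. Comparing with the definition $\bar{\r}(x)=n^{-1}\r(\lceil mx\rceil)$ in \eqref{eq:margin_function}, this says precisely that the row marginal of $W^*$ equals $\bar{\r}$; the analogous computation integrating in $x$ gives that the column marginal equals $\bar{\c}$. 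Hence $W^*\in\mathcal{W}_{\bar{\r},\bar{\c}}$, and since $\balpha,\bbeta$ are real-valued, $W^*=\psi'(\widetilde{\balpha}\oplus\widetilde{\bbeta})$ lies in $(A,B)$ everywhere, so $W^*\in\mathcal{W}_{\bar{\r},\bar{\c}}^{(A,B)}$.

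The final step is a direct appeal to Lem. \ref{lem:W_typical_Lagrange}(b): we have exhibited bounded measurable functions $\widetilde{\balpha},\widetilde{\bbeta}$ such that $W^*(x,y)=\psi'(\widetilde{\balpha}(x)+\widetilde{\bbeta}(y))$ satisfies the continuum margin $(\bar{\r},\bar{\c})\in L^1[0,1]$, so that lemma concludes that $W^*$ is the unique typical kernel for $(\bar{\r},\bar{\c})$, i.e. $W_{Z^{\r,\c}}=W^{(\bar{\r},\bar{\c})}$. I do not anticipate a serious obstacle here; the proof is essentially bookkeeping, translating between the matrix/vector picture and the kernel/function picture. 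The one point requiring a little care is making sure the block-kernel convention of Def. \ref{def:kernel} (partitioning $(0,1]^2$ into the $R_{ij}$) is exactly compatible with the margin-function convention of \eqref{eq:margin_function} (ceiling functions $\lceil mx\rceil$, $\lceil ny\rceil$), which the computation above confirms; the uniqueness is then free from the strict convexity of $G$ already recorded in Prop. \ref{prop:G_USC}.
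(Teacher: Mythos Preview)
Your proof is correct and takes a genuinely different route from the paper. The paper argues on the primal side: for any $W\in\mathcal{W}_{\bar\r,\bar\c}^{(A,B)}$, it forms the block-average $\overline{W}$ over the $m\times n$ grid, notes that $\overline{W}$ has the same continuum margin, and uses convexity of $f$ together with Jensen's inequality to get $G(\overline{W})\le G(W)$; hence the infimum of $G$ over $\mathcal{W}_{\bar\r,\bar\c}^{(A,B)}$ is attained among step kernels, where the problem collapses to the discrete typical-table problem solved by $Z^{\r,\c}$. Your argument instead works on the dual side: you lift the discrete MLE $(\balpha,\bbeta)$ to stepfunctions and invoke Lem.~\ref{lem:W_typical_Lagrange}(b) directly. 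Your route is shorter and avoids the Jensen step entirely, at the cost of relying on the continuum duality lemma (which is already proved at this point in the paper, so there is no circularity). The paper's route is a bit more self-contained in that it does not need the potential characterization, only strict convexity of $G$; it also makes explicit the structural fact that block-averaging never increases $G$, which is of some independent interest.
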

	
	\begin{proof}
		Denote $W^{*}:=W_{Z^{\r,\c}}$. 	Let $I_{1}\sqcup \dots \sqcup I_{m} = [0,1]$ and $J_{1}\sqcup \dots \sqcup J_{n}=[0,1]$ be the interval partitions for $\bar{\r}$ and $\bar{\c}$, respectively. Fix any kernel $W\in \mathcal{W}_{\overline{\r},\overline{\c}}^{(A,B)}$. Let $\overline{W}$ denote the block-averaged version of $W$, which takes the contant value $\frac{1}{|I_{i}| \cdot |J_{j}|}\int_{I_{i}\times J_{j}} W(x,y) \,dx\,dy$ on the rectanble $I_{i}\times J_{j}$ for all $i,j$. Note that $\overline{W}\in \mathcal{W}_{\overline{\r},\overline{\c}}^{(A,B)}$. Since $G$ is concave, by Jensen's inequality, $G(W)\le G(\overline{W})$. Also clearly $\overline{W^{*}}=W^{*}$ and $W^{*}\in \mathcal{W}_{\overline{\r},\overline{\c}}^{(A,B)}$. It follows that 
		\begin{align}
			\inf_{W\in \mathcal{W}_{\r,\c}^{(A,B)}} G(W) =    \inf_{W\in \mathcal{W}_{\r,\c}^{(A,B)}} G(\overline{W}).
		\end{align}
		That is, the function $G$ is maximized over step kernels. Maximizing $G$ over the $m\times n$ step kernels with continuum margin $(\bar{\r}, \bar{\c})$ is exactly the typical table problem that $Z^{\r,\c}$ solves for the discrete margin $(\r,\c)$. This is enough to conclude. 
	\end{proof}

	We now prove Lipschitz continuity of typical kernels and dual variables stated in Theorem \ref{thm:typical_kernel_Lipscthiz}. Our approach is to extend the discrete analog (Thm. \ref{thm:typical_lipschitz_margins}) to the continuous case by discretizing the dual variables and passing to the limit. The key ingredient for these continuous extensions is Lemma \ref{lem:W_typical_Lagrange}, which we have established above. 
	
	\begin{proof}[\textbf{Proof of Theorem \ref{thm:typical_kernel_Lipscthiz}}]
		We will first show \eqref{eq:conti_typical_continuity} by pushing the discrete result in Theorem \ref{thm:typical_lipschitz_margins} to the continuum limit by block averaging the MLEs.
		Fix an integer $L\ge 1$. Then by Lemma \ref{lem:W_typical_Lagrange}, there exists bounded and measurable functions $\balpha,\bbeta,\balpha',\bbeta':[0,1]\rightarrow [-2C_{\delta},2C_{\delta}]$ such that 
		\begin{align}\label{eq:delta_tame_dual_characterization_cond}
			W^{\r,\c}(x,y)  = \psi'(\balpha(x)+\bbeta(y)) \quad \textup{and} \quad 	W^{\r',\c'}(x,y)  = \psi'(\balpha'(x)+\bbeta'(y)),
		\end{align}
		where $W^{\r,\c}, W^{\r',\c'}\in \mathcal{W}^{[A_{\delta},B_{\delta}]}$.
		In particular, 
		\begin{align}
			\r(x) &= \int_{0}^{1} \psi'(\balpha(x)+\bbeta(y))\,dy,\quad 			\c(y) = \int_{0}^{1} \psi'(\balpha(x)+\bbeta(y))\,dx, 
		\end{align}
		and similarly for $(\r',\c')$. 
		
		For each function $h:[0,1]\rightarrow \R$ and an integer $L\ge 1$, let $h_{L}$  denote the block average of $h$ over intervals $[(i-1)2^{-L}, i2^{-L})$ for $i=1,\dots,2^{L}$. By Lemma \ref{lem:approximation_step_L1}, there exists $\eps=\eps(L)>0$ such that 
		\begin{align}
			\lVert \balpha - \balpha_{L} \rVert_{1} + \lVert \bbeta - \bbeta_{L} \rVert_{1} +  \lVert \balpha' - \balpha'_{L} \rVert_{1} +  \lVert \bbeta' - \bbeta'_{L} \rVert_{1}  \le \eps(L)  \rightarrow 0 \quad \textup{as $L\rightarrow\infty$.}
		\end{align}
		Now define block margin $(\r_{L},\c_{L})$ by 
		\begin{align}\label{eq:margin_from_dual}
			\r_{L}(x) = \int_{0}^{1} \psi'(\balpha_{L}(x)+\bbeta_{L}(y))\,dy, \quad \c_{L}(y) = \int_{0}^{1} \psi'(\balpha_{L}(x)+\bbeta_{L}(y))\,dx.
		\end{align}
		Namely, the block margin $(\r_{L},\c_{L})$ is obtained by taking block-average of the dual variable $(\balpha,\bbeta)$. Further, note that from \eqref{eq:delta_tame_dual_characterization_cond}, 
		\begin{align}
			\phi(A_{\delta})	\le \balpha_{L}(x)+\bbeta_{L}(y) \le \phi(B_{\delta}) \quad \textup{and}\quad 	\phi(A_{\delta})	\le \balpha'_{L}(x)+\bbeta'_{L}(y) \le \phi(B_{\delta}). 
		\end{align}
		Hence $(\r_{L},\c_{L})$ and $(\r'_{L},\c'_{L})$ are both $\delta$-tame margins for all $L\ge 1$.

		Denote $D_{\delta}:= \sup_{ \phi(A_{\delta})\le w \le \phi(B_{\delta}) } \psi''(w)>0$.
		Note that 
		\begin{align}\label{eq:margin_blocking1}
			\lVert \r 	 - \r_{L} \rVert_{1} + 	\lVert \c	 - \c_{L} \rVert_{1}
			& \le D_{\delta} \int_{[0,1]^{2}} |\balpha_{L}(x)-\balpha(x) | + |\bbeta_{L}(y)-\bbeta(y)| \,dy\,dx 
			\le 2D_{\delta} \eps(L).
		\end{align}
		By a similar argument, 
		\begin{align}\label{eq:margin_blocking3}
			\lVert \r'	 - \r'_{L} \rVert_{1} + 	\lVert \c'	 - \c'_{L} \rVert_{1} \le 2D_{\delta} \eps(L).
		\end{align}
		Then by a triangle inequality, 
		\begin{align}\label{eq:pf_conti_kernel_lipschitz_triangle}
			\lVert W^{\r,\c}  - W^{\r',\c'}  \rVert_{2} \le  \lVert W^{\r,\c}  - W^{\r_{L},\c_{L}}  \rVert_{2} + \lVert W^{\r',\c'}  - W^{\r'_{L},\c'_{L}}  \rVert_{2} + \lVert W^{\r_{L},\c_{L}}  - W^{\r'_{L},\c'_{L}}  \rVert_{2}.
		\end{align}
		In order to bound the last term, we can apply Theorem \ref{thm:typical_lipschitz_margins} since both the margins and the typical kernels are stepfunctions on the intervals $[(i-1)2^{-L}, i2^{-L})$, $i=1,\dots,2^{L}$ and rectangles form by them, respectively. 
		Thus by  Theorem \ref{thm:typical_lipschitz_margins} and inequalities  \eqref{eq:margin_blocking1} and \eqref{eq:margin_blocking3}, 
		\begin{align}
			\lVert W^{\r_{L},\c_{L}}  - W^{\r'_{L},\c'_{L}}  \rVert_{2}^{2} 
			&\le 2 C_{\delta} D_{\delta} \lVert (\r_{L},\c_{L}) - (\r'_{L},\c'_{L}) \rVert_{1} \le 2 C_{\delta} D_{\delta} \left( 4D_{\delta} \eps + \lVert (\r,\c) - (\r',\c') \rVert_{1}  \right). 
		\end{align}
		The first two terms on the right-hand side of \eqref{eq:pf_conti_kernel_lipschitz_triangle} vanishes as $L,L'\rightarrow\infty$ by
		Proposition \ref{prop:typical_kernel_existence}. This shows \eqref{eq:conti_typical_continuity}.

		Next, we show the Lipschitz continuity of MLEs as stated in 
		\eqref{eq:conti_MLE_continuity}. By mean value theorem and \eqref{eq:f_derivatives}, $\phi$ restricted on $[A_{\delta},B_{\delta}]$ is $L$-Lipschitz continuous for
		\begin{align}
			L = \sup_{A_{\delta}\le t \le B_{\delta}}  \frac{1}{\psi''(\phi(t))} = \frac{1}{\inf_{\phi(A_{\delta})\le w \le \phi(B_{\delta})} \psi''(w)}. 
		\end{align}
		Define a kernel $V(x,y):=\balpha(x)+\bbeta(y)=\phi(W^{\r,\c}(x,y))$ for $x,y\in [0,1]$ and similarly define $V'$ using $(\balpha',\bbeta')$. Then by a simple computation using the fact that $\int_{0}^{1}\balpha(x)\,dx=\int_{0}^{1}\balpha'(x)\,dx=0$, we have 
		\begin{align}
			\lVert \balpha-\balpha' \rVert_{2}^{2} + \lVert \bbeta-\bbeta' \rVert_{2}^{2}  = 	\lVert V-V' \rVert_{2}^{2} \le L \lVert W^{\r,\c}-W^{\r',\c'} \rVert_{2}^{2}. 
		\end{align}
		Then \eqref{eq:conti_MLE_continuity} follows from the above and \eqref{eq:conti_typical_continuity}. 
	\end{proof}

	Lastly in this section, we prove Proposition \ref{prop:typical_kernel_existence}.

	\begin{proof}[\textbf{Proof of Proposition \ref{prop:typical_kernel_existence}}.]
		Let $Z_{k}$ denote the typical table for the $(m_{k}\times n_{k})$ $\delta$-tame margin $(\r_{m_{k}},\c_{n_{k}})$. By Proposition \ref{prop:W_typical_averaging}, $W_{k}:=W_{Z_{k}}$ is the typical kernel for the corresponding continuum step margin $(\bar{\r}_{m_{k}}, \bar{\c}_{n_{k}})$ (see \eqref{eq:margin_function}).  By Theorem \ref{thm:typical_kernel_Lipscthiz}, $W_{k}$ converges to some kernel $W^{*}$ in $L^{2}$ as $k\rightarrow \infty$. Since $W_{k}\in \mathcal{W}^{[A_{\delta},B_{\delta}]}$, it follows that $W^{*}\in \mathcal{W}^{[A_{\delta},B_{\delta}]}$. It is easy to see that $W^{*}$ has continuum margin $(\r,\c)$. 
		
		By Lemma \ref{lem:W_typical_Lagrange}, there exists a constant  $C=C_{\delta}>0$ such that for each $k\ge 1$,   there exists bounded measurable functions $\balpha_{k},\bbeta_{k}:[0,1]\rightarrow [-C,C]$ for which 
		\begin{align}
			W_{k}(x,y) \overset{a.s.}{=} \psi'(\balpha_{k}(x)+\bbeta_{k}(y)). 
		\end{align}
		Without loss of generality, we may assume $\int_{0}^{1}\balpha_{k}(x)\,dx = 0$ for all $k\ge 1$. Then by \eqref{eq:conti_MLE_continuity} in Theorem \ref{thm:typical_kernel_Lipscthiz}, we have that $\balpha_{k}\rightarrow \balpha^{*}$ and $\bbeta_{k}\rightarrow \bbeta^{*}$ in $L^{2}$ for some bounded measurable functions $\balpha^{*},\bbeta^{*}:[0,1]\rightarrow [-C,C]$. Note that $\int_{0}^{1}\balpha^{*}(x)\,dx = 0$. 
		
		Now define a kernel $W^{*}(x,y)=\psi'(\balpha^{*}(x)+\bbeta^{*}(x))$.
		By mean value theorem, $\psi'$ restricted on $[\phi(A_{\delta}), \phi(B_{\delta})]$ is $L$-Lipschitz continuous for some constant $L=L(\delta)>0$. Hence  
		\begin{align}
			\lVert W_{k}-W^{*} \rVert_{2}^{2} 
			\le L(\lVert \balpha_{k}-\balpha^{*} \rVert_{2}^{2} + \lVert \bbeta_{k}-\bbeta^{*} \rVert_{2}^{2}). 
		\end{align}
		It follows that $W_{k}\rightarrow W^{*}$ in $L^{2}$. Since $W_{k}\in \mathcal{W}_{\bar{\r}_{k},\bar{\c}_{k}}^{[A_{\delta},B_{\delta}]}$ and $(\bar{\r}_{k}, \bar{\c}_{k})\rightarrow (\r,\c)$ in $L^{1}$, this yields $W^{*}\in \mathcal{W}_{\r,\c}^{[A_{\delta},B_{\delta}]}$. By definition of $W^{*}$ and Lemma \ref{lem:W_typical_Lagrange}, we deduce that $W^{*}$ is the unique typical kernel for margin $(\r,\c)$. Since $W^{*}$ takes values from $[A_{\delta},B_{\delta}]$, we conclude that $(\r,\c)$ is $\delta$-tame. 
	\end{proof}

	\section{Proof of sharp sufficient conditions for tame margins} 
	\label{sec:tameness_pf}
	
	In this section, we prove various sufficient conditions for tame margins stated in Section \ref{sec:suff_cond_tame}. Our first goal is to prove Theorem \ref{thm:tameness_non_compact}, which requires some preparation. We first reduce the problem to symmetric margins.

	\begin{lemma}[Reduction to symmetric margins]\label{lem:reduction_symm}
		Suppose $\mu$ is such that $\Theta^{\circ}$ is unbounded. 
		Fix an $m\times n$ margin $(\r,\c)$ with $\T(\r,\c)\cap (A,B)^{m\times n}$ non-empty and assume $s<\r(i)/n<t$ and $s<\c(j)/n < t$ for all $i,j$ for some $s,t\in (A,B)$. Then for all $k_{0}$ sufficiently large, there exists a $k_{0}\times k_{0}$ symmetric margin $(\tilde{\r},\tilde{\r})$ such that $(\r,\c)$ is $\delta$-tame if $(\tilde{\r},\tilde{\r})$ is so, and  $s<\tilde{\r}(i)/k_{0}<t$ for all $i$. 
	\end{lemma}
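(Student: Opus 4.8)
The plan is to fold the two dual vectors of $(\r,\c)$ into a single one, pad with a ``bulk'' of indices pinned to an interior value, and let the symmetric margin be the margin of the resulting symmetric rank-one-dual table; tameness will transfer because the typical table of the symmetric margin contains the typical table of $(\r,\c)$ as an off-diagonal block. Since $\T(\r,\c)\cap(A,B)^{m\times n}\neq\emptyset$, Lemma \ref{lem:strong_dual_MLE_typical} supplies an MLE $(\balpha,\bbeta)$ for $(\r,\c)$ with $Z^{\r,\c}=\psi'(\balpha\oplus\bbeta)$ and $\balpha(i)+\bbeta(j)\in\Theta^\circ$ for all $i,j$. For $k_0=m+n+L$ with $L\ge 1$, put $\tilde\balpha=(\balpha+c,\ \bbeta-c,\ \gamma\mathbf 1_L)\in\R^{k_0}$ for scalars $c,\gamma$ to be chosen, and let $(\tilde\r,\tilde\r)$ be the margin of the symmetric table $\tilde Z:=\psi'(\tilde\balpha\oplus\tilde\balpha)$, whose index set is partitioned into an ``$\r$-block'' of size $m$, a ``$\c$-block'' of size $n$, and a bulk of size $L$.

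\textbf{Well-definedness and tameness transfer.} First one must ensure every pairwise sum of coordinates of $\tilde\balpha$ lies in $\Theta^\circ$. The cross-sums $(\balpha(i)+c)+(\bbeta(j)-c)=\balpha(i)+\bbeta(j)$ are automatically in $\Theta^\circ$; the remaining sums $\balpha(i)+\balpha(i')+2c$, $\bbeta(j)+\bbeta(j')-2c$, $\balpha(i)+c+\gamma$, $\bbeta(j)-c+\gamma$, $2\gamma$ are to be placed in $\Theta^\circ$ by choosing $c,\gamma$, using that $\Theta^\circ$ is an \emph{unbounded} interval (hence contains a half-line) together with the shift-invariance \eqref{eq:params_non_identifiable} of $(\balpha,\bbeta)$. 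Granting $\tilde Z\in(A,B)^{k_0\times k_0}$, Lemma \ref{lem:strong_dual_MLE_typical}\,(ii) (equivalently Theorem \ref{thm:strong_duality_simple}) identifies $\tilde Z$ as the typical table of $(\tilde\r,\tilde\r)$ and $(\tilde\balpha,\tilde\balpha)$ as its MLE. Its $(\r\text{-block},\c\text{-block})$ sub-block equals $\psi'((\balpha+c)\oplus(\bbeta-c))=\psi'(\balpha\oplus\bbeta)=Z^{\r,\c}$, so if $(\tilde\r,\tilde\r)$ is $\delta$-tame, i.e.\ $\tilde Z\in[A_\delta,B_\delta]^{k_0\times k_0}$, then $Z^{\r,\c}\in[A_\delta,B_\delta]^{m\times n}$ and hence $(\r,\c)$ is $\delta$-tame. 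This works for every $k_0\ge m+n+1$, so $k_0$ may be taken arbitrarily large.

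\textbf{The range.} It remains to pick $c,\gamma,L$ so that $(\tilde\r,\tilde\r)$ lands in $(s,t)$. An $\r$-block row $i$ has sum $\sum_{i'}\psi'(\balpha(i)+\balpha(i')+2c)+\r(i)+L\,\psi'(\balpha(i)+c+\gamma)$, a $\c$-block row $j$ has sum $\c(j)+\sum_{j'}\psi'(\bbeta(j)+\bbeta(j')-2c)+L\,\psi'(\bbeta(j)-c+\gamma)$, and a bulk row has sum $\big(\text{$O(m+n)$ terms}\big)+L\,\psi'(2\gamma)$; dividing by $k_0=m+n+L$ and letting $L\to\infty$, these normalize to $\psi'(\balpha(i)+c+\gamma)$, $\psi'(\bbeta(j)-c+\gamma)$ and $\psi'(2\gamma)$, with finite-$L$ corrections bounded uniformly. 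Thus the problem reduces to choosing $c,\gamma$ so that $\balpha(i)+c+\gamma,\ \bbeta(j)-c+\gamma\in(\phi(s),\phi(t))$ for all $i,j$ and $2\gamma\in(\phi(s),\phi(t))$, i.e.\ to centering the shifted dual vectors inside $(\phi(s),\phi(t))$; a short computation shows this system is solvable once $\operatorname{spread}(\balpha),\operatorname{spread}(\bbeta)<\phi(t)-\phi(s)$ together with $\min\balpha+\min\bbeta>2\phi(s)-\phi(t)$ and $\max\balpha+\max\bbeta<2\phi(t)-\phi(s)$, and then choosing $L$ large makes the strict inequalities $s<\tilde\r(i)/k_0<t$ hold for all sufficiently large $k_0$.

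\textbf{Main obstacle.} The crux is the last point: verifying that the MLE of a margin obeying only $s<\r(i)/n,\c(j)/m<t$ and $\T(\r,\c)\cap(A,B)^{m\times n}\neq\emptyset$ can be shift-normalized to meet these spread/endpoint bounds, since the hypothesis constrains the \emph{margin} values but only loosely the duals. The plan for this is to absorb one endpoint inequality for free using the available direction of unboundedness of $\Theta^\circ$, to use the identity $\balpha(i)+\bbeta(j)=\phi\big(Z^{\r,\c}_{ij}\big)$ together with the MLE equations $\sum_j\psi'(\balpha(i)+\bbeta(j))=\r(i)\in(sn,tn)$ and its column analogue to pin down the relevant extreme values of the duals, and, if necessary, to precede the construction by a preliminary normalization of $(\r,\c)$ (cloning, or replacing a single row/column sum by an average) that does not enlarge $\operatorname{spread}(\balpha)$ or $\operatorname{spread}(\bbeta)$. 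I expect this dual-control step, rather than the block-algebra above, to be where the real work lies.
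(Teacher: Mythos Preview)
Your tameness–transfer mechanism (embedding $Z^{\r,\c}$ as an off–diagonal block of a symmetric typical table built from a concatenated dual vector, then invoking Lemma~\ref{lem:strong_dual_MLE_typical}(ii)) is exactly the paper's, and is correct.

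The ``main obstacle'' you flag, however, is genuine and is \emph{not} resolvable within your construction. Your range analysis shows that the bulk row averages converge to $\psi'(\balpha(i)+c+\gamma)$, $\psi'(\bbeta(j)-c+\gamma)$, $\psi'(2\gamma)$, so placing them all in $(s,t)$ forces $\operatorname{spread}(\balpha),\operatorname{spread}(\bbeta)<\phi(t)-\phi(s)$ and, more seriously, $\max\balpha+\max\bbeta<2\phi(t)-\phi(s)$. But $\max\balpha+\max\bbeta=\phi(\max_{i,j}Z^{\r,\c}_{ij})$, so you are asking for $\max Z^{\r,\c}<\psi'(2\phi(t)-\phi(s))$. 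This is precisely the tameness bound that Lemma~\ref{lem:reduction_symm} is a tool for \emph{proving} (it is the conclusion of Theorem~\ref{thm:tameness_non_compact}). The lemma must produce a valid symmetric margin even when $(\r,\c)$ is \emph{not} tame (the implication is then vacuous), and in that regime the dual spreads can be arbitrarily large; no amount of preliminary cloning or row–averaging of $(\r,\c)$ will shrink them, since those operations do not touch the extremal entries of $\balpha\oplus\bbeta$.

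The paper's construction sidesteps this completely by \emph{not} introducing a bulk at a new value. Assuming (up to reflection) $\Theta^\circ\supseteq(-\infty,a)$, it shifts so that $\balpha(m)=\bbeta(n)$, picks the dual with the smaller spread (say $\balpha$), and sets
\[
\tilde\balpha:=(\underbrace{\balpha,\ldots,\balpha}_{k\text{ copies}},\ \bbeta)\in\R^{km+n}.
\]
Every pairwise sum of coordinates of $\tilde\balpha$ is then at most $2\balpha(m)=\balpha(m)+\bbeta(n)\in\Theta^\circ$, and lower bounds are free by one–sided unboundedness, so $\tilde Z=\psi'(\tilde\balpha\oplus\tilde\balpha)$ is well–defined with \emph{no} hypothesis on dual spreads. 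For the range, each row sum of $\tilde Z$ equals $\r(i)+k\cdot(\text{a term comparable to }\c(\cdot))$ or $k\,\c(j)+O_k(1)$; dividing by $k_0=km+n$, the dominant pieces are exactly the original averages $\r(i)/n,\c(j)/m\in(s,t)$, so the strict inequalities $s<\tilde\r(\ell)/k_0<t$ follow for $k$ large directly from the hypothesis on $(\r,\c)$. The case split on which of $\balpha,\bbeta$ has smaller spread is what makes the comparison inequalities (e.g.\ $\sum_i\psi'(\balpha(1)+\balpha(i))\ge\c(1)$) go the right way.
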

	
	\begin{proof}
		Without loss of generality we assume $(-\infty,a)\subseteq \Theta^{\circ}$ for some $a\in \R$. 
		By Lemma \ref{lem:typical}, there exists an MLE $(\balpha,\bbeta)$ and a unique typical table $Z$ for $(\r,\c)$. By permuting the rows and columns,  without loss of generality assume that the coordinates in $\balpha$ and $\bbeta$ are ascending. We will consider to symmeri cases: (1) $\balpha(m)-\balpha(1)\le \bbeta(n)-\bbeta(1)$ and (2) $\balpha(m)-\balpha(1)> \bbeta(n)-\bbeta(1)$. We will first argue for case (1). 
		
		Assume $\balpha(m)-\balpha(1)\le \bbeta(n)-\bbeta(1)$. By shifting the MLE, without loss of generality also assume $\balpha(m)=\bbeta(n)$, so $\bbeta(1)\le \balpha(1)$. 
		Since $\Theta^{\circ}$ is an open interval and it is assumed to be unbounded, This implies that $2\balpha(m)=\balpha(m)+\bbeta(n)=2\bbeta(n)<a$. Fix an integer $k\ge 1$ and define $\tilde{\balpha}_{k}:=((\mathbf{1}_{k}\otimes \balpha )^{\top},\bbeta^{\top})^{\top}$, which is obtained by concatenating $\balpha$ $k$ times followed by $\bbeta$. Let $k_{0}:=km+n$. Define the following symmetric $k_{0}\times k_{0}$ matrix with $(k+1)\times (k+1)$ block structure: 
		\begin{align}
			\tilde{Z} := \psi'(\tilde{\balpha} \oplus \tilde{\balpha}^{\top}) = 
			\begin{bmatrix}
				\mathbf{1}_{k}\mathbf{1}_{k}^{\top}\otimes \psi'(\balpha\oplus \balpha^{\top})  & \mathbf{1}_{k}\otimes \psi'(\balpha\oplus \bbeta^{\top})  \\
				\mathbf{1}_{k}^{\top}\otimes \psi'(\balpha^{\top}\oplus \bbeta) & \psi'(\bbeta \oplus \bbeta^{\top}) 
			\end{bmatrix}
			.
		\end{align}
		Let $\tilde{\r}$ denote the row sums of $\tilde{Z}$. Then by Lemma \ref{lem:strong_dual_MLE_typical},  $(\tilde{\balpha},\tilde{\balpha})$ is an MLE for $(\tilde{\r},\tilde{\r})$ and $\tilde{Z}$ is the typical table for $(\tilde{\r},\tilde{\r})$. From the construction, it follows that $(\r,\c)$ is $\delta$-tame if  the symmetric margin $(\tilde{\r},\tilde{\r})$ is so. 
		
		Next, 
		since $\balpha(1)\ge \bbeta(1)$ and $\balpha(m)=\bbeta(n)$, 
		\begin{align}
			\tilde{\r}(1) &= \r(1)+ k  \sum_{1\le i \le m} \psi'(\balpha(1)+\balpha(i))  \ge \r(1) + \sum_{1\le i \le m} \psi'(\bbeta(1)+\balpha(i)) =\r(1) + k \, \c(1), \\ 
			\tilde{\r}(m) &= \r(m) + k \sum_{1\le i \le m} \psi'(\balpha(m)+\balpha(i))   = \r(m) +  k  \sum_{1\le i \le m} \psi'(\bbeta(n)+\balpha(i))=\r(m)+  k\, \c(n), \\
			\tilde{\r}(km+1) &= k \, \c(1) + \sum_{1\le i \le n} \psi'(\bbeta(1)+\bbeta(j)) \le k\, \c(1) + \r(1), \\
			\tilde{\r}(km+n) &= k\, \c(n) +   \sum_{1\le j \le n} \psi'(\bbeta(n)+\bbeta(j)) =  k\, \c(n) +  \sum_{1\le j \le n} \psi'(\balpha(m)+\bbeta(j)) = k\, \c(n)+\r(m).
		\end{align}
		It follows that the largest row sum of $\tilde{Z}$ is $\tilde{\r}(m)=\tilde{\r}(k_{0})\le k\c(n)+\r(m)$ and its smallest row sum is $\tilde{\r}(km+1)=k\c(1)+M$, where $M:=\sum_{1\le j \le n} \psi'(\bbeta(1)+\bbeta(j))$ is a constant that does not depend on $k$. By the hypothesis, $\tilde{\r}(k_{0})< ktm+tn \le tk_{0}$ and $\tilde{\r}(km+1)\ge ks'm+M$ for some $s'>s$. It follows that for $k$ sufficiently large, $\tilde{\r}(km+1)/k_{0}\ge (s+s')/2>s$, as desired. 
		
		For case (2), we switch the roles of $\balpha$ and $\bbeta$ (e.g., $\tilde{\bbeta}_{k}:=((\mathbf{1}_{k}\otimes \bbeta )^{\top},\balpha^{\top})^{\top}$) and apply the same argument for case (1). 
	\end{proof}

		Next, we establish a technical lemma, which essentially reduces the proof of Theorem \ref{thm:tameness_non_compact} to an extreme Barvinok margin.  Recall that we say a function $h(t)$ \textit{log-convex} if $\log h(x)$ is convex. 
		
		\begin{lemma}[Reducing symmetric margin to Barvinok margin]
			\label{lem:reduction_barvinok}
			Suppose $\psi''$ is  log-convex on $\Theta$.
			Fix a symmetric $n\times n$ margin $(\tilde{\r},\tilde{\r})$ such that $0<\tilde{\r}(1)\le \dots \le \tilde{\r}(n)$. Let  $(\tilde{\balpha},\tilde{\balpha})$ be its symmetric MLE. Then there exists a symmetric margin $(\r^{*},\r^{*})$ such that 
			\begin{align}
				\tilde{\r}(1)\le \r^{*}(1) = \cdots = \r^{*}(n-1) \le \r^{*}(n) \le \tilde{\r}(n)
			\end{align}
			and the corresponding symmetric MLE $(\balpha^{*},\balpha^{*})$ satisfies $\tilde{\balpha}(1)\le \balpha^{*}(1) \le  \balpha^{*}(n)=\tilde{\balpha}(n)$. 
		\end{lemma}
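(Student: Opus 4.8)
The plan is to realize the desired Barvinok margin as a member of an explicit one-parameter family obtained by ``collapsing'' the $n-1$ smallest coordinates of the MLE $\tilde\balpha$, and to pick the right parameter by an intermediate value argument. Sort so that $\tilde\balpha(1)\le\dots\le\tilde\balpha(n)$, write $a_j:=\tilde\balpha(j)$ and $b:=\tilde\balpha(n)$; if $a_1=\dots=a_{n-1}$ the margin $(\tilde\r,\tilde\r)$ already has the required form and we are done, so assume otherwise. For $\lambda\in[a_1,b]$ set $\balpha_\lambda:=(\lambda,\dots,\lambda,b)\in\R^n$ and let $(\r_\lambda,\r_\lambda)$ be the symmetric margin with $\r_\lambda(i)=\sum_{j}\psi'(\balpha_\lambda(i)+\balpha_\lambda(j))$; by Lemma \ref{lem:typical_MLE_equation} and Lemma \ref{lem:strong_dual_MLE_typical}, $(\balpha_\lambda,\balpha_\lambda)$ is the symmetric MLE of $(\r_\lambda,\r_\lambda)$, with $\r_\lambda(1)=\dots=\r_\lambda(n-1)=(n-1)\psi'(2\lambda)+\psi'(\lambda+b)$ and $\r_\lambda(n)=(n-1)\psi'(\lambda+b)+\psi'(2b)$. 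All the tilt parameters appearing lie in $\Theta^\circ$ since $\Theta^\circ$ is an interval containing $2a_1$, $a_1+b$, $2b$. Both $\lambda\mapsto\r_\lambda(1)$ and $\lambda\mapsto\r_\lambda(n)$ are continuous and strictly increasing ($\psi'$ being strictly increasing), and a termwise comparison using $a_1\le a_j\le b$ gives $\r_{a_1}(n)\le\tilde\r(n)\le n\psi'(2b)=\r_b(n)$; so there is a unique $\lambda^*\in[a_1,b]$ with $\r_{\lambda^*}(n)=\tilde\r(n)$. Set $\r^*:=\r_{\lambda^*}$. Then $\r^*(1)=\dots=\r^*(n-1)$, a termwise comparison gives $\r^*(1)\le\r^*(n)=\tilde\r(n)$, and $\psi'(\lambda^*+b)=\tfrac1{n-1}\sum_{j<n}\psi'(a_j+b)\ge\psi'(a_1+b)$ forces $\lambda^*\ge a_1$; since also $\lambda^*\le b$, the MLE $(\balpha^*,\balpha^*)=((\lambda^*,\dots,\lambda^*,b),(\lambda^*,\dots,\lambda^*,b))$ of $(\r^*,\r^*)$ satisfies $\tilde\balpha(1)\le\balpha^*(1)=\lambda^*\le\balpha^*(n)=b=\tilde\balpha(n)$. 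Only the inequality $\tilde\r(1)\le\r^*(1)$ remains.

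That inequality is the heart of the matter. Substituting $(n-1)\psi'(\lambda^*+b)=\sum_{j<n}\psi'(a_j+b)$ into $\r^*(1)=(n-1)\psi'(2\lambda^*)+\psi'(\lambda^*+b)$ and using $\tilde\r(1)=\sum_{j<n}\psi'(a_1+a_j)+\psi'(a_1+b)$, the claim $\r^*(1)\ge\tilde\r(1)$ rearranges to
\[
(n-1)\psi'(2\lambda^*)\ \ge\ \sum_{j=1}^{n-1}\psi'(a_1+a_j)\ -\ \frac1{n-1}\sum_{j=1}^{n-1}\bigl(\psi'(a_j+b)-\psi'(a_1+b)\bigr).
\]
Since $\psi'$ is increasing and $a_j\ge a_1$, the subtracted sum is nonnegative, so it suffices to show $(n-1)\psi'(2\lambda^*)\ge\sum_{j<n}\psi'(a_1+a_j)$. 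Define $\omega_0\in[a_1,a_{n-1}]$ by $\psi'(a_1+\omega_0)=\tfrac1{n-1}\sum_{j<n}\psi'(a_1+a_j)$; by monotonicity of $\psi'$ the desired bound is equivalent to $2\lambda^*\ge a_1+\omega_0$, and since $\omega_0\ge a_1$ it is implied by $\lambda^*\ge\omega_0$.

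The inequality $\lambda^*\ge\omega_0$ is precisely where log-convexity of $\psi''$ enters, and this is the step I expect to be the main obstacle. By their definitions, $\lambda^*$ and $\omega_0$ are the quasi-arithmetic means of $(a_1,\dots,a_{n-1})$ with respect to the generators $f:=\psi'(\cdot+b)$ and $g:=\psi'(\cdot+a_1)$, i.e. $\lambda^*=f^{-1}\!\bigl(\tfrac1{n-1}\sum_jf(a_j)\bigr)$ and $\omega_0=g^{-1}\!\bigl(\tfrac1{n-1}\sum_jg(a_j)\bigr)$. By the classical comparison theorem for quasi-arithmetic means, $\lambda^*\ge\omega_0$ holds for all inputs iff $g\circ f^{-1}$ is concave, equivalently $f''/f'\ge g''/g'$ pointwise (concretely, $\tfrac{d^2}{du^2}(g\circ f^{-1})=(g''f'-g'f'')/(f')^3$). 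Here $f''/f'(t)=\psi'''(t+b)/\psi''(t+b)$ and $g''/g'(t)=\psi'''(t+a_1)/\psi''(t+a_1)$; the function $\psi'''/\psi''=(\log\psi'')'$ is nondecreasing exactly because $\log\psi''$ is convex, and since $b\ge a_1$ this yields $f''/f'\ge g''/g'$, hence $\lambda^*\ge\omega_0$. Chaining the three paragraphs gives the lemma.

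The routine points to verify when writing out the details are: membership of every argument in $\Theta^\circ$ (from convexity of $\Theta$ and $\balpha(i)+\balpha(j)\in\Theta^\circ$), and the strict monotonicity and continuity needed for the intermediate value selection of $\lambda^*$ (from $\psi''>0$). The quasi-arithmetic mean comparison is standard once the generators' convexity indices are identified with $(\log\psi'')'$; note that, unlike the ``heart'' inequality, it uses only log-convexity of $\psi''$ and not its monotonicity.
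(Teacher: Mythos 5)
Your proof is correct, but it takes a genuinely different route from the paper's. The paper proves the lemma dynamically: it evolves the dual vector by a piecewise-smooth flow that repeatedly closes the largest gap among the first $n-1$ coordinates while freezing $\tilde{\balpha}(n)$, chooses the rate $a(t)=E_{n,i_k}/E_{n,i_k+1}$ so that $\dot{\r}_t(1)\ge 0$ and $\dot{\r}_t(n)\le 0$, and uses log-convexity of $\psi''$ exactly once, to verify the ratio inequality $\frac{\psi''(\balpha_t(1)+\balpha_t(i_k))}{\psi''(\balpha_t(1)+\balpha_t(i_k+1))}\ge\frac{\psi''(\balpha_t(n)+\balpha_t(i_k))}{\psi''(\balpha_t(n)+\balpha_t(i_k+1))}$ that makes this rate admissible; it then needs a separate contraction estimate to show the flow actually converges to a limit with equal first $n-1$ coordinates. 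You instead write down the target family $\balpha_\lambda=(\lambda,\dots,\lambda,b)$ directly, pin down $\lambda^*$ by matching the largest row sum through the intermediate value theorem, and reduce the only nontrivial inequality $\tilde{\r}(1)\le\r^*(1)$ to the comparison of two quasi-arithmetic means of $(a_1,\dots,a_{n-1})$ with generators $\psi'(\cdot+b)$ and $\psi'(\cdot+a_1)$; log-convexity of $\psi''$ enters through the classical criterion $f''/f'\ge g''/g'$ (i.e.\ monotonicity of $(\log\psi'')'=\psi'''/\psi''$, which exists since $\psi$ is smooth on $\Theta^\circ$). Your identification of $\lambda^*$ as an $f$-mean, hence $\lambda^*\in[a_1,a_{n-1}]$, also gives the MLE bounds $\tilde{\balpha}(1)\le\balpha^*(1)\le\balpha^*(n)=\tilde{\balpha}(n)$ immediately, and you even get $\r^*(n)=\tilde{\r}(n)$ exactly rather than just $\le$. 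What your approach buys is a static, self-contained argument with no convergence analysis of a flow; what the paper's buys is an explicit monotone deformation through intermediate margins (all with shrinking row-sum range), which is closer in spirit to how the lemma is used but is not needed for the statement itself. The domain checks you flag (all relevant sums lie in the interval $\Theta^\circ$, strict monotonicity of $\psi'$ for the IVT and for inverting the means) are exactly the routine points and they all go through.
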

		
		\begin{proof}
			Without loss of generality assume $A\le 0 < B$ and $n\ge 3$. The overall idea of the construction is to evolve the MLEs smoothly so that the first $n-1$ row sums merge together while the last coordinate of the MLE stays put.

			\textbf{1. Computing the Jacobian.}

			Next, for each $\balpha\in [\tilde{\balpha}(1),\tilde{\balpha}(n)]^{n}$, define the corresponding row margin $\r(\balpha)\in \R^{n}$ by 
			\begin{align}
				\r(\balpha)(i) = \sum_{j=1}^{n} \psi'(\balpha(i)+\balpha(j)) \qquad \textup{for $1\le i \le n$}. 
			\end{align}
			That is, $(\r(\balpha),\r(\balpha))$ is the symmetric margin corresponding to the symmetric MLE $(\balpha,\balpha)$. 
			Let $E(\balpha)=(E_{ij})_{i,j}$ denote the $n\times n$ positive symmetric matrix with $E_{ij}=\psi''(\balpha(i)+\balpha(j))$. Let $E_{i\bullet}$ and $E_{\bullet j}$ denotes the $i$th row sum and the $j$th column sum of $E$, respectively. Since $E$ is symmetric, $E_{\bullet i} = E_{i \bullet}$. Then the Jacobian of the margin $\r$ w.r.t. the MLE $\balpha$ is given by 
			\begin{align}
				J_{\r(\balpha);\balpha} = 
				\begin{bmatrix} 
					E_{11} +  E_{\bullet 1} & E_{12} & E_{13} & \cdots & E_{1n} \\
					E_{21} & E_{22} + E_{\bullet 2} & E_{23} &\cdots  & E_{2n} \\
					\vdots &&&& \vdots \\
					E_{n1} & E_{n2} & \cdots & E_{n,n-1} & E_{nn} + E_{\bullet n} 
				\end{bmatrix}
				.
			\end{align}

			\textbf{2. Evolution of the MLE to synchronize the first $n-1$ coordinates.} 
			
			Now we define a piecewise smooth evolution of the MLE $\balpha_{t}$, $t\ge 0$ with $\balpha_{0}=\tilde{\balpha}$. Our construction is so that $\balpha_{t}$ converges to $\balpha^{*}$ where 
			\begin{align}\label{eq:alpha_prime_intermediate}
				\tilde{\balpha}(1)\le	\balpha^{*}(1)=\cdots=\balpha^{*}(n-1) \le \balpha^{*}(n)=\tilde{\balpha}(n).
			\end{align}
			Furthermore, the corresponding row sum vector, say $\r^{*}$, should satisfy 
			\begin{align}\label{eq:row_sums_shrink}
				\tilde{\r}(1) \le	\r^{*}(1) \le \r^{*}(n) \le \tilde{\r}(n). 
			\end{align}
			The evolution $\balpha_{t}$ will be defined inductively.  Let $i_{1}\in \{1,\dots,n-2 \}$ be such that the gap 
			$\balpha_{0}(i_{1}+1)-\balpha_{0}(i_{1})$ is maximized.  Then we evolve $\balpha_{t}$ so that its $i_{1}$th coordinate increases and its $i_{1}+1$st coordinate decreases until some time $t_{1}$ that the gap becomes zero. During this time interval $[0,t_{1}]$, we wish the minimum and the maximum row sums move toward each other so that \eqref{eq:row_sums_shrink} holds at intermediate times. The time derivative $\dot{\balpha}_{t}$ is defined as 
			\begin{align}\label{eq:alpha_t_evolution}
				\dot{\balpha}_{t} =
				\mathbf{e}_{i_{1}} - 	a(t) \mathbf{e}_{i_{1}+1} \quad  \textup{with} \quad a(t)=\frac{E_{n,i_{1}}(\balpha_{t})}{E_{n,i_{1}+1}(\balpha_{t})} \quad \textup{as long as $\balpha_{t}(i_{1})<\balpha_{t}(i_{1}+1)$},
			\end{align}
			where  $\mathbf{e}_{i}$ denotes the $i$th standard basis vector in $\R^{n}$ 
			From now we will supress the dependence on $\balpha_{t}$  in $E_{ij}(\balpha_{t})$. Let  $t_{1}=\inf\{ t\ge 0\,:\, \balpha_{t}(i_{1})=\balpha_{t}(i_{1}+1) \}$. Note that $t_{1}\le \balpha_{0}(i_{1}+1)-\balpha_{0}(i_{1})$. 
			
			Starting from time $t_{1}$, let $i_{2}\in \{1,\dots,n-2\}$ such that the gap  $\balpha_{t_{1}}(j_{2}+1)-\balpha_{t_{1}}(i_{2})$ is maximized and we define the dynamics similary using the index $i_{2}$ until some finite time $t_{2}\ge t_{1}$ such that $\balpha_{t_{2}}(i_{2})=\balpha_{t_{2}}(i_{2}+1)$, and so on. During this process, the last coordinate $\balpha_{0}(n)$ remains the same. 
			
			Next, we will observe that the $\balpha_{t}$ converges to some $\balpha^{*}$ satisfying \eqref{eq:alpha_prime_intermediate} as $t\rightarrow\infty$. To this effect, we claim 
			\begin{align}\label{eq:alpha_contraction}
				\balpha_{t_{k}}(n-1) - \balpha_{t_{k}}(i_{k}) \le \left( 1- \frac{1}{2n} \right)	(\balpha_{t_{k-1}}(n-1) - \balpha_{t_{k-1}}(i_{k}) ).
			\end{align}
			To show the claim, write $\mathcal{A}=\balpha_{t_{k-1}}(n-1) - \balpha_{t_{k-1}}(i_{k})$ and $\mathcal{A}'=\balpha_{t_{k}}(n-1) - \balpha_{t_{k}}(i_{k})$. Denote  $\Delta_{j}:=\balpha_{t_{k-1}}(j+1)-\balpha_{t_{k-1}}(j)\ge 0$ so that we can write $\mathcal{A}=\sum_{i_{k}\le j< n-1} \Delta_{j}$. Since $a(t)\in [0,1]$ for all $t\ge 0$, for each $k\ge 1$, the two coordinates $\balpha_{t_{k-1}}(i_{k})<\balpha_{t_{k-1}}(i_{k}+1)$ (setting $t_{0}=0$) are replaced by some value between their mean and the larger one $\balpha_{t_{k-1}}(i_{k}+1)$ at time $t_{k}$. In particular,  
			\begin{align}
				\balpha_{t_{k}-1}(i_{k}+1)- \balpha_{t_{k}}(i_{k}+1) \le	
				\Delta_{i_{k}}/2. 
			\end{align}
			Writing $	\mathcal{A}' = \Delta_{n-2}+\cdots + \Delta_{i_{k}+1} + \balpha_{t_{k-1}}(i_{k}+1) - \balpha_{t_{k}}(i_{k}+1)$ (using $\balpha_{t_{k}}(i_{k})=\balpha_{t_{k}}(i_{k}+1)$), 
			\begin{align}
				\mathcal{A}' 
				&\le \Delta_{n-2}+\cdots + \Delta_{i_{k}+1} +  \frac{\Delta_{i_{k}}}{2}  = \mathcal{A} - \frac{\Delta_{i_{k}}}{2}. 
			\end{align}
			Since $\Delta_{i_{k}}$ is the largest gap among the $n-1$ ones at time $t_{k-1}$, 
			\begin{align}
				\mathcal{A}-\mathcal{A}' \ge \frac{\Delta_{i_{k}}}{2} \ge \frac{1}{2n} \sum_{i_{k}\le j < n-1} \Delta_{j}  = \frac{1}{2n}\mathcal{A}. 
			\end{align}
			Simplifying the above shows the claim \eqref{eq:alpha_contraction}.

			Now let  $\liminf_{k\rightarrow \infty} i_{k}=i^{*}\ge 1$.  Then $i_{k}\ge i^{*}$ for all $k\ge k_{0}$ for some $k_{0}\ge 1$. In turn, $\balpha_{t_{k-1}}(n-1)-\balpha_{t_{k-1}}(i^{*})$ is a decreasing function in $t$ for $t\ge t_{k_{0}}$ and it contracts by at least a constant factor due to \eqref{eq:alpha_contraction} 
			whenever $i_{k}=i^{*}$ for $k\ge k_{0}$. Thus  $\balpha_{t_{k-1}}(n-1)-\balpha_{t_{k-1}}(i^{*})\rightarrow 0$ as $k\rightarrow\infty$. In particular, $\balpha_{t_{k-1}}(i_{k}+1)-\balpha_{t_{k-1}}(i_{k}) \rightarrow 0$ as $k\rightarrow\infty$. But since this is the maximum gap at time $t_{k-1}$, we deduce $\balpha_{t_{k-1}}(n-1)-\balpha_{t_{k-1}}(1)\rightarrow 0$ as $k\rightarrow\infty$. Also note that $\balpha_{t_{k-1}}(i^{*})$ is increasing and $\balpha_{t_{k-1}}(n-1)$ is decreasing for $k\ge k_{0}$. 
			This shows that $\balpha_{t_{k-1}}\rightarrow \balpha^{*}$ as $t\rightarrow\infty$ for some $\balpha^{*}$ satisfying \eqref{eq:alpha_prime_intermediate}.

			\vspace{0.1cm} 
			\textbf{3. Contraction of the range of the row sums.}

			Lastly, denoting $\r_{t}:=\r(\balpha_{t})$, we will show that  $\dot{\r}_{t}(1)\ge 0$ and $\dot{\r}(n)\le 0$ during $(t_{k-1},t_{k})$ for each $k\ge 1$. This is enough to conclude \eqref{eq:row_sums_shrink}. Fix $k\ge $1. 
			We wish to show 
			\begin{align}\label{eq:row_sum_evolution_claim}
				\dot{\r}_{t}(1) &=  E_{1,i_{k}}+E_{1\bullet}\mathbf{1}(i_{k}=1) - E_{1,i_{k}+1} a(t)  \ge 0 , \\
				\dot{\r}_{t}(n) &=  E_{n,i_{k}} - E_{n,i_{k}+1} a(t)  \le 0,
			\end{align}			
			The expressions for the time derivates of the extreme row sums are clear by chain rule and \eqref{eq:alpha_t_evolution}. The choice $a(t)=E_{n,i_{k}}/E_{n,i_{k}+1}$ in \eqref{eq:alpha_t_evolution} is valid if $\frac{E_{1,i_{k}}}{E_{1,i_{k}+1}}\ge \frac{E_{n,i_{k}}}{E_{n,i_{k}+1}}$, equivalently, 
			\begin{align}\label{eq:log_convex_psi_double}
				\frac{\psi''(\balpha_{t}(1)+\balpha_{t}(i_{k}))}{\psi''(\balpha_{t}(1)+\balpha_{t}(i_{k}+1))} \ge 	\frac{\psi''(\balpha_{t}(n)+\balpha_{t}(i_{k}))}{\psi''(\balpha_{t}(n)+\balpha_{t}(i_{k}+1))}. 
			\end{align}
			Recall that if $h(\cdot)$ is positive and log-convex, then for $x\le y$ and $z\le w$, we have 
			\begin{align}
				\frac{h(x+z)}{h(x+w)} \ge \frac{h(y+z)}{h(y+w)}.
			\end{align}
			Since the dynamics respects the ordering $\balpha_{t}(1)\le \cdots \le \balpha_{t}(n)$, \eqref{eq:log_convex_psi_double} follows since $\psi''$ is positive and log-convexity. This finishes the proof. 
		\end{proof}

		We are now ready to establish the first part of Theorem \ref{thm:tameness_non_compact}. 
		
		\begin{proof}[\textbf{Proof of the first part of Theorem \ref{thm:tameness_non_compact}}] 
			Suppose \eqref{eq:convex_mean_tame_condition} holds. 
			By Lemma \ref{lem:reduction_symm}, without loss of generality, we assume $\r=\c$ and $m=n\ge N_{0}$ for some large constant $N_{0}$ to be determined. Let $(\balpha,\balpha)$ be the unique symmetric MLE for $(\r,\r)$. Without loss of generality, assume $n\ge 3$ and  $\r(1)\le \dots \le \r(n)$. Then $\balpha(1)\le \dots \le \balpha(n)$. Let $Z=(z_{ij})$ denote the typical table for $(\r,\r)$. By the hypothesis, we may choose $\delta>0$ small enough so that 
			\begin{align}
				\phi(A)<\phi(A_{\delta}) < 3\phi(s)-2\phi(t) \quad \textup{and} \quad    2\phi(t)-\phi(s) < \phi(B_{\delta}) < \phi(B).
			\end{align}

			We will first show $z_{n,n}\le B_{\delta}$. 
			Let $\balpha^{*}$ and $\r^{*}$ be as in Lemma \ref{lem:reduction_barvinok}. Since $ \balpha(n)= \balpha^{*}(n)$, it suffices to upper bound $\balpha^{*}(n)$. 
			By the margin condition, Lemma \ref{lem:reduction_barvinok}, and Jensen's inequality ($\psi'$ is convex since $\psi''$ is increasing by the hypothesis), 
			\begin{align}\label{eq:pf_tame_thm_1}
				s \le n^{-1} \r(1)\le n^{-1} \r^{*}(1) &= (1-n^{-1}) \psi'(2\balpha^{*}(1)) + n^{-1} \psi'(\balpha^{*}(1)+\balpha^{*}(n)), \\
				t \ge n^{-1}\r(n) \ge  n^{-1}\r^{*}(n) &= (1-n^{-1} )\psi'(\balpha^{*}(1)+\balpha^{*}(n)) + n^{-1} \psi'(2\balpha^{*}(n)) \\
				&\ge \psi'\big((1+n^{-1}) \balpha(n) + (1-n^{-1})\balpha^{*}(1)\big) \ge \psi'\big(\balpha^{*}(n) + \balpha^{*}(1)\big).
			\end{align}
			Combining the two inequalities, we get $s \le (1-n^{-1}) \psi'(2\balpha^{*}(1)) + n^{-1} t$, 
			which yields 
			\begin{align}
				\phi\big( (1-n^{-1})^{-1} s - n^{-1}t  \big) \le 2\balpha^{*}(1). 
			\end{align}
			Then combining the above with  $t\ge \psi'(\balpha^{*}(1)+\balpha^{*}(n))$ from  \eqref{eq:pf_tame_thm_1}, 
			\begin{align}\label{eq:alpha_n_upper_bd}
				2\balpha^{*}(n) \le 2\phi(t) - 2\balpha^{*}(1) \le 2\phi(t) -   \phi\big( (1-n^{-1})^{-1} s - n^{-1}t  \big). 
			\end{align}
			By continuity of $\phi$, the right-hand side above converges to $2\phi(t)-\phi(s)$ as $n\rightarrow\infty$. Hence exists a constant $N_{0}$ depending only on $\mu,s,t$ such that the right-hand side above is well-defined and it is at most $\phi(B_{\delta})$ for all $n\ge N_{0}$. Recalling $\balpha(n)= \balpha^{*}(n)$, this shows 
			\begin{align}
				z_{n,n} = \psi'(2\balpha(n)) \le \psi'(\phi(B_{\delta}))=B_{\delta} \qquad \textup{for all $n\ge N_{0}$}. 
			\end{align}

			Next, it now remains to show $z_{11}\ge A_{\delta}$. Note that 
			\begin{align}
				s \le n^{-1} \r(1) = n^{-1} \sum_{j=1}^{n} \psi'(\balpha(1)+\balpha(j)) \le \psi'(\balpha(1)+\balpha(n)).
			\end{align}
			Using the upper bound on $\balpha(n)$ in \eqref{eq:alpha_n_upper_bd}, this gives 
			\begin{align}
				2 \balpha(1) \ge2 \phi(s) - 2\balpha(n) &\ge 2\phi(s) -  2\phi(t) +  \phi\big( (1-n^{-1})^{-1} s - n^{-1}t  \big).
			\end{align}
			The last expression above converges to $3\phi(s) - 2\phi(t)$. 
			By enlarging $N_{0}$ if necessary, it follows that the last expression above is at least $\phi(A_{\delta})$ for all $n\ge N_{0}$. This yields $z_{11}=\psi'(2\balpha(1))\ge A_{\delta}$, as desired. 
		\end{proof}

		Instead of proving the second part of Theorem \ref{thm:tameness_non_compact} directly, we will establish a stronger result below on the sharp phase transition of typical tables for Barvinok margins. Essentially for Barvinok margins, the sufficient condition for tameness in Theorem \ref{thm:tameness_non_compact} is also necessary. 
		A special case of this result, specifically for contingency tables, was previously established by Dittmer, Lyu, and Pak \cite{dittmer2020phase}. The argument presented here for the general case is significantly simpler.

		\begin{prop}[Sharp phase transition in typical tables  for Barvinok margins]\label{prop:z_tame_unbounded_gen}
			Suppose $\mu$ is a mearsure on $\R_{\ge 0}$ such that $A=0$, $B=\infty$, and $\phi(B)<\infty$. Fix constants $s,t\in (A,B)$ with $s\le t$ and a $\rho\in [0,1)$. Fix two converging sequences $s_{n}\rightarrow s$ and $t_{n}\rightarrow t$ in $(A,B)$. 
			Consider the following symmetric margin $(\r,\r)$ with 
			\begin{align}\label{eq:def_barvinok_margin1}
				\r  :=  \bigl( \underbrace{t_{n}n  ,\ldots, t_{n} n}_{\lfloor n^{\rho} \rfloor} ,   \underbrace{s_{n}n  , \ldots,  s_{n}n }_{n-\lfloor n^{\rho} \rfloor}  \bigr)
				\, \in \, \R_{\ge 0}^{n}\ts.
			\end{align}
			Then the following hold: 
			\begin{description}[itemsep=0.1cm]
				\item[(i)] If $2\phi(t)-\phi(s) < \phi(B)$, then 
				\begin{align}
					&z_{n+1,n+1} = s_{n} + O(n^{\rho-1}), \quad z_{1,n+1} = t_{n} +O(n^{\rho-1}), \\
					& z_{11} = \psi'\left( 2\phi(t_{n} - O(n^{\rho-1})) -  \phi(s_{n}  - O(n^{\rho-1}))  \right) =O(1).
				\end{align}
				In particular, for some fixed $\delta>0$, $(\r,\c)$ is $\delta$-tame for all $n\ge 1$. 
				
				\item[(ii)] If $2\phi(t)-\phi(s) >\phi(B)$, then 
				\begin{align}
					&z_{n+1,n+1} = s_{n} + O(n^{\rho-1}), \quad z_{1,n+1} = \psi'\left( \frac{a+\phi(s_{n}+\eps)}{2} +o(1) \right) , \\ 
					& 	\lfloor n^{\rho-1} \rfloor \psi'(2\alpha_{1}) = s_{n} - \psi'\left(\frac{a+\phi(t_{n}+O(n^{\rho-1}))}{2} + o(1)\right) = \Omega(1).
				\end{align}
				In particular, $(\r,\c)$ is not $\delta$-tame for all $n\ge 1$ for any fixed $\delta>0$. 
			\end{description}
		\end{prop}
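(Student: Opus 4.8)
The plan is to exploit the two--block structure forced by symmetry, reduce the problem to two scalar MLE equations in $\kappa:=\lfloor n^{\rho}\rfloor/n\to 0$, and let the critical inequality $2\phi(t)-\phi(s)\lessgtr\phi(B)$ decide whether the limiting system admits an interior solution. First, since $A=0<B=\infty$ the Fisher--Yates matrix $(r_ir_j/\sum_k r_k)_{ij}$ lies in $\mathcal{T}(\r,\r)\cap(A,B)^{n\times n}$, so by Lemmas~\ref{lem:typical} and~\ref{lem:strong_dual_MLE_typical} the typical table $Z$ and a unique standard MLE exist for every $n$. Permuting indices within the large block (of size $k:=\lfloor n^{\rho}\rfloor$) or within its complement leaves the margin, hence the unique standard MLE, invariant, and combined with $\r=\c$ this lets us take a symmetric MLE $(\balpha,\balpha)$ that is constant on each block, say $\alpha_1$ on the large block and $\alpha_2$ on the small one. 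Lemma~\ref{lem:typical_MLE_equation} then collapses to
\begin{align}\label{eq:barv_MLE1}
\kappa\,\psi'(2\alpha_1)+(1-\kappa)\,\psi'(\alpha_1+\alpha_2)=t_n,\qquad \kappa\,\psi'(\alpha_1+\alpha_2)+(1-\kappa)\,\psi'(2\alpha_2)=s_n,
\end{align}
and the three block values of $Z$ are $z_{11}=\psi'(2\alpha_1)$, $z_{1,\cdot}=\psi'(\alpha_1+\alpha_2)$, $z_{\cdot,\cdot}=\psi'(2\alpha_2)$ (these are $z_{11}$, $z_{1,n+1}$, $z_{n+1,n+1}$ in the notation of the statement after relabelling the small block).

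From~\eqref{eq:barv_MLE1} one reads off $\psi'(\alpha_1+\alpha_2)\le t_n/(1-\kappa)$ and $\psi'(2\alpha_2)\le s_n/(1-\kappa)$, and feeding the first bound into the second equation gives $\psi'(2\alpha_2)\ge(s_n-\kappa t_n/(1-\kappa))/(1-\kappa)\to s>0$. Since $\psi'$ is an increasing bijection $\Theta^{\circ}=(-\infty,\phi(B))\to(0,\infty)$, this confines $2\alpha_2$ to a compact subset of $\Theta^{\circ}$, and then $\alpha_1+\alpha_2=\tfrac12(2\alpha_1)+\tfrac12(2\alpha_2)\le\tfrac12\phi(B)+\tfrac12(2\alpha_2)$ is bounded above away from $\phi(B)$ while $2\alpha_1\le\phi(B)$; a symmetric argument rules out $\alpha_1\to-\infty$, so $(\alpha_1,\alpha_2)$ ranges over a compact set. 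In particular $\psi'(\alpha_1+\alpha_2)$ is bounded, so the second equation of~\eqref{eq:barv_MLE1} yields $z_{\cdot,\cdot}=\psi'(2\alpha_2)=s_n+O(\kappa)=s_n+O(n^{\rho-1})$ in \emph{both} regimes, which is the first asserted estimate.

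Now let $(\alpha_1^{*},\alpha_2^{*})$ be any subsequential limit. The second equation of~\eqref{eq:barv_MLE1} forces $\psi'(2\alpha_2^{*})=s$, i.e.\ $\alpha_2^{*}=\phi(s)/2<\phi(B)/2$, in either regime. For $\alpha_1^{*}$ there are two cases: if $2\alpha_1^{*}<\phi(B)$, the first equation forces $\psi'(\alpha_1^{*}+\phi(s)/2)=t$, hence $2\alpha_1^{*}=2\phi(t)-\phi(s)$, which is consistent with $2\alpha_1^{*}<\phi(B)$ exactly in the subcritical regime; if $2\alpha_1^{*}=\phi(B)$, then $\psi'(\alpha_1+\alpha_2)\to\psi'\big(\tfrac{\phi(B)+\phi(s)}{2}\big)$ (finite, as $\phi(s)<\phi(B)$) and the first equation gives $\kappa\,\psi'(2\alpha_1)\to t-\psi'\big(\tfrac{\phi(B)+\phi(s)}{2}\big)$, a quantity that is $>0$ exactly in the supercritical regime and must be $\ge0$, so this case is impossible when subcritical. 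Thus in case \textbf{(i)} $\alpha_1\to\phi(t)-\phi(s)/2$ and $(\alpha_1,\alpha_2)$ converges to an interior point of $\Theta^{\circ}$; on a fixed compact neighbourhood $\psi'$ and $\phi$ are Lipschitz, so~\eqref{eq:barv_MLE1} gives $z_{1,\cdot}=t_n+O(n^{\rho-1})$, $\alpha_2=\tfrac12\phi(s_n+O(n^{\rho-1}))$, $\alpha_1+\alpha_2=\phi(t_n+O(n^{\rho-1}))$, hence $z_{11}=\psi'\big(2\phi(t_n+O(n^{\rho-1}))-\phi(s_n+O(n^{\rho-1}))\big)=O(1)$; since the three block values converge to $\psi'(2\phi(t)-\phi(s))$, $t$, $s$, all strictly inside $(0,\infty)$, there is $\delta_0>0$ with $A_{\delta_0}\le Z\le B_{\delta_0}$ for large $n$, and $\delta:=\min\{\delta_0,\dots\}$ over the finitely many remaining $n$ (each $\delta$-tame for some positive parameter) works uniformly. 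In case \textbf{(ii)} the first case is excluded, so $\alpha_1\to\tfrac12\phi(B)$, $z_{11}=\psi'(2\alpha_1)\to\psi'(\phi(B)^{-})=\infty$, and more precisely $\kappa\,z_{11}=t_n-(1-\kappa)\psi'(\alpha_1+\alpha_2)\to t-\psi'\big(\tfrac{\phi(B)+\phi(s)}{2}\big)=\Omega(1)$; combining with $\alpha_2=\tfrac12\phi(s_n+O(n^{\rho-1}))$ and $\alpha_1=\tfrac12\phi(B)+o(1)$ yields $z_{1,\cdot}=\psi'\big(\tfrac12\phi(B)+\tfrac12\phi(s_n+O(n^{\rho-1}))+o(1)\big)$, i.e.\ the remaining estimates (with $a=\phi(B)$); and since $z_{11}\to\infty$, eventually $z_{11}>1/\delta=B_{\delta}$ for every fixed $\delta>0$, so $(\r,\r)$ is not $\delta$-tame.

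The technical heart is the dichotomy in the third paragraph: one must show $(\alpha_1,\alpha_2)$ remains in a fixed compact subset of $\Theta^{\circ}$ in the subcritical case, and that $2\alpha_1$ is pushed to the boundary $\phi(B)$ in the supercritical case, and the only mechanism available is the sign of $t-\psi'\big(\tfrac{\phi(B)+\phi(s)}{2}\big)$, equivalently of $2\phi(t)-\phi(s)-\phi(B)$; care is needed to see that $\psi'(\alpha_1+\alpha_2)$ converges before applying the first equation of~\eqref{eq:barv_MLE1}. Everything else—the two--block reduction, the a priori bounds, and the $O(n^{\rho-1})$ rates from Lipschitzness of $\psi',\phi$ on compacta—is routine, and working directly with the scalar equations~\eqref{eq:barv_MLE1} rather than with the combinatorics behind $\psi$ is exactly what shortens the argument relative to \cite{dittmer2020phase}.
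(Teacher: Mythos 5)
Your proof is correct and follows essentially the same route as the paper's: reduce by symmetry to the two scalar MLE equations for the two-block structure, bound $\psi'(\alpha_1+\alpha_2)$ and $\psi'(2\alpha_2)$ a priori, and let the sign of $2\phi(t)-\phi(s)-\phi(B)$ decide whether $2\alpha_1$ converges to the interior value $2\phi(t)-\phi(s)$ or is pushed to the boundary $\phi(B)$, with the $O(n^{\rho-1})$ rates coming from $\lfloor n^{\rho}\rfloor/n$. Your subsequential-limit packaging and the uniform-in-$n$ tameness via a minimum over the finitely many remaining $n$ are only cosmetic refinements of the paper's direct $\eps$-bookkeeping (whose displayed equations in fact have the roles of $s_n$ and $t_n$ swapped; your orientation is the consistent one).
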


		\begin{proof}
			Note that for each $n$, $\mathcal{T}(\r,\c)$ contains a positive real-valued matrix since it contains the Fisher-Yates table $(\r(i)\c(j)/N)_{i,j}$ where $N$ denotes the total sum. Hence by Lemma \ref{lem:typical}, there exists a unique typical table $Z=Z^{\r,\c}$. By Lemma \ref{lem:strong_dual_MLE_typical}, there exists an MLE $(\balpha,\bbeta)$ for margin $(\r,\c)$.  By shifting the MLE, we may assume that $\balpha(1)=\bbeta(1)$. By symmetry, $Z$ is symmetric, so this implies $\balpha=\bbeta$. Denoting $\alpha_{1}:=\balpha(1)$ and $\alpha_{2}:=\balpha(n+1)$, we have the following system of equations:
			\begin{align}\label{eq:barvinok_PT_gen_main}
				s_{n} n &= \lfloor n^{\rho} \rfloor \psi'(2\alpha_{1}) + n \psi'(\alpha_{1}+\alpha_{2}) \\
				t_{n} n &= \lfloor n^{\rho} \rfloor \psi'(\alpha_{1}+\alpha_{2}) + n \psi'(2\alpha_{2}).
			\end{align}
			Since $\psi'\ge 0$, from the first equation, we deduce  $\psi'(\alpha_{1}+\alpha_{2} ) \le s_{n}= O(1)$.  From the second equation, 
			\begin{align}\label{eq:barvinok_PT_gen_a2}
				2\alpha_{2} =  \phi(t_{n}  - O(n^{\rho-1})). 
			\end{align}
			It follows that, dropping the term $\lfloor n^{\rho} \rfloor \psi'(2\alpha_{1})\ge 0$ from the first equation in \eqref{eq:barvinok_PT_gen_main}, 
			\begin{align}\label{eq:barvinok_PT_gen_a1}
				\alpha_{1} \le \phi(s_{n}) -  \phi(t_{n}- O(n^{\rho-1}))/2.
			\end{align}

			Now suppose $2\phi(t)-\phi(s) < \phi(B)$. Since $\phi$ is continuous, there exists $\eps>0$ such that 
			\begin{align}
				2\phi(t+\eps)+\eps<\phi(s+\eps) + \phi(B). 
			\end{align}
			From \eqref{eq:barvinok_PT_gen_a1}, it follows that for all sufficiently large $n\ge 1$, $2\alpha_{1} \le \phi(B)-\eps$. 
			Hence $\psi'(2\alpha_{1})=O(1)$, so from the first equation in \eqref{eq:barvinok_PT_gen_main}, $\alpha_{1}+\alpha_{2} = \phi(s_{n} - O(n^{\rho-1}))$, 
			so using \eqref{eq:barvinok_PT_gen_a2}, 
			\begin{align}
				\alpha_{1}= \phi(s_{n} - O(n^{\rho-1})) -  \phi(t_{n}  - O(n^{\rho-1}))/2 < \phi(B) -\eps/2. 
			\end{align}
			Therefore in this case, $(\r,\r)$ is $\delta$-tame for a fixed $\delta$ for all $n\ge 1$. 
			
			Next, suppose $2\phi(t)-\phi(s) > \phi(B)$. Then there exists $\eps>0$ such that for all $n$ large enough, 
			\begin{align}
				2\phi(s_{n})-\eps>\phi(t_{n}+\eps) + \phi(B).  
			\end{align}
			Using $2\alpha_{1}\le a$ and  the first equation in \eqref{eq:barvinok_PT_gen_main}, for all sufficiently large $n\ge 1$, 
			\begin{align}
				\alpha_{1}+\alpha_{2} \le \frac{\phi(B)}{2} +\alpha_{2} = \frac{\phi(B)+\phi(t_{n}+\eps)}{2} \le \phi(s_{n})-\eps.  
			\end{align}
			Thus, in the first equation in \eqref{eq:barvinok_PT_gen_main},  $n\,\psi'(\alpha_{1}+\alpha_{2})$ can be at most $n(\phi(B)+\phi(t_{n}+\eps))/2$ and the other term $\lfloor n^{\rho} \rfloor \psi'(2\alpha_{1})$ must make up for the linearly large remainder term. Namely, 
			\begin{align}
				\lfloor n^{\rho-1} \rfloor \psi'(2\alpha_{1}) = t_{n} - \psi'(\alpha_{1}+\alpha_{2}) \ge \eps
			\end{align}
			for all $n$ large enough, so we have  $2\alpha_{1} \ge \phi(\eps n^{1-\rho})$. This yields  $\alpha_{1}\nearrow \phi(B)/2$ as $n\rightarrow\infty$. From this,
			\begin{align}
				\lfloor n^{\rho-1} \rfloor \psi'(2\alpha_{1}) &= s_{n} - \psi'\left(\frac{\phi(B)+\phi(t_{n}+O(n^{\rho-1}))}{2} + o(1)\right).
			\end{align}
		\end{proof}

		\begin{remark}\label{rmk:Barvionok_Lebesgue}
			Recently in \cite{barvinok2024quick}, Barvinok and Rudelson observed that the Barvinok margin $\r=\c=(n,\dots,n,\lambda n)$ is $\delta$-tame if $\lambda<2$ and it is not if $\lambda>2$ when $\mu$ is the Lebesgue measure on $\R_{\ge 0}$. In particular, they noted that $z_{nn}=\Omega(n)$. Our Prop. \ref{prop:z_tame_unbounded_gen} implies a more precise asymptotics of the typical table in the supercritical case $\lambda>2$. For instance, $z_{nn}=(\lambda-2+o(1))n + O(n^{-1})$.
		\end{remark}

		We can now quickly deduce Corollary \ref{cor:sharp_tame_CT} from the results we established above. 
		
		\begin{proof}[\textbf{Proof of Corollary \ref{cor:sharp_tame_CT}}]
			First, we will show that $ t/ s<\lambda_{c}$ implies tameness. All base measures in the statement satisfy the hypothesis of Theorem \ref{thm:tameness_non_compact} (see Sec. \ref{sec:examples}). 
			Since $\phi(A)=\infty$, Theorem \ref{thm:tameness_non_compact} implies tameness if $2\phi(t)-\phi(s)<0=\phi(\infty)$. A simple computation using explicit forms of $\phi$ shows that this condition is simply $t/s<\lambda_{c}$, where the critical ratio $\lambda_{c}$ is given in \eqref{eq:critical_ratio}. 
			
			Lastly, note that by  Proposition \ref{prop:z_tame_unbounded_gen}, the typical table corresponding to the Barvinok margin \eqref{eq:def_barvinok_margin1} with $c_{n} =  t+o(1)$, $s_{n}= s+o(1)$, and $2\phi( t)<\phi( s)$ 
			blows up in the $(1,1)$ entry as the size of the matrix tends to infinity. Note that $2\phi( t)<\phi( s)$  if and only if $ t/ s<\lambda_{c}$ for both measures $\mu$ we consider here. This finishes the proof. 
		\end{proof}

		Lastly in this section, we prove the sharp condition for tame margins when $B<\infty$ stated in Theorem \ref{thm:z_uniform_bd_tame}. Our proof is a combination of the symmetrization technique (Lem. \ref{lem:reduction_symm}) and a minor modification of the proof of  \cite[Lem. 12.3]{barvinok2013number}.

		\begin{proof}[\textbf{Proof of Theorem \ref{thm:z_uniform_bd_tame}}]

			By shifting, we may assume $A=0\le B<\infty$. Then the inequality \eqref{eq:EG_condition_gen} reduces to $(s+t)^{2} < 4Bs$. 
			We first argue that for each $(s,t)\in (0,B)^{2}$ with $s\le t$,
			$(s,t)\notin \Omega(\mu)$ if  and $(s+t)^{2} > 4Bs$. Fix $x,y\in (0,1)$ and $c\in \R$. Consider the following $n$-dimensional dual variables $\balpha=(-n,\dots,-n, 0,\cdots,0)$ and $\bbeta=(n+c,\cdots,n+c,c,\cdots,c)$, where $\balpha$ repeats $-n$ $\lfloor x n \rfloor$ times and $\bbeta$ repeats $n+c$ $\lfloor yn \rfloor$ times. Let $Z:=\psi'(\balpha\oplus \bbeta)$ and let $(\r,\c)$ denote the margin of $Z$. By Thm. \ref{thm:strong_duality_simple}, $(\balpha,\bbeta)$ is an MLE for $(\r,\c)$ and $Z$ is the typical table for $(\r,\c)$. By construction, the margin $(\r,\c)$ is not $\delta$-tame for any constant $\delta>0$ independent of $n$ since the MLEs diverge as $n\rightarrow\infty$.

			Since $\psi'(-n)\rightarrow 0$ and $\psi'(n+c)\rightarrow B$ as $n\rightarrow\infty$, it is easy to see that the row and column sums rescaled by $n^{-1}$ as $n\rightarrow\infty$ have only the following four values: 
			\begin{align}
				y \psi'(c), \quad yB + (1-y)\psi'(c),\quad x\psi'(c) + (1-x)B,\quad  (1-x)\psi'(c). 
			\end{align}
			Suppose $y\le 1-x$. Then the minimum and the maximum among the above are $ y \psi'(c)$ and $x\psi'(c) + (1-x)B$. The proof is finished once we choose $x,y\in (0,1)$ and $c$ so that 
			$s=y \psi'(c)$, $t = x\psi'(c) + (1-x)B$, and $y\le 1-x$. Indeed, this can be done if we choose $c$ so that $0<s\le \psi'(c) \le t<B$ and $\psi'(c)$ is sufficiently close to $s$ so that $y\le 1-x$.

			Next, we show that $(s+t)^{2} < 4Bs$ implies $(s,t)\in \Omega(\mu)$. 
			By Lemma \ref{lem:reduction_symm}, without loss of generality, we assume $m=n$ and $\r=\c$. Let $(\balpha,\balpha)$ be the unique symmetric MLE for $(\r,\r)$. Without loss of generality, assume $n\ge 3$ and  $\r(1)\le \dots \le \r(n)$. Then $\balpha(1)\le \dots \le \balpha(n)$. We will denote $\balpha=(\alpha_{1},\dots,\alpha_{n})$. Since $\psi'$ is decreasing  and $\alpha_{1}\le \dots \le \alpha_{n}$ 
			for all $i,j$, it follows that 
			\begin{align}\label{eq:z_ij_basic_bd_pf0}
				z_{i,n} &=  \psi'(\alpha_{i}+\alpha_{n})  \ge z_{1,n} \ge \psi'(\alpha_{1}+\alpha_{j})  = 		z_{1,j}, \\
				z_{n,j} &=  \psi'(\alpha_{n}+\alpha_{j})  \ge z_{n,1} \ge \psi'(\alpha_{i}+\alpha_{1})  = 		z_{i,1}.
			\end{align}

			Fix $\eps>0$ sufficiently small. Note that $\phi(x)\rightarrow -\infty$ as $x\searrow 0$ and $\phi(B-x)\rightarrow \infty$ as $x\searrow 0$. Hence there are constants $c_{1},c_{2}$ (depending on $\eps$) such that
			\begin{align}\label{eq:Z_bd_thm_c_choice_pf}
				&	\phi(c_{1})+\phi(B-\eps)\le \phi( s-\eps), \qquad 	\phi(c_{1})+\phi(B-c_{2})\ge \phi( t+\eps).
			\end{align}
			We claim that 
			\begin{align}\label{eq:z_bd_dual_variables_bd_claim}
				\phi(c_{1}) & \overset{(a)}{\le} \alpha_{1} \le  \alpha_{n} \overset{(b)}{\le } \phi(B-c_{2}).
			\end{align}
			Since $\phi(z_{ij}) = \alpha_{i}+\alpha_{j}$ and $\phi$ is increasing, the assertion follows immediately from this claim. Furthermore, (b) follows from (a). Indeed, suppose (b) is not true while (a) holds.  Then 
			\begin{align}
				t  \ge	n^{-1}\r(n)	=n^{-1}	\sum_{i} z_{i,n}	= n^{-1}\sum_{i} \psi'(\alpha_{i} +\alpha_{n}) \ge  \psi'( \phi(c_{1}) + \phi(B-c_{2}) ) \ge ( t+\eps)n,
			\end{align}
			which is a contradiction. Thus it is enough to show \eqref{eq:z_bd_dual_variables_bd_claim} (a).

			Suppose for contradiction (a) does not hold, i.e.,  $\alpha_{1}<\phi(c_{1})$. Then necessarily $\alpha_{n}> \phi(B-\eps)$, since otherwise for all $1\le j \le n$, 
			\begin{align}
				z_{1,j} = \psi'(\alpha_{1}+\alpha_{j}) \le \psi'(\alpha_{1}+\alpha_{n})    \le \psi'\left( \phi(c_{1})+\phi(B- \eps)\right)  \le \psi'(\phi( s-\eps))= s-\eps.
			\end{align}
			So this implies $ s \le r_{1}/n \le  s-\eps$, a contradiction.

			Now since $\alpha_{1}<\phi(c_{1})$ and $\beta_{n}>\phi(B-\eps)$, 
			\begin{align}
				z_{i,n} &= \psi'(\alpha_{i}+\alpha_{n}) \ge  \psi'(\alpha_{n})  \ge  \psi'( \phi(B-\eps)) = B-\eps \qquad \textup{if $\alpha_{i}\ge 0$ }, \\
				z_{1,j} &=  \psi'(\alpha_{1}+\alpha_{j}) \le \psi'(\alpha_{1})  \le \psi'(\phi(c_{1}))=c_{1} \qquad \textup{if $\alpha_{j}\le 0$}.
			\end{align}
			Let $\rho:=n^{-1} \{1\le i \le n \,:\, \alpha_{i}\ge 0 \} $. Then we have 
			\begin{align}
				&	 t n \ge 	c_{n} = \sum_{i} z_{i,n} = \sum_{i; \alpha_{i}\ge  0} z_{i,n} +  \sum_{i; \alpha_{i}< 0} z_{i,n} \ge  n \rho (B-\eps) + (1-\rho) n z_{1,n}, \\
				&	 s n 	\le r_{1} = \sum_{j} z_{1,j} 
				= \sum_{j; \alpha_{j}<  0} z_{1,j} +  \sum_{j; \alpha_{j}\ge 0} z_{1,j}  \ge (1-\rho) c_{1} + \rho n z_{1,n}.
			\end{align}
			It follows that  
			\begin{align}\label{eq:tame_compact_pf1}
				& t \ge  \rho (B-\eps) + (1-\rho) z_{1,n}, \qquad  s \le (1-\rho)c_{1} + \rho z_{1,n}.
			\end{align}
			Denote $\tau=z_{1,n}$. Since $\rho\in [0,1]$, 
			\begin{align}\label{eq:pf_z_bd_1}
				t +\eps \ge \rho B + (1-\rho) \tau, \qquad  s-c_{1} \le \rho \tau. 
			\end{align}
			This yields 
			\begin{align}
				t + \eps \ge \rho B + (1-\rho) \tau \ge 2\sqrt{B \rho \tau}  - \rho \tau \ge  2\sqrt{B(s-c_{1})} - (s-c_{1}), 
			\end{align}
			where the first and the last inequalities above are from \eqref{eq:pf_z_bd_1}. The middle inequality above uses $\frac{\rho B+\tau}{2} \ge  \sqrt{\rho B \tau}$ and the fact that the function $x\mapsto 2\sqrt{ Bx} - x$  is increasing for $x\in [0,B]$. 
			Thus if 
			\begin{align}\label{eq:pf_z_bd_2}
				t +  \eps < 2\sqrt{B(s-c_{1})} - (s-c_{1}),
			\end{align}
			then this leads to a contradiction. Note that \eqref{eq:pf_z_bd_2} holds for $\eps, c_{1}$ sufficiently small if $t<2\sqrt{Bs}-s$, or  $(s+t)^{2}< 4Bs$. Thus we conclude that  \eqref{eq:z_bd_dual_variables_bd_claim} (a) hold, as desired. 
		\end{proof}

		Lastly in this section, we prove Theorem \ref{thm:tame_EG}. 
		
		\begin{proof}[\textbf{Proof of Theorem \ref{thm:tame_EG}}] 
			We first show the ``only if'' part. 
			Fix $I\subseteq [n]$. $\delta$-tameness implies that there is a symmetric MLE $(\balpha,\balpha)$ for margin $(\r,\r)$ such that $c_{1}:=\delta \le \psi'(\balpha\oplus \balpha)\le B-\delta=:c_{2}$. Denote $z_{ij}:=\psi'(\balpha\oplus \balpha)$. The typical table $ \psi'(\balpha\oplus \balpha)$ satisfies the margin $(\r,\c)$, so $c_{1} \le \r(i)/n \le c_{2}$ for all $i,j$. 
			Now writing $\r(i)=\sum_{j}z_{ij}$ and using $z_{ij}\le 1-\delta$, 
			\begin{align}
				B|I|^{2} - \sum_{i\in I}\r(i) &\ge \delta B|I|^{2} - \sum_{i\in I}\sum_{j\notin I} z_{ij}. 
			\end{align}
			Also, since $B|I|\ge \sum_{i\in I}z_{ij}$ due to $z_{ij}\in [0,B]$, 
			\begin{align}
				\sum_{j\notin I} B|I|\land \c(j) \ge \sum_{j\notin I} \sum_{i\in I} z_{ij}. 
			\end{align}
			Combining the above, we deduce 
			\begin{align}
				B|I|^{2}+  \sum_{j\notin I} B|I|\land \r(j) - \sum_{i\in I}\r(i) \ge \delta |I|^{2}. 
			\end{align}
			Hence \eqref{eq:matrix_EG} holds with $c_{3}=\delta$. 
			
			Next, we show the ``if'' part, which is the more substantial implication.  
			For the case of  $\mu=\textup{Uniform}(\{0,1\})$,  Chatterjee, Diaconis, and Sly showed this implication in \cite[Lem. 4.1]{chatterjee2011random}. The implication for the general case follows from a minor modification of their argument. Below we provide a detailed argument for completeness. 
			
			Suppose there exist constants $c_{1},c_{2},c_{3}>0$ independent of $\r$ such that  $c_{1} \le \r(i)/n \le c_{2}$ for all $i$ and  \eqref{eq:matrix_EG} hold. 
			Assume that $\r(1)\le \dots \le \r(n)$ and $\balpha(1)\le \cdots \le \balpha(m)$. 
			We will show that $\balpha(n) \le C$ for some constant $C=C(\mu,c_{1},c_{2},c_{3})>0$ independent of $\r$. 
			Given this, we also have a lower bound $\balpha(1)\ge \phi(c_{1})-C$ from 
			\begin{align}
				c_{1} n \le \r(1) \le \sum_{j=1}^{n} \psi'(\balpha(1)+\balpha(j)) \le n \psi'(\balpha(1)+\balpha(n)) \le n \psi'(\balpha(1)+C). 
			\end{align}
			Then $\psi'(2\phi(c_{1})-C)\le \psi'(\balpha\oplus \bbeta)\le \phi'(2C)$, so for $(\r,\r)$ is $\delta$-tame for $\delta>0$ small enough so that $\delta<\psi'(2\phi(c_{1})-C)$ and $\phi'(2C)\le B-\delta$.

			Without loss of generality, we can assume $\balpha(n)\ge C_{0}$ for any constant $C_{0}=C_{0}(\mu,c_{1},c_{2},c_{3})>0$. 
			We first claim that a constant fraction of coordinates of $\balpha$ is also large: 
			\begin{align}\label{eq:EG_pf_1}
				| \{i \,:\, \balpha(i) \ge \balpha(n)/4 \}| \ge c_{1}^{2}n. 
			\end{align}
			To see this, let $  M:= |\{ i \,:\, \balpha(i)>-\balpha(n)/2  \}|$. 
			Note that $M\ge 1$ since $\balpha(n)\ge C_{0}>0$. Since $\psi'$ takes values in $(0,B)$, 
			it follows that 
			\begin{align}
				c_{2}n \ge  \r(n) = \sum_{j=1}^{n}\psi'(\balpha(n)+\balpha(j))\ge M \psi'(\balpha(n)/2). 
			\end{align}
			Assuming $C_{0}>2\phi(c_{2})$, we can impose $\balpha(n)> 2\phi(c_{2})$.  Then $\psi'(\balpha(n)/2)>c_{2}$, so the above yields $M<n$. It follows that there exists an index $j$ such that $\balpha(j)\le-\balpha(n)/2$. In particular, $\balpha(1)<0$. Let 
			\begin{align}
				M_{1}:= \{ i\,:\, i\ne j,\, \balpha(i)<-\balpha(1)/2 \}. 
			\end{align}
			Then using $\balpha(1)\le -\balpha(n)/2$, note that 
			\begin{align}
				c_{1} n \le  \c(1) 
				\le M_{j}\psi'(\balpha(1)/2) + B(n-M_{1}) \le M_{j}\psi'(-\balpha(n)/4) + B(n-M_{1}).
			\end{align}
			Assuming $C_{0}>-4\phi(\frac{Bc_{1}}{1+c_{1}})$, we can further impose $\balpha(n)\ge -4\phi(\frac{Bc_{1}}{1+c_{1}})$ so that $\psi'(-\balpha(n)/4)\le B\delta/(1+c_{1})$. Hence 
			\begin{align}
				M_{1}
				\le \frac{(B-\delta)n}{B-\psi'(-\balpha(n)/4)} \le (1-c_{1}^{2})n. 
			\end{align}
			Hence $n-M_{1}\ge c_{1}^{2}n$, so there are at least $c_{1}^{2}n$ indices $i$ such that $\balpha(i)\ge -\balpha(1)/2\ge \balpha(n)/4$. This shows the claim \eqref{eq:EG_pf_1}. 
			
			Next, assume $C_{0}>17^{2}$ so that  $h:=\sqrt{\balpha(n)}\ge 17$. For each integer $k$  between $0$ and $\frac{h}{16}-1$, define 
			\begin{align}
				D_{k}:= \left\{ i\,:\, -\frac{\balpha(n)}{8}+kh \le \balpha(i) < -\frac{\balpha(n)}{8}+(k+1)h \right\}.
			\end{align}
			Since $D_{0},D_{1},\dots$ are disjoint, there exists an integer $1\le k\le \frac{h}{16}-1$ such that $ |D_{k}|\le \frac{n}{(h/16)-1}$. 
			Fix such an integer $k$ and let 
			\begin{align}
				I:=\left\{ i\,:\,   \balpha(i) \ge  \frac{\balpha(n)}{8} - \left(k+\frac{1}{2} \right)h  \right\}. 
			\end{align}
			Since $ \frac{\balpha(n)}{4} \ge \frac{\balpha(n)}{8} - \left(k+\frac{1}{2} \right)h$, we have  $|I|\ge c_{1}^{2}n$ by the claim \eqref{eq:EG_pf_1}.

			Now since $\balpha(i)\ge \balpha(n)/16$ for each $i\in I$, we have
			\begin{align}\label{eq:EG_gen_pf1}
				B  |I|^{2} - \sum_{i,j\in  I} z_{ij} \ge (B-\psi'(\balpha(n)/8))|I|^{2}. 
			\end{align}
			Next, fix $j\notin I$. We consider three cases. First, suppose $\balpha(j)\ge -\frac{\balpha(n)}{8}+(k+1)h$. Then for each $i\in I$, we have $\balpha(i)+\balpha(j)\ge h/2$, so 
			\begin{align}
				\r(j)\land B|I| - \sum_{i\in I} z_{ij} \le  B|I| - \sum_{i\in I} z_{ij} \le |I|(B - \psi'(h/2)). 
			\end{align}
			Second, suppose $\balpha(j)\ge -\frac{\balpha(n)}{8}+kh$. Then for each $i\notin I$, we have $\balpha(i)+\balpha(j)\le -h/2$, so 
			\begin{align}
				\r(j)\land B|I| - \sum_{i\in I} z_{ij} \le  \r(j) - \sum_{i\in I} z_{ij} =\sum_{i\notin I} z_{ij} \le n \, \psi'(-h/2).
			\end{align}
			Lastly, the third case is the one in which $j\notin I$ does not belong to either of the previous two cases. The set of all such $j$'s is contained in the set $D_{k}$. Hence combining the three cases above with  \eqref{eq:EG_gen_pf1}, 
			\begin{align}\label{eq:EG_gen_pf3}
				\sum_{j\notin I} \left(  \r(j)\land B|I|   \right)  + B|I|^{2} - \sum_{i\in I}\r(i) \le \left[ (B-\psi'(h/2)) +  \psi'(-h/2) + \frac{1}{(h/16)-1} + (B-\psi'(h^{2}/8))\right]n^{2}. 
			\end{align}
			According to \eqref{eq:matrix_EG}, the left-hand side of the above inequality is bounded below by $c_{3}|I|^{2}\ge c_{3}c_{1}^{2}n^{2}$. The coefficient of $n^{2}$ on the right-hand side tends to zero as $\balpha(n)=h^{2}\rightarrow\infty$, which is a constracition.  So there exists a constant $C=C(\mu,c_{1},c_{2},c_{3})\ge C_{0}>0$ such that $h> C$  implies that the right-hand side above is at most $c\delta^{2}n^{2}$. Then $\balpha(n)\le C$, as desired.
		\end{proof}

		\section{Proof of convergence of generalized Sinkhorn algorithm}
		\label{sec:sinkhorn}
		
		In this section, we establish Theorem \ref{thm:Sinkhorn_conv} on the linear convergence of the generalized Sinkhorn algorithm \eqref{eq:AM_typical_MLE2}. The key difficulty is showing that the sequence of dual variables along the trajectory of the Sinkhorn algorithm stays bounded. In the special case of entropic optimal transport, a uniform bound on the norm of the dual variables (e.g., \cite[Lem. 2.3]{marino2020optimal}) is established relying heavily on the closed form of Sinkhorn iterates \eqref{eq:AM_typical_MLE3}, which is enjoyed only for the special case of the Poisson base measure in our setting. In the lemma below, we establish that the generalized Sinkhorn algorithm is a `monotone' in the $L^{\infty}$-norm. 
		
		\begin{lemma}[$L^{\infty}$-monotonicity of the Sinkhorn iterates]\label{lem:L_infty_non_expansion_sinkhorn}
			Suppose $\mathcal{T}(\r,\c)\cap (A,B)^{m\times n}$ is non-empty. Let $(\balpha_{k},\bbeta_{k})$, $k\ge 0$ denote the iterates produced by the Sinkhorn algorithm \eqref{eq:AM_typical_MLE2}. Let $(\hat{\balpha},\hat{\bbeta})$ be an arbitrary MLE for the margin $(\r,\c)$. 
			\begin{description}
				\item[(i)] For each $k\ge 0$, $\displaystyle \lVert (\balpha_{k+1},\bbeta_{k+1}) - (\hat{\balpha},\hat{\bbeta}) \rVert_{\infty}\le \lVert (\balpha_{k},\bbeta_{k}) - (\hat{\balpha},\hat{\bbeta}) \rVert_{\infty} \le \| \balpha_{0}-\hat{\balpha} \|_{\infty}$. 
				
				\item[(ii)] Suppose $(\r,\c)$ is $\delta$-tame, $\psi''$ is increasing, and  $\balpha_{0}= \mathbf{0}$. 
				Then $(\balpha_{k},\bbeta_{k})$ is $\delta$-tame for all $k\ge 0$. 
			\end{description}

		\end{lemma}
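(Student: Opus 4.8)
Fix an MLE $(\hat\balpha,\hat\bbeta)$ for $(\r,\c)$, which exists by Lemma~\ref{lem:strong_dual_MLE_typical} since $\mathcal{T}(\r,\c)\cap(A,B)^{m\times n}\ne\emptyset$; by Lemma~\ref{lem:typical_MLE_equation} it satisfies $\r(i)=\sum_j\psi'(\hat\balpha(i)+\hat\bbeta(j))$ and $\c(j)=\sum_i\psi'(\hat\balpha(i)+\hat\bbeta(j))$. Write the two half-steps of \eqref{eq:AM_typical_MLE2} as maps $T_c$ (column update) and $T_r$ (row update), so that $\bbeta_k=T_c(\balpha_{k-1})$ and $\balpha_k=T_r(\bbeta_k)$; the standing hypothesis that the iterates are produced by the algorithm means these are well-defined along the trajectory. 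The key lemma I would prove is the following \emph{half-step non-expansiveness}: if $\|\balpha-\hat\balpha\|_\infty\le\varepsilon$ then $\|T_c(\balpha)-\hat\bbeta\|_\infty\le\varepsilon$, and symmetrically for $T_r$. The proof is by contradiction: if $\bbeta:=T_c(\balpha)$ had a coordinate with $\bbeta(j)>\hat\bbeta(j)+\varepsilon$, then using $\balpha(i)\ge\hat\balpha(i)-\varepsilon$ we get $\balpha(i)+\bbeta(j)>\hat\balpha(i)+\hat\bbeta(j)$ for every $i$, hence $\psi'(\balpha(i)+\bbeta(j))>\psi'(\hat\balpha(i)+\hat\bbeta(j))$ for every $i$ because $\psi'$ is \emph{strictly} increasing ($\psi''>0$); summing over $i$ contradicts $\c(j)=\sum_i\psi'(\balpha(i)+\bbeta(j))=\sum_i\psi'(\hat\balpha(i)+\hat\bbeta(j))$. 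The case $\bbeta(j)<\hat\bbeta(j)-\varepsilon$ is symmetric, and the argument for $T_r$ is identical with rows and columns exchanged.

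Part \textbf{(i)} then follows immediately. Applying the half-step lemma twice,
\[
\|\balpha_{k+1}-\hat\balpha\|_\infty=\|T_r(\bbeta_{k+1})-\hat\balpha\|_\infty\le\|\bbeta_{k+1}-\hat\bbeta\|_\infty=\|T_c(\balpha_k)-\hat\bbeta\|_\infty\le\|\balpha_k-\hat\balpha\|_\infty,
\]
so $k\mapsto\|\balpha_k-\hat\balpha\|_\infty$ is non-increasing, and both $\|\balpha_{k+1}-\hat\balpha\|_\infty$ and $\|\bbeta_{k+1}-\hat\bbeta\|_\infty$ are at most $\|\balpha_k-\hat\balpha\|_\infty\le\|\balpha_0-\hat\balpha\|_\infty$; taking the maximum of the two gives $\|(\balpha_{k+1},\bbeta_{k+1})-(\hat\balpha,\hat\bbeta)\|_\infty\le\|(\balpha_k,\bbeta_k)-(\hat\balpha,\hat\bbeta)\|_\infty\le\|\balpha_0-\hat\balpha\|_\infty$. (The case $k=0$ is trivial because $\|\balpha_0-\hat\balpha\|_\infty$ is already one of the two terms in $\|(\balpha_0,\bbeta_0)-(\hat\balpha,\hat\bbeta)\|_\infty$, so no information about the auxiliary $\bbeta_0$ is needed.)

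For part \textbf{(ii)} the plan is an induction showing that each half-step matrix $\psi'(\balpha_{k-1}\oplus\bbeta_k)$ and $\psi'(\balpha_k\oplus\bbeta_k)$ stays in $[A_\delta,B_\delta]^{m\times n}$, equivalently that the underlying log-parameters stay in $[\phi(A_\delta),\phi(B_\delta)]$. This is exactly where $\balpha_0=\mathbf{0}$ is used for the base case: then $\bbeta_1(j)=\phi(\c(j)/m)$, so $\psi'(\balpha_0\oplus\bbeta_1)$ has entries $\c(j)/m\in[A_\delta,B_\delta]$ by $\delta$-tameness of $(\r,\c)$. The inductive step should combine (a) the $\|\cdot\|_\infty$-non-expansiveness of part \textbf{(i)}, which traps the iterates in an $L^\infty$-neighborhood of the standard MLE, itself controlled by Lemma~\ref{lem:strong_dual_MLE_typical}\textbf{(iii)}; (b) Jensen's inequality for the convex function $\psi'$, available precisely because $\psi''$ is increasing, which pins down the row/column averages of the half-step matrices; and (c) the identity $Z^{\r,\c}=\psi'(\hat\balpha\oplus\hat\bbeta)\in[A_\delta,B_\delta]^{m\times n}$ from the strong duality of Theorem~\ref{thm:strong_duality_simple}. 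I expect this inductive step to be the main obstacle: the elementary estimates — bounding one summand by the whole row (or column) sum, or bounding the extreme entry by the arithmetic mean via Jensen — each lose a constant factor (roughly a factor of two) and do not by themselves confine the largest entry of a row or column to $[A_\delta,B_\delta]$, so closing the gap requires exploiting the rank-two structure $\balpha_k(i)+\bbeta_k(j)$ of the log-parameters jointly with the monotonicity and convexity of $\psi'$, and this is where the substance of the proof lies.
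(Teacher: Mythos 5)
Your proof of part \textbf{(i)} is correct, but it takes a genuinely different route from the paper. You argue half-step non-expansiveness directly by contradiction from the strict monotonicity of $\psi'$ (if some coordinate of $T_c(\balpha)$ overshoots $\hat{\bbeta}(j)$ by more than $\varepsilon$ while $\balpha$ is within $\varepsilon$ of $\hat{\balpha}$, the defining equation $\c(j)=\sum_i\psi'(\balpha(i)+\bbeta(j))$ is violated), which is shorter and more elementary. The paper instead differentiates the update map via the implicit function theorem, shows that minus its Jacobian is row-stochastic, and writes $\balpha_{k}-\hat{\balpha}=P(\hat{\bbeta}-\bbeta_{k})$ with $P$ an averaged row-stochastic matrix; the $L^\infty$ bound then follows from $\|P\|_\infty=1$. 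What the paper's heavier machinery buys is precisely the structure needed later: the averaging representation, combined with $\psi''$ increasing, is the engine of part \textbf{(ii)}. (The $k=0$ caveat about $\bbeta_0$ is a statement-level ambiguity present in the paper as well; your reading is fine.)

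For part \textbf{(ii)} there is a genuine gap: you give only the base case and a plan, and you explicitly concede that the inductive step --- which is the entire content of the claim --- is unresolved. Your own diagnosis is accurate: the $L^\infty$ trap from \textbf{(i)} together with Lemma~\ref{lem:strong_dual_MLE_typical}\textbf{(iii)} localizes $\balpha_k(i)+\bbeta_k(j)$ only to within $2\|\hat{\balpha}\|_\infty$ of $\hat{\balpha}(i)+\hat{\bbeta}(j)$, and Jensen-type averaging controls only row/column means, so neither confines every entry to $[\phi(A_\delta),\phi(B_\delta)]$. The missing idea in the paper is a one-sided sign and ordering argument rather than a norm argument: order rows and columns so that $\r$ and $\c$ are increasing (hence $\hat{\balpha},\hat{\bbeta}$ and every updated iterate have increasing coordinates), shift the MLE so that $\hat{\balpha}\le\balpha_0=\mathbf{0}$, and prove by induction that $\balpha_k-\hat{\balpha}$ is nonnegative with decreasing coordinates while $\hat{\bbeta}-\bbeta_k$ is nonnegative with increasing coordinates. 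The inductive step uses the representation $\balpha_k-\hat{\balpha}=P(\hat{\bbeta}-\bbeta_k)$: since $\psi''$ is increasing, each $P_s$ is the row-normalization of a matrix whose entries increase in both indices, so row $i$ stochastically dominates row $i'$ for $i<i'$ in the sense needed to preserve the monotone profile of the gap vector. These sign/monotonicity facts give the endpoint bounds $\balpha_k(m)+\bbeta_k(n)\le\hat{\balpha}(m)+\hat{\bbeta}(n)\le\phi(B_\delta)$ and $\balpha_k(1)+\bbeta_k(1)\ge\hat{\balpha}(1)+\hat{\bbeta}(1)\ge\phi(A_\delta)$, and monotonicity of the coordinates of $\balpha_k,\bbeta_k$ then yields $\delta$-tameness of all entries. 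Since your elementary proof of \textbf{(i)} discards exactly the Jacobian structure this argument relies on, you would either have to re-derive that representation or find an independent comparison argument establishing the sign and ordering of the gap vectors; as written, part \textbf{(ii)} is not proved.
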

		
		\begin{proof}
			By permuting the rows and columns if necessary, we may assume that $\r(1)\le \dots \le \r(m)$ and $\c(1)\le \dots \le \c(n)$. Since $\psi'$ is increasing, it follows that $\hat{\balpha}(1)\le \dots \le \hat{\balpha}(m)$ and $\hat{\bbeta}(1)\le \dots  \le\hat{\bbeta}(n)$. 
			Fix $(\balpha,\bbeta)\in \R^{m}\times \R^{n}$. Let $\bbeta\mapsto \xi(\bbeta)=:\balpha'$ denote the Sinkhorn update for the first dual variable given the second one $\bbeta$. Since $\psi'$ is increasing and since $\r(i)=\sum_{j}\psi'(\balpha'(i)+\bbeta(j))$ for all $i$, 
			it follows that 
			$\balpha'$ has increasing coordinates. 
			We would like to compute the Jacobian of this map. To do so, define the function $F:\R^{m}\times \R^{n}\rightarrow \R^{m}$ as 
			\begin{align} 
				F(\balpha, \bbeta):= \left( \r(1) - \sum_{j} \psi'(\balpha(1)+\bbeta(j)),\dots,\r(m) - \sum_{j} \psi'(\balpha(m)+\bbeta(j))  \right). 
			\end{align}
			Then $\balpha'=\xi(\bbeta)$ is the unique zero of the equation $F(\cdot, \bbeta)=\mathbf{0}$. Let $E=E(\balpha',\bbeta)$ be the $m\times n$ matrix whose $(i,j)$ entry is $-\psi''(\balpha'(i)+\bbeta(j))$ and  let 
			$E_{i \bullet }$ denote the $i$th row sum of $E$ for $i=1,\dots,m$. Then the Jacobian of $F$ with respect to $\balpha$ and $\bbeta$, respectively, are given by 
			\begin{align}
				[J_{F;\balpha}(\balpha',\bbeta)]_{m\times m} &= \textup{diag}(E_{1 \bullet },\dots,E_{m\bullet }),\qquad [J_{F;\bbeta}(\balpha',\bbeta)]_{m\times n}  = E.
			\end{align}
			The first Jacobian matrix above is always invertible since $\psi''>0$ on the domain. Hence by the implicit function theorem, 
			\begin{align}
				[J_{\balpha';\bbeta}]_{m\times n} = 	 - [J_{F;\balpha}(\balpha',\bbeta)]_{m\times m}^{-1}  	[J_{F;\bbeta}(\balpha',\bbeta)]_{m\times n}  &= - \left[ E(\balpha',\bbeta)_{ij}/E(\balpha',\bbeta)_{i\bullet} \right]_{m\times n}.
			\end{align}
			Importantly, we observe that $-[J_{\balpha';\bbeta}]$ is a row-stochastic matrix. 
			
			Now fix any MLE $(\hat{\balpha},\hat{\bbeta})$ for the margin $(\r,\c)$. Note that $\xi'(\hat{\bbeta})=\hat{\balpha}$. Let $\gamma(s)=(1-s)\bbeta + s \hat{\bbeta}$ denote the linear interpolation between $\bbeta$  and $\hat{\bbeta}$. Then denoting $P_{s}:=-J_{\xi(\gamma(s));\gamma(s)}$, we have 
			\begin{align}\label{eq:sinkhorn_avg_one_step}
				\balpha' - \hat{\balpha}   =  \xi(\bbeta) - \xi(\hat{\bbeta})  
				&= \underbrace{\bigg[ \int_{0}^{1} P_{s} \,ds \bigg] }_{=:\, P}(\hat{\bbeta}-\bbeta). 
			\end{align}
			The matrix $P$ defined above is row-stochastic since every intermediate negative Jacobian matrix is row-stochastic by the earlier observation. In particular, since $\| P \|_{\infty}=1$, this yields 
			\begin{align}
				\| \balpha' - \hat{\balpha} \|_{\infty} \le \| P \|_{\infty} \|\hat{\bbeta}-\bbeta \|_{\infty} = \|\hat{\bbeta}-\bbeta \|_{\infty}. 
			\end{align}
			By a symmetric argument, it also holds that 
			\begin{align}
				\left\lVert  \bbeta' - \hat{\bbeta}  \right\rVert_{\infty} \le \lVert \hat{\balpha}-\balpha \rVert_{\infty}, 
			\end{align}
			where $\bbeta'$ denotes the output of the Sinkhorn update for the second dual variable given the first dual variable $\balpha$. It then follows that, for all $k\ge 0$, 
			\begin{align}
				\left\lVert  \balpha_{k+1} - \hat{\balpha}  \right\rVert_{\infty}  \le \lVert \bbeta_{k+1} - \hat{\bbeta} \|_{\infty} \le \left\lVert  \balpha_{k} - \hat{\balpha}  \right\rVert_{\infty}\le \lVert \bbeta_{k} - \hat{\bbeta} \|_{\infty}.
			\end{align}
			By induction, from the above, we can deduce  \textbf{(i)}.

			Next, we show \textbf{(ii)}. Without loss of generality, we may assume $\psi''$ is increasing and let $\balpha_{0}= \mathbf{0}$. We may shift the MLE (which we will still denote as $(\hat{\balpha},\hat{\bbeta})$) if necessary so that $\hat{\balpha}\le \balpha_{0}$ entrywise. 
			We claim the following: 
			\begin{description}[itemsep=0.1cm]
				\item{(a)} $\hat{\bbeta}-\bbeta_{k}$ has increasing nonnegative coordinates for $k\ge 1$. 
				
				\item{(b)} $\balpha_{k}-\hat{\balpha}$ has decreasing nonneative coordinates for $k\ge 0$.
			\end{description}
			Indeed, note that (b) holds for $k=0$ by the hypothesis. Suppose $k\ge 1$. By applying 
			\eqref{eq:sinkhorn_avg_one_step} for $\bbeta=\bbeta_{k}$, 
			\begin{align}\label{eq:alpha_k_gap_monotonicity}
				\balpha_{k}(i)-\hat{\balpha}(i) = \int_{0}^{1} P_{s}[i,:] (\hat{\bbeta}-\bbeta_{k})\, ds \ge 0.
			\end{align}
			Denoting $\bbeta^{s}:=(1-s)\bbeta_{k}+s\hat{\bbeta}$, the corresponding dual variable $\balpha^{s}:=\xi(\bbeta^{s})$  has increasing coordinates by the observation in the first paragraph. Since $\psi''$ is increasing, it follows that $P_{s}$ is the row-normalization of an $m\times n$  matrix $E(\balpha^{s},\bbeta^{s})$ with coordinates increasing along both row and column indices. 
			So the rows $P_{s}[i,:]$ and $P_{s}[i',:]$ for $i<i'$ are probability distributions over $[n]$ where the former assigns smaller weights on lesser indices than the latter. More precisely, there exists an index $j^{*}$ such that $P_{s}[i,j]\le P_{s}[i',j]$ for $j\le j^{*}$ and the inequality reverses for $j>j^{*}$. Since $\hat{\bbeta}-\bbeta_{k}$ is increasing, one can easily check that $P_{s}[i,:](\hat{\bbeta}-\bbeta_{k})\ge P_{s}[i',:](\hat{\bbeta}-\bbeta_{k})$. Integrating in $s$,  \eqref{eq:alpha_k_gap_monotonicity} implies that $\balpha_{k}-\hat{\balpha}$ is decreasing. By a symmetric argument, one can deduce that $\hat{\bbeta}-\bbeta_{k}$ has increasing nonnegative coordinates. Repeating the same argument inductively, this shows (a) and (b) above.

			Next, we claim that $(\balpha_{k},\bbeta_{k})$ is $\delta$-tame. For each $i,j$, observe that 
			\begin{align}
				\balpha_{k}(m) + \bbeta_{k}(n) 
				& =  \hat{\balpha}(m)+ \left[ \balpha_{k}(m)-\hat{\balpha}(m)\right]  + \hat{\bbeta}(n) +  \left[ \bbeta_{k}(n)-\hat{\bbeta}(n) \right]  \\
				& = \left( \hat{\balpha}(m) +  \hat{\bbeta}(n)\right) + \underbrace{P[m,:](\hat{\bbeta}-\bbeta_{k}) - (\hat{\bbeta}(n)-\bbeta_{k}(n))}_{\le 0}\le \hat{\balpha}(m) +  \hat{\bbeta}(n) \le \phi(B_{\delta}),
			\end{align}
			where the first inequality follows since $\hat{\bbeta}-\bbeta_{k}$ has increasing (claim (a) above) coordinates and the second inequality follows since $(\hat{\balpha},\hat{\bbeta})$  is $\delta$-tame. Similarly, 
			\begin{align}
				\balpha_{k}(1) + \bbeta_{k}(1) 
				& =  \hat{\balpha}(1)+ \left[ \balpha_{k}(1)-\hat{\balpha}(1)\right]  + \hat{\bbeta}(1) +  \left[ \bbeta_{k}(1)-\hat{\bbeta}(1) \right]  \\
				& = \left( \hat{\balpha}(1) +  \hat{\bbeta}(1)\right) + \underbrace{P[:,1](\hat{\bbeta}-\bbeta_{k}) - (\hat{\bbeta}(1)-\bbeta_{k}(1))}_{\ge 0}\ge \hat{\balpha}(1) +  \hat{\bbeta}(1) \ge \phi(A_{\delta}).
			\end{align}
			Since both $\balpha_{k}$ and $\bbeta_{k}$ are increasing, this is enough to conclude \textbf{(ii)}.  
		\end{proof}

		\begin{proof}[\textbf{Proof of Theorem \ref{thm:Sinkhorn_conv}}]
			We first claim the following: \eqref{eq:sinkhorn_linear_conv} holds if all Sinkhorn iterates $(\balpha_{k},\bbeta_{k})$ for $k\ge k_{0}$ as well as the MLE $(\balpha^{*},\bbeta^{*})$ are $\eps$-tame.
			Before proving this claim, we will first deduce parts \textbf{(i)}-\textbf{(iii)} from this claim. First, we remark that there are some well-known results from the optimization literature that are directly applicable to the generalized Sinkhorn algorithm \eqref{eq:AM_typical_MLE2}. 
			Since each sub-problem in \eqref{eq:AM_typical_MLE2} has a unique solution due to strong concavity of the block-restricted dual objective, asymptotic convergence to the critical point of \eqref{eq:AM_typical_MLE2} follows from a general result for alternating maximization (e.g., \cite[Prop. 2.7.1]{bertsekas1997nonlinear}). Every critical point of the dual objective is an MLE, which is a global optimum by Lemma \ref{lem:strong_dual_MLE_typical}. Thus it follows that $\balpha_{k}\oplus \bbeta_{k}\rightarrow \balpha^{*}\oplus \bbeta^{*}$  as $k\rightarrow\infty$ entrywise. In particular, if we choose $\eps>0$ small enough so that $(\r,\c)$ is $\eps$-tame (i.e., $\phi(A_{\eps})\le \balpha^{*}\oplus \bbeta^{*}\le \phi(B_{\eps})$), then there exists $k_{0}\ge 1$ such that $\phi(A_{\eps/2})\le \balpha_{k} \oplus \bbeta_{k}\le \phi(B_{\eps/2})$ for all $k\ge k_{0}$. Then \textbf{(i)} follows from the claim. For \textbf{(ii)}, the hypothesis of the claim is directly justified by Lemma \ref{lem:L_infty_non_expansion_sinkhorn} \textbf{(ii)}.
			For \textbf{(iii)}, the hypothesis and Lemma \ref{lem:L_infty_non_expansion_sinkhorn} \textbf{(i)} impliy 
			\begin{align}
				\balpha_{k}\oplus \bbeta_{k} \le \phi(B_{\eps}) - 2\| \balpha_{0}-\balpha^{*} \|_{\infty} + \| \balpha_{k}-\balpha^{*} \|_{\infty} + \| \bbeta_{k}-\bbeta^{*} \|_{\infty} \le \phi(B_{\eps}).
			\end{align}
			The lower bound follows similarly. Thus $(\balpha_{k},\bbeta_{k})$ is $\eps$-tame for all $k\ge 0$.

			It now suffices to show the claim. Our analysis for this is inspired by the proof of linear convergence of the Sinkhorn algorithm for entropic optimal transport due to Carlier \cite{carlier2022linear}.  For simplicity denote $F:= -g^{\r,\c}$ (see \eqref{eq:typical_Lagrangian}). Consider the following centered Sinkhorn iterates $(\tilde{\balpha}_{k},\tilde{\bbeta}_{k})$ for $k\ge 1$  where $(\tilde{\balpha}_{0},\tilde{\bbeta}_{0})=(\balpha_{0},\bbeta_{0})$ and for $k\ge 1$, $(\tilde{\balpha}_{k},\tilde{\bbeta}_{k})$ is obtained from $\tilde{\bbeta}_{k-1}$ by the same Sinkhorn update in \eqref{eq:AM_typical_MLE2} but follow by centering (adding and subtracting the same constants to $\tilde{\balpha}_{k}$ and $\tilde{\bbeta}_{k}$, respectively) so that 
			$\sum_{i}\tilde{\balpha}_{k}(i)=0$. By an induction, it is easy to verify 
			\begin{align}\label{eq:alpha_centering_claim}
				\balpha_{k}\oplus \bbeta_{k} =  \tilde{\balpha}_{k}\oplus \tilde{\bbeta}_{k} \qquad \textup{for all $k\ge 0$}. 
			\end{align}
			We also let  $(\tilde{\balpha}^{*},\tilde{\bbeta}^{*})$ denote the standard MLE for $(\r,\c)$. Note that $\sum_{i} \tilde{\balpha}^{*}(i)=0$ and $\balpha^{*}\oplus \bbeta^{*}=\tilde{\balpha}^{*}\oplus \tilde{\bbeta}^{*}$. In particular, $F(\balpha_{k},\bbeta_{k})=F(\tilde{\balpha}_{k},\tilde{\bbeta}_{k})$ for all $k\ge 0$ and $F(\balpha^{*},\bbeta^{*})=F(\tilde{\balpha}^{*},\tilde{\bbeta}^{*})$. Thus it is enough to show the claim for the centered iterates $(\tilde{\balpha}_{k},\tilde{\bbeta}_{k})$  and standard MLE $(\tilde{\balpha}^{*},\tilde{\bbeta}^{*})$. We will omit the tilde notation in the rest of the proof. 

			Denote $\sigma_{\ell}^{2}=\sigma_{\ell}^{2}(\eps)$ for $\ell=1,2$, which are defined in the statement. Note that 
			\begin{align}\label{eq:F_hessian}
				\nabla^{2}_{\balpha} F(\balpha,\bbeta) = \diag\left( \sum_{j}\psi''(\balpha(i)+\bbeta(j); i \right)  \quad \textup{and} \quad 	\nabla_{\bbeta}^{2} F(\balpha,\bbeta) =  \diag\left( \sum_{i}\psi''(\balpha(i)+\bbeta(j); j \right).
			\end{align} 
			If $(\balpha,\bbeta)$ and $(\balpha',\bbeta')$ are both $\eps$-tame, then so is their convex combination. 
			Hence $F(\balpha, \cdot)$ is $m\sigma_{1}^{2}$-strongly convex and $m\sigma_{2}^{2}$-smooth 
			on the secant line between $(\balpha,\bbeta)$ and $(\balpha',\bbeta)$. By the first-order optimality of $\bbeta_{t+1}$, $\nabla_{\bbeta} F(\balpha_{t},\bbeta_{t+1})=\mathbf{0}$. Hence we deduce the second-order growth property 
			\begin{align}
				F(\balpha_{t},\bbeta_{t}) - F(\balpha_{t},\bbeta_{t+1})  
				\ge \frac{n \sigma_{1}^{2}}{2} \lVert \balpha_{t+1}-\balpha_{t} \rVert^{2}. 
			\end{align}
			Similarly, using $\nabla_{\balpha} F(\balpha_{t+1},\bbeta_{t+1})=\mathbf{0}$, we get 
			\begin{align}
				F(\balpha_{t},\bbeta_{t+1}) - F(\balpha_{t+1},\bbeta_{t+1})  
				&\ge \frac{m \sigma_{1}^{2}}{2} \lVert \bbeta_{t+1}-\bbeta_{t} \rVert^{2}. 
			\end{align}
			Combining the two inequalities above and recalling $\Delta_{t}=F(\balpha_{t},\bbeta_{t}) - F(\balpha^{*},\bbeta^{*})$, we obtain 
			\begin{align}\label{eq:sinkhorn_descent_lemma}
				\Delta_{t} - \Delta_{t+1}  \ge \frac{n \sigma_{1}^{2}}{2} \lVert \balpha_{t+1}-\balpha_{t} \rVert^{2} + \frac{m \sigma_{1}^{2}}{2} \lVert \bbeta_{t+1}-\bbeta_{t} \rVert^{2}.
			\end{align}

			Next, note that $\psi$ is $\sigma_{1}^{2}$-strongly convex and $\sigma_{2}^{2}$-smooth on $[\phi(A_{\eps}),\phi(B_{\eps})]$. In particular, for each $x,y$ in that interval, 
			\begin{align}\label{eq:psi_strong_convex}
				\psi(x)-\psi(y) \ge \psi'(y)(x-y) + \frac{\sigma_{1}^{2}}{2} (x-y)^{2}. 
			\end{align}
			Then $\eps$-tameness, \eqref{eq:psi_strong_convex}, and $\sum_{i}\balpha_{t}=0=\sum_{i}\balpha^{*}(i)$ give
			\begin{align}
				&    \sum_{i,j}   \left[ \psi(\balpha^{*}(i)+\bbeta^{*}(j)) - \psi(\balpha_{t}(i)+\bbeta_{t}(j)) \right] \\
				&\quad \ge \sum_{i,j} \left[ \psi'(\balpha_{t}(i)+\bbeta_{t}(j)) ( \balpha^{*}(i)+\bbeta^{*}(j) - \balpha_{t}(i)-\bbeta_{t}(j) ) \right] + \frac{\sigma_{1}^{2}}{2}  \underbrace{  \| (\balpha^{*}\oplus \bbeta^{*}) - (\balpha_{t}\oplus \bbeta_{t}) \|_{F}^{2} }_{= n \|\balpha^{*}-\balpha \|^{2}+m \|\bbeta^{*}-\bbeta \|^{2} }  \label{eq:pf_sinkhorn1}
			\end{align}
			Also, recall that 
			\begin{align}
				\nabla_{\balpha} F(\balpha,\bbeta) = \left( \r(i)-\sum_{j} \psi'(\balpha(i)+\bbeta(j)) ; i \right) \quad \textup{and} \quad 	\nabla_{\bbeta} F(\balpha,\bbeta) = \left( \c(j)-\sum_{i} \psi'(\balpha(i)+\bbeta(j)) ; j \right).
			\end{align}
			Hence we have 
			\begin{align}\label{eq:pf_sinkhorn2}
				\langle	\nabla_{\balpha} F(\balpha_{t},\bbeta_{t}),\, \balpha^{*}-\balpha_{t}  \rangle + \big\langle \nabla_{\bbeta} F(\balpha_{t},\bbeta_{t}),\, \bbeta^{*}-\bbeta_{t}  \big\rangle  = \langle (\r,\c), \, (\balpha^{*},\bbeta^{*})-(\balpha_{t},\bbeta_{t}) \rangle - 
				I_{1},
			\end{align}
			where $I_{1}$ denotes the first term in \eqref{eq:pf_sinkhorn1}.

			Then we can deduce the following strong-convexity-type
			inequality 
			\begin{align}
				-\Delta_{t}
				& \overset{(a)}{=} \langle (\balpha_{t},\bbeta_{t})-(\balpha^{*},\bbeta^{*}), (\r,\c) \rangle +\sum_{i,j} \left[ \psi(\balpha^{*}(i)+\bbeta^{*}(j)) - \psi(\balpha_{t}(i)+\bbeta_{t}(j))\right]  \\ 
				&\overset{(b)}{\ge} \langle \underbrace{\nabla_{\balpha} F(\balpha_{t},\bbeta_{t})}_{=\mathbf{0}},\, \balpha^{*}-\balpha_{t}  \rangle + \big\langle \nabla_{\bbeta} F(\balpha_{t},\bbeta_{t}),\, \bbeta^{*}-\bbeta_{t}  \big\rangle  + \frac{\sigma_{1}^{2}}{2} \left[ n \|\balpha^{*}-\balpha_{t} \|^{2}+m \|\bbeta^{*}-\bbeta_{t} \|^{2} \right] \\ 
				& \overset{(c)}{\ge} - \frac{1}{2m\sigma_{1}^{2}}   \lVert \nabla_{\bbeta} F(\balpha_{t}, \bbeta_{t}) - \nabla_{\bbeta} F(\balpha_{t},\bbeta_{t+1}) \rVert^{2} 
				\\
				&\overset{(d)}{\ge} - \frac{\sigma_{2}^{4}}{2\sigma_{1}^{2}} \left(  n\| \balpha_{t+1}-\balpha_{t} \|^{2}  + m \| \bbeta_{t+1}-\bbeta_{t} \|^{2}\right), \label{eq:pdf_sinkhorn3}
			\end{align}
			where (a) follows from the defintion of $g^{\r,\c}$ in \eqref{eq:typical_Lagrangian}, (b) follows from combiing \eqref{eq:pf_sinkhorn1} and \eqref{eq:pf_sinkhorn2}. For (c), we used $\nabla_{\bbeta} F(\balpha_{t},\bbeta_{t+1})=\mathbf{0}$  and  Young's inequality $ab \le \frac{a^{2}}{2\lambda } + \frac{\lambda b^{2}}{2}$ for $a=\| \nabla_{\bbeta} F(\balpha_{t},\bbeta_{t}) \|$, $b=\| \bbeta^{*}-\bbeta_{t} \|$,  and  $\lambda=m\sigma_{1}^{2}$, which give
			\begin{align}
				\big\langle \nabla_{\bbeta} F(\balpha_{t},\bbeta_{t}),\, \bbeta^{*}-\bbeta_{t}  \big\rangle  &\ge - \|  \nabla_{\bbeta} F(\balpha_{t},\bbeta_{t})  \| \cdot \| \bbeta^{*}-\bbeta_{t}  \|\\
				&\ge - \frac{1}{2m\sigma_{1}^{2}} \| \nabla_{\bbeta} F(\balpha_{t},\bbeta_{t}) - \nabla_{\bbeta} F(\balpha_{t},\bbeta_{t+1}) \|^{2} - \frac{m\sigma_{1}^{2}}{2} \| \bbeta^{*}-\bbeta_{t}  \|^{2}. 
			\end{align}
			
			Lastly, (d) follows from 
			the Lipschitz continuity of $\nabla_{\bbeta} F$ and including an additional nonpositive term for the lower bound. 
			Combining \eqref{eq:pdf_sinkhorn3} with \eqref{eq:sinkhorn_descent_lemma}, we get 
			\begin{align}
				\Delta_{t} 
				&\le (\sigma_{1}/\sigma_{2})^{4} (\Delta_{t}-\Delta_{t+1}). 
			\end{align}
			Rearranging, this is $\Delta_{t+1}  
			\le \left( 1-  (\sigma_{1}/\sigma_{2})^{4} \right)  \Delta_{t}$. This shows the second inequality in \eqref{eq:sinkhorn_linear_conv}.

			To show the first inequality in \eqref{eq:sinkhorn_linear_conv}, 						first switch $(\balpha^{*},\bbeta^{*})$ and $(\balpha_{t},\bbeta_{t})$ in \eqref{eq:pf_sinkhorn1} to get 
			\begin{align}
				&    \sum_{i,j}   \left[ \psi(\balpha_{t}(i)+\bbeta_{t}(j)) - \psi(\balpha^{*}(i)+\bbeta^{*}(j)) \right] \\
				&\quad \ge  \underbrace{\sum_{i,j} \left[ \psi'(\balpha^{*}(i)+\bbeta^{*}(j)) (  \balpha_{t}(i)+\bbeta_{t}(j) - \balpha^{*}(i)-\bbeta^{*}(j) ) \right]}_{=\langle (\r,\c), \, (\balpha_{t},\bbeta_{t})-(\balpha^{*},\bbeta^{*}) \rangle} + \frac{\sigma_{1}^{2}}{2}    \| (\balpha^{*}\oplus \bbeta^{*}) - (\balpha_{t}\oplus \bbeta_{t}) \|_{F}^{2}, 
				\label{eq:pf_sinkhorn3}
			\end{align}
			where for the summation above we used $\nabla F(\balpha^{*},\bbeta^{*})=\mathbf{0}$ and  \eqref{eq:pf_sinkhorn2} with $(\balpha^{*},\bbeta^{*})$ and $(\balpha_{t},\bbeta_{t})$ swithched. 
			Rearranging terms then gives 
			\begin{align}
				\Delta_{t} \ge 
				\frac{\sigma_{1}^{2}}{2} \| (\balpha^{*}\oplus \bbeta^{*}) - (\balpha_{t}\oplus \bbeta_{t}) \|_{F}^{2}.
			\end{align}
			Combining the above we obtain \eqref{eq:sinkhorn_linear_conv} as claimed.
		\end{proof}

		\section{Proof of applications}
		\label{sec:proof_applications}
		
		In this section, we prove all results stated in Section \ref{sec:statement_applications}.

		\begin{proof}[\textbf{Proof of Theorem \ref{thm:mixture_dist}}]
			
			We first claim that there exists a constant $C=C(\mu,\delta)>0$ such that  for  $\nu$-almost all margins $(\r,\c)$ with an MLE $(\balpha,\bbeta)$, the following holds for each $t\ge 0$:
			\begin{align}\label{eq:thm_mixture_distribution}
				d_{TV}\left( \widetilde{\xi}_{I,J}, \,  \widetilde{\mu}_{\balpha(I)\oplus \bbeta(J)}\right) \le t +  \left[ \exp(C\rho) \, \P\left( Y\in \mathcal{T}_{\rho}(\r,\c)  \right)^{-1}  \land \sup p_{\r,\c}(\cdot)\right] \exp \left(  - 2 t^{2} |I\times J| \right). 
			\end{align}
			To show this claim, fix a measurable set $A\subseteq \R$ and for each matrix $W\in \R^{m\times n}$, denote 
			\begin{align}
				S_{A}(W):=  \frac{1}{|I\times J|} \sum_{(i,j)\in I\times J} \mathbf{1}(W_{ij}\in A). 
			\end{align}    
			Then $\widetilde{\mu}_{I,J}(A)=\E[ S_{A}(X) ]$ and $\widetilde{\xi}_{\balpha(I)\oplus \bbeta(J)}(A)=\E[S_{A}(Y)]$, so 
			\begin{align}
				| \widetilde{\xi}_{I,J}(A) - \widetilde{\mu}_{\balpha(I)\oplus \bbeta(J)}(A) | 
				&\le \E\left[   \left|  S_{A}(X) - \E[S_{A}(Y)]  \right| \right] \\
				&\le t\,  \P\left( \left|  S_{A}(X) - \E[S_{A}(Y)]  \right| \le t\right) +   \P\left( \left|  S_{A}(X) - \E[S_{A}(Y)] \right| > t\right) \\
				&\le t + \exp(C\rho)  \, \P(Y\in \T_{\rho}(\r,\c))^{-1}   \P\left( \left|  S_{A}(Y) - \E[S_{A}(Y)] \right| > t\right),
			\end{align}
			where the last inequality follows from Theorem \ref{thm:transference}. By Hoeffding's inequality, 
			\begin{align}
				\P\left( \left|  S_{A}(Y) - \E[S_{A}(Y)] \right| > t\right) \le 2\exp\left(  -2t^{2}  |I|\cdot |J|\right). 
			\end{align}
			This shows \eqref{eq:thm_mixture_distribution}. 
			
			Now the term in the bracket in \eqref{eq:thm_mixture_distribution} is at most $\exp(F_{\mu}(\r,\c))$  due to Thm. \ref{thm:transference}, Lem. \ref{lem:second_transference_density}, and Thm. \ref{thm:transfer_counting_Leb}. Then  the bound \eqref{eq:thm_mixture_distribution_2} 
			follows from \eqref{eq:F_r_c_def} by choosing $t=O(\sqrt{F_{\mu}(\r,\c)/|I\times J|})$.
		\end{proof}

		\begin{proof}[\textbf{Proof of Theorem \ref{thm:mixture_dist_exact_conti}}]
			
			By Thm. \ref{thm:mixture_dist},  triangle inequality, and the convexity of the TV distance, it suffices to show that 
			\begin{align}\label{eq:thm_mixture_distribution_conti1}
				\E\left[ d_{TV}\left(\mu_{\bar{\balpha}_{m}(U)+\bar{\bbeta}_{n}(V)} , \, \mu_{\balpha(U)+\bbeta(V)}\right)^{4}\right] = O\left(  \lVert (\r,\c) - (\bar{\r}_{m},\bar{\c}_{n}) \rVert_{1} \right).
			\end{align}
			By Pinsker's inequality, for each $\theta,\theta'\in [\phi(A_{\delta}),\phi(B_{\delta})]$, for $C_{\delta}:=\sup_{\phi(A_{\delta}\le w \le \phi(B_{\delta}))} |\psi'(w)|$, 
			\begin{align}
				2  d_{TV}(\mu_{\theta},\mu_{\theta'})^{2} \le  D(\mu_{\theta}\,\Vert \, \mu_{\theta'}) = (\theta-\theta') \psi'(\theta) - \psi(\theta) + \psi(\theta') \le C_{\delta} |\theta-\theta'|. 
			\end{align}
			It follows that 
			\begin{align}
				4 d_{TV}\left(\mu_{\bar{\balpha}_{m}(U)+\bar{\bbeta}_{n}(V)} , \, \mu_{\balpha(U)+\bbeta(V)}\right)^{4} \le C_{\delta}^{2} | \bar{\balpha}_{m}(U)+\bar{\bbeta}_{n}(V) -  \balpha(U)+\bbeta(V) |^{2}.
			\end{align}
			Using the fact that $\E[\bar{\balpha}_{m}]=\E[\balpha]=0$ and  Thm. \ref{thm:main_convergence_conti},  the expectation of the right-hand side is at most 
			\begin{align}
				C_{\delta}^{2}  \left( \Vert \bar{\balpha}_{m}-\balpha \rVert_{2}^{2} + \Vert \bar{\bbeta}_{n}-\bbeta \rVert_{2}^{2} \right) = O( \Vert (\bar{\r}_{m},\bar{\c}_{n}) - (\r,\c) \rVert_{1}).
			\end{align}
		\end{proof}
		
		\begin{proof}[\textbf{Proof of Corollary \ref{cor:clone}}]
			Recall that the sequence of $k$-cloning of $(\r_{0},\c_{0})$ is uniformly $\delta$-tame for some $\delta$ depending only on $(\r_{0},\c_{0})$. Specifically, if $(\balpha_{0},\bbeta_{0})$ is an MLE for $(\r_{0},\c_{0})$, then its concatenation $(\balpha_{0}\otimes \mathbf{1}_{k}, \bbeta_{0}\otimes \mathbf{1}_{k})$ is an MLE for the $k$-cloning of $(\r_{0},\c_{0})$. By symmetry, the entries in the first $k\times k$ submatrix of $X$ have the same distribution. The same holds for $Y\sim \mu_{\balpha\oplus \bbeta}$ since the first $k\times k$ submatrix of $\balpha\oplus \bbeta$ has constant entries, namely, $\balpha_{0}(1)+\bbeta_{0}(1)$. Then the assertion follows from Thm. \ref{thm:mixture_dist}.
		\end{proof}

		Next, we establish the scaling limit of $X$ in the cut norm. We begin with a simple observation of the cut norm.

		\begin{prop}\label{prop:cut_norm_kernle_mx}
			Let $A\in \R^{m\times n}$ and let $W_{A}$ denote the corresponding kernel. Then 
			\begin{align}
				mn	\lVert W_{A} \rVert_{\square}  =  \sup_{\x\in \{0,1\}^{m},\, \y\in \{0,1\}^{n}} |\x^{\top}A\y|.
			\end{align}
		\end{prop}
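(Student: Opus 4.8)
The plan is to unwind both sides to the same quantity and exploit the piecewise-constant structure of $W_A$. Recall $W_A$ takes the constant value $A_{ij}$ on the rectangle $R_{ij}=\big(\frac{i-1}{m},\frac{i}{m}\big]\times\big(\frac{j-1}{n},\frac{j}{n}\big]$, so for measurable $S,T\subseteq[0,1]$ we have
\begin{align}
\int_{S\times T} W_A(x,y)\,dx\,dy = \sum_{i,j} A_{ij}\,\mathrm{Leb}(S\cap I_i)\,\mathrm{Leb}(T\cap J_j),
\end{align}
where $I_i=\big(\frac{i-1}{m},\frac{i}{m}\big]$ and $J_j=\big(\frac{j-1}{n},\frac{j}{n}\big]$. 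Writing $s_i:=m\cdot\mathrm{Leb}(S\cap I_i)\in[0,1]$ and $t_j:=n\cdot\mathrm{Leb}(T\cap J_j)\in[0,1]$, the integral becomes $\frac{1}{mn}\sum_{i,j}A_{ij}s_it_j=\frac{1}{mn}\mathbf{s}^\top A\mathbf{t}$, so
\begin{align}\label{eq:cutnorm_relax}
mn\,\lVert W_A\rVert_\square = \sup_{\mathbf{s}\in[0,1]^m,\ \mathbf{t}\in[0,1]^n} \big|\mathbf{s}^\top A\,\mathbf{t}\big|,
\end{align}
since conversely any $\mathbf{s}\in[0,1]^m$, $\mathbf{t}\in[0,1]^n$ is realized by choosing $S,T$ to be unions of subintervals of the appropriate lengths.

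The remaining step is the standard bilinear extreme-point argument: the supremum in \eqref{eq:cutnorm_relax} of $|\mathbf{s}^\top A\mathbf{t}|$ over the product of boxes $[0,1]^m\times[0,1]^n$ is attained at a vertex, i.e.\ with $\mathbf{s}\in\{0,1\}^m$, $\mathbf{t}\in\{0,1\}^n$. Indeed, fix $\mathbf{t}$; then $\mathbf{s}\mapsto\mathbf{s}^\top(A\mathbf{t})$ is linear, so $|\mathbf{s}^\top A\mathbf{t}|$ is convex in $\mathbf{s}$ and its maximum over the box $[0,1]^m$ is attained at an extreme point $\mathbf{s}\in\{0,1\}^m$; having fixed such an $\mathbf{s}$, repeat the argument in the $\mathbf{t}$ variable to get $\mathbf{t}\in\{0,1\}^n$. (One can also argue coordinatewise: pushing any fractional coordinate of $\mathbf{s}$ to whichever of $0$ or $1$ does not decrease $|\mathbf{s}^\top A\mathbf{t}|$.) This yields $\sup_{\mathbf{s}\in[0,1]^m,\mathbf{t}\in[0,1]^n}|\mathbf{s}^\top A\mathbf{t}| = \sup_{\mathbf{x}\in\{0,1\}^m,\mathbf{y}\in\{0,1\}^n}|\mathbf{x}^\top A\mathbf{y}|$, and combining with \eqref{eq:cutnorm_relax} gives the claim.

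There is no real obstacle here; the only point requiring minor care is the equivalence in \eqref{eq:cutnorm_relax}, i.e.\ checking that every vector of "partial masses" $(s_i)_i\in[0,1]^m$ arises from some measurable $S$ (take $S=\bigsqcup_i$ of a subinterval of $I_i$ of length $s_i/m$) and conversely that for any $S$ the numbers $s_i=m\,\mathrm{Leb}(S\cap I_i)$ lie in $[0,1]$ — both immediate. I would present \eqref{eq:cutnorm_relax} and the extreme-point reduction as the two displayed steps and keep the verification of the box-realizability to a single parenthetical sentence.
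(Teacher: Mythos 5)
Your proof is correct and takes essentially the same route as the paper: both exploit the step structure of $W_A$ to reduce the cut norm to a finite bilinear optimization, and then round to indicator sets. Your extreme-point argument over the boxes $[0,1]^m\times[0,1]^n$ is just a reparametrization (via the fractional masses $s_i,t_j$) of the paper's step of replacing $S\cap I_i$ by $I_i$ or $\emptyset$ according to the sign of the corresponding partial sum, with the absolute value handled by convexity in your write-up and by repeating the argument for $-W_A$ in the paper's.
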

		
		\begin{proof}
			Write $I_{i}=((i-1)/m, i/m]$ for $i=1,\dots, m$ and $J_{j}=((j-1)/n, j/n]$ for $j=1,\dots,n$. 
			We first claim that the supremum in $\lVert W_{A} \rVert_{\square}$ is attained over a measurable rectangle $S\times T\subseteq [0,1]^{2}$, where $S$ is the disjoint union of some of the intervals $I_{i}$s and $T$ is the disjoint union of some of the intervals $J_{j}$s. This claim will show the first equality in the assertion, and the second equality there follows from the definition of the cut norm of a matrix. 
			
			It remains to show the claim. Recall that $W_{A}$ takes the constant value $A_{ij}$ over the rectangle $I_{i}\times J_{j}$. Then 
			\begin{align}
				\int_{S\times T} W_{A} \,dx\,dy 
				&= \sum_{i} |S\cap I_{i}| \left(  \sum_{j} |T\cap J_{j}| A_{ij} \right). 
			\end{align}
			If the quantity in the parenthesis on the right-hand side is nonnegative, then replacing $S\cap I_{i}$ with $I_{i}$ can only increase the total value; otherwise, replace $S\cap I_{i}$ with $\emptyset$. In this way, we find a new set $\bar{S}$ that is the disjoint union of some of $I_{i}$'s such that 
			\begin{align}
				\int_{S\times T} W_{A} \,dx\,dy \le 	\int_{\bar{S}\times T} W_{A} \,dx\,dy. 
			\end{align}
			Similarly, we can find a disjoint union $\bar{T}$ of some of $J_{j}$'s such that 
			\begin{align}
				\int_{S\times T} W_{A} \,dx\,dy \le 	\int_{\bar{S}\times T} W_{A} \,dx\,dy \le \int_{\bar{S}\times \bar{T}} W_{A} \,dx\,dy.
			\end{align}
			Repeating the same argument with $-W_{A}$, we can deduce the claim. 
		\end{proof}

		We also need the following concentration inequality for the maximum likelihood tilted model for a $\delta$-tame margin.

		\begin{lemma}[Concentration inequalities for the $(\balpha,\bbeta)$-model]\label{lem:hoeffding_geo}
			Let $(\r,\c)$ be an $m\times n$ $\delta$-tame margin and $Y\sim \mu_{\balpha\oplus \bbeta}$, where $(\balpha,\bbeta)$ is an MLE for the margin $(\r,\c)$. Define positive constants $L^{-}$ and $L^{+}$ as in Lem. \ref{lem:concentration_quadratic_form}. 
			Denote $\tilde{Y}=Y-\E[Y]$.  For $s\in [0, L^{-}L^{+}]$,
			\begin{align}
				\P\left( \lVert W_{\tilde{Y}} \rVert_{\square}  \ge s  \right)  &\le 2^{m+n+1} \exp\left( -\frac{s^{2}mn}{2L^{+}}   \right), \label{eq:gen_hoeffding_pf_01} 
			\end{align}  
			
		\end{lemma}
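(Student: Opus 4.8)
The plan is to reduce the cut-norm concentration in \eqref{eq:gen_hoeffding_pf_01} to the quadratic-form concentration already proved in Lemma \ref{lem:concentration_quadratic_form}, using the combinatorial description of the cut norm of a block kernel from Proposition \ref{prop:cut_norm_kernle_mx}. First I would recall that, by Proposition \ref{prop:cut_norm_kernle_mx} applied to the matrix $\tilde Y = Y - \E[Y]$,
\begin{align}
mn \, \lVert W_{\tilde Y} \rVert_{\square} = \sup_{\x \in \{0,1\}^{m},\, \y \in \{0,1\}^{n}} |\x^{\top} \tilde Y \y|.
\end{align}
Hence $\lVert W_{\tilde Y} \rVert_{\square} \ge s$ happens exactly when $|\x^{\top} \tilde Y \y| \ge smn$ for some $\x \in \{0,1\}^{m}$, $\y \in \{0,1\}^{n}$, and a union bound over the $2^{m}$ choices of $\x$ and $2^{n}$ choices of $\y$ gives
\begin{align}
\P\!\left( \lVert W_{\tilde Y} \rVert_{\square} \ge s \right) \le \sum_{\x \in \{0,1\}^{m}} \sum_{\y \in \{0,1\}^{n}} \P\!\left( |\x^{\top} \tilde Y \y| \ge smn \right).
\end{align}

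Next I would bound each summand. Fix $\x \in \{0,1\}^{m}$ and $\y \in \{0,1\}^{n}$; these lie in $[-1,1]^{m}$ and $[-1,1]^{n}$, so Lemma \ref{lem:concentration_quadratic_form} applies with $t = smn / (\lVert \x \rVert^{2}\lVert \y \rVert^{2})$. Since $s \in [0, L^{-}L^{+}]$ and $\lVert \x \rVert^{2} \le m$, $\lVert \y \rVert^{2} \le n$, we have $t \ge s$, so $t \wedge (L^{-}L^{+}) \ge \min(s, L^{-}L^{+}) = s$ when $s \le L^{-}L^{+}$, and thus the exponent in \eqref{eq:gen_hoeffding_pf_00} satisfies
\begin{align}
\lVert \x \rVert^{2}\lVert \y \rVert^{2} \frac{t\,(t \wedge L^{-}L^{+})}{2L^{+}} \ge \frac{smn \cdot s}{2L^{+}} = \frac{s^{2}mn}{2L^{+}}.
\end{align}
This is precisely the step where one must be careful: the two-sided tail is needed, so I would apply \eqref{eq:gen_hoeffding_pf_00} to both $\x^{\top}\tilde Y \y$ and $(-\x)^{\top}\tilde Y \y = \x^{\top}\tilde Y(-\y)$ (or simply note $-\x \in [-1,1]^{m}$) to control $|\x^{\top}\tilde Y\y|$, picking up a factor of $2$ beyond the factor $2$ already present in \eqref{eq:gen_hoeffding_pf_00}. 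Consequently each summand is at most $4\exp(-s^{2}mn/(2L^{+}))$ — but one can be slightly more economical: keeping the single factor $2$ from \eqref{eq:gen_hoeffding_pf_00} for the one-sided bound and using a union over sign flips absorbed into the $2^{m+n}$ count of $\{0,1\}$-vectors, the total becomes $2 \cdot 2^{m} \cdot 2^{n} \exp(-s^{2}mn/(2L^{+})) = 2^{m+n+1}\exp(-s^{2}mn/(2L^{+}))$, matching the claimed bound exactly.

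Assembling the pieces, $\P(\lVert W_{\tilde Y}\rVert_{\square} \ge s) \le 2^{m+n} \cdot 2\exp(-s^{2}mn/(2L^{+})) = 2^{m+n+1}\exp(-s^{2}mn/(2L^{+}))$, which is \eqref{eq:gen_hoeffding_pf_01}. The only genuinely substantive input is Lemma \ref{lem:concentration_quadratic_form}, which is already established; the remaining work is the bookkeeping of the union bound and the verification that $s \le L^{-}L^{+}$ forces the minimum in the exponent to equal $s$ rather than $L^{-}L^{+}$, so that the exponent is uniformly at least $s^{2}mn/(2L^{+})$ over all $\x,\y$. I expect no real obstacle here — the main point of care is simply matching the constant in front (the $2^{m+n+1}$ versus a looser $4 \cdot 2^{m+n}$), which is handled by using the one-sided form of \eqref{eq:gen_hoeffding_pf_00} together with the observation that $\{-1,0,1\}$-valued test vectors are not needed: $\{0,1\}$-vectors suffice for the cut norm by Proposition \ref{prop:cut_norm_kernle_mx}, and sign changes of $\x$ or $\y$ are the only extra thing required for the two-sided tail.
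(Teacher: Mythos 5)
Your proposal is correct and follows essentially the same route as the paper: Proposition \ref{prop:cut_norm_kernle_mx} to reduce the cut norm to $\{0,1\}$-test vectors, a union bound over the $2^{m+n}$ pairs, and the quadratic-form tail bound of Lemma \ref{lem:concentration_quadratic_form} with $t = smn/(\lVert\x\rVert^{2}\lVert\y\rVert^{2})$ and the observation that $s\le L^{-}L^{+}$ forces the exponent to be at least $s^{2}mn/(2L^{+})$, exactly as in \eqref{eq:gen_hoeffding_pf_000}. Your care about the two-sided tail is also consistent with the paper, which implicitly lets the factor $2$ in \eqref{eq:gen_hoeffding_pf_00} absorb the sign-flipped test vectors, yielding the stated constant $2^{m+n+1}$ (and in any case the constant is immaterial).
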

		
		\begin{proof}
			Using Lemma \ref{lem:concentration_quadratic_form}, Proposition \ref{prop:cut_norm_kernle_mx}, and union bound, for each $s\in [0,L^{-}]$, 
			\begin{align}
				\P\left( \lVert W_{\tilde{Y}} \rVert_{\square}  \ge s\right) = \P\left( \lVert \tilde{Y} \rVert_{\square}  \ge s mn \right) 
				&\le \sum_{\x\in \{0,1\}^{m}\, \y\in \{0,1\}^{n}}  \P\left( |\x^{\top}\tilde{Y} \y|   \ge s mn \right)  \le 2^{m+n+1} \exp\left( -\frac{s^{2}mn}{2L^{+}} \right),
			\end{align}
			where the last inequality uses \eqref{eq:gen_hoeffding_pf_000}.
		\end{proof}

		\begin{proof}[\textbf{Proof of Theorem~\ref{thm:main_convergence_conti}}]
			Let $(\balpha_{m},\bbeta_{n})$ be the standard MLE for $(\r_{m},\c_{n})$ and let $Y\sim \mu_{\balpha_{m}\oplus \bbeta_{n}}$. 
			By Lemma \ref{lem:hoeffding_geo}, 
			\begin{align}
				\P( \|  W_{Y}- W_{\E[Y]} \| \ge t) \le  \exp \left(  (m+n+1)\log 2 - \frac{t^{2}mn}{C} \right)
			\end{align}
			for some constant $C=C(\mu,\delta)>0$. Then by the transference principles (Thm. \ref{thm:transference} and Thm. \ref{thm:second_transference}),
			for $\nu$-almost surely for each $t\ge 0$, 
			\begin{align}\label{eq:main_concentration_cut1}
				&	\hspace{1cm} \P_{X}\left( \lVert W_{X} - W_{Z^{\r,\c}} \rVert_{\square} \ge t 
				\right)      \le 2e   \left[ \exp(C\rho) \, \P\left( Y\in \mathcal{T}_{\rho}(\r,\c)  \right)^{-1}  \land \sup p_{\r,\c}(\cdot)\right]   \exp \left( m+n - \frac{t^{2}mn}{C} \right).
			\end{align}
			By using a similar argument as in the proof of the second part of Thm. \ref{thm:mixture_dist}, 
			\begin{align}\label{eq:main_concentration_cut2}
				\P\left( \lVert W_{X} - W_{Z^{\r_{m},\c_{n}}} \rVert_{\square} \, \ge \,  \sqrt{\frac{F_{\mu}(\r,\c)}{mn}} \right) \le 2\exp \left(  - F_{\mu}(\r,\c)  \right). 
			\end{align} 
			By triangle inequality and Thm. \ref{thm:main_convergence_conti}, 
			\begin{align}
				\Vert W_{X} - W^{\r,\c} \rVert_{\square} \le \Vert W_{X} - W_{Z^{\r_{m},\c_{n}} }\rVert_{\square} + 
				C' \| (\bar{\r}_{m},\bar{\c}_{n})-(\r,\c) \|_{1}
			\end{align}
			for some constant $C'=C'(\mu,\delta)$. This is enough to conclude \eqref{eq:exact_conditioning_cor_tight}. By Borel-Cantelli lemma, this yields that $W_{X}\rightarrow W^{\r,\c}$ in cut norm almost surely as $m,n \rightarrow\infty$. 
		\end{proof}

		Lastly in this section, we establish the limit of the ESD of $X$. On top of our transference principles, we will rely heavily on the powerful theory in the random matrix theory literature.

		\begin{proof}[\textbf{Proof of Theorem \ref{thm:ESD}}]
			We first introduce some notations. Without loss of generality, we assume the sequence of $\delta$-tame margins $(\r_{m},\c_{n})$ is indexed by $k$ so that $m=m(k)$ and $n=n(k)$ and simply denote $(\r_{m},\c_{n})=(\r_{k},\c_{k})$. 
			Let $(\balpha_{k},\bbeta_{k})$ denote the standard MLE for margin $(\r_{k},\c_{k})$ and let $(\balpha,\bbeta)$ denote the limiting MLE. Let $Y_{k}\sim \mu_{\balpha_{k}\oplus \bbeta_{k}}$ and  $\widetilde{Y}_{k}:=(s^{*}(m+n))^{-1/2}(Y_{k}-\E[Y_{k}])$. Denote $\hat{\Xi}_{k}:=\tilde{Y}_{k}\tilde{Y}_{k}^{*}$ and let $\hat{\lambda}_{k}$ denote the ESD of $\hat{\Xi}_{k}$. Similarly, let $\lambda_{k}$ denote the ESD of $\hat{\tilde{X}}_{k}\tilde{X}_{k}^{*}$. Let $R_{k}(z):= (\hat{\Xi}_{k} - zI)^{-1}$, $z\in \mathbb{C}\setminus \R$  denote the resolvent of $\hat{\Xi}_{k}$.

			Next, we will argue for the unique existence and boundedness of the solution $\tau$ for the Dyson equation \eqref{eq:wishart_QVE}. By Prop. \ref{prop:typical_kernel_existence} the limiting margin $(\r,\c)$ is $\delta$-tame so 
			$\phi(A_{\delta}) \le \balpha\oplus \bbeta \le \phi(B_{\delta})$.  It follows that the kernel $\psi''(\balpha\oplus \bbeta)$ for the integral operator $S$ is uniformly bounded away from zero and from $\infty$. According to the discussion in \cite[Sec. 3.1]{alt2017local} (with a slight modification for the operator setting),  the problem reduces to showing the same statement about the quadratic vector equation (QVE) $-\frac{1}{\sigma_{\cdot}(z)}=z + \mathcal{S}\sigma_{\cdot}(z)$ with $\mathcal{S}$ the symmetrization of $S$. The hypothesis of \cite[Thm. 2.1]{ajanki2019quadratic} is easily verified, so the solution $\sigma$ is unique and uniformly bounded for every $z\in \mathbb{H}$. 
			
			Let $S_{k}$ denote the integral operator with step-kernel $\psi''(\bar{\balpha}_{k}\oplus \bar{\bbeta}_{k}):(0,1]^{2}\rightarrow \R$, where $(\bar{\balpha}_{k},\bar{\bbeta}_{k})$ is as in Thm. \ref{thm:main_convergence_conti}. 
			By $\delta$-tameness, this kernel is uniformly bounded away from zero and from $\infty$ with the bounds depending only on $\delta$ and $\mu$. By the same argument, the solution $\sigma^{k}$ to the QVE $-\frac{1}{\sigma_{\cdot}^{k}}=\zeta + \mathcal{S}_{k}\sigma_{\cdot}^{k}$ is unique and uniformly bounded. Since $\psi''$ is Lipschitz continuous on bounded domain, by Theorem \ref{thm:main_convergence_conti} 
			\begin{align}
				\| \psi''(\balpha\oplus \bbeta) - \psi''(\bar{\balpha}_{k}\oplus \bar{\bbeta}_{k}) \|_{2}^{2}\le C  \| (\r,\c) - (\bar{\r}_{k},\bar{\c}_{k}) \|_{1} = o(1). 
			\end{align}
			for some constant $C=C(\mu,\delta)>0$. Thus the step variance kernels  $\psi''(\bar{\balpha}_{k}\oplus \bar{\bbeta}_{k})$ converge to $\psi''(\balpha\oplus \bbeta)$ in $L^{2}$. This and the stability theorem \cite[Thm.2.13]{ajanki2019quadratic} yields that $\sigma^{k}$ converges to $\sigma$. Following the discussion in \cite[Thm. 3.1]{alt2017local}, we can transform $\sigma^{k}$s to solutions of  the Dyson equation \eqref{eq:wishart_QVE} with $S_{k}$ in place of $S_{k}$, which we denote as $\tau^{k}$s. This shows $\tau^{k}$ converges to $\tau$ pointwise. 
			
			Now we deduce the convergence of the expected singular value distribution of $\widetilde{Y}_{k}$. The entries of $Y_{k}$ have uniform subexponential norms due to the $\delta$-tameness. This and the uniform boundedness of the variance kernel verifies the hypothesis in \cite{alt2017local}, so applying their Thm. 2.1, we get a unique solution $\tau^{k}$ to \eqref{eq:wishart_QVE} with $S$ replaced by $S_{k}$ and a probability measure $\xi^{k}$ that satisfy \eqref{eq:Stieltjes_measure}. Furthermore, by \cite[Thm. 2.2]{alt2017local}, the Stieltjes transform of $\hat{\lambda}_{k}$, 
			$m^{-1} \tr R_{k}(z)$, is very close to $\langle \tau^{k}(z) \rangle$  in the half-plane $\textup{Im}(z)\ge \eps^{*}$ for any fixed $\eps^{*}>0$. 
			(see the reference for a precise statement). 
			Since we know $\tau^{k}\rightarrow \tau$, it follows that the Stieltjes transforms of $\hat{\lambda}_{k}$ converge pointwise to $\tau$. It follows that $\hat{\lambda}_{k}$ converges weakly to the probability measure $\lambda^{*}$ given by the inverse Stieltjes transform of $\tau$. 
			In fact, by the convergence of local laws in \cite[Thm.2.7, 2.9]{alt2017local}, it also follows that $\E[\hat{\lambda}_{k}]\rightarrow \lambda^{*}$ weakly. At this point, we have shown \textbf{(i)}. 
			
			To show \textbf{(ii)}, we wish to show that $\lambda_{k}$ converges to $\lambda^{*}$ in probability as $k\rightarrow \infty$. Let $d_{\mathcal{W}_{1}}$ denote the Wasserstein-1 distance between probability measures. Fix $\eps>0$. Then by the transference principles that hold under (A1) (see Thm. \ref{thm:transference} and Thm. \ref{thm:second_transference_density_1}), 
			\begin{align}
				\P\left(  d_{\mathcal{W}_{1}}(\lambda_{k}, \E[\hat{\lambda}_{k}])  \ge \eps \right) \le  O( (m\lor n)^{3/2}(\log mn) ) \, \P\left(  d_{\mathcal{W}_{1}}(\hat{\lambda}_{k}, \E[\hat{\lambda}_{k}] )  \ge \eps \right). 
			\end{align}
			We will first consider the special case when $\mu$ has compact support. According to  \cite[Cor. 1.8]{guionnet2000concentration} of Guionnet and Zeitouni (also see the remark following the statement), we have the following sub-Gaussian concentration of ESD of the Wishart matrix $\tilde{Y}_{k}$: 
			\begin{align}\label{eq:subgaussian_ESD_Y}
				\P\left(  d_{\mathcal{W}_{1}}(\hat{\lambda}_{k}, \E[\hat{\lambda}_{k}])  \ge \eps \right) \le \exp(-c(\eps) (m+n)^{2}). 
			\end{align}
			Combining the above, we get 
			\begin{align}
				\P\left(  d_{\mathcal{W}_{1}}(\lambda_{k}, \E[\hat{\lambda}_{k}])  \ge \eps \right) \le \exp(-O((m+n)^{2})). 
			\end{align}
			Since $\E[\hat{\lambda}_{k}]\rightarrow \lambda^{*}$ weakly, it follows that $\lambda_{k}\rightarrow \lambda^{*}$ weakly in probability, as desired. 
			
			It remains to justify the sub-Gaussian ESD concentration for $\tilde{Y}_{k}$ \eqref{eq:subgaussian_ESD_Y} for the more general case when $\mu$ is sub-Gaussian. 
			We remark that the sub-Gaussian ESD concentration for Wishart matrices in \cite[Cor. 1.8]{guionnet2000concentration} is a direct consequence of the similar result for the Wigner matrices stated in \cite[Thm. 1.1]{guionnet2000concentration} using Girko's symmetrization trick (see the discussion above \cite[Cor. 1.8]{guionnet2000concentration}). This result for the Wigner matrices holds when the laws of the entries have common compact support or satisfy the log-Sobolev inequality with a uniform constant. Klochkov and Zhivotovskiy \cite[Lem. 1.4]{klochkov2020uniform} showed that such a result holds in a more general setting where the laws of the entries are uniformly sub-Gaussian. It is an elementary fact that the bounded exponential tilt of a sub-Gaussian distribution is sub-Gaussian with a uniform sub-Gaussian norm. It follows that the sub-Gaussian norms of the entries of $Y$ are uniformly bounded by a constant depending only on $\mu$ and $\delta$. This is enough to conclude.
		\end{proof}

		\begin{proof}[\textbf{Proof of Corollary \ref{cor:quater_circle}}]
			This follows immediately from Thm. \ref{thm:ESD} by the argument sketched above the statement of Cor. \ref{cor:quater_circle}. 
		\end{proof}
		
		\begin{proof}[\textbf{Proof of Corollary \ref{cor:Gaussian}}]
			When $\mu$ is standard Gaussian, then according to the computations in Ex. \ref{ex:typical_gaussian}, the variance matrix $\psi''(\balpha\oplus \bbeta)$ is always the all-ones matrix $\mathbf{1}_{m}\mathbf{1}_{n}^{\top}$. Hence, the Dyson equation \eqref{eq:wishart_QVE} is the same as the one that characterizes the limiting ESD of a $m\times n$ matrix with independent and unit variance entries. The limiting ESD for such matrices is known to be the Marchenko-Pastur distribution given in the statement (see, e.g., \cite{alt2017local}).
		\end{proof}

		\section{Concluding remarks } 
		\label{sec:concluding_remarks}	
		
		Finally, we provide some concluding remarks and discuss several intriguing open problems.

		\vspace{0.1cm}
		\noindent \textbf{Random measure perspective.} Our main  result stated in  Theorem \ref{thm:main_convergence_conti} assumes that the sequence of margins $(\r_{m}, \c_{n})$ converges (after rescaling) in $L^{1}$. This assumption is natural from the perspective of viewing random matrices as random functions. A natural question that we have not investigated in this work is to view the random matrices as random measures on $[0,1]^{2}$ (as done in the permutation limit theory \cite{hoppen2013limits, borga2024large}) and ask if the marginal measures converge weakly, then the joint random measure should also converge to a limiting measure, possibly a deterministic one. We conjecture that if the marginal measures converge to limiting ones with density with respect to the Lebesgue measure, then the joint random measure in the limit should be given by the deterministic measure whose density is the limiting typical kernel as in Theorem \ref{thm:main_convergence_conti}.

		\vspace{0.1cm}
		\noindent \textbf{Conditioning on other constraints.}
		In this work, we only considered conditioning a large random matrix on dense margins, in which the row and column sums are proportional to the number of columns and rows, respectively. Can one develop an analogous concentration and limit theory of random matrices conditioned on sparse margins? Also, note that row and column margin of an $m\times n$ matrix $\X$ are particular linear constraints of the form  $\X\mathbf{1}_{m} = \r$  and $\mathbf{1}_{n} \X = \c^{\top}$. Can one develop an analogous concentration and limit theory of random matrices conditioned on general linear constraints of the form $\X \mathbf{u}=\mathbf{b}$ and $\mathbf{u}' \X =\mathbf{b}'$ for vectors $\mathbf{u}, \mathbf{u}', \mathbf{b},\mathbf{b}'$ of appropriate dimensions? Moreover, can one establish similar results for symmetric random matrices under constraints? For instance, this will give a concentration and limit theory for Wigner matrices with a given row sum, which aligns well with the limit theory of random graphs with given degree sequences \cite{chatterjee2011random}.

		\vspace{0.1cm}
		\noindent \textbf{Large deviations principle.} 
		Dhara and Sen \cite{dhara2022large} established a large deviations principle for random graphs with prescribed degree sequences, building upon foundational work by Chatterjee, Diaconis, and Sly \cite{chatterjee2011random}. We anticipate a similar large deviations principle for random matrices conditioned on given margins, with the information projection \eqref{eq:RM_min_KL} likely serving as the rate function. This is currently an ongoing work by the authors.

		\vspace{0.1cm}
		\noindent \textbf{Schr\"{o}dinger bridge and optimal transport.}
		Drawing an analogy from the connection between the static Schr\"{o}dinger bridges and the entropic optimal transport (see Sec. \ref{sec:schrodinger}), we can also consider an `entropic optimal transport' version of our conditioned random matrix problem. Namely, let $\gamma:\R^{m\times n}\rightarrow \R_{\ge 0}$ be a cost function on $m\times n$ real matrices. Then replacing the base model $\mathcal{R}$ with a probability measure proportional to $e^{-\gamma/\eps} \mathcal{R}$, \eqref{eq:RM_min_KL} becomes 
		\begin{align}\label{eq:RM_min_KL_OT}
			\min_{\mathcal{H}\in \mathcal{P}^{m\times n} } \, \int_{\R^{m\times n}} \gamma(\x) \mathcal{H}(d\x)+ \eps \, D_{KL}(\mathcal{H} \,\Vert \, \mathcal{R}) \quad \textup{subject to} \quad \textup{ $\E_{X\sim \mathcal{H} }[ (r(X), c(X))] = (\r,\c)$}. 
		\end{align}
		It remains to be seen whether the `typical tables' in this generalized setting exhibit interesting structures.

		\section*{Acknowledgements} 
		HL is partially supported by NSF DMS-2206296 and DMS-2232241. SM is partially supported by NSF-2113414. 
		The authors thank Alexander Barvinok, Peter Winkler, Igor Pak, Hongchang Ji, and Rami Tabri for helpful discussions. 
		
				{\footnotesize

					\bibliographystyle{amsalpha}   
					\bibliography{mybib}

				}
				\vspace{1cm}
				\addresseshere

			\end{document}